\documentclass[11pt]{amsart}
\usepackage{amsmath,amsmath,latexsym,amssymb,amsfonts,amsbsy, amsthm}
\usepackage{mathrsfs}
\usepackage{fancyhdr}
\usepackage{latexsym}
\usepackage{bbding}
\usepackage{wasysym}
\usepackage{cite}
\usepackage{multicol,graphics}
\usepackage{amssymb}
\usepackage{graphicx}
\usepackage{color}
\usepackage{bm}
\usepackage{enumitem}
\allowdisplaybreaks

\newtheorem{theorem}{Theorem}[section]
\newtheorem{lemma}[theorem]{Lemma}
\newtheorem{proposition}[theorem]{Proposition}
\newtheorem{remark}[theorem]{Remark}
\newtheorem{corollary}[theorem]{Corollary}
\newtheorem{definition}{Definition}[section]

\numberwithin{equation}{section}


\begin{document}

\title[Boundary Control of Ericksen-Leslie]{On optimal boundary control of Ericksen-Leslie system in dimension two}
\author{Qiao Liu,  Changyou Wang, Xiaotao Zhang, Jianfeng Zhou}
\date{}
\begin{abstract}
In this paper, we consider the boundary value problem of a simplified Ericksen-Leslie system in dimension two
with non-slip boundary condition for the velocity field $u$ and time-dependent boundary 
condition  for the director field $d$ of unit length. For such a system, we first establish the existence of
a global weak 
solution that is smooth away from finitely many singular times, then establish the existence of a unique global strong 
solution that is smooth for $t>0$ under the assumption that the image of boundary data  is contained in the hemisphere $\mathbb S^2_+$. 
Finally, we apply these theorems to the problem of optimal boundary control of the simplified
Ericksen-Leslie system and show both the existence and a necessary condition
of an optimal boundary control.
\end{abstract}

\keywords{Nematic liquid crystal flow;  optimal boundary control}

\subjclass[2010]{76A15, 35B65, 35Q35}

\maketitle

\tableofcontents

\section{Introduction}
\setcounter{equation}{0}
\setcounter{theorem}{0}

In this paper, we will consider the boundary value problem of a simplified Ericksen-Leslie system,  first proposed
by Lin \cite{Lin1}, that models the hydrodynamic motion of nematic liquid crystals under the non-slip boundary condition
on the velocity field and time-dependent
boundary condition on the liquid crystal director field in dimension two.
More precisely, for a bounded smooth domain $\Omega\subseteq \mathbb{R}^{2}$ with $\Gamma=\partial\Omega$ and $0<T<\infty$,
set $Q_T=\Omega\times {(0, T)}$ and $\Gamma_T=\Gamma\times {(0,T)}$. 
We seek a $({\bf u},{\bf d}):Q_T\mapsto \mathbb R^2\times\mathbb S^2$ that solves
\begin{align}
   \label{eq1.1}
\begin{cases}
\mathbf{u}_{t}-\nu\Delta \mathbf{u} +(\mathbf{u}\cdot\nabla)\mathbf{u}+\nabla P=-\lambda\operatorname{div}(\nabla \mathbf{d} \odot\nabla \mathbf{d}),\\
\qquad \nabla\cdot \mathbf{u}=0,  \\
 \mathbf{d}_{t}+(\mathbf{u}\cdot\nabla)\mathbf{d}=\mu (\Delta \mathbf{d}+|\nabla \mathbf{d}|^{2}\mathbf{d}),
\end{cases} 
\ {\rm{in}}\ Q_T
\end{align}
subject to the  boundary and initial conditions: 
 \begin{align} \label{eq1.2}
 &\mathbf{u}({x},t)={0},\ \mathbf{d}({x},t)=\mathbf{h}({x},t),\ ({x}, t)\in \Gamma_T,\\
   \label{eq1.3}
 &\mathbf{u}|_{t=0}=\mathbf{u}_0(x), \
 \mathbf{d}|_{t=0}=\mathbf{d}_0(x),\ {x}\in \Omega,
 \end{align}
where ${\bf u}_0:\Omega\mapsto\mathbb R^2$ and ${\bf d_0}:\Omega\mapsto\mathbb R^3$
are given maps such that $\nabla\cdot \mathbf{u}_0(x)=0$ and $|\mathbf{d}_{0}(x)| = 1$ for
$x\in\Omega$. Here $\mathbf{u}:Q_T\mapsto
\mathbb{R}^{2}$ represents the fluid velocity field,
$P:Q_T\mapsto \mathbb{R}$ represents the
pressure function, $\mathbf{d}:\Omega\times
(0,T)\mapsto \mathbb{S}^{2}=\{y\in\mathbb{R}^{3}: \, |y|=1\}$ 
represents the averaged orientation field of the nematic liquid crystal molecules,
and $\nu$, $\mu$ and $\lambda$ are three positive constants that represent viscosity,
the competition between kinetic energy and potential energy, and microscopic
elastic relaxation time for the molecular orientation field. The
notation $\nabla \mathbf{d}\odot\nabla \mathbf{d}$ stands for the $2\times 2$ matrix
whose ($i,j$)-th entry is given by $\nabla_i {\bf d}\cdot \nabla_j {\bf d}$  for $1\le i, j\le 2$.

 The system \eqref{eq1.1} is a simplified version of the general
Ericksen-Leslie system, which was introduced by Ericksen \cite{Ericksen} and Leslie \cite{Leslie}
between 1958  and  1968, that represents a macroscopic continuum description of the time evolution of the 
material under the influence of both the fluid velocity field ${\bf u}$ and the macroscopic
description of the microscopic orientation configuration field ${\bf d}$ of rod-like liquid crystal molecules.
Mathematically,  \eqref{eq1.1} is a system that strongly couples between the non-homogeneous Navier-Stokes
equation and the transported heat flow of harmonic maps to $\mathbb S^2$ (see \cite{CL1993} for the Dirichlet
problem of harmonic heat flows).

There have been many interesting results on the system \eqref{eq1.1}, \eqref{eq1.2}, and \eqref{eq1.3},
when the boundary data ${\bf h}(x,t)={\bf h}(x): \Gamma_T\mapsto \mathbb S^2$  is $t$-independent. 
In a series of papers, Lin \cite{Lin1} and Lin-Liu \cite{LL1, LL2} initiated the mathematical analysis of \eqref{eq1.1}--\eqref{eq1.3} in 1990's.
More precisely, they considered in \cite{LL1, LL2} the case of the so-called Ericksen's variable degree of orientation, that replaces the Dirichlet
energy $\frac12\int_\Omega |\nabla {\bf d}|^2\,dx$ for ${\bf d}:\Omega\mapsto\mathbb S^2$ by the Ginzburg-Landau energy
$\int_\Omega (\frac12|\nabla {\bf d}|^2+\frac{1}{4\epsilon^2}(1-|{\bf d}|^2)^2)\,dx$ for ${\bf d}:\Omega\mapsto\mathbb R^3$.
It has been established by \cite{LL1, LL2} the global existence of strong and weak solutions in dimensions $2$ and $3$ respectively, along with
some partial regularity results analogous to \cite{CKN} on the Navier-Stokes equation. 
However, since the arguments and crucial estimates in \cite{LL1,LL2} depend on the parameter $\epsilon$, it is challenging to utilize
such obtained solutions as approximate solutions to \eqref{eq1.1}, \eqref{eq1.2}, and \eqref{eq1.3} because it remains
a very difficult question to study their convergence as $\epsilon$ tends to zero.  In dimension two, it has been recently  
shown by Lin-Lin-Wang \cite{LLW} and Lin-Wang \cite{LW0} that \eqref{eq1.1}, \eqref{eq1.2}, and \eqref{eq1.3} admits a unique global
``almost regular" weak solution that is smooth away from finitely many singular times when ${\bf h}(={\bf d}_0|_{\partial\Omega})
\in C^{2,\alpha}(\partial\Omega, \mathbb S^2)$, see Hong \cite{Hong}, Xu-Zhang \cite{XZ}, Hong-Xin \cite{HX}, Huang-Lin-Wang \cite{HLW}, and
Lei-Li-Zhang \cite{LLZ} for related works. In a very recent article, Lin-Wang \cite{LW2} have proved 
the existence of a global weak solution to \eqref{eq1.1}, 
\eqref{eq1.2}, and \eqref{eq1.3} in dimension $3$ when ${\bf d}_0(\Omega)\subset\mathbb S^2_+$, see  Huang-Lin-Liu-Wang \cite{HLLW} for related works. 
The interested reader can also consult the survey article \cite{LW1} and the papers by 
Lin-Liu \cite{LL3},  Wang-Zhang-Zhang \cite{WZZ}, Cavaterra-Rocca-Wu \cite{CRW2013}, 
and Wu-Lin-Liu \cite{WLL} for related works on the general Ericksen-Leslie system.

Turning to the technically more challenging case of $t$-dependent boundary data ${\bf h}:Q_T\mapsto \mathbb S^2$ for ${\bf d}$, 
to the best of our knowledge there has been no previous work addressing \eqref{eq1.1}, \eqref{eq1.2}, and \eqref{eq1.3} 
available in the literature yet.  For the Ericksen-Leslie system in the case of Ericksen's variable degree of orientation or the so-called Ginzburg-Landau approximate version of \eqref{eq1.1}, \eqref{eq1.2}, and \eqref{eq1.3}, there has been several interesting works by \cite{Bosia, CGR, CGM, GW, GRR} extending the main theorems by Lin-Liu \cite{LL1, LL2} to $t$-dependent boundary data for ${\bf d}$. In particular,  Cavaterra-Rocca-Wu \cite{CRW2017} have recently studied the optimal boundary control issue for such a system in dimension $2$ (see the books \cite{ATF, HPU, T2010}
for more discussions on optimal control of PDEs).
The motivation for the study we undertake in this paper is two fold: 
\begin{itemize}
\item [(i)] We are interested in extending the previous theorems by Lin-Lin-Wang \cite{LLW} and
establish the theory of global weak and strong solutions of \eqref{eq1.1}, \eqref{eq1.2}, and \eqref{eq1.3} for $t$-dependent boundary data of ${\bf d}$ in dimension $2$; and
\item [(ii)] We plan to employ the existence of a unique, global strong solutions to establish the existence of an optimal boundary control of \eqref{eq1.1}, \eqref{eq1.2}, and \eqref{eq1.3} and characterize a necessary condition of such an optimal
boundary control in a spirit similar to \cite{CRW2017}.
\end{itemize}

Now let us briefly set up the boundary control problem. Denote  ${\bf e}_3=(0,0,1)$, and set
\begin{align*}
&\mathbf{H}= \text{ Closure of } \{{\bf u}\in C_{0}^{\infty}(\Omega,\mathbb{R}^{2}) : \nabla\cdot\mathbf{u}=0\} \text{ in } L^{2}(\Omega,\mathbb{R}^{2}),\nonumber\\
&\mathbf{V}= \text{ Closure of } \{{\bf u}\in C_{0}^{\infty}(\Omega,\mathbb{R}^{2}):\nabla\cdot\mathbf{u}=0\} \text{ in } H^{1}_{0}(\Omega,\mathbb{R}^{2}),\\
&H^k(\Omega, \mathbb S^2)=\big\{{\bf d}\in H^k(\Omega,\mathbb R^3): |{\bf d}(x)|=1 \ \mbox{a.e.}\ x\in\Omega\big\}, \ k\ge 0.
\end{align*}
For any given $\beta_i\in \mathbb R_+$, $1\le i\le 5$, not all zeroes, and four target maps
$${\bf u}_{Q_T}\in L^2([0,T], {\bf H}),  {\bf d}_{Q_T}\in L^2(Q_T, \mathbb S^2), 
 {\bf u}_\Omega\in {\bf H},  {\bf d}_{\Omega}\in L^2(\Omega,\mathbb S^2),$$
our goal is to study the minimization problem of the cost functional
\begin{align}\label{CF}
2\mathcal{C}(({\bf u}, {\bf d}), {\bf h})&=\beta_1\|{\bf u}-{\bf u}_{Q_T}\|_{L^2(Q_T)}^2
+\beta_2\|{\bf d}-{\bf d}_{Q_T}\|_{L^2(Q_T)}^2\nonumber\\
&+\beta_3\|{\bf u}(T)-{\bf u}_{\Omega}\|_{L^2(\Omega)}^2
+ \beta_4\|{\bf d}(T)-{\bf d}_{\Omega}\|_{L^2(\Omega)}^2\nonumber\\
&+\beta_5 \|{\bf h}-{\bf e}_3\|_{L^2(\Gamma_T)}^2,
\end{align}
for any boundary data ${\bf h}:\Gamma_T\mapsto\mathbb S^2$ that lies in a bounded, closed, and geometric
convex set $\widetilde{\mathcal{U}}_M$ of
a function space $\mathcal{U}$, to be specified in the section 4, and $({\bf u}, {\bf d})$ is the unique global strong solution to the state problem
\eqref{eq1.1}, with the boundary condition $({\bf u}, {\bf d})=(0, {\bf h})$  and the initial condition
$({\bf u}_0, {\bf d}_0)$  for some fixed maps ${\bf u}_0\in {\bf H}$ and ${\bf d}_0\in H^1(\Omega,\mathbb S^2)$,
subject to the compatibility condition:
\begin{equation}\label{comp_cond}
{\bf h}(x,0)={\bf d}_0(x), \ x\in\Gamma.
\end{equation} 

In order to study both the existence and a necessary condition for an optimal boundary control ${\bf h}$ of the cost functional 
$\mathcal{C}(({\bf u}, {\bf d}), {\bf h})$ over $\widetilde{\mathcal{U}}_M$, we first need to establish the existence of a unique global strong solution
to \eqref{eq1.1}, \eqref{eq1.2}, and \eqref{eq1.3}. This turns out to be a challenging task, since the existence theorems by Lin-Lin-Wang \cite{LLW} and
Huang-Lin-Liu-Wang \cite{HLLW} indicate that the short time smooth solution may develop finite time singularity even for a $t$-independent
boundary value ${\bf d}_0$. There are several new difficulties that we need to overcome,
when we try to establish  Theorem \ref{thm1} 
extending the main theorem of Lin-Lin-Wang \cite{LLW}
to  \eqref{eq1.1}, \eqref{eq1.2}, and \eqref{eq1.3} for a $t$-dependent boundary value ${\bf h}$: 
\begin{itemize}
\item[1)] the energy $\mathcal{E}({\bf u}, {\bf d})(t)
=\frac12\int_\Omega (|{\bf u}|^2+|\nabla{\bf d}|^2)(t)$ may grow along the flow, which makes it difficult to
estimate the total number of singular times;
\item[2)] the boundary local energy inequality involves contributions by ${\bf h}$ that need to be carefully studied; and
\item[3)] the boundary $\epsilon_0$-regularity property is more delicate to establish than the case of $t$-independent boundary condition.
\end{itemize}

Under the additional assumption that the initial and boundary values ${\bf d}_0(\Omega)\subset\mathbb S^2_+$
and ${\bf h}(\Gamma_T)\subset\mathbb S^2_+$, where
$\mathbb{S}_{+}^{2}=\{{y}=(y^{1},y^{2},y^{3})\in \mathbb{S}^{2} \, \mid\, y^{3}\geq 0\}$, we are able to show in Theorem \ref{globalW}
that the weak solutions $({\bf u}, {\bf d})$ 
obtained in Theorem \ref{thm1} satisfy both ${\bf d}\in L^2([0,T], H^2(\Omega,\mathbb S^2_+))$ and  the smoothness property that
$({\bf u}, {\bf d})\in C^\infty(Q_T)\cap C^\alpha(\overline{\Omega}\times (0,T])$
for any $\alpha\in (0,1)$. In particular, if, in addition, $({\bf u}_0, {\bf d}_0)\in {\bf V}\times H^2(\Omega, \mathbb S^2_+)$
and ${\bf h}\in H^{\frac52, \frac54}(\Gamma_T,\mathbb S^2_+)$, then $({\bf u}, {\bf d})$ is the unique, strong solution
to \eqref{eq1.1}, \eqref{eq1.2}, and \eqref{eq1.3} that enjoys a priori estimate:
$$\|{\bf u}\|_{L^\infty_tH^1_x(Q_T)} +\|{\bf d}\|_{L^\infty_tH^2_x(Q_T)} 
+\|{\bf u}\|_{L^2_tH^2_x(Q_T)} +\|{\bf d}\|_{L^2_tH^3_x(Q_T)} \le C(T).$$
This estimate is established in the section 3, which turns out to be the crucial estimate in order to 
establish the Fr\'echet differentiability of the control to state map 
${\mathcal S}({\bf h})=({\bf u}, {\bf d})$ over appropriate function spaces, that is to be discussed in the section 4, 
by which we can obtain a necessary condition of an optimal boundary
control ${\bf h}: \Gamma_T\mapsto\mathbb S^2_+$. 

There have been many research articles on the optimal boundary control for parabolic equations, the Navier-Stokes equation, 
and the Cahn-Hilliard-Navier-Stokes system. See H\"omberg-Krumbiegel-Rehberg\cite{HKR}, Kunisch-Vexler\cite{KV},  Fattorini-Sritharan\cite{FS1, FS2}, Fursikov-Gunzburger-Hou \cite{FGH1, FGH2}, Hinze-Kunisch \cite{HK}, Frigeri-Rocca-Sprekels \cite{FRS}, Hinterm\"uller-Wedner \cite{HW}, Colli-Sprekels\cite{CP},
and Tr\"oltzsch \cite{T2010}, Alekseev-Tikhomirov-Fomin\cite{ATF}, and Hinze-Pinnau-Ulberich\cite{HPU}.

Since the exact  values of  $\nu$, $\mu$ and $\lambda$  don't play a role,
we henceforth assume $\nu=\mu=\lambda=1$.


\section{Existence of weak solutions}

In this section, we will establish the existence of a global weak solution to \eqref{eq1.1}, \eqref{eq1.2}, and \eqref{eq1.3}. 
First, let us recall a few notations. For any nonnegative number $k\ge 0$, recall the Sobolev spaces
$$H^k(\Gamma,\mathbb S^2)=\big\{f\in H^k(\Gamma, \mathbb R^3): \ f(x)\in\mathbb S^2\ {\rm{a.e.}}\ x\in\Gamma\big\},$$
$$H^{k, \frac{k}2}(\Gamma_T, \mathbb S^2)=\big\{f\in H^{k,\frac{k}2}(\Gamma_T,\mathbb R^3): \ f(x,t) \in \mathbb S^2,
 \ {\rm{a.e.}}\ (x,t)\in\Gamma_T\big\},$$
and the dual space of $H^k(\Gamma,\mathbb R^3)$,  $H^{-k}(\Gamma, \mathbb R^3)=(H^k(\Gamma,\mathbb R^3))'$.  
Our first theorem, which is an extension of \cite{LLW} Theorem 1.3 to time dependent boundary data,  states as follows. 

\begin{theorem}\label{thm1} For any $0<T<\infty$ and any bounded, smooth domain $\Omega\subset\mathbb R^2$ with boundary $\Gamma$,
assume that
\begin{align*}
\mathbf{h}\in L^2([0,T], H^{\frac{3}{2}}(\Gamma, \mathbb{S}^{2})), \ \partial_{t}\mathbf{h}\in 
L^{2}_tH^{\frac{3}{2}}_x(\Gamma_T),
\end{align*}
and
$(\mathbf{u}_{0}, \mathbf{d}_{0})\in \mathbf{H}\times H^{1}(\Omega,\mathbb{S}^{2})$
satisfies the compatibility condition \eqref{comp_cond}.
Then there exists a weak solution $(\mathbf{u}, \mathbf{d}):Q_T\mapsto \mathbb{R}^{2}\times\mathbb{S}^{2}$ of the
system \eqref{eq1.1}, with initial and boundary condition \eqref{eq1.2} and \eqref{eq1.3}, such that
\begin{align} \label{W-space}
\mathbf{u}\in L^{\infty}([0,T],\mathbf{H})\cap L^{2}([0,T],\mathbf{V}),
\ {\rm{and}}\ \mathbf{d} \in L^{\infty}_t H^{1}_x(Q_T,\mathbb{S}^{2}).
\end{align}
Furthermore, there exist $L\in\mathbb{N}$, depending only on $(\mathbf{u}_{0},\mathbf{d}_{0})$, and $0<T_{1}<\cdots<T_{L}\leq T$
such that $(\mathbf{u},\mathbf{d})$ is regular away from $\cup_{i=1}^L\{T_i\}$ in the sense 
that for any $0<\alpha<1$, 
$$(\mathbf{u},\mathbf{d})\in C^\infty(\Omega\times ((0,T]\setminus \cup_{i=1}^L \{T_i\}))
\cap C^{\alpha, \frac{\alpha}2}(\overline\Omega\times ((0,T]\setminus \cup_{i=1}^L \{T_i\})).$$
Moreover, there is a universal constant $\varepsilon_{1}>0$ such that 
for each singular time $T_{i}$, $1\leq i\leq L$, it holds that 
\begin{align}\label{blowup}
\limsup_{t\uparrow T_{i}} \max_{{x}\in\overline\Omega}
 \int_{\Omega\cap B_{r}({x})}(|\mathbf{u}|^{2}+|\nabla \mathbf{d}|^{2})(\cdot,t)\geq \varepsilon_{1}^2, \quad \forall r>0.
\end{align}
\end{theorem}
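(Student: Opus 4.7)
The plan is to adapt the Ginzburg-Landau penalization framework of \cite{LL1,LL2,LLW} to time-dependent boundary data. First, by a standard trace-lifting theorem I would pick an extension $\widetilde{\bf h}\in H^1([0,T];L^2(\Omega,\mathbb R^3))\cap L^2([0,T];H^2(\Omega,\mathbb R^3))$ of ${\bf h}$, which exists thanks to ${\bf h}\in H^1_tH^{3/2}_x(\Gamma_T)$. For each $\epsilon>0$, I would consider the penalized system in which $|\nabla{\bf d}|^2{\bf d}$ is replaced by $-\epsilon^{-2}(|{\bf d}|^2-1){\bf d}$, with the same boundary and initial data. Writing ${\bf d}_\epsilon=\widetilde{\bf h}+{\bf v}_\epsilon$, the resulting system for $({\bf u}_\epsilon,{\bf v}_\epsilon)$ has homogeneous Dirichlet boundary data with inhomogeneous forcing from $\widetilde{\bf h}$ and its derivatives, and a Galerkin scheme produces global approximate solutions on $[0,T]$.

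The main a priori estimate is an $\epsilon$-uniform energy bound. Testing the momentum equation against ${\bf u}_\epsilon$ and the penalized director equation against $-(\Delta{\bf d}_\epsilon-\epsilon^{-2}(|{\bf d}_\epsilon|^2-1){\bf d}_\epsilon)$, the cross terms between $\operatorname{div}(\nabla{\bf d}_\epsilon\odot\nabla{\bf d}_\epsilon)$ and $({\bf u}_\epsilon\cdot\nabla){\bf d}_\epsilon$ cancel by integration by parts, but nontrivial boundary contributions of the form
\begin{align*}
\int_0^T\!\int_\Gamma \partial_t{\bf h}\cdot\partial_\nu{\bf d}_\epsilon\,d\sigma\,dt
\end{align*}
appear, which I would bound by a trace pairing $|\langle\partial_t{\bf h},\partial_\nu{\bf d}_\epsilon\rangle_\Gamma|\lesssim\|\partial_t{\bf h}\|_{H^{1/2}(\Gamma)}(\|\Delta{\bf d}_\epsilon\|_{L^2(\Omega)}+\|\nabla{\bf d}_\epsilon\|_{L^2(\Omega)})$ and absorbed by Gr\"onwall. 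This yields
\begin{align*}
\sup_{t\in[0,T]}\mathcal{E}({\bf u}_\epsilon,{\bf d}_\epsilon)(t)+\int_0^T\!\int_\Omega\bigl(|\nabla{\bf u}_\epsilon|^2+|\Delta{\bf d}_\epsilon|^2\bigr)\le C\bigl(\mathcal{E}_0,\|{\bf h}\|_{H^1_tH^{3/2}_x(\Gamma_T)}\bigr),
\end{align*}
together with a uniform bound on $\epsilon^{-2}\int_{Q_T}(|{\bf d}_\epsilon|^2-1)^2$. Passing to the limit $\epsilon\downarrow 0$ via Aubin-Lions compactness and a div-curl argument for the Ericksen stress, as in \cite{LLW}, produces a weak limit $({\bf u},{\bf d})$ with $|{\bf d}|=1$ a.e.\ and satisfying \eqref{W-space}.

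For the partial regularity and singular-time counting I would combine three ingredients: (a) the interior $\epsilon_0$-regularity of \cite{LLW}, which bootstraps to $C^\infty$-smoothness in $\Omega\times(0,T]$ away from the singular set and is unaffected by the $t$-dependence of ${\bf h}$; (b) a boundary $\epsilon_0$-regularity theorem asserting that if for some $(x_0,t_0)\in\Gamma_T$ and small $r>0$ one has $\sup_{t\in[t_0-r^2,t_0]}\int_{\Omega\cap B_r(x_0)}(|{\bf u}|^2+|\nabla{\bf d}|^2)(\cdot,t)<\epsilon_1^2$, then $({\bf u},{\bf d})\in C^{\alpha,\alpha/2}$ in a smaller parabolic half-cylinder at $(x_0,t_0)$; and (c) a local energy inequality up to $\Gamma_T$, derived by testing the equations with a cutoff $\eta$ that need not vanish on $\Gamma$, producing extra boundary terms controlled by the $H^1_tH^{3/2}_x$-norm of ${\bf h}$ on $\Gamma\cap\operatorname{supp}\eta$. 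Given (a)--(c), the complement of the regular set is discrete in $t$, and at each such singular time $T_i$ an amount at least $\epsilon_1^2$ of energy must concentrate at some point of $\overline\Omega$, yielding \eqref{blowup}. Since the total energy over $[0,T]$ is bounded by $C(\mathcal{E}_0,{\bf h})$, the number of singular times is at most $C(\mathcal{E}_0,{\bf h})/\epsilon_1^2<\infty$.

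The main obstacle I expect is the boundary $\epsilon_0$-regularity in (b). In the $t$-independent case of \cite{LLW}, one can compare $({\bf u},{\bf d})$ with a stationary harmonic extension of the frozen boundary datum; with time-varying ${\bf h}$, one must instead compare with a parabolic blow-up limit whose Dirichlet data are themselves time-varying, and the convergence of the blow-up sequence requires quantitative H\"older control of ${\bf h}$ in $t$ (available from ${\bf h}\in H^1_tH^{3/2}_x(\Gamma_T)$) together with a careful tracking of the boundary energy flux via the inequality in (c). This is precisely where the full strength of the hypotheses ${\bf h},\partial_t{\bf h}\in L^2_tH^{3/2}_x(\Gamma_T)$ enters in an essential way.
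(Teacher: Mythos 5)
Your construction of the weak solution via Ginzburg--Landau penalization, followed by passage to the limit $\epsilon\downarrow 0$, is precisely the route the paper explicitly declines to take. In the introduction the authors note that ``since the arguments and crucial estimates in [Lin--Liu] depend on the parameter $\epsilon$, it is challenging to utilize such obtained solutions as approximate solutions to \eqref{eq1.1}, \eqref{eq1.2}, and \eqref{eq1.3} because it remains a very difficult question to study their convergence as $\epsilon$ tends to zero.'' Your one-line appeal to ``Aubin-Lions compactness and a div-curl argument for the Ericksen stress, as in [LLW]'' does not close this. The $\epsilon$-uniform estimates put $\nabla\mathbf{d}_\epsilon$ only in $L^\infty_tL^2_x$, so $\nabla\mathbf{d}_\epsilon\odot\nabla\mathbf{d}_\epsilon$ is bounded merely in $L^\infty_tL^1_x$; its weak-* limit may develop a defect measure and need not equal $\nabla\mathbf{d}\odot\nabla\mathbf{d}$. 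Moreover [LLW] does not use GL penalization to prove its existence theorem (they smooth the data, not the equation), so there is no available div-curl identification of the limiting stress to cite. This step of your argument is a genuine gap, and it is the central difficulty the paper is designed to circumvent.

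The paper instead approximates the data rather than the equation: it takes smooth $(\mathbf{u}_0^k, \mathbf{d}_0^k, \mathbf{h}^k)\to(\mathbf{u}_0,\mathbf{d}_0,\mathbf{h})$, uses a contraction-mapping/Schauder argument (Theorem \ref{thm6}) to produce smooth short-time solutions of the \emph{original} system \eqref{eq1.1}, and then invokes Lemma \ref{lem15} to get a lower bound $T^k\geq\theta_0 r_0^2$ on the existence time depending only on the local energy profile of the data, uniformly in $k$. Since each $(\mathbf{u}^k,\mathbf{d}^k)$ solves the actual constrained system with $|\mathbf{d}^k|\equiv1$, the limit passage as $k\to\infty$ is among genuine solutions, and the equation is inherited by weak and local-strong convergence (together with the $\epsilon_0$-regularity estimates); the $\epsilon\to 0$ issue never arises. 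The remaining ingredients of your proposal --- interior/boundary $\epsilon_0$-regularity, a local energy inequality involving a boundary flux term controlled by $\|\partial_t\mathbf{h}\|_{H^{3/2}}$, discreteness of singular times via an $\tfrac12\varepsilon_1^2$ energy drop, and the finiteness count --- are consistent with the paper's Lemmas \ref{lem4}, \ref{lem5}, \ref{lem11}, \ref{lem12} and the argument in Subsection 2.4, though the paper's boundary regularity (Lemma \ref{lem5}) is a substantial piece of work that your sketch treats as a black box. If you want to retain your overall plan, you would need to replace the GL penalization with the paper's data-approximation scheme (or else independently resolve the $\epsilon\to0$ limit, which would be a significant new result on its own).
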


\medskip
A few remarks are in the order: 
\begin{remark} {\rm 1) Theorem \ref{thm1} was first established by \cite{LLW} for any time independent boundary data
${\bf h}(x,t)={\bf d}_0(x)$, $(x,t)\in\Gamma_T$, with ${\bf d}_0\in C^{2,\beta}(\Gamma, \mathbb S^2)$ for some $\beta\in (0,1)$.\\
2) By the Sobolev embedding theorem,  ${\bf h}\in L^2_tH^\frac32_x(\Gamma_T)$ and 
$\partial_t{\bf h}\in L^2_tH^\frac32_x(\Gamma_T)$
imply that ${\bf h}\in {{\rm Lip}}(\Gamma_T)$. We will present a new proof of the boundary $\epsilon_0$-regularity theorem on \eqref{eq1.1}
and \eqref{eq1.2} for any Lipschitz continuous boundary value ${\bf h}:\Gamma_T\mapsto\mathbb S^2$, which plays a crucial role in
the proof of Theorem \ref{thm1}. \\
3) In contrast with the autonomous boundary condition studied by \cite{LLW}, 
the system \eqref{eq1.1}, \eqref{eq1.2}, and \eqref{eq1.3} no longer enjoys the energy dissipation inequality 
for a time dependent boundary value ${\bf h}$.  However, under the  assumption that both ${\bf h}$ and
$\partial_{t}\mathbf{h}$ belong to $L^{2}_tH^{\frac{3}{2}}_x(\Gamma_T)$, we are able to estimate the growth
rate of the energy $\mathcal{E}({\bf u}(t), {\bf d}(t))$ by
\begin{align}\label{growth-E}
\mathcal{E}({\bf u}(t), {\bf d}(t))\leq\exp\big(C\int_0^t \|\partial_t{\bf h}\|_{H^\frac32(\Gamma)}\big) 
\big[\mathcal{E}({\bf u}_0, {\bf d}_0)+C\|({\bf h}, \partial_t{\bf h})\|^2_{L^2_tH^\frac32_x(\Gamma_T)}\big].
\end{align}
This turns to be sufficient for establishing the finiteness of singular times.}
\end{remark}

Applying the maximum principle, we can show that if $\mathbf{d}_{0}:\Omega\mapsto \mathbb{S}_{+}^{2}$ and $\mathbf{h}:\Gamma_T\mapsto \mathbb{S}_{+}^{2}$, then any weak solution $(\mathbf{u}, \mathbf{d})$ to problem \eqref{eq1.1}, \eqref{eq1.2}, and \eqref{eq1.3} obtained by Theorem \ref{thm1} 
satisfies (see Lemma \ref{rem-S+} below)
\begin{align*}
\mathbf{d}({x},t):Q_T\mapsto \mathbb{S}_{+}^{2}.
\end{align*}
This, together with Lemma \ref{vanish}, ensures that \eqref{blowup} never occurs in the interval $(0,T]$.
Hence, we obtain the following theorem.

\begin{theorem}\label{globalW} For  any $T>0$ and a bounded smooth domain $\Omega\subset\mathbb{R}^{2}$, assume that 
\begin{align*}
\mathbf{h}\in L^{2}_tH^{\frac{3}{2}}_x(\Gamma_T,\mathbb{S}_{+}^{2}) 
\ {\rm{and}}\ \partial_{t}\mathbf{h}\in L^{2}_tH^{\frac{3}{2}}_x(\Gamma_T)
\end{align*}
and
$(\mathbf{u}_{0}, \mathbf{d}_{0})\in \mathbf{H}\times H^{1}(\Omega,\mathbb{S}_{+}^{2})$
satisfies the compatibility condition \eqref{comp_cond}.
Then there is a weak solution $(\mathbf{u}, \mathbf{d}):Q_T\mapsto \mathbb{R}^{2}\times\mathbb{S}_{+}^{2}$ of 
the system \eqref{eq1.1} with the initial and boundary conditions \eqref{eq1.2} and \eqref{eq1.3} such that
\begin{align*}
&\mathbf{u}\in L^{\infty}([0,T], \mathbf{H})\cap L^{2}([0,T], \mathbf{V}),\\
&\mathbf{d} \in L^{\infty}_tH^{1}_x(Q_T,\mathbb{S}_{+}^{2})\cap L^2_tH^2_x(Q_T,\mathbb S^2_+),
\end{align*}
and $({\bf u}, {\bf d})\in C^\infty(Q_T)\cap C^{\alpha, \frac{\alpha}2}(\overline\Omega \times (0,T))$ 
for any $\alpha\in (0,1)$. 
In particular,  for $\varepsilon_{1}$ given by Theorem \ref{thm1} there exists $r_0>0$ such that

\begin{align}\label{small_cond}
\sup_{(x,t)\in\overline{\Omega}\times[0,T]}\int_{\Omega\cap B_{r_0}(x)}
(|\mathbf{u}|^{2}+|\nabla\mathbf{d}|^{2})(y,t)\,dy < \varepsilon_{1}^2.
\end{align}
\end{theorem}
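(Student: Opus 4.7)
The plan is to produce $(\mathbf{u},\mathbf{d})$ as the weak solution supplied by Theorem \ref{thm1}, exploit the hemisphere constraint on $\mathbf{d}$ to eliminate every singular time, and finally bootstrap to the higher regularity asserted. First I would invoke Theorem \ref{thm1} directly: restricting the ranges of $\mathbf{h}$ and $\mathbf{d}_0$ to $\mathbb{S}^2_+$ only strengthens its hypotheses, so it produces a weak solution of class \eqref{W-space} that is smooth on $Q_T$ away from a finite set of singular times $0<T_1<\cdots<T_L\le T$.

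Next I would apply the hinted maximum principle (Lemma \ref{rem-S+}) to conclude that $\mathbf{d}(Q_T)\subset\mathbb{S}^2_+$. The standard idea is to test the equation for $\mathbf{d}$ against $(d^3)_-$: the nonlinear term $|\nabla\mathbf{d}|^2 d^3$ has a favorable sign on $\{d^3<0\}$, and since both the boundary trace and the initial datum of $d^3$ are nonnegative, a Gr\"onwall argument forces $d^3\ge 0$ throughout $Q_T$.

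The heart of the argument is to rule out every $T_i$. Arguing by contradiction, suppose some $T_i\in(0,T]$ is singular. Then \eqref{blowup} yields a concentration point $x_i\in\overline{\Omega}$, and the standard parabolic blow-up procedure --- rescaling $(\mathbf{u},\mathbf{d})$ around $(x_i,T_i)$ at the concentration scale --- extracts in the limit a nontrivial harmonic map $\omega$ from either $\mathbb{R}^2$ (if $x_i\in\Omega$) or from a half plane with constant boundary value (if $x_i\in\Gamma$; the Lipschitz regularity of $\mathbf{h}$ makes its boundary trace converge to a constant under rescaling). Since $\mathbf{d}\in\mathbb{S}^2_+$ pointwise, the bubble $\omega$ maps into $\mathbb{S}^2_+$ as well. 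Lemma \ref{vanish} (a Liouville-type vanishing for harmonic maps into the closed hemisphere) then forces $\omega$ to be constant, contradicting the positivity of its energy. Hence $L=0$, and $(\mathbf{u},\mathbf{d})\in C^\infty(Q_T)\cap C^{\alpha,\alpha/2}(\overline{\Omega}\times(0,T))$ for every $\alpha\in(0,1)$.

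With all singular times ruled out, \eqref{small_cond} follows from a standard equi-continuity/compactness argument: the map $(x,t)\mapsto\int_{\Omega\cap B_r(x)}(|\mathbf{u}|^2+|\nabla\mathbf{d}|^2)(y,t)\,dy$ is jointly continuous, vanishes as $r\to 0$ pointwise on the compact set $\overline{\Omega}\times[0,T]$, and is equi-continuous there, so a single radius $r_0>0$ suffices uniformly. Finally, the bound $\mathbf{d}\in L^2_tH^2_x$ is obtained by testing the $\mathbf{d}$-equation (after subtracting a harmonic extension of $\mathbf{h}$) against $-\Delta\mathbf{d}$ and using $\mathbf{d}\cdot\Delta\mathbf{d}=-|\nabla\mathbf{d}|^2$, with Gagliardo--Nirenberg absorbing the quartic term against the $L^\infty_tH^1_x$-bound already provided by Theorem \ref{thm1}. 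The main obstacle is the blow-up analysis at a boundary concentration point $x_i\in\Gamma$: one must track the $t$-dependent Lipschitz boundary datum through parabolic rescaling and secure enough compactness for the rescaled problem to converge to a half-plane harmonic map with constant hemispherical boundary values --- this is precisely where the new boundary $\epsilon_0$-regularity announced in Remark~2 following Theorem \ref{thm1} is essential.
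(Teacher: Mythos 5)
Your proposal follows the paper's own argument essentially step for step: take the Theorem \ref{thm1} solution, use the maximum principle of Lemma \ref{rem-S+} to confine $\mathbf{d}$ to $\mathbb{S}^2_+$, rule out every singular time by parabolic blow-up to a finite-energy $\mathbb{S}^2_+$-valued harmonic map on $\mathbb{R}^2$ (or a half-plane with constant boundary trace), and invoke Lemma \ref{vanish} to kill the bubble; the $L^2_tH^2_x$ bound and \eqref{small_cond} then drop out by the standard testing and compactness arguments you describe. The only point worth tightening is the boundary bubble: Lemma \ref{vanish} is stated for maps $\mathbb{S}^2\to\mathbb{S}^2_+$, so for a half-plane limit with constant boundary data one should say explicitly that one reflects evenly across the boundary line (valid because the boundary value is constant), obtains a finite-energy harmonic map $\mathbb{R}^2\to\mathbb{S}^2_+$, removes the point singularity at infinity, and then applies Lemma \ref{vanish} --- the paper also leaves this reflection step implicit, citing \cite{LLW}.
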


\medskip
The smoothness of a weak solution $({\bf u}, {\bf d})$ to \eqref{eq1.1} and \eqref{eq1.2} relies on  both the interior and
boundary $\varepsilon_1$-regularity properties for $({\bf u}, {\bf d})$, provided 
${\bf d}\in L^2_tH^2_x(Q_T)$
and the condition \eqref{small_cond} holds.  This will be discussed in the following subsection. 

\subsection{Regularity of weak solutions}
In this subsection, we will show both the interior and boundary regularity
for weak solutions $({\bf u}, {\bf d})$ to \eqref{eq1.1} and \eqref{eq1.2} that satisfies 
${\bf d}\in L^2_tH^2_x(Q_T)$
and  \eqref{small_cond}.

\begin{theorem}\label{thm3} For  a $T>0$ and a bounded smooth domain $\Omega\subset\mathbb{R}^{2}$,
assume $\mathbf{h}\in L^{2}_tH^{\frac{3}{2}}_x(\Gamma_T,\mathbb{S}^{2})$ and 
$\partial_{t}\mathbf{h}\in L^{2}_tH^{\frac{3}{2}}_x(\Gamma_T, \mathbb{S}^{2})$.
If $(\mathbf{u}, {\bf d})\in L^{\infty}([0,T],\mathbf{H})\cap L^{2}([0,T],\mathbf{V})\times
L^{\infty}_tH^{1}_x(Q_T, \mathbb{S}^{2})\cap L^{2}_tH^{2}_x(Q_T,\mathbb{S}^{2})$ 
is a weak solution of the system \eqref{eq1.1} with the boundary condition \eqref{eq1.2},
then $(\mathbf{u},\mathbf{d})\in C^\infty(\Omega\times [\delta, T])
\cap C^{\alpha, \frac{\alpha}2}(\overline\Omega\times [\delta, T])$ for any $\alpha\in (0, 1)$ and $0<\delta<T$. 
\end{theorem}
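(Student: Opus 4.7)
The plan is to deduce Theorem \ref{thm3} from an $\varepsilon_1$-regularity criterion combined with a higher-integrability improvement and a parabolic bootstrap, in the spirit of the proof of Theorem \ref{thm1}.

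First, I would upgrade the integrability of $(\mathbf{u}, \nabla\mathbf{d})$ using the given hypotheses. Since $\mathbf{u}\in L^\infty_tL^2_x\cap L^2_tH^1_x(Q_T)$ and $\mathbf{d}\in L^\infty_tH^1_x\cap L^2_tH^2_x(Q_T)$, the two-dimensional Ladyzhenskaya-Gagliardo-Nirenberg inequality
$$\|f\|_{L^4(\Omega)}^4\le C\|f\|_{L^2(\Omega)}^2\|f\|_{H^1(\Omega)}^2$$
applied with $f=\mathbf{u}$ and $f=\nabla\mathbf{d}$ yields $\mathbf{u},\nabla\mathbf{d}\in L^4_{t,x}(Q_T)$. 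Consequently, by Hölder's inequality, the local energy
$$\int_{\Omega\cap B_r(x_0)}(|\mathbf{u}|^2+|\nabla\mathbf{d}|^2)(y,t)\,dy$$
can be bounded in terms of a shrinking $L^4_{t,x}$ integral, so by absolute continuity of the integral, for any prescribed threshold $\varepsilon_1>0$ (the one provided by Theorem \ref{thm1}) and any $(x_0,t_0)\in\overline\Omega\times[\delta,T]$ there exists $r=r(x_0,t_0)>0$ with
$$\sup_{t\in[t_0-r^2,t_0]}\int_{\Omega\cap B_r(x_0)}(|\mathbf{u}|^2+|\nabla\mathbf{d}|^2)(\cdot,t)<\varepsilon_1^2.$$
A compactness argument on $\overline\Omega\times[\delta,T]$ then produces a uniform such $r$.

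Second, I would apply the $\varepsilon_1$-regularity theorems at each $(x_0,t_0)$. For interior points I would invoke the interior $\varepsilon_1$-regularity theorem (as in Lin-Lin-Wang \cite{LLW}), which upgrades the smallness of the local energy to $C^{\alpha,\alpha/2}$ bounds of $(\mathbf{u},\nabla\mathbf{d})$ on a smaller parabolic cylinder. For boundary points, I would invoke the new boundary $\varepsilon_1$-regularity theorem for Lipschitz time-dependent boundary data $\mathbf{h}$ (alluded to in Remark 2.2 and established earlier in the paper on the basis of $\mathbf{h}\in\mathrm{Lip}(\Gamma_T)$, which follows from $\mathbf{h}, \partial_t\mathbf{h}\in L^2_tH^{3/2}_x(\Gamma_T)$ by Sobolev embedding). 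Covering $\overline\Omega\times[\delta,T]$ by finitely many such cylinders produces a global Hölder estimate $(\mathbf{u},\mathbf{d})\in C^{\alpha,\alpha/2}(\overline\Omega\times[\delta,T])$ for some $\alpha\in(0,1)$.

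Third, I would bootstrap to $C^\infty$ in the interior. Once $\nabla\mathbf{d}$ is Hölder, the nonlinear term $|\nabla\mathbf{d}|^2\mathbf{d}$ in the $\mathbf{d}$-equation lies in $C^{\alpha,\alpha/2}_{\mathrm{loc}}$, and $(\mathbf{u}\cdot\nabla)\mathbf{d}$ is handled by the Hölder continuity of $\mathbf{u}$ and $\nabla\mathbf{d}$. Standard Schauder theory for parabolic equations promotes $\mathbf{d}$ to $C^{2+\alpha,1+\alpha/2}_{\mathrm{loc}}$. Viewing the first equation of \eqref{eq1.1} as a nonstationary Stokes system with forcing $(\mathbf{u}\cdot\nabla)\mathbf{u}+\nabla\cdot(\nabla\mathbf{d}\odot\nabla\mathbf{d})$, one similarly upgrades $\mathbf{u}$. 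Iterating in the interior yields $(\mathbf{u},\mathbf{d})\in C^\infty(\Omega\times[\delta,T])$. Up to the boundary, the bootstrap is capped by the assumed regularity $\mathbf{h}\in L^2_tH^{3/2}_x$, $\partial_t\mathbf{h}\in L^2_tH^{3/2}_x$ (which only gives $\mathrm{Lip}$), so the global statement remains $C^{\alpha,\alpha/2}$.

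The main obstacle is the boundary $\varepsilon_1$-regularity step with time-dependent $\mathbf{h}$: the boundary local energy inequality acquires extra boundary contributions from $\partial_t\mathbf{h}$ and tangential derivatives of $\mathbf{h}$, and the blowup analysis needed to extract the small-energy decay must control these new terms carefully. This is the technically delicate point, but it is exactly the content of the boundary $\varepsilon_1$-regularity theorem proved earlier in the paper under the hypothesis that $\mathbf{h}$ is Lipschitz, which is already available once we reach the present statement. Everything else is a standard harvesting of integrability and Schauder bootstrap.
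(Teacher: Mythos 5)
Your proposal follows essentially the same route as the paper: upgrade $(\mathbf{u},\nabla\mathbf{d})$ to $L^4(Q_T)$, invoke the interior and boundary $\varepsilon$-regularity lemmas to obtain H\"older continuity up to the boundary on $\overline\Omega\times[\delta,T]$, and then bootstrap via Schauder theory in the interior. The differences are cosmetic: for $\nabla\mathbf{d}\in L^4$ you use Ladyzhenskaya's interpolation, whereas the paper exploits the identity $|\nabla\mathbf{d}|^2=-\mathbf{d}\cdot\Delta\mathbf{d}\in L^2(Q_T)$ coming from $|\mathbf{d}|=1$; both are valid.

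One step in your reduction is not quite correct, however, and should be repaired. You claim that Hölder's inequality bounds the time-slice $L^2$ local energy $\int_{B_r(x_0)\cap\Omega}(|\mathbf{u}|^2+|\nabla\mathbf{d}|^2)(\cdot,t)$ by a shrinking spacetime $L^4$ integral, from which you extract a uniform-in-$t$ smallness. But the spacetime norm $\|\cdot\|_{L^4(Q_r(z_0))}$ does not control $\sup_t\|\cdot\|_{L^4(B_r)}$, so the inferred $\sup_t$ bound does not follow. In fact this intermediate reduction is unnecessary: the small-energy hypotheses of the interior and boundary $\varepsilon$-regularity criteria (Lemma \ref{lem4} and Lemma \ref{lem5}) are stated directly in terms of the spacetime quantity $\int_{Q_r^{(+)}(z_0)}(|\mathbf{u}|^4+|\nabla\mathbf{d}|^4)$. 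Once $(\mathbf{u},\nabla\mathbf{d})\in L^4(Q_T)$, absolute continuity of the $L^4$ spacetime integral gives this smallness on uniformly small cylinders directly, without passing through time-slice $L^2$ energies or invoking the constant of Theorem \ref{thm1}. You should also record that $\nabla P\in L^{4/3}(Q_T)$, which follows from Lemma \ref{lem13} once $\mathbf{u}\cdot\nabla\mathbf{u}+\nabla\cdot(\nabla\mathbf{d}\odot\nabla\mathbf{d})\in L^{4/3}(Q_T)$; this pressure regularity is a hypothesis of both $\varepsilon$-regularity lemmas and should not be omitted.
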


In order to prove Theorem \ref{thm3} and the existence of short time smooth solutions 
to \eqref{eq1.1}, \eqref{eq1.2}, and \eqref{eq1.3}, we recall the definition of H\"older spaces
in $Q_T$. First, define the parabolic distance in $Q_T$ by
$$\delta({z}_{1}, {z}_{2})=|{x}_{1}-{x}_{2}|+\sqrt{|t_{1}-t_{2}|},
\ \ {z}_{i}=({x}_{i},t_{i})\in Q_{T}, \ i=1,2.$$
For  $\alpha\in(0,1]$ and $U\subset Q_T$, a continuous function $f: U\mapsto \mathbb R$  belongs to
the H\"older space $C^{\alpha, \frac{\alpha}2}(U)$, if $\displaystyle\|f\|_{C^{\alpha, \frac{\alpha}2}(U)}
=\|f\|_{C^0(U)}+ \big[f\big]_{C^{\alpha, \frac{\alpha}2}(U)}<\infty$, where 
\begin{align*}
\big[f\big]_{C^{\alpha,\frac{\alpha}{2}}(U)}\! =&\sup_{{z}_{1},{z}_{2}\in U,
{z}_{1}\neq {z}_{2}} \frac{|f({z}_{1})-f({z}_{2})|}{\delta({z}_{1}, {z}_{2})^{\alpha}}.
\end{align*}
For any positive integer $k\ge 1$, a continuous function $f:U\mapsto \mathbb R$ belongs to $C^{k+\alpha, \frac{k+\alpha}2}(U)$,  if
\begin{align*}
\big\|f\big\|_{C^{k+\alpha,\frac{k+\alpha}{2}}(U)}=\sum_{0\leq r+2s\leq k} \|\partial_{t}^{s}\partial_{{x}}^{r} f\|_{C^{0}(U)}
+\big[f\big]_{C^{k+\alpha,\frac{k+\alpha}{2}}(U)}<\infty,
\end{align*}
where 
\begin{align*}
\big[f\big]_{C^{k+\alpha,\frac{k+\alpha}{2}}(U)}\!
 =&\!
 \begin{cases}\displaystyle
\!\sum_{r+2s=k} [\partial_{t}^{s}\partial_{{x}}^{r} f]_{C^{\alpha,\frac{\alpha}{2}}(U)}, \ k \text{ is even},\\
\displaystyle\!\sum_{r+2s=k} [\partial_{t}^{s}\partial_{{x}}^{r} f]_{C^{\alpha,\frac{\alpha}{2}}(U)}
\!+\!\sum_{r+2s=k-1} [\partial_{t}^{s}\partial_{{x}}^{r} f]_{C_{t}^{\frac{1+\alpha}{2}}(U)},\\
\ \ k \text{ is odd},
 \end{cases}
\end{align*}
and
$$\big[f\big]_{C^{\frac{1+\alpha}2}_{t}(U)}
=\sup_{(x,t_1), (x,t_2)\in U,  t_{1}\neq t_{2}}
\frac{|f({x},t_{1})-f({x},t_{2})|}{|t_{1}-t_{2}|^{\frac{1+\alpha}{2}}}.$$

\medskip

For $z_0=({x}_{0},t_0)\in \Omega\times (0,T]$ and $0<r<\min\{\sqrt{t_0}, {\rm{dist}}(x_0,\Gamma)\}$, set
\begin{align*}
B_{r}({x}_{0})=\big\{{x}\in \mathbb{R}^{2}\, \mid\, |{x}-{x}_{0}|\leq r\big\}
,\  Q_{r}({z}_{0}) = B_{r}({x}_{0})\times[t_{0}-r^{2},t_{0}],
\end{align*}
and the parabolic boundary of $Q_{r}({z}_{0})$ by
\begin{align*}
\partial_p Q_{r}({z}_{0})=\big(B_{r}({x}_{0})\times\{t_{0}-r^{2}\}\big)\cup 
\big(\partial B_{r}({x}_{0})\times[t_{0}-r^{2},t_{0}]\big).
\end{align*}
Denote $B_{r}({0}),Q_{r}({0},0)$ and $\partial_p Q_{r}({0},0)$ 
by  $B_{r}, Q_{r}$ and $\partial_p Q_{r}$ respectively, if $z_0=(0,0)$.
For $f\in L^{1}(Q_{r}({z}_{0}))$, denote by
\begin{align*}
&f_{{z}_{0},r}=\frac{1}{|Q_{r}({z}_{0})|} \int_{Q_{r}({z}_{0})}
f({x},t), \\
&f_{{x}_{0},r}(t)=\frac{1}{|B_{r}({x}_{0})|} \int_{B_{r}({x}_{0})}
f({x},t), \ t \in [t_0-r^2, t_0],
\end{align*}
as the average of $f$ over $Q_{r}({z}_{0})$ and $B_{r}({x}_{0})$ respectively.
\medskip

For $p,q\in (1, \infty)$ and $U\subset Q_T$, 
define $W^{1,0}_{p,q}(U)
=L^{q}_t W^{1,p}_x(U)$, with the norm
\begin{align*}
\big\|f\big\|_{W^{1,0}_{p,q}(U)}
=\big\|f\big\|_{L^{q}_tL^{p}_x(U)}
+\big\|\nabla f\big\|_{L^{q}_t L^{p}_x(U)},
\end{align*}
and
$$W^{2,1}_{p,q}(U)
=\big\{f\in W^{1,0}_{p,q}(U)\ | \ \nabla^{2}f,\ \partial_{t}f\in L^{q}_tL^{p}_x(U)\big\},$$ 
with the norm
\begin{align*}
\|f\|_{W^{2,1}_{p,q}(U)}
=\|f\|_{W^{1,0}_{p,q}(U)}+[f]_{W^{2,1}_{p,q}(U)}
\end{align*}
where
\begin{align*}
\big[f\big]_{W^{2,1}_{p,q}(U)}=\|\nabla^{2}f\|_{L^{q}_tL^{p}_x(U)}
+\|\partial_{t} f\|_{L^{q}_tL^{p}_x(U)}.
\end{align*}
For $p=q$, denote $L^{p}(U)=L^{p}_tL^{p}_x(U)$ and
$W^{r,s}_{p}(U)=W^{r,s}_{p,p}(U)$. 

\medskip

We begin with an interior $\varepsilon_0$-regularity result, whose proof follows exactly from 
Lin-Lin-Wang \cite{LLW} Lemma 2.1.

\begin{lemma}\label{lem4}
For any $\alpha\in(0,1)$, there exists $\varepsilon_{0}>0$ such that for ${z}_{0}=({x}_{0},t_{0})\in Q_T$ 
and $0<r<\min\{\sqrt{t_0}, {\rm{dist}}(x_0,\Gamma)\}$, 
if $(\mathbf{u},\mathbf{d})\in W^{1,0}_{2}(Q_T, \mathbb{R}^{2}\times\mathbb{S}^{2})$, 
$P\in W^{1,0}_{\frac{4}{3}}(Q_T)$ is a weak solution to \eqref{eq1.1} satisfying
\begin{align}\label{L4small}
\int_{Q_{r}({z}_{0})}(|\mathbf{u}|^{4}+|\nabla \mathbf{d}|^{4})\leq \varepsilon_{0}^{4},
\end{align}
then $(\mathbf{u},\mathbf{d})\in C^{\alpha,\frac{\alpha}{2}}(Q_{\frac{r}{2}}({z}_{0}),
\mathbb{R}^{2}\times \mathbb{S}^{2})$, and there holds that
\begin{align*}
&\big[\mathbf{d}\big]_{C^{\alpha,\frac{\alpha}{2}}(Q_{\frac{r}{2}}({z}_{0}))}
\leq C \big(\|\mathbf{u}\|_{L^{4}(Q_{r}({z}_{0}))}
+\|\nabla \mathbf{d}\|_{L^{4}(Q_{r}({z}_{0}))}\big),
\nonumber\\
&\big[\mathbf{u}\big]_{C^{\alpha,\frac{\alpha}{2}}(Q_{\frac{r}{2}}({z}_{0}))}
\leq C \big(\|\mathbf{u}\|_{L^{4}(Q_{r}({z}_{0}))}
+\|\nabla \mathbf{d}\|_{L^{4}(Q_{r}({z}_{0}))}
+ \|\nabla P\|_{L^{\frac{4}{3}}(Q_{r}({z}_{0}))}\big).
\end{align*}
\end{lemma}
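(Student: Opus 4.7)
The plan is to establish a Morrey-type decay for
\begin{align*}
\Psi(\rho) := \int_{Q_\rho(z_0)}\bigl(|\mathbf{u}|^4+|\nabla\mathbf{d}|^4\bigr) + \int_{Q_\rho(z_0)}|\nabla P|^{4/3}
\end{align*}
as $\rho\to 0$, and then to pass to Hölder regularity via the parabolic Campanato embedding. Since $\Psi$ is scale-invariant under the rescaling $(\mathbf{u}^r,\mathbf{d}^r,P^r)(x,t) := (r\mathbf{u},\mathbf{d},r^2 P)(x_0+rx,t_0+r^2t)$, I may reduce to $z_0=0$ and $r=1$, so that $\|\mathbf{u}\|_{L^4(Q_1)}+\|\nabla\mathbf{d}\|_{L^4(Q_1)} \le 2\varepsilon_0$.

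On $Q_1$, I decompose $\mathbf{d} = \mathbf{d}_1+\mathbf{d}_2$, where $\mathbf{d}_1$ is caloric on $Q_1$ with $\mathbf{d}_1 = \mathbf{d}$ on $\partial_p Q_1$, and
\begin{align*}
(\partial_t-\Delta)\mathbf{d}_2 = -(\mathbf{u}\cdot\nabla)\mathbf{d}+|\nabla\mathbf{d}|^2\mathbf{d}, \qquad \mathbf{d}_2\big|_{\partial_p Q_1}=0.
\end{align*}
Hölder's inequality gives $\|(\mathbf{u}\cdot\nabla)\mathbf{d}\|_{L^2(Q_1)} + \||\nabla\mathbf{d}|^2\|_{L^2(Q_1)} \le C\varepsilon_0\,\Psi(1)^{1/4}$, whence parabolic $L^2$-theory together with Ladyzhenskaya's interpolation yields $\|\nabla\mathbf{d}_2\|_{L^4(Q_1)}^4 \le C\varepsilon_0^4\,\Psi(1)$. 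Since $\nabla\mathbf{d}_1$ is caloric, the standard interior heat estimates yield the decay $\int_{Q_\theta}|\nabla\mathbf{d}_1|^4 \le C\theta^{4}\int_{Q_{1/2}}|\nabla\mathbf{d}_1|^4$ for $\theta\in(0,1/2)$. Analogously I decompose $(\mathbf{u},P) = (\mathbf{u}_1,P_1)+(\mathbf{u}_2,P_2)$, with $(\mathbf{u}_1,P_1)$ solving the homogeneous Stokes system and $(\mathbf{u}_2,P_2)$ absorbing the forcing $-(\mathbf{u}\cdot\nabla)\mathbf{u}-\operatorname{div}(\nabla\mathbf{d}\odot\nabla\mathbf{d})$, whose appropriate norm is again controlled by $C\varepsilon_0\,\Psi(1)^{1/4}$ by the same Hölder argument.

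Combining the decay of the linear parts with the smallness of the nonlinear remainders yields, for all $\theta\in(0,1/2)$,
\begin{align*}
\Psi(\theta r) \le C_0\bigl(\theta^{4}+\varepsilon_0^{4}\bigr)\,\Psi(r).
\end{align*}
Choosing $\theta$ small and then $\varepsilon_0$ small so that this factor is bounded by $\theta^{4\alpha}$, an iteration in $\rho$ gives $\Psi(\rho) \le C(\rho/r)^{4\alpha}\Psi(r)$ for all $0<\rho\le r$. The parabolic Campanato embedding then delivers $\mathbf{d},\mathbf{u}\in C^{\alpha,\alpha/2}(Q_{r/2}(z_0))$ with the claimed quantitative estimates; the $\|\nabla P\|_{L^{4/3}}$ term enters only the bound for $[\mathbf{u}]_{C^{\alpha,\alpha/2}}$, inherited through the Stokes decomposition.

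The main obstacle is the careful treatment of the pressure in the local Stokes decomposition: $P$ is only defined up to an additive constant, and only its gradient is globally integrable. One must introduce a Bogovskii right-inverse of the divergence to construct the divergence-free test functions, and split $P_1$ (governed by interior Stokes estimates) from $P_2$ (governed by the nonlinear forcing) so that $\mathbf{u}_2$ satisfies the decay $\|\mathbf{u}_2\|_{L^4(Q_1)}^4 \le C\varepsilon_0^4\,\Psi(1)$ in the correct gauge. This pressure bookkeeping is precisely the content of Lin--Lin--Wang \cite{LLW} Lemma 2.1, whose argument I would follow verbatim.
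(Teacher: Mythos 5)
Your proposal follows the same overall strategy — caloric/Stokes decomposition of $(\mathbf{u},\mathbf{d})$, smallness of the nonlinear remainders, iteration of a decay estimate, and conclusion via a parabolic Morrey/Campanato lemma — which is indeed the content of the Lin--Lin--Wang Lemma 2.1 that the paper cites for Lemma~\ref{lem4} without reproving it. For the director field this is exactly right: the caloric part of $\nabla\mathbf{d}$ decays like $\theta^{4}$ in $L^{4}$, the remainder is $O(\varepsilon_0^4)$ by the $W^{2,1}_2$-theory and Ladyzhenskaya, and iterating gives $\int_{Q_\rho}|\nabla\mathbf{d}|^4\lesssim (\rho/r)^{4\alpha}$, which then yields the Morrey condition $\rho^{-2}\int_{Q_\rho}(|\nabla\mathbf{d}|^2+\rho^2|\partial_t\mathbf{d}|^2)\lesssim\rho^{2\alpha}$ and hence $\mathbf{d}\in C^{\alpha,\alpha/2}$, precisely as done for the boundary analogue in Lemma~\ref{lem5}.

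The part that does not go through as written is the $\mathbf{u}$-step. You propose iterating a quantity $\Psi(\rho)$ that \emph{includes} $\int_{Q_\rho}|\nabla P|^{4/3}$ and claim the decay $\Psi(\theta r)\le C_0(\theta^4+\varepsilon_0^4)\Psi(r)$. Two problems: first, the Stokes pressure of the homogeneous part is only harmonic in $x$ for each fixed $t$, so the spatial sup estimate yields decay in the spatial slices but gives no compensating gain from the shrunken time interval unless one has some uniform-in-$t$ control, so the clean $\theta^4$ factor on the pressure term is not available; second, and more structurally, the $L^4$-smallness of $\mathbf{u}$ does not on its own yield Hölder continuity of $\mathbf{u}$ — one needs a decay of the \emph{mean oscillation} $\int_{Q_\rho}|\mathbf{u}-\mathbf{u}_{x_0,\rho}(t)|^2$ at rate $\rho^{4+2\alpha}$, i.e.\ a genuine parabolic Campanato estimate, in which $\nabla P$ enters as a fixed $L^{4/3}$ perturbation (appearing only on the right-hand side of the final bound, as in the statement of Lemma~\ref{lem4}) rather than as a quantity that must itself decay. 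This is precisely the ``pressure bookkeeping'' you flag at the end; since you defer it to LLW Lemma 2.1 verbatim, your plan is effectively the paper's, but the iterated quantity $\Psi$ should be replaced by the Campanato seminorm of $\mathbf{u}$ and the Morrey quantity for $\nabla\mathbf{d}$, treated separately, to make the argument correct as stated.
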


\medskip

For $r>0$ and ${z}_{0}=({x}_{0},t_{0})$ with ${x}_{0}\in\Gamma$ and $t_{0}>0$, 
set 
$$B_{r}^{+}({x}_{0})=B_{r}({x}_{0})\cap \Omega, \ Q_{r}^{+}({z}_{0})=B_{r}^{+}({x}_{0})\times [t_{0}-r^{2},t_{0}],$$ 
and
$$\Gamma_{r}({x}_{0})=\partial B_{r}^{+}({x}_{0})\cap \Gamma,\ \ 
S_{r}^{+}({x}_{0})=\partial B_{r}^{+}({x}_{0})
\cap \Omega$$
so that
$$\partial B_{r}^{+}({x}_{0})\!=\!\Gamma_{r}({x}_{0})\cup S_{r}^{+}({x}_{0}),$$
and
\begin{align*}
\partial_p Q_{r}^{+}({z}_{0})\!=\!\big(\partial B_{r}^{+}({x}_{0})\!\times\! [t_{0}\!-\!r^{2},t_{0}]\big)\cup 
\big(B_{r}^{+}({x}_{0})\!\times\!\{t_{0}\!-\!r^{2}\}\big).
\end{align*}
If $({x}_{0},t_{0})= ({0},0)$, simply denote  
$$B_{r}^{+}=B_{r}^{+}({0}), \ Q_{r}^{+}=Q_{r}^{+}({0},0), \ \Gamma_{r}=\Gamma_{r}({0}), 
\ S_{r}^{+}=S_{r}^{+}({0}),$$
and
$$\partial B_{r}^{+}=\partial B_{r}^{+}({0}),\ \partial_p Q_{r}^{+}=\partial_p Q_{r}^{+}({0},0).$$

\medskip

Next we will establish a corresponding boundary $\varepsilon_0$ regularity for \eqref{eq1.1} and \eqref{eq1.2}, which is
a highly nontrivial extension of \cite{LLW} Lemma 2.2, where a time independent boundary data for ${\bf d}$ is assumed.

\begin{lemma}\label{lem5}
For any $\alpha\in(0,1)$, ${\bf h}\in L^2_tH^\frac32_x(\Gamma_T,\mathbb S^2)$ with
$\partial_t{\bf h}\in L^2_tH^\frac32_x(\Gamma_T)$,  assume that
$({\bf u}, {\bf d})\in W^{1,0}_2(Q_T, \mathbb R^2\times\mathbb S^2)$,
$P\in W^{1,0}_{\frac43}(Q_T)$, is a weak solution of \eqref{eq1.1} and \eqref{eq1.2}.
Then there exist $r_0\in (0, \sqrt{t_0})$ depending on $\Gamma$
and $\varepsilon_{1}>0$ depending on $\alpha$ and
$\|(\mathbf{h}, \partial_t {\bf h})\|_{L^2_tH^\frac32_x(\Gamma_T)}$
such that for any ${z}_{0}=({x}_{0},t_{0})\in \Gamma_T$,  if
\begin{align}\label{L4smallb}
\int_{Q_{r_0}^{+}({z}_{0})}(|\mathbf{u}|^{4}+|\nabla \mathbf{d}|^{4})\leq \varepsilon_{1}^{4},
\end{align}
then $(\mathbf{u},\mathbf{d})\in C^{\alpha,\frac{\alpha}{2}}(Q_{\frac{r_0}{2}}^{+}({z}_{0}),
\mathbb{R}^{2}\times \mathbb{S}^{2})$, and there holds that
\begin{align}\label{bdyestimate1}
\big [\mathbf{d}\big]_{C^{\alpha,\frac{\alpha}{2}}(Q_{\frac{r_0}{2}}^{+}({z}_{0}))}
\leq& C \big[\|\mathbf{u}\|_{L^{4}(Q_{r_0}^{+}({z}_{0}))}
+\|\nabla \mathbf{d}\|_{L^{4}(Q_{r_0}^{+}({z}_{0}))}\nonumber\\
&\ +\|(\mathbf{h},r_0^2\partial_t{\bf h})\|_{L^2_tH^\frac32_x(\Gamma_{r_0}(x_0)\times [t_0-r_0^2, t_0])}\big],\\
   \label{bdyestimate2}
\big[\mathbf{u}\big]_{C^{\alpha,\frac{\alpha}{2}}(Q_{\frac{r_0}{2}}^{+}({z}_{0}))}
\leq& C \big[\|\mathbf{u}\|_{L^{4}(Q_{r_0}^{+}({z}_{0}))}
+\|\nabla \mathbf{d}\|_{L^{4}(Q_{r_0}^{+}({z}_{0}))}\nonumber\\
&+ \|\nabla P\|_{L^{\frac{4}{3}}(Q_{r_0}^{+}({z}_{0}))}\nonumber\\
&
+\|(\mathbf{h},r_0^2\partial_t{\bf h})\|_{L^2_tH^\frac32_x(\Gamma_{r_0}(x_0)\times [t_{0}-r_0^{2},t_{0}])}\big].
\end{align}
\end{lemma}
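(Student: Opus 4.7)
The plan is to extend the Campanato--Morrey decay iteration of Lemma \ref{lem4} to the boundary setting. Two new ingredients are required: flattening of $\Gamma$ near $x_0$, and subtraction of a parabolic extension of $\mathbf{h}$ to reduce to homogeneous Dirichlet boundary values for the director field. After translating $z_0$ to the origin and flattening $\Gamma$ by a smooth local diffeomorphism, the problem is transferred to the half-ball $Q_{r_0}^+$; the transformed system retains the structure of \eqref{eq1.1} modulo lower-order terms with smooth coefficients that are harmless for $r_0$ small. Appealing to the parabolic trace theorem, construct an extension $\mathbf{H}\in W^{2,1}_2(Q_{r_0}^+)$ of $\mathbf{h}\vert_{\Gamma_{r_0}\times[-r_0^2,0]}$ satisfying
$$\|\partial_t\mathbf{H}\|_{L^2(Q_{r_0}^+)}+\|\nabla^2\mathbf{H}\|_{L^2(Q_{r_0}^+)}+r_0^{-1}\|\mathbf{H}\|_{L^{\infty}(Q_{r_0}^+)}\le C\bigl\|(\mathbf{h},r_0^2\partial_t\mathbf{h})\bigr\|_{L^2_tH^{\frac{3}{2}}_x(\Gamma_{r_0}\times[-r_0^2,0])}.$$
The shifted director $\tilde{\mathbf{d}}:=\mathbf{d}-\mathbf{H}$ vanishes on $\Gamma_{r_0}\times[-r_0^2,0]$ and satisfies
$$\partial_t\tilde{\mathbf{d}}-\Delta\tilde{\mathbf{d}}=-(\mathbf{u}\cdot\nabla)\mathbf{d}+|\nabla\mathbf{d}|^2\mathbf{d}+\Delta\mathbf{H}-\partial_t\mathbf{H},$$
while $\mathbf{u}$ already has zero lateral boundary values.

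For a fixed $\theta\in(0,1)$ and each $z\in\overline{Q_{r_0/2}^+}$, I decompose $(\mathbf{u},\tilde{\mathbf{d}})$ on $Q_{\theta r_0}^+(z)$ as $(\mathbf{u}_L,\tilde{\mathbf{d}}_L)+(\mathbf{u}_N,\tilde{\mathbf{d}}_N)$, where $(\mathbf{u}_L,\tilde{\mathbf{d}}_L)$ solves the homogeneous linear Stokes--heat system with zero lateral boundary values and the actual traces of $(\mathbf{u},\tilde{\mathbf{d}})$ on $\partial_pQ_{\theta r_0}^+(z)\setminus\Gamma$, and $(\mathbf{u}_N,\tilde{\mathbf{d}}_N)$ absorbs the right-hand sides with zero initial/boundary data. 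Standard half-space Stokes and heat decay yields an $L^4$-contraction factor $\theta^{2+4\alpha}$ for the linear part, while Calder\'on--Zygmund theory in $W^{2,1}_{4/3}$ bounds the nonlinear part in $L^4$ by the $L^{4/3}$-norm of $-(\mathbf{u}\cdot\nabla)\mathbf{d}+|\nabla\mathbf{d}|^2\mathbf{d}+\Delta\mathbf{H}-\partial_t\mathbf{H}-\operatorname{div}(\nabla\mathbf{d}\odot\nabla\mathbf{d})$, which in turn is controlled via the 2D Ladyzhenskaya inequality
$$\|f\|_{L^4(Q_r^+)}^4\le C\|f\|_{L^{\infty}_tL^2_x(Q_r^+)}^2\|\nabla f\|_{L^2_tL^2_x(Q_r^+)}^2$$
by the $L^4$-smallness hypothesis \eqref{L4smallb} together with the extension norms. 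Choosing $\theta$ small to absorb the linear decay and then $\varepsilon_1$ small to absorb the nonlinear correction, I obtain for $\Phi(r):=r^{-4\alpha}\int_{Q_r^+(z)}(|\mathbf{u}|^4+|\nabla\mathbf{d}|^4)$ a decay inequality
$$\Phi(\theta r_0)\le\tfrac12\Phi(r_0)+C\bigl\|(\mathbf{h},r_0^2\partial_t\mathbf{h})\bigr\|_{L^2_tH^{\frac{3}{2}}_x(\Gamma_{r_0}\times[-r_0^2,0])}^2+C\|\nabla P\|_{L^{\frac{4}{3}}(Q_{r_0}^+)}^{\frac{4}{3}}.$$
Iterating the inequality and letting $z$ vary over $\overline{Q_{r_0/2}^+}$ yields Morrey-type growth of order $r^{4\alpha}$ of the $L^4$-energy, and Campanato's embedding then delivers the H\"older estimates \eqref{bdyestimate1}--\eqref{bdyestimate2}.

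The principal new difficulty relative to \cite{LLW} is the presence of the term $\partial_t\mathbf{H}$ in the equation for $\tilde{\mathbf{d}}$, which vanishes in the time-independent case and requires careful control at each scale of the iteration. The hypothesis $\partial_t\mathbf{h}\in L^2_tH^{\frac{3}{2}}_x$ is exactly what allows the parabolic trace theorem to deliver $\partial_t\mathbf{H}\in L^2(Q_{r_0}^+)$ at the correct scaling to close the iteration; any weaker regularity would fail. A secondary technical subtlety is the pressure estimate on the half-ball: since $\mathbf{u}=0$ on $\Gamma$, one must combine the half-space Stokes estimates with a Bogovski\u{\i}-type correction to ensure that $\nabla P\in L^{\frac{4}{3}}$ decays at the rate dictated by $\mathbf{u}$ throughout the iteration, consistently with the boundary condition.
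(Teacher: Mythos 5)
Your plan shares the broad architecture of the paper's argument — flatten the boundary, subtract an extension of $\mathbf{h}$, decompose the director at each scale into a caloric part plus a correction driven by the nonlinearity, and iterate a Morrey-type decay — but the decay mechanism you use for the linear part is not the one that makes the lemma hold for the \emph{full} range $\alpha\in(0,1)$, and this is where the proposal has a real gap.

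\medskip

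\textbf{The claimed $\theta^{2+4\alpha}$ contraction for the linear part is not justified, and fails for $\alpha\ge\frac12$.} Your iterated quantity is $\Phi(r)=r^{-4\alpha}\int_{Q_r^+}(|\mathbf{u}|^4+|\nabla\mathbf{d}|^4)$, which does \emph{not} subtract any average. For the caloric part $\tilde{\mathbf{d}}_L$ with zero lateral boundary data, the tangential derivative vanishes on $\Gamma_r$, but the normal derivative does not; so $\nabla\tilde{\mathbf{d}}_L$ does not vanish on the flat boundary, and the best one can extract from measure scaling plus interior--boundary $L^\infty$ bounds for caloric functions is
$\int_{Q_{\theta r}^+}|\nabla\tilde{\mathbf{d}}_L|^4\lesssim\theta^{4}\int_{Q_r^+}|\nabla\tilde{\mathbf{d}}_L|^4$.
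To beat $\theta^4$ you must subtract the mean of the normal derivative, which your $\Phi$ does not do. Since the Campanato iteration requires a contraction factor $o(\theta^{4\alpha})$, the factor $\theta^4$ only closes the iteration when $4>4\alpha$, i.e.\ $\alpha<1$; but you also need the contraction to dominate the measure normalization in $\Phi$, which demands exponent strictly larger than $4\alpha+$something, and the asserted $\theta^{2+4\alpha}$ is not available for the gradient. The paper does not try to get the decay of $\nabla\mathbf{d}^1$ from caloric contraction at all. Instead, it observes that $\mathbf{h}\in L^2_tH^{3/2}_x$ with $\partial_t\mathbf{h}\in L^2_tH^{3/2}_x$ forces $\mathbf{h}\in C^{\alpha,\alpha/2}(\Gamma_T)$ for \emph{every} $\alpha<1$ (a Sobolev-embedding fact, estimate \eqref{h-bound2}), and it subtracts the time-slice harmonic extension $\mathbf{h}_E$ rather than a parabolic extension. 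Because $\mathbf{d}^1-\mathbf{h}_E$ vanishes on $\Gamma_r\times[t_1-r^2,t_1]$ and both $\mathbf{d}^1$ and $\mathbf{h}_E$ are $C^{\alpha,\alpha/2}$, the difference is pointwise of size $O(r^\alpha)$ near the boundary; two Caccioppoli estimates and Ladyzhenskaya then give $\int_{Q_{r/4}^+}|\nabla(\mathbf{d}^1-\mathbf{h}_E)|^4\lesssim r^{4\alpha}$, and $\int_{Q_r^+}|\nabla\mathbf{h}_E|^4\lesssim r^{4-8/p}\le r^{4\alpha}$ follows from $\mathbf{h}_E\in L^\infty_tH^2_x$ with a free choice of $p>4$. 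The exponent $\alpha$ there is carried by the H\"older continuity of the boundary data, not by a scale-by-scale decay of a caloric function. Your local $W^{2,1}_2$ trace extension $\mathbf{H}$ constructed once at scale $r_0$ does not re-supply this H\"older information at each smaller scale, which is exactly where the proposal goes silent.

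\medskip

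\textbf{Two further points.} (i) Your trace estimate on $\mathbf{H}$ has an odd scaling ($r_0^{-1}\|\mathbf{H}\|_{L^\infty}$ on the same footing as $\|\nabla^2\mathbf{H}\|_{L^2}$); the dimensionally consistent byproduct of $\mathbf{h}\in L^2_tH^{3/2}_x$ with $\partial_t\mathbf{h}\in L^2_tH^{3/2}_x$ is a pointwise H\"older modulus, which is what the paper actually uses. (ii) You couple $\mathbf{u}$ into the same iteration and absorb $\nabla P$ as a generic right-hand side for the nonlinear part; you flag the need for a Bogovski\u{\i} correction but do not produce it. The paper deliberately decouples the director estimate from the velocity: it proves Morrey decay for $|\nabla\mathbf{d}|^2+r^2|\partial_t\mathbf{d}|^2$ (estimate \eqref{morrey-d}) using only the heat equation for $\mathbf{d}$, and then invokes the Stokes boundary estimate of Lin--Lin--Wang for $\mathbf{u}$ and \eqref{bdyestimate2}. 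In a coupled iteration the pressure has no a priori Morrey modulus, and making the scale-by-scale error term uniformly controllable would require precisely the pressure decomposition you gestured at but did not carry out.

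\medskip

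To repair the proposal, replace the caloric decay of the linear part by the argument through $\mathbf{h}_E$: show $\mathbf{h}\in C^{\alpha,\alpha/2}(\Gamma_T)$ and $\mathbf{h}_E\in L^\infty_tH^2_x\cap C^{\alpha,\alpha/2}$, prove $\|\mathbf{d}^1-\mathbf{h}_E\|_{L^\infty}\lesssim r^\alpha$ on $Q_{r}^+(z_1)$, and extract $r^{4\alpha}$ decay of $\int|\nabla\mathbf{d}|^4$ from Caccioppoli plus Ladyzhenskaya; then handle $\mathbf{u}$ by a separate Stokes-boundary argument rather than a coupled iteration.
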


\begin{proof} The proof of Lemma \ref{lem5} is more delicate than
\cite{LLW} Lemma 2.2, because the boundary value ${\bf h}$ is time dependent.
Here we will give a detailed argument.

Choosing a sufficiently small $r_0>0$ and applying the standard boundary flatten technique, 
we may, for simplicity, assume that $x_0=0, t_0=1, r_0<1$ so that
$$\Omega\cap B_{r_0}(0)=\mathbb R^2_+\cap B_{r_0}(0)
=B_{r_0}^+, \  {\rm{and}}\ Q_{r_0}^+(0,1)=B_{r_0}^+\times [1-r_0^2, 1].$$ 

First, observe that \eqref{L4smallb} and \eqref{eq1.1} imply 
$$\partial_t {\bf d}-\Delta{\bf d}=|\nabla {\bf d}|^2{\bf d}-{\bf u}\cdot\nabla{\bf d}\in L^2(Q_{r_0}^+).$$
Hence, by the $W^{2,1}_2$-theory on parabolic equations,  we have that
$\partial_t {\bf d}\in L^2(Q_{\frac{3r_0}4}^+)$, ${\bf d}\in L^2_tH^2_x(Q_{\frac{3r_0}4}^+)$, and
\begin{align}\label{L2H2}
&\big\|(\partial_t {\bf d}, \nabla^2{\bf d})\big\|_{L^2(Q_{\frac{3r_0}4}^+)}\nonumber\\
&\le C\big[\big\|({\bf u}, \nabla{\bf d})\big\|_{L^4(Q_{r_0}^+)}
+\big\|({\bf h}, r_0^2\partial_t{\bf h})\big\|_{L^2_tH^\frac32_x(\Gamma_{r_0})} \big].
\end{align}
For ${z}_{1}=({x}_{1},t_{1})\in \Gamma_{\frac{r_0}{2}}\times[1-\frac{r_0^2}{4}, 1]$, $0<r\leq\frac{r_0}{4}$, 
let $\mathbf{d}^{1}:Q_{r}^{+}({z}_{1})\mapsto \mathbb{R}^{3}$ solve
\begin{align}\label{d1}
\begin{cases}
\partial_{t}\mathbf{d}^{1}-\Delta \mathbf{d}^{1}=0 & \text{ in } Q_{r}^{+}({z}_{1}),\\
\mathbf{d}^{1}=\mathbf{h}& \text{ on } \Gamma_{r}({x}_{1})\times [t_{1}-r^{2},t_{1}],\\
\mathbf{d}^{1}=\mathbf{d}& \text{ on } \partial_p Q_r^+(z_1)\setminus (\Gamma_r(x_1)\times [t_1-r^2, t_1]).
\end{cases}
\end{align}
Then $\mathbf{d}^{2}={\bf d}-{\bf d}^1:Q_{r}^{+}({z}_{1})\mapsto \mathbb{R}^{3}$ solves
\begin{align}\label{d2}
\begin{cases}
\partial_{t}\mathbf{d}^{2}-\Delta \mathbf{d}^{2}=-\mathbf{u}\cdot\nabla \mathbf{d}+|\nabla \mathbf{d}|^{2}\mathbf{d} &\text{ in } Q_{r}^{+}({z}_{1}),\\
\qquad\qquad\mathbf{d}^{2}={0}& \text{ on } \partial_p Q_{r}^{+}({z}_{1}).
\end{cases}
\end{align}
From \eqref{L4smallb} and an argument similar to \eqref{L2H2},
we have that $\partial_{t}\mathbf{d}^{2},\nabla^{2}\mathbf{d}^{2}\in L^{2}(Q_{r}^{+}({z}_{1}))$.
Hence, by multiplying $\eqref{d2}_{1}$ by $\Delta \mathbf{d}^{2}$ and integrating over $B_{r}^{+}({x}_{1})$, 
we obtain that
\begin{align*}
\frac{1}{2}\frac{d}{dt}\int_{B_{r}^{+}({x}_{1})} |\nabla \mathbf{d}^{2}|^{2} +
\int_{B_{r}^{+}({x}_{1})}|\Delta \mathbf{d}^{2}|^{2}=\int_{B_{r}^{+}({x}_{1})}(-\mathbf{u}\cdot\nabla \mathbf{d}+|\nabla \mathbf{d}|^{2}\mathbf{d})\cdot\Delta \mathbf{d}^{2}.
\end{align*}
Integrating over $t\in [t_{1}-r^{2},t_{1}]$ and  applying H\"{o}lder's inequality, this yields
\begin{align*}
&\|\nabla \mathbf{d}^{2}\|_{L^\infty_tL^{2}_x(Q_{r}^{+}({z}_{1}))}^{2}
+\|\Delta \mathbf{d}^{2}\|_{L^{2}(Q_{r}^{+}({z}_{1}))}^{2}\\
&\leq C\big[\||\mathbf{u}||\nabla\mathbf{d}|\|_{L^{2}(Q_{r}^{+}({z}_{1}))}^{2}
+\|\nabla \mathbf{d}\|_{L^{4}(Q_{r}^{+}({z}_{1}))}^{4}\big]\\
&\leq C\big[\|\mathbf{u}\|_{L^{4}(Q_{r}^{+}({z}_{1}))}^{2}
+\|\nabla\mathbf{d}\|_{L^{4}(Q_{r}^{+}({z}_{1}))}^{2}\big]
\|\nabla \mathbf{d}\|_{L^{4}(Q_{r}^{+}({z}_{1}))}^{2}.
\end{align*}
This,  together with the Ladyzhenskaya inequality (see Lemma \ref{lem14} below),  yields that
\begin{align}\label{d2-est}
&\|\nabla \mathbf{d}^{2}\|_{L^{4}(Q_{r}^{+}({z}_{1}))}^{4}+\|\nabla \mathbf{d}^{2}\|_{L^\infty_tL^{2}_x(Q_{r}^{+}({z}_{1}))}^{4}
+\|\Delta \mathbf{d}^{2}\|_{L^{2}(Q_{r}^{+}({z}_{1}))}^{4}\nonumber\\
&\leq C\big[\|\mathbf{u}\|_{L^{4}(Q_{r}^{+}({z}_{1}))}^{4}
+\|\nabla\mathbf{d}\|_{L^{4}(Q_{r}^{+}({z}_{1}))}^{4}\big]
\|\nabla \mathbf{d}\|_{L^{4}(Q_{r}^{+}({z}_{1}))}^{4}\nonumber\\
&\leq C\varepsilon_1^4\|\nabla \mathbf{d}\|_{L^{4}(Q_{r}^{+}({z}_{1}))}^{4}. 
\end{align}
Now we  estimate ${\bf d}^1$. First observe that ${\bf h}, \partial_t{\bf h}
\in L^2_tH^\frac32_x(\Gamma_T)$ implies that ${\bf h}\in L^\infty_tH^\frac32_x(\Gamma_T)$
and
\begin{equation}\label{h-bound1}
\|{\bf h}\|_{L^\infty_tH^\frac32_x(\Gamma_T)}
\leq C\big(\frac{1}{T}+ \|\partial_t {\bf h}\|_{L^2_tH^\frac32_x(\Gamma_T)}\big)\|{\bf h}\|_{L^2_tH^\frac32_x(\Gamma_T)}.
\end{equation}
This, combined with $\partial_t {\bf h}\in L^2_tH^\frac32_x(\Gamma_T)$ and the Sobolev embedding theorem,
implies that ${\bf h}\in C^{\alpha, \frac{\alpha}2}(\Gamma_T)$ for any $\alpha\in (0,1)$, and
\begin{equation}\label{h-bound2}
\big\|{\bf h}\big\|_{C^{\alpha, \frac{\alpha}2}(\Gamma_T)}
\leq C(\alpha)\big(\frac{1}{T}+ \|\partial_t {\bf h}\|_{L^2_tH^\frac32_x(\Gamma_T)}\big)
\big(1+\|{\bf h}\|_{L^2_tH^\frac32_x(\Gamma_T)}\big).
\end{equation}
It follows from \eqref{d1}, \eqref{h-bound2} and the boundary regularity theory for parabolic equations that 
${\bf d}^1\in C^{\alpha,\frac{\alpha}2}(Q_{\frac{3r}4}^+(z_1))$ and
\begin{align}\label{d1-bound1}
\big\|{\bf d}^1\big\|_{C^{\alpha,\frac{\alpha}2}(Q_{\frac{3r}4}^+(z_1))}
&\leq C\big(\big\|{\bf h}\big\|_{C^{\alpha, \frac{\alpha}2}(Q_r^+(z_1))}+\|\nabla{\bf d}\|_{L^4(Q_r^+(z_1))}\big)\nonumber\\
&\leq C\big(\alpha, T, \varepsilon_1, \|({\bf h}, \partial_t{\bf h})\|_{L^2_tH^\frac32_x(\Gamma_T)}\big). 
\end{align}
For $t\in (0,T]$, let ${\bf h}_E(\cdot, t):\Omega\mapsto\mathbb R^3$ be the harmonic extension of
${\bf h}(\cdot, t):\Gamma\mapsto\mathbb S^2$. Then we have that
\begin{align*}
&\|{\bf h}_E\|_{L^2_tH^2_x(Q_T)}\le C \|{\bf h}\|_{L^2_tH^\frac32_x(\Gamma_T)},\\
&\|\partial_t{\bf h}_E\|_{L^2_tH^2_x(Q_T)}\le C \|\partial_t{\bf h}\|_{L^2_tH^\frac32_x(\Gamma_T)}.
\end{align*}
Furthermore, \eqref{h-bound1} and \eqref{h-bound2} imply that
${\bf h}_E\in L^\infty_tH^2_x(Q_T)\cap C^{\alpha, \frac{\alpha}2}(Q_T)$ for any $\alpha\in (0,1)$, and
\begin{align}\label{he-bound}
&\max\big\{\big\|{\bf h}_E\big\|_{L^\infty_tH^2_x(Q_T)},\ \big\|{\bf h}_E\big\|_{C^{\alpha, \frac{\alpha}2}(Q_T)}\big\}
\nonumber\\
&\leq C(\alpha)\big(\frac{1}{T}+ \|\partial_t {\bf h}\|_{L^2_tH^\frac32_x(\Gamma_T)}\big)
\big(1+\|{\bf h}\|_{L^2_tH^\frac32_x(\Gamma_T)}\big)\nonumber\\
&\leq C\big(\alpha, T, \|({\bf h}, \partial_t{\bf h})\|_{L^2_tH^\frac32_x(\Gamma_T)}\big).
\end{align}
Observe that ${\bf d}^1-{\bf h}_E$ solves
\begin{equation}\label{d1he1}
\partial_t({\bf d}^1-{\bf h}_E)-\Delta ({\bf d}^1-{\bf h}_E)=-\partial_t {\bf h}_E
\ {\rm{in}}\ Q_r^+(z_1).
\end{equation}
Let $\eta\in C_0^\infty(B_{\frac{3r}4}(x_1))$ be a cut-off function of $B_{\frac{r}2}(x_1)$, i.e.,
$0\le\eta\le 1$, $\eta=1$ in $B_{\frac{r}2}(x_1)$,  and $|\nabla\eta|\le 8r^{-1}$.
Multiplying \eqref{d1he1} by $({\bf d}^1-{\bf h}_E)\eta^2$, integrating over $B_{r}^+(x_1)$, and 
applying \eqref{d1-bound1} and \eqref{he-bound}, we obtain
\begin{align*}
&\frac{d}{dt}\int_{B_r^+(x_1)}|{\bf d}^1-{\bf h}_E|^2\eta^2
+2\int_{B_r^+(x_1)}|\nabla({\bf d}^1-{\bf h}_E)|^2\eta^2\\
&=-2\int_{B_r^+(x_1)} \langle\nabla({\bf d}^1-{\bf h}_E), {\bf d}^1-{\bf h}_E\rangle \nabla\eta^2
-2\int_{B_r^+(x_1)} \langle\partial_t{\bf h}_E, {\bf d}^1-{\bf h}_E\rangle \eta^2\\
&\le \int_{B_r^+(x_1)}|\nabla({\bf d}^1-{\bf h}_E)|^2\eta^2
+\int_{B_r^+(x_1)}(4|{\bf d}^1-{\bf h}_E|^2|\nabla\eta|^2+2|\partial_t{\bf h}_E| |{\bf d}^1-{\bf h}_E|)\\
&\le  \int_{B_r^+(x_1)}|\nabla({\bf d}^1-{\bf h}_E)|^2\eta^2+Cr^{2\alpha}
+Cr^\alpha \int_{B_r^+(x_1)}|\partial_t{\bf h}_E|.
\end{align*}
Integrating this inequality over $t\in [t_1-\frac{r^2}{4}, t_1]$ yields
\begin{align}\label{d1he2}
\int_{Q_{\frac{r}{2}}^+(z_1)}|\nabla({\bf d}^1-{\bf h}_E)|^2
&\le Cr^{2+2\alpha}+Cr^\alpha\int_{Q_r^+(z_1)}|\partial_t{\bf h}_E|\nonumber\\
&\le  Cr^{2+2\alpha}+Cr^{2+2\alpha}\|\partial_t{\bf h}_E\|_{L^2_tL^{\frac{2}{1-\alpha}}_x(Q_T)}\nonumber\\
&\le C\big(1+\|\partial_t{\bf h}_E\|_{L^2_tH^2_x(Q_T)}\big)r^{2+2\alpha}\nonumber\\
&\le C\big(1+\|\partial_t{\bf h}\|_{L^2_tH^\frac32_x(\Gamma_T)}\big)r^{2+2\alpha}\le Cr^{2+2\alpha}.
\end{align}
Let $\eta_1\in C^\infty_0(B_{\frac{r}2}(x_1))$ be  a cut-off function of $B_{\frac{3r}8}(x_1)$,
i.e. $\eta_1=1$ in $B_{\frac{3r}8}(x_1)$, and $|\nabla\eta_1|\le 16r^{-1}$.
Multiplying \eqref{d1he1} by $\Delta({\bf d}^1-{\bf h}_E)\eta_1^2$ and integrating over $B_r^+(x_1)$, 
and using $\partial_t {\bf d}^1=\Delta({\bf d}^1-{\bf h}_E)$, we get
\begin{align}\label{d1he3}
&\frac{d}{dt}\int_{B_r^+(x_1)}|\nabla({\bf d}^1-{\bf h}_E)|^2\eta_1^2
+2\int_{B_r^+(x_1)}|\Delta({\bf d}^1-{\bf h}_E)|^2\eta_1^2\nonumber\\
&=2\int_{B_r^+(x_1)}\big(\langle\partial_t{\bf h}_E, \Delta({\bf d}^1-{\bf h}_E)\rangle\eta_1^2\nonumber\\
&\ \ \ -\langle\partial_t({\bf d}^1-{\bf h}_E), \nabla({\bf d}^1-{\bf h}_E)\rangle\nabla\eta_1^2\big)\nonumber\\
&\le \frac12\int_{B_r^+(x_1)}|\Delta({\bf d}^1-{\bf h}_E)|^2\eta_1^2+8\int_{B_r^+(x_1)}|\partial_t {\bf h}_E|^2\eta_1^2
\nonumber\\
&\ \ \ +\frac12\int_{B_r^+(x_1)}|\partial_t {\bf d}^1|^2\eta_1^2
+8\int_{B_r^+(x_1)}|\nabla({\bf d}^1-{\bf h}_E)|^2|\nabla\eta_1|^2\nonumber\\
&\le \int_{B_r^+(x_1)}|\Delta({\bf d}^1-{\bf h}_E)|^2\eta_1^2+C\int_{B_r^+(x_1)}|\partial_t {\bf h}_E|^2\\
&\ \ \ +Cr^{-2}\int_{B_{\frac{r}2}^+(x_1)}|\nabla({\bf d}^1-{\bf h}_E)|^2.\nonumber
\end{align}
By Fubini's theorem, there exists $t_*\in [t_1-\frac{r^2}4, t_1-\frac{r^2}{16}]$ such that 
$$\int_{B_{\frac{r}2}^+(x_1)\times\{t_*\}}|\nabla({\bf d}^1-{\bf h}_E)|^2
\le \frac{32}{r^2}\int_{Q_{\frac{r}2}^+(z_1)}|\nabla({\bf d}^1-{\bf h}_E)|^2\le Cr^{2\alpha}.
$$
This, combined with \eqref{d1he2} and integration of \eqref{d1he3} over $t\in [t_*, t_1]$, yields
\begin{align}\label{d1he4}
\|\nabla({\bf d}^1-{\bf h}_E)\|^2_{L^\infty_tL^2_x(Q_{\frac{3r}8}^+(z_1))}
+\int_{Q_{\frac{3r}8}^+(z_1)}|\Delta({\bf d}^1-{\bf h}_E)|^2\le Cr^{2\alpha}.
\end{align}
This, combined with the Ladyzhenskaya inequality (see Lemma \ref{lem14} below), implies
that
\begin{align}\label{d1he5}
&\int_{Q_{\frac{r}4}^+(z_1)}|\nabla({\bf d}^1-{\bf h}_E)|^4\nonumber\\
&\le C\|\nabla({\bf d}^1-{\bf h}_E)\|^2_{L^\infty_tL^2_x(Q_{\frac{3r}8}^+(z_1))}
\big[\|\nabla({\bf d}^1-{\bf h}_E)\|^2_{L^\infty_tL^2_x(Q_{\frac{3r}8}^+(z_1))}\nonumber\\
&\ \ \ +\int_{Q_{\frac{3r}8}^+(z_1)}|\Delta({\bf d}^1-{\bf h}_E)|^2\big]\nonumber\\
&\le Cr^{4\alpha}.
\end{align}
Since ${\bf h}_E\in L^\infty_tH^2_x(Q_T)$,  we have that  for all $4<p<\infty$,
\begin{align}\label{he-est1}
\int_{Q_r^+(x_1)}|\nabla{\bf h}_E|^4&\le r^2\sup_{t\in [t_1-r^2, t_1]}\int_{B_r^+(x_1)}|\nabla{\bf h}_E|^4\nonumber\\
&\le Cr^{4-\frac{8}{p}} \sup_{t\in [t_1-r^2, t_1]}(\int_{B_r^+(x_1)}|\nabla{\bf h}_E|^p)^{\frac{4}{p}}\nonumber\\
&\le C\big(T, \|({\bf h}, \partial_t{\bf h})\|_{L^2_tH^\frac32_x(\Gamma_T)}\big) r^{4-\frac{8}{p}}\nonumber\\
&\le Cr^{4-\frac{8}{p}}.
\end{align}
Without loss of generality, we may only consider $\alpha\in (\frac12,1)$.
By choosing $p=\frac{2}{1-\alpha}>4$, \eqref{d1he5} and \eqref{he-est1} imply that
\begin{align}\label{d1-est}
\int_{Q_{\frac{r}4}^+(z_1)}|\nabla{\bf d}^1|^4\le Cr^{4\alpha}.
\end{align}
Putting \eqref{d2-est} and \eqref{d1-est} together, we obtain that
\begin{align}\label{L4d-est}
\int_{Q_{\frac{r}4}^+(z_1)}|\nabla\mathbf{d}|^{4}
\leq& Cr^{4\alpha}+C\varepsilon_1^4\int_{Q_r^+(z_1)}
|\nabla \mathbf{d}|^{4}.
\end{align}
It is well known that by iterations of \eqref{L4d-est}, we can conclude that
\begin{align}\label{L4d-est1}
\int_{Q_r^+(z_1)} |\nabla{\bf d}|^4
&\le Cr^{4\alpha}+C(\frac{r}{r_0})^{4\alpha}\int_{Q^+_{r_0}(z_1)}|\nabla {\bf d}|^4\nonumber\\
&\le C(\alpha, \varepsilon_1, r_0) r^{4\alpha}
\end{align} 
holds for all $z_1\in\Gamma_{\frac{r_0}2}\times [1-\frac{r_0^2}4, 1]$ and
$0<r\le\frac{r_0}4$.

\medskip

Next we want to estimate $\|\partial_{t}\mathbf{d}\|_{L^{2}(Q_{\frac{r}2}^{+}({z}_{1}))}$ for
$z_1\in\Gamma_{\frac{r_0}2}\times [1-\frac{r_0^2}4, 1]$ and
$0<r\le\frac{r_0}4$.  To do this, we first observe that \eqref{d1he4}, \eqref{he-bound}, \eqref{d2-est}, together with
\eqref{L4d-est1}, imply that
\begin{eqnarray}\label{W22-d-est}
\int_{Q_{\frac{r}2}^+(z_1)}|\Delta {\bf d}|^2&\le&
\int_{Q_{\frac{r}2}^+(z_1)}|\Delta{\bf d}^2|^2+|\Delta({\bf d}^1-{\bf h}_E)|^2+|\Delta {\bf h}_E|^2\nonumber\\
&\le& C(\alpha,\varepsilon_1, r_0) r^{2\alpha}.
\end{eqnarray}
Hence it follows from the equation of ${\bf d}$ that 
\begin{eqnarray}\label{l2-dt-est}
\int_{Q_{\frac{r}2}^+(z_1)}|\partial_t{\bf d}|^2
&\le&C\int_{Q_{\frac{r}2}^+(z_1)}(|{\bf u}\cdot\nabla {\bf d}|^2+|\Delta {\bf d}|^2+|\nabla{\bf d}|^4)\nonumber\\
&\le&C(\int_{Q_{\frac{r}2}^+(z_1)}|{\bf u}|^4)^\frac12
(\int_{Q_{\frac{r}2}^+(z_1)}|\nabla {\bf d}|^4)^\frac12\nonumber\\
&&\ + C\int_{Q_{\frac{r}2}^+(z_1)}(|\Delta {\bf d}|^2+|\nabla{\bf d}|^4)\nonumber\\
&\le& C(\alpha,\varepsilon_1, r_0) r^{2\alpha}.
\end{eqnarray}

Putting \eqref{L4d-est1} together with \eqref{l2-dt-est} and applying H\"older's inequality, we conclude
that
\begin{align}\label{morrey-d}
\frac{1}{r^{2}} \int_{Q_{r}^{+}({z}_{1})} (|\nabla\mathbf{d}|^{2}+r^{2}|\partial_{t}\mathbf{d}|^{2})\leq 
Cr^{2\alpha}
\end{align}
holds for any ${z}_{1}\in \Gamma_{\frac{r_0}{2}}^{+}({0})\times [1-\frac{r_0^2}{4},1]$ and $0<r\leq \frac{r_0}{4}$. 

It is clear that \eqref{morrey-d}, combined with the interior regularity Lemma \ref{lem4} and the parabolic Morrey's decay Lemma 
(see, e.g., \cite{CL1993}), yields that $\mathbf{d}\in C^{\alpha,\frac{\alpha}{2}}(Q_{\frac{r_0}{2}}^{+}(z_0),
\mathbb{S}^{2})$ and the estimate \eqref{bdyestimate1} holds.  On the other hand, the boundary H\"older regularity
of ${\bf u}$ and the estimate \eqref{bdyestimate2} can be established exactly as in \cite{LLW} Lemma 2.2. 
Thus  the proof of Lemma \ref{lem5} is complete.
\end{proof}

\medskip
\noindent \textbf{Proof of Theorem \ref{thm3}.} 
Since $\mathbf{u}\in L^{\infty}([0,T], \mathbf{H})\cap L^{2}([0,T], \mathbf{V})$, it follows from 
Ladyzhenskaya's inequality that
$\mathbf{u}\in L^{4}(Q_T,\mathbb{R}^{2}).$
For ${\bf d}$, it follows from $\mathbf{d}\in L^{2}([0,T], H^{2}(\Omega))$ and
$|\mathbf{d}|=1$ that
\begin{align*}
|\nabla \mathbf{d}|^{2}=-\mathbf{d}\cdot\Delta\mathbf{d}\in L^2(Q_T)
\end{align*}
so that $|\nabla \mathbf{d}|\in L^{4}(Q_T)$ and
$\mathbf{u}\cdot\nabla \mathbf{u}+\nabla\cdot(\nabla\mathbf{d}\odot\nabla \mathbf{d})\in L^{\frac{4}{3}}(Q_T)$.
From this and Lemma \ref{lem13} below, we conclude 
that $\nabla P\in L^{\frac{4}{3}}(Q_T)$. By the absolute continuity of $L^{4}$-norm of $(\mathbf{u},\nabla \mathbf{d})$,
we can apply both Lemma \ref{lem4} and Lemma \ref{lem5} to show that for any $\alpha\in (0,1)$,
\begin{align*}
(\mathbf{u},\mathbf{d})\in C^{\alpha,\frac{\alpha}{2}}\big(\overline\Omega\times [\delta, T], 
\mathbb{R}^{2}\times\mathbb{S}^{2}\big) 
\end{align*}
holds for any $0<\delta<T$. 
By employing the standard higher order regularity theory, we can get the interior smoothness
of $({\bf u}, {\bf d})$ in $Q_T$. This completes the proof. 
\qed

\medskip

\subsection{Existence of short time smooth solutions}

In this subsection, we will establish the existence of a unique short time  
smooth solution to \eqref{eq1.1}--\eqref{eq1.3} 
for any smooth initial and boundary data. More precisely, we have

\begin{theorem}\label{thm6}
For any bounded, smooth domain $\Omega\subset\mathbb{R}^{2}$,  $0<T<\infty$ and $\alpha\in (0,1)$, 
let $\mathbf{h}\in C^{2+\alpha,1+\frac{\alpha}{2}}(\Gamma_{T}, \mathbb{S}^{2})$, 
$\mathbf{u}_{0}\in C^{2, \alpha}(\overline{\Omega},\mathbb{R}^{2})$ with $\nabla \cdot\mathbf{u}_{0}=0$, 
$\mathbf{d}_{0}\in C^{2, \alpha}(\overline{\Omega}, \mathbb{S}^{2})$ satisfying 
the compatibility condition \eqref{comp_cond}.
Then there exist $0<T_{*}\leq T$ depending on $\|\mathbf{u}_{0}\|_{C^{2,\alpha}(\Omega)}$, 
$\|\mathbf{d}_{0}\|_{C^{2,\alpha}(\Omega)}$ and $\|\mathbf{h}\|_{C^{2+\alpha,1+\frac{\alpha}{2}}(\Gamma_{T})}$ 
and a unique solution $(\mathbf{u},\mathbf{d})$ to the system \eqref{eq1.1} -\eqref{eq1.3} such that
\begin{align*}
(\mathbf{u},\mathbf{d})\in C^{2+\alpha,1+\frac{\alpha}{2}}(\overline{Q_{T_{*}}},
\mathbb{R}^{2}\times \mathbb{S}^{2}).
\end{align*}
Furthermore, $(\mathbf{u},\mathbf{d})\in C^{\infty}(Q_{T_*}, \mathbb{R}^{2}\times \mathbb{S}^{2})$.
\end{theorem}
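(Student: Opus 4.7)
The plan is a Banach contraction mapping argument in the parabolic Hölder space $C^{2+\alpha,1+\frac{\alpha}{2}}(\overline{Q_{T_*}})$, followed by a maximum principle argument to recover $|\mathbf{d}|=1$ and a bootstrap for $C^\infty$ smoothness.

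First I would fix an extension $(\mathbf{u}^*,\mathbf{d}^*)\in C^{2+\alpha,1+\frac{\alpha}{2}}(\overline{Q_T})$ of the data with $\nabla\cdot\mathbf{u}^*=0$, $(\mathbf{u}^*,\mathbf{d}^*)|_{t=0}=(\mathbf{u}_0,\mathbf{d}_0)$, $\mathbf{u}^*|_{\Gamma_T}=0$ and $\mathbf{d}^*|_{\Gamma_T}=\mathbf{h}$, whose Hölder norm is controlled by the data. For $T_*\in(0,T]$ and $M>0$ to be chosen, let $\mathcal{X}_{T_*,M}$ denote the closed set of divergence-free pairs $(\mathbf{v},\mathbf{e})\in C^{2+\alpha,1+\frac{\alpha}{2}}(\overline{Q_{T_*}})$ that coincide with $(\mathbf{u}^*,\mathbf{d}^*)$ on $\partial_p Q_{T_*}$ and satisfy $\|(\mathbf{v}-\mathbf{u}^*,\mathbf{e}-\mathbf{d}^*)\|_{C^{2+\alpha,1+\frac{\alpha}{2}}}\le M$. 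Define $\Phi(\mathbf{v},\mathbf{e})=(\mathbf{u},\mathbf{d})$, where $(\mathbf{u},\mathbf{d})$ solves the decoupled linear system
\begin{align*}
&\mathbf{u}_t-\Delta\mathbf{u}+\nabla P=-(\mathbf{v}\cdot\nabla)\mathbf{v}-\nabla\cdot(\nabla\mathbf{e}\odot\nabla\mathbf{e}),\ \nabla\cdot\mathbf{u}=0,\\
&\mathbf{d}_t-\Delta\mathbf{d}=-(\mathbf{v}\cdot\nabla)\mathbf{e}+|\nabla\mathbf{e}|^2\mathbf{e},
\end{align*}
with the prescribed initial and boundary data \eqref{eq1.2}--\eqref{eq1.3}. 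Solonnikov's Schauder estimates for the Stokes system and the classical Schauder estimates for scalar parabolic equations, combined with the algebra property of Hölder spaces, show that $\Phi$ takes values in $C^{2+\alpha,1+\frac{\alpha}{2}}(\overline{Q_{T_*}})$.

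Second, I would establish the contraction. The condition that $(\mathbf{v}-\mathbf{u}^*,\mathbf{e}-\mathbf{d}^*)$ vanishes on $\partial_p Q_{T_*}$ yields, by interpolation with $C^0$, a gain of a positive power $T_*^\gamma$ in lower-order Hölder norms of these perturbations. Using this twice---to show $\Phi(\mathcal{X}_{T_*,M})\subset\mathcal{X}_{T_*,M}$ (with $M$ chosen from the data norms and $T_*$ small) and to control $\Phi(\mathbf{v}_1,\mathbf{e}_1)-\Phi(\mathbf{v}_2,\mathbf{e}_2)$ in a slightly weaker norm where the linear system is again handled by Schauder theory---one obtains strict contraction for $T_*$ sufficiently small. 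The resulting fixed point $(\mathbf{u},\mathbf{d})$ is the sought-after solution in $C^{2+\alpha,1+\frac{\alpha}{2}}(\overline{Q_{T_*}})$. To verify $|\mathbf{d}|=1$, I set $\varphi:=|\mathbf{d}|^2-1$; from the $\mathbf{d}$-equation,
\[
\varphi_t+\mathbf{u}\cdot\nabla\varphi-\Delta\varphi=2|\nabla\mathbf{d}|^2\varphi,\qquad \varphi|_{\partial_p Q_{T_*}}=0,
\]
and the classical parabolic maximum principle forces $\varphi\equiv 0$. Uniqueness at the nonlinear level follows from a standard Grönwall-type $L^2$ energy estimate on the difference of two solutions, using $L^\infty$ control of $(\mathbf{u},\nabla\mathbf{d})$. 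Finally, $(\mathbf{u},\mathbf{d})\in C^\infty(Q_{T_*})$ is an immediate consequence of Theorem \ref{thm3}, once we observe that the $C^{2+\alpha,1+\frac{\alpha}{2}}$ regularity implies $\mathbf{d}\in L^2_tH^2_x$.

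The main obstacle is the compatibility at the parabolic corner $\Gamma\times\{0\}$. Full $C^{2+\alpha,1+\frac{\alpha}{2}}$ regularity up to this corner normally requires, beyond the zeroth-order compatibility \eqref{comp_cond}, the first-order compatibility $\mathbf{h}_t(x,0)=\Delta\mathbf{d}_0(x)-\mathbf{u}_0(x)\cdot\nabla\mathbf{d}_0(x)+|\nabla\mathbf{d}_0(x)|^2\mathbf{d}_0(x)$ on $\Gamma$, which is not explicitly assumed. The standard remedy is either to interpret the hypothesis as implicitly including this condition, or to approximate $(\mathbf{h},\mathbf{u}_0,\mathbf{d}_0)$ by compatible data, run the argument above, and pass to the limit using the uniform quantitative estimates---a technical but well-known ingredient in Solonnikov-type arguments. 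The other quantitative linchpin is extracting the $T_*^\gamma$ gain in the contraction estimate within the Stokes-parabolic coupling.
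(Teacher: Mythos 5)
Your argument is essentially the paper's: the paper also proves Theorem~\ref{thm6} by a Banach fixed-point argument in $C^{2+\alpha,1+\frac{\alpha}{2}}(\overline{Q_{T_*}})$, using as reference function the pair $(\mathbf{u}_0,\mathbf{h}_P)$ where $\mathbf{h}_P$ is the parabolic lifting of $\mathbf{h}$ (solution of the caloric problem \eqref{LP0}), the decoupled linear Stokes/parabolic system \eqref{NLC2} as the iteration map $L$, Solonnikov/Schauder estimates to bound $L$ (Lemma~\ref{lem7}) and to show it is contractive (Lemma~\ref{lem8}), the smallness of $T_*$ extracted from interpolation between $C^0$ and $C^{2+\alpha,1+\frac{\alpha}{2}}$ of functions vanishing at $t=0$, and the maximum principle (Lemma~\ref{lem9}) to recover $|\mathbf{d}|=1$; smoothness then follows from Theorem~\ref{thm3} exactly as you indicate. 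The only cosmetic difference is that you take a generic extension $(\mathbf{u}^*,\mathbf{d}^*)$ of the data whereas the paper takes $(\mathbf{u}_0,\mathbf{h}_P)$, and you hedge that the contraction may need a weaker norm, whereas the paper establishes it directly in the $C^{2+\alpha,1+\frac{\alpha}{2}}$ norm.

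Your caveat about the first-order compatibility condition at the parabolic corner $\Gamma\times\{0\}$ is well taken and is a genuine subtlety. The Schauder estimate \eqref{widetilded} for $\widetilde{\mathbf{d}}=\mathbf{d}-\mathbf{h}_P$, which has zero data on $\partial_p Q_{T_*}$, strictly speaking requires the forcing $-\mathbf{v}\cdot\nabla\mathbf{f}+|\nabla\mathbf{f}|^2\mathbf{f}$ to vanish on $\Gamma\times\{0\}$, i.e. $\mathbf{u}_0\cdot\nabla\mathbf{d}_0=|\nabla\mathbf{d}_0|^2\mathbf{d}_0$ on $\Gamma$ (equivalently, since $\partial_t\mathbf{h}_P(\cdot,0)=\Delta\mathbf{d}_0$ on $\Gamma$ by \eqref{LP0}, the first-order compatibility $\partial_t\mathbf{h}(\cdot,0)=\Delta\mathbf{d}_0-\mathbf{u}_0\cdot\nabla\mathbf{d}_0+|\nabla\mathbf{d}_0|^2\mathbf{d}_0$ on $\Gamma$), and similarly for the Stokes part. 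The paper does not state this condition explicitly; your suggested remedies---either reading the hypotheses as implicitly assuming it, or approximating by compatible data with uniform Schauder bounds and passing to the limit---are the standard resolutions and are consistent with how Theorem~\ref{thm6} is subsequently invoked in Lemma~\ref{lem15} (where only interior regularity plus the $C^{\alpha,\frac{\alpha}{2}}$ boundary bound for $t>0$, and the uniform energy estimates, are used).
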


\begin{proof}
The proof is based on the contraction mapping principle. Let $\mathbf{h}_{P}:Q_T\mapsto\mathbb R^3$ solve
\begin{align}
 \begin{cases}
 \partial_t\mathbf{h}_P-\Delta \mathbf{h}_P={0},\qquad &\text{ in } Q_T,\\
 \mathbf{h}_P=\mathbf{h},\qquad \qquad  &\text{ on } \Gamma_T,\\
 \mathbf{h}_P= \mathbf{d}_{0}, \qquad \quad\ \  &\text{ in } \Omega\times\{0\},
 \end{cases}\label{LP0}
 \end{align}
 For $0<T_{*}\leq T$ and $K>0$ to be chosen later, define
\begin{align}
\mathfrak{X}(T_*, K)=&\Big\{(\mathbf{v},\mathbf{f})\in C^{2+\alpha,1+\frac{\alpha}{2}}(\overline{Q_{T_{*}}},
\mathbb{R}^{2}\times \mathbb{R}^{3}): \ (\mathbf{v},\mathbf{f})|_{t=0}=(\mathbf{u}_{0}, \mathbf{d}_{0}),
\nonumber\\
&\qquad\ \ \ \nabla\cdot \mathbf{v}=0,\  
\big\|(\mathbf{v}-\mathbf{u}_{0},
\mathbf{f}-\mathbf{h}_{P})\big\|_{C^{2+\alpha,1+\frac{\alpha}{2}}(Q_{T_*})}\leq K\Big\},
\end{align}
which is equipped with the norm
\begin{align*}
\big\|(\mathbf{v},\mathbf{f})\big\|_{\mathfrak{X}(T_*, K)}:
=\big\|(\mathbf{v}, \mathbf{f})\big\|_{C^{2+\alpha,1+\frac{\alpha}{2}}(Q_{T_*})}, 
\quad \forall (\mathbf{v},\mathbf{f})\in \mathfrak{X}(T_*, K).
\end{align*}
\medskip
Note that $\big(\mathfrak{X}(T_*,K), \|\cdot\|_{\mathfrak{X}(T_*, K)}\big)$ is a Banach space. 

We now define the operator $L$ by letting
\begin{align*}
(\mathbf{u},\mathbf{d}):=L(\mathbf{v},\mathbf{f}):\mathfrak{X}(T_*,K)\mapsto C^{2+\alpha,1+\frac{\alpha}{2}}(\overline{Q_{T_{*}}},\mathbb{R}^{2}\times \mathbb{R}^{3})
\end{align*}
be a unique solution to the following non-homogeneous linear system:
\begin{align}\label{NLC2}
\begin{cases}
\partial_{t}\mathbf{u}-\Delta\mathbf{u}+\nabla P=-\mathbf{v}\cdot\nabla \mathbf{v}-\nabla\cdot(\nabla \mathbf{f}\odot\nabla \mathbf{f}) \qquad\quad &\text{in }Q_{T_{*}},\\
\nabla \cdot\mathbf{u}=0\quad &\text{in }Q_{T_{*}},\\
\partial_{t}\mathbf{d}-\Delta\mathbf{d}=|\nabla \mathbf{f}|^{2}\mathbf{f}-\mathbf{v}\cdot\nabla\mathbf{f}\quad &\text{in }Q_{T_{*}},\\
(\mathbf{u}, \mathbf{d})=(0, \mathbf{h}) \quad&\text{on }\Gamma_{T_{*}},\\
(\mathbf{u}, {\bf d})=(\mathbf{u}_{0}, \mathbf{d}_{0})
\quad &\text{in }\Omega\times\{0\}.
\end{cases}
\end{align}
It follows from Lemmas \ref{lem7} and \ref{lem8} below that if we choose
a sufficiently small $T_{*}\in (0, T]$ and a sufficiently large $K>0$, 
then $L:\mathfrak{X}(T_*, K)\mapsto \mathfrak{X}(T_*, K)$ is a contractive map
so that there is a unique fixed point $({\bf u}, {\bf d})\in \mathfrak{X}(T_*, K)$ of $L$,
i.e. $({\bf u}, {\bf d})=L({\bf u}, {\bf d})$. 
Moreover, it follows from Lemma \ref{lem9} that $|\mathbf{d}|=1$ in $Q_{T_*}$. 
Thus the conclusions of Theorem \ref{thm6} hold, if we can prove Lemma \ref{lem7},
Lemma \ref{lem8}, and Lemma \ref{lem9} below.
\end{proof}

\begin{lemma}\label{lem7}
There exist $0<T_{*}\leq T$ and $K>0$ such that $L:\mathfrak{X}(T_*, K)\mapsto \mathfrak{X}(T_*, K)$
is a bounded operator.
\end{lemma}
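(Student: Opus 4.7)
The plan is to view \eqref{NLC2} as a decoupled linear Stokes--heat system whose forcing depends on $(\mathbf{v}, \mathbf{f}) \in \mathfrak{X}(T_*, K)$, apply parabolic Schauder estimates to obtain a $C^{2+\alpha, 1+\frac{\alpha}{2}}$-bound on $(\mathbf{u}, \mathbf{d})$, and then absorb the $K$-dependence by choosing $T_*$ small. Throughout, write
\[\mathbf{F}_1 := -\mathbf{v}\cdot\nabla\mathbf{v} - \nabla\cdot(\nabla\mathbf{f}\odot\nabla\mathbf{f}), \qquad \mathbf{F}_2 := |\nabla\mathbf{f}|^{2}\mathbf{f} - \mathbf{v}\cdot\nabla\mathbf{f}\]
for the forcings of the $\mathbf{u}$-- and $\mathbf{d}$--equations.

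As a preliminary step, I would note that classical Schauder theory applied to \eqref{LP0} under the compatibility condition \eqref{comp_cond} produces a data-dependent constant $C_0$ with $\|\mathbf{h}_P\|_{C^{2+\alpha, 1+\frac{\alpha}{2}}(Q_T)} \leq C_0$. Combined with $(\mathbf{v}, \mathbf{f}) \in \mathfrak{X}(T_*, K)$ and the triangle inequality, this gives
\[\|\mathbf{v}\|_{C^{2+\alpha, 1+\frac{\alpha}{2}}(Q_{T_*})} + \|\mathbf{f}\|_{C^{2+\alpha, 1+\frac{\alpha}{2}}(Q_{T_*})} \leq 2K + C_0 + \|\mathbf{u}_0\|_{C^{2+\alpha}(\overline\Omega)}.\]
The Leibniz rule in H\"older spaces then yields $\|\mathbf{F}_1\|_{C^{\alpha, \frac{\alpha}{2}}(Q_{T_*})} + \|\mathbf{F}_2\|_{C^{\alpha, \frac{\alpha}{2}}(Q_{T_*})} \leq P(K)$, where $P$ is a polynomial of degree three with data-dependent coefficients.

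Next, I would apply Solonnikov's parabolic Schauder theory to the inhomogeneous Stokes problem (using $\nabla\cdot\mathbf{u}_0 = 0$ and $\mathbf{u}_0|_\Gamma = 0$) and the standard Schauder theory to the heat equation for $\mathbf{d}$ (using \eqref{comp_cond}) to conclude
\[\|\mathbf{u}\|_{C^{2+\alpha, 1+\frac{\alpha}{2}}(Q_{T_*})} + \|\mathbf{d}\|_{C^{2+\alpha, 1+\frac{\alpha}{2}}(Q_{T_*})} + \|\nabla P\|_{C^{\alpha, \frac{\alpha}{2}}(Q_{T_*})} \leq C\bigl(1 + P(K)\bigr),\]
with $C$ independent of $T_*\in(0,T]$. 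By triangle inequality with $\mathbf{u}_0$ and $\mathbf{h}_P$, this yields the crude bound $\|(\mathbf{u} - \mathbf{u}_0, \mathbf{d} - \mathbf{h}_P)\|_{C^{2+\alpha, 1+\frac{\alpha}{2}}(Q_{T_*})} \leq C(1 + P(K))$, which is insufficient since the right-hand side grows polynomially in $K$.

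To close the estimate, I would decompose $\mathbf{F}_i = \mathbf{F}_i(\cdot, 0) + \widetilde{\mathbf{F}}_i$, where the frozen part $\mathbf{F}_i(\cdot, 0)$ depends only on $(\mathbf{u}_0, \mathbf{d}_0)$ while $\widetilde{\mathbf{F}}_i$ vanishes on $\Omega\times\{0\}$. The contribution of $\mathbf{F}_i(\cdot, 0)$ to $(\mathbf{u} - \mathbf{u}_0, \mathbf{d} - \mathbf{h}_P)$ in $C^{2+\alpha, 1+\frac{\alpha}{2}}(Q_{T_*})$ tends to zero as $T_*\to 0$ by the continuity of the Stokes/heat semigroups at $t=0$. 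Since $\mathbf{v} - \mathbf{u}_0$, $\mathbf{f} - \mathbf{h}_P$ and their spatial derivatives up to order two vanish at $t=0$ and are $K$-bounded in $C^{2+\alpha, 1+\frac{\alpha}{2}}$, an interpolation between the zero initial trace and the ambient H\"older control yields $\|\widetilde{\mathbf{F}}_i\|_{C^{\alpha, \frac{\alpha}{2}}(Q_{T_*})} \leq C T_*^{\theta}(1 + K^{2})$ for some $\theta = \theta(\alpha) > 0$. First fixing $K$ large enough to dominate the data-dependent constants and then shrinking $T_*$ to suppress the $K^{2}$-contribution yields the required self-mapping bound $\leq K$. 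The principal obstacle is precisely this interpolation step: vanishing at $t=0$ immediately gives $T_*$-decay in $L^\infty$, but transferring that gain to the full $C^{\alpha, \frac{\alpha}{2}}$ seminorm requires separately handling spatial and temporal H\"older difference quotients of $\widetilde{\mathbf{F}}_i$ and carefully leveraging the uniform $C^{2+\alpha, 1+\frac{\alpha}{2}}$-bound on $(\mathbf{v}, \mathbf{f})$.
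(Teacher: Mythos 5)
Your overall plan matches the paper's: express \eqref{NLC2} as a linear Stokes-heat system with forcing depending on $(\mathbf{v},\mathbf{f})$, apply Schauder estimates, and exploit the fact that $\mathbf{v}-\mathbf{u}_0$ and $\mathbf{f}-\mathbf{h}_P$ vanish at $t=0$ together with an interpolation inequality to gain smallness in $T_*$, then pick $K$ before $T_*$. The paper does not freeze the whole forcing; rather it splits the \emph{arguments}, writing $\mathbf{v}=(\mathbf{v}-\mathbf{u}_0)+\mathbf{u}_0$ and $\mathbf{f}=(\mathbf{f}-\mathbf{h}_P)+\mathbf{h}_P$ and estimating the resulting product terms one by one via the interpolation $\|g\|_{C^{\alpha,\frac{\alpha}{2}}}\leq C(\delta^{-1}\|g\|_{C^0}+\delta\|g\|_{C^{2+\alpha,1+\frac{\alpha}{2}}})$. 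Your forcing decomposition $\mathbf{F}_i=\mathbf{F}_i(\cdot,0)+\widetilde{\mathbf{F}}_i$ is in the same spirit, and the interpolation obstacle you flag is real and is exactly what the paper's two-parameter $(\delta,T_*)$ argument is designed to surmount.

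However, there is a genuine error. You claim that the contribution of the frozen forcing $\mathbf{F}_i(\cdot,0)$ to $(\mathbf{u}-\mathbf{u}_0,\mathbf{d}-\mathbf{h}_P)$ in $C^{2+\alpha,1+\frac{\alpha}{2}}(Q_{T_*})$ tends to zero as $T_*\to 0$ ``by the continuity of the Stokes/heat semigroups at $t=0$.'' This is false. Take the $\mathbf{d}$-component: if $w$ solves $\partial_t w-\Delta w=\mathbf{F}_2(\cdot,0)$ with $w(\cdot,0)=0$ and $w|_\Gamma=0$, then $\partial_t w(\cdot,0)=\mathbf{F}_2(\cdot,0)$, so $\|\partial_t w\|_{C^0(Q_{T_*})}$ is bounded below by $\|\mathbf{F}_2(\cdot,0)\|_{C^0(\Omega)}$ and does not vanish as $T_*\to 0$. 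Since $\partial_t w$ is one of the terms counted in $\|w\|_{C^{2+\alpha,1+\frac{\alpha}{2}}(Q_{T_*})}$, this norm stays bounded away from zero. (One should also be careful that the compatibility condition required by the parabolic Schauder theory holds after your decomposition, since $\mathbf{F}_i(\cdot,0)$ need not vanish on $\Gamma$.) The correct statement, and what the paper actually does, is that this piece is bounded by a data-dependent constant $C_0$ uniformly in $T_*$ --- in the paper this is the role of the terms $\mathbf{u}_0\cdot\nabla\mathbf{h}_P$ and $|\nabla\mathbf{h}_P|^2\mathbf{h}_P$, each $\leq C_0$ --- and one then chooses $K$ (in the paper $K=64C_0^2$) large enough to dominate it, reserving the shrinking of $T_*$ (and the auxiliary interpolation parameter $\delta$) to suppress the $K$- and $K^2$-dependent (indeed $K^3$-dependent, from $|\nabla\widetilde{\mathbf{f}}|^2\widetilde{\mathbf{f}}$) contributions of the vanishing-at-$t=0$ pieces. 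Your final sentence about ``fixing $K$ large enough to dominate the data-dependent constants'' is the right instinct, but it contradicts the earlier claim that this contribution vanishes; the claim needs to be replaced by the uniform data bound.
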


\begin{proof}
For any $(\mathbf{v},\mathbf{f})\in\mathfrak{X}(T_*, K)$, 
set $(\mathbf{u},\mathbf{d})=L(\mathbf{v},\mathbf{f})$. 
Let $C_{0}>0$ denote a constant depending on $\|\mathbf{u}_{0}\|_{C^{2+\alpha}(\Omega)}$, $\|\mathbf{d}_{0}\|_{C^{2+\alpha}(\Omega)}$ and $\|\mathbf{h}\|_{C^{2+\alpha,1+\frac{\alpha}{2}}(\Gamma_{T})}$ .
\smallbreak

By the Schauder theory to the equation \eqref{LP0},
$\mathbf{h}_{P}$ satisfies
\begin{align} \label{estimatehp}
\|\mathbf{h}_{P}\|_{C^{2+\alpha,1+\frac{\alpha}{2}}(Q_{T_*})}
\leq C\big( \|\mathbf{d}_{0}\|_{C^{2+\alpha}(\Omega)}
+
\|\mathbf{h}\|_{C^{\alpha,\frac{\alpha}{2}}(\Gamma_{T_{*}})}\big),
\end{align}
Set $\widetilde{\mathbf{d}}=\mathbf{d}-\mathbf{h}_{P}$. Then \eqref{NLC2} can be rewritten as
\begin{align}\label{NLC3}
\begin{cases}
\partial_{t}\mathbf{u}-\Delta\mathbf{u}+\nabla P=-\mathbf{v}\cdot\nabla \mathbf{v}-\nabla\cdot(\nabla \mathbf{f}\odot\nabla \mathbf{f}) \qquad\quad &\text{in }Q_{T_{*}},\\
\nabla \cdot\mathbf{u}=0\quad &\text{in }Q_{T_{*}},\\
\partial_{t}\widetilde{\mathbf{d}}-\Delta\widetilde{\mathbf{d}}=|\nabla \mathbf{f}|^{2}\mathbf{f}-\mathbf{v}\cdot\nabla\mathbf{f}\quad &\text{in }Q_{T_{*}},\\
(\mathbf{u},  \widetilde{\mathbf{d}})=(0, 0) \quad&\text{on }\Gamma_{T_{*}},\\
(\mathbf{u}, \widetilde{\mathbf{d}})=({\bf u}_0,0)
\quad &\text{in }\Omega\times\{0\}.
\end{cases}
\end{align}
Assume $K\geq C_{0}$. By the Schauder theory of parabolic systems, we have
\begin{align}\label{widetilded}
\big\|\widetilde{\mathbf{d}}\big\|_{C^{2+\alpha,1+\frac{\alpha}{2}}(Q_{T_*})}
\leq C\big(
\|\mathbf{v}\cdot\nabla \mathbf{f}\|_{C^{\alpha,\frac{\alpha}{2}}(Q_{T_*})}
+\||\nabla\mathbf{f}|^{2} \mathbf{f}\|_{C^{\alpha,\frac{\alpha}{2}}(Q_{T_*})}\big).
\end{align}
To estimate the first term in the right-hand side, let $\widetilde{\mathbf{f}}=\mathbf{f}-\mathbf{h}_{P}$.
Then we have
\begin{align}\label{widetilded1}
\big\|\mathbf{v}\!\cdot\!\nabla \mathbf{f}\big\|_{ C^{\alpha,\frac{\alpha}{2}}(Q_{T_*})}
\leq& C\big[\|(\mathbf{v}\!-\!\mathbf{u}_{0})
\!\cdot\!\nabla\widetilde{\mathbf{f}}\|_{C^{\alpha,\frac{\alpha}{2}}(Q_{T_*})}
+\!\|\mathbf{u}_{0}\!\cdot\!\nabla \widetilde{\mathbf{f}}\|_{C^{\alpha,\frac{\alpha}{2}}(Q_{T_*})}
\nonumber\\
&+\!\|(\mathbf{v}\!-\!\mathbf{u}_{0})\!\cdot\!\nabla \mathbf{h}_{P}\|_{C^{\alpha,\frac{\alpha}{2}}(Q_{T_*})}
\!+\!\|\mathbf{u}_{0}\!\cdot\!\nabla \mathbf{h}_{P}\|_{ C^{\alpha,\frac{\alpha}{2}}(Q_{T_*})}\big].
\end{align}
It follows from \eqref{estimatehp} that
\begin{align*}
\|\mathbf{u}_{0}\!\cdot\!\nabla \mathbf{h}_{P}\|_{\! C^{\alpha,\frac{\alpha}{2}}(Q_{T_*})}\leq C_{0}.
\end{align*}
Since $(\mathbf{v}-\mathbf{u}_{0},\widetilde{\mathbf{f}})=(0, 0)$ in $\Omega\times\{0\}$,
we have 
\begin{align*}
\sum_{k=0}^2\|\nabla^{k}(\mathbf{v}-\mathbf{u}_{0})\|_{C^{0}(Q_{T_*})}\leq KT_{*}, 
\end{align*}
and
\begin{align*}
\sum_{k=0}^2\|\nabla^{k} \widetilde{\mathbf{f}}\|_{C^{0}(Q_{T_*})}
=\|\nabla^{k} (\mathbf{f}-\mathbf{h}_{P})\|_{C^{0}(Q_{T_*})}\leq KT_{*}.
\end{align*}
Employing the interpolation inequalities, we have that for any $0<\delta<1$,
\begin{align*}
\|\mathbf{v}-\mathbf{u}_{0}\|_{C^{\alpha,\frac{\alpha}{2}}(Q_{T_*})}
\leq& C\big(\frac{1}{\delta} \|\mathbf{v}-\mathbf{u}_{0}\|_{C^{0}(Q_{T_*})}
+\delta \|\mathbf{v}-\mathbf{u}_{0}\|_{C^{2+\alpha,1+\frac{\alpha}{2}}(Q_{T_*})}\big)
\nonumber\\
\leq & C\big(\delta+\frac{T_{*}}{\delta}\big)K,\nonumber\\
\big\| \widetilde{\mathbf{f}}\big\|_{C^{\alpha,\frac{\alpha}{2}}(Q_{T_*})}
\leq& C\big( \frac{1}{\delta} \| \widetilde{\mathbf{f}}\|_{C^{0}(Q_{T_*})}
+\delta \|\widetilde{\mathbf{f}}\|_{{C^{2+\alpha,1+\frac{\alpha}{2}}(Q_{T_*})}}\big)\\
\leq & C\big(\delta+\frac{T_{*}}{\delta}\big)K,
\end{align*}
and
\begin{align*}
\big\|\nabla \widetilde{\mathbf{f}}\big\|_{C^{\alpha,\frac{\alpha}{2}}(Q_{T_*})}
\leq &\big( \frac{1}{\delta} \|\nabla \widetilde{\mathbf{f}}\|_{C^{0}(Q_{T_*})}
+\delta \|\widetilde{\mathbf{f}}\|_{{C^{2+\alpha,1+\frac{\alpha}{2}}(Q_{T_*})}}\big)\\
\leq & C\big (\delta+\frac{T_{*}}{\delta}\big)K.
\end{align*}
Putting these estimates together,
we obtain that
\begin{align*}
\big\|(\mathbf{v}-\mathbf{u}_{0})\cdot\nabla \widetilde{\mathbf{f}}\big\|_{C^{\alpha,\frac{\alpha}{2}}(Q_{T_*})}
&\leq  C\big(
\|\mathbf{v}-\mathbf{u}_{0}\|_{C^{0}(Q_{T_*})}
\|\nabla \widetilde{\mathbf{f}}\|_{C^{\alpha,\frac{\alpha}{2}}(Q_{T_*})}\\
&\ + \|\mathbf{v}-\mathbf{u}_{0}\|_{C^{\alpha,\frac{\alpha}{2}}(Q_{T_*})}
\|\nabla \widetilde{\mathbf{f}}\|_{C^{0}(Q_{T_*})}\big)
\nonumber\\
&\leq  CT_{*}\big(\delta+\frac{T_{*}}{\delta}\big)K^{2}.
\end{align*}
Similarly, we have that
\begin{align*}
&\big\|\mathbf{u}_{0}\cdot\nabla\widetilde{\mathbf{f}}\big\|_{C^{\alpha,\frac{\alpha}{2}}(Q_{T_*})}
+\|(\mathbf{v}-\mathbf{u}_{0})\cdot
\nabla\mathbf{h}_{P}\|_{C^{\alpha,\frac{\alpha}{2}}(Q_{T_*})}
\nonumber\\
&\leq 
C_{0}(\|\mathbf{v}-\mathbf{u}_{0}\|_{C^{\alpha,\frac{\alpha}{2}}(Q_{T_*})}
+\|\nabla \widetilde{\mathbf{f}}\|_{C^{\alpha,\frac{\alpha}{2}}(Q_{T_*})}\\
&\ \ +\|\mathbf{v}-\mathbf{u}_{0}\|_{C^{0}(Q_{T_*})}
+\|\nabla \widetilde{\mathbf{f}}\|_{C^{0}(Q_{T_*})})
\nonumber\\
&\leq C_{0}\big(T_{*}+\delta+\frac{T_{*}}{\delta}\big)K.
\end{align*}
Putting all these estimates together, we obtain that
\begin{align*}
&\big\|\mathbf{v}\cdot\nabla\mathbf{f}\big\|_{C^{\alpha,\frac{\alpha}{2}}(Q_{T_*})}\\
&\leq C_{0}\big(T_{*}+\delta+\frac{T_{*}}{\delta}\big)K+CT_{*}\big(\delta+\frac{T_{*}}{\delta}\big)K^{2}
+C_{0}\\
&\leq  \frac{\sqrt{K}}{4}, 
\end{align*}
provided  $K=64 C_{0}^{2}$, $\delta\leq \frac{1}{16(C_{0}+CK)\sqrt{K}}$,
and $T_{*}=\min \{\frac{1}{2}, \delta^{2}\}$.
\medskip

We can estimate the second term in the right-hand side of \eqref{widetilded} by
\begin{align} \label{widetilded2}
&\||\nabla \mathbf{f}|^{2}\mathbf{f}\|_{C^{\alpha,\frac{\alpha}{2}}(Q_{T_*})}\nonumber\\
&\leq \||\nabla \widetilde{\mathbf{f}}|^{2}\widetilde{\mathbf{f}}
\|_{C^{\alpha,\frac{\alpha}{2}}(Q_{T_*})}
+2\||\nabla \widetilde{\mathbf{f}}||\nabla \mathbf{h}_{P}|
|\widetilde{\mathbf{f}}|\|_{C^{\alpha,\frac{\alpha}{2}}(Q_{T_*})}\nonumber\\
&+\||\nabla \widetilde{\mathbf{f}}|^{2}
|\mathbf{h}_{P}|\|_{C^{\alpha,\frac{\alpha}{2}}(Q_{T_*})}
+\||\nabla \mathbf{h}_{P}|^{2}
|\widetilde{\mathbf{f}}|\|_{C^{\alpha,\frac{\alpha}{2}}(Q_{T_*})}\nonumber\\
&+2\||\nabla \widetilde{\mathbf{f}}||\nabla \mathbf{h}_{P}|
|\mathbf{h}_{P}|\|_{C^{\alpha,\frac{\alpha}{2}}(Q_{T_*})}
+\||\nabla \mathbf{h}_{P}|^{2}
|\mathbf{h}_{P}|\|_{C^{\alpha,\frac{\alpha}{2}}(Q_{T_*})}.
\end{align}
It is easy to see that
\begin{align*}
&\||\nabla \mathbf{h}_{P}|^{2}
|\mathbf{h}_{P}|\|_{C^{\alpha,\frac{\alpha}{2}}(Q_{T_*})}\leq C_{0},\\
&\||\nabla \widetilde{\mathbf{f}}|^{2}\widetilde{\mathbf{f}}
\|_{C^{\alpha,\frac{\alpha}{2}}(Q_{T_*})}\\
&\le C\big( \|\nabla \widetilde{\mathbf{f}}\|_{C^{0}(Q_{T_*})}^{2}
\|\widetilde{\mathbf{f}}\|_{C^{\alpha,\frac{\alpha}{2}}(Q_{T_*})}
+
\|\nabla \widetilde{\mathbf{f}}\|_{C^{0}(Q_{T_*})}
\|\nabla\widetilde{\mathbf{f}}\|_{C^{\alpha,\frac{\alpha}{2}}(Q_{T_*})}
\|\widetilde{\mathbf{f}}\|_{C^{0}(Q_{T_*})}\big)
\nonumber\\
& \le C T_{*}^{2}\big(\delta+\frac{T_{*}}{\delta}\big) K^{3},
\end{align*}
and
\begin{align*}
&\||\nabla \widetilde{\mathbf{f}}||\nabla \mathbf{h}_{P}|
|\widetilde{\mathbf{f}}|\|_{C^{\alpha,\frac{\alpha}{2}}(Q_{T_*})}\\
&\leq C\big(
\|\nabla \widetilde{\mathbf{f}}\|_{C^{\alpha,\frac{\alpha}{2}}(Q_{T_*})}
\|\nabla \mathbf{h}_{P}\|_{C^{0}(Q_{T_*})}
\|\widetilde{\mathbf{f}}\|_{C^{0}(Q_{T_*})}
\nonumber\\
&+\|\nabla \widetilde{\mathbf{f}}\|_{C^{0}(Q_{T_*})}(
\|\nabla \mathbf{h}_{P}\|_{C^{\alpha,\frac{\alpha}{2}}(Q_{T_*})}
\|\widetilde{\mathbf{f}}\|_{C^{0}(Q_{T_*})}
+
\|\nabla \mathbf{h}_{P}\|_{C^{0}(Q_{T_*})}
\|\widetilde{\mathbf{f}}\|_{C^{\alpha,\frac{\alpha}{2}}(Q_{T_*})}
)\big)\nonumber\\
&\leq
C_{0} T_{*}\big(\delta+\frac{T_{*}}{\delta}\big) K^{3}.
\end{align*}
Similarly, we have that
\begin{align*}
&\||\nabla \widetilde{\mathbf{f}}|^{2}
|\mathbf{h}_{P}|\|_{C^{\alpha,\frac{\alpha}{2}}(Q_{T_*})}
+\||\nabla \mathbf{h}_{P}|^{2}
|\widetilde{\mathbf{f}}|\|_{C^{\alpha,\frac{\alpha}{2}}(Q_{T_*})}\\
&+2\||\nabla \widetilde{\mathbf{f}}||\nabla \mathbf{h}_{P}|
|\mathbf{h}_{P}|\|_{C^{\alpha,\frac{\alpha}{2}}(Q_{T_*})}\\
&\leq
C_{0} T_{*}(T_{*}+\delta+\frac{T_{*}}{\delta}) K^{2}.
\end{align*}
Substituting these estimates  into \eqref{widetilded2}, we get that
\begin{align*}
\||\nabla\mathbf{f}|^{2}\mathbf{f}\|_{C^{\alpha,\frac{\alpha}{2}}(Q_{T_*})}
\leq& C_{0} T_{*}(T_{*}+\delta+\frac{T_{*}}{\delta}) K^{2}
+CT_{*}^{2} (\delta+\frac{T_{*}}{\delta}) K^{3}+C_{0}
\nonumber\\
\leq& \frac{\sqrt{K}}{4},
\end{align*}
provided  $K=64C_{0}^{2}$, $\delta\leq \frac{1}{32(C_{0}+C)K^{\frac{5}{2}}}$ and
$T_{*}=\min\{\frac{1}{2}, \delta \}$. Hence
\begin{align}\label{widetilded22}
\|\widetilde{\mathbf{d}}\|_{C^{2+\alpha,1+\frac{\alpha}{2}}(Q_{T_*})}
\leq \frac{\sqrt{K}}{2}.
\end{align}
By the Schauder theory for non-homogeneous, non-stationary Stokes equations
$\eqref{NLC3}_{1}$, we have that
\begin{align}\label{strongu}
\|\mathbf{u}-\mathbf{u}_{0}\|_{C^{2+\alpha,1+\frac{\alpha}{2}}(Q_{T_*})}
&\leq C\big[
\|\mathbf{v}\cdot\nabla \mathbf{v}\|_{C^{\alpha,\frac{\alpha}{2}}(Q_{T_*})}\nonumber\\
&\qquad+\|\nabla\cdot(\nabla \mathbf{f}\odot\nabla \mathbf{f})\|_{C^{\alpha,\frac{\alpha}{2}}(Q_{T_*})}\big].
\end{align}
For the first term of the right-hand side of \eqref{strongu}, we have
\begin{align*}
&\|\mathbf{v}\cdot\nabla \mathbf{v}\|_{C^{\alpha,\frac{\alpha}{2}}(Q_{T_*})}\\
&\leq C \|(\mathbf{v}-\mathbf{u}_{0})\cdot\nabla (\mathbf{v}-\mathbf{u}_{0})\|_{C^{\alpha,\frac{\alpha}{2}}(Q_{T_*})}
+
\|(\mathbf{v}-\mathbf{u}_{0})\cdot
\nabla \mathbf{u}_{0}\|_{C^{\alpha,\frac{\alpha}{2}}(Q_{T_*})}\nonumber\\
&
\ \ \ +
\|\mathbf{u}_{0}\cdot
\nabla (\mathbf{v}-\mathbf{u}_{0})\|_{C^{\alpha,\frac{\alpha}{2}}(Q_{T_*})}
+
\|\mathbf{u}_{0}\cdot\nabla \mathbf{u}_{0}\|_{C^{\alpha,\frac{\alpha}{2}}(Q_{T_*})}
\nonumber\\
&\leq C T_{*}(\delta+\frac{T_{*}}{\delta}) K^{2}+C_{0}(T_{*}+\delta+\frac{T_{*}}{\delta})K+C_{0}
\nonumber\\
&\leq  \frac{\sqrt{K}}{4},
\end{align*}
provided $K=64 C_{0}^{2}$, $\delta\leq \frac{1}{16(C_{0}+CK)\sqrt{K}}$ and $T_{*}=\min \{\frac{1}{2}, \delta^{2}\}$.

\noindent For the second term in the right hand side of \eqref{strongu}, we have 
\begin{align*}
&\|\nabla\cdot(\nabla\mathbf{f}\odot\nabla\mathbf{f})
\|_{C^{\alpha,\frac{\alpha}{2}}(Q_{T_*})}\\
&\!\leq C \big[\|\nabla\cdot(\nabla\widetilde{\mathbf{f}}\odot\nabla\widetilde{\mathbf{f}})
\|_{C^{\alpha,\frac{\alpha}{2}}(Q_{T_*})}
+
\|\nabla\cdot(\nabla \mathbf{h}_{P}\odot\nabla\widetilde{\mathbf{f}})
\|_{C^{\alpha,\frac{\alpha}{2}}(Q_{T_*})}
\nonumber\\
&+
\|\nabla\cdot(\nabla\widetilde{\mathbf{f}}\odot\nabla \mathbf{h}_{P})
\|_{C^{\alpha,\frac{\alpha}{2}}(Q_{T_*})}
+
\|\nabla\cdot(\nabla \mathbf{h}_{P}\odot\nabla \mathbf{h}_{P})
\|_{C^{\alpha,\frac{\alpha}{2}}(Q_{T_*})}\big]
\nonumber\\
&\leq C_{0}
\|\widetilde{\mathbf{f}}\|_{C^{2+\alpha,1+\frac{\alpha}{2}}(Q_{T_*})}
+T_{*}K \|\widetilde{\mathbf{f}}\|_{C^{2+\alpha,1+\frac{\alpha}{2}}(Q_{T_*})}
+C_{0}\nonumber\\
&\leq  C_{0} K+C T_{*}K^{2}+C_{0}\leq \frac{\sqrt{K}}{4},
\end{align*}
provided $K=64C_{0}^{2}$ and  $T_{*}\leq \min\{1,T,\frac{1}{8(1+CK^{\frac{1}{2}})K}\}$. 

These two inequalities, together with \eqref{widetilded22} and \eqref{strongu}, imply that
\begin{align*}
\|\mathbf{u}-\mathbf{u}_{0}\|_{C^{2+\alpha,1+\frac{\alpha}{2}}(Q_{T_*})}
+
\|\mathbf{d}-\mathbf{h}_{P}
\|_{C^{2+\alpha,1+\frac{\alpha}{2}}(Q_{T_*})}
\leq K,
\end{align*}
which implies that $L:\mathfrak{X}(T_*, K)\mapsto\mathfrak{X}(T_*, K)$
is bounded. The proof of Lemma \ref{lem7} is completed.
\end{proof}

\begin{lemma} \label{lem8}
There exist a sufficiently large $K>0$ and a sufficiently small $T_{*}>0$ 
such that $L:\mathfrak{X}(T_*,K)\mapsto \mathfrak{X}(T_*, K)$ is a contractive map.
\end{lemma}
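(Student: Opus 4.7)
The plan is to take two pairs $(\mathbf{v}_i,\mathbf{f}_i)\in\mathfrak{X}(T_*,K)$, $i=1,2$, set $(\mathbf{u}_i,\mathbf{d}_i,P_i)=L(\mathbf{v}_i,\mathbf{f}_i)$, and derive the linearized difference system satisfied by $\mathbf{U}=\mathbf{u}_1-\mathbf{u}_2$, $\mathbf{D}=\mathbf{d}_1-\mathbf{d}_2$, $Q=P_1-P_2$, with differences of data $\mathbf{V}=\mathbf{v}_1-\mathbf{v}_2$, $\mathbf{F}=\mathbf{f}_1-\mathbf{f}_2$. Since both $(\mathbf{u}_i,\mathbf{d}_i)$ share the same initial and boundary values, $(\mathbf{U},\mathbf{D},Q)$ solves a non-homogeneous Stokes-heat system with zero initial and boundary conditions, whose right-hand sides are
\begin{align*}
\mathbf{G}_u&=-(\mathbf{V}\cdot\nabla\mathbf{v}_1+\mathbf{v}_2\cdot\nabla\mathbf{V})-\nabla\cdot(\nabla\mathbf{F}\odot\nabla\mathbf{f}_1+\nabla\mathbf{f}_2\odot\nabla\mathbf{F}),\\
\mathbf{G}_d&=\bigl(\nabla\mathbf{f}_1+\nabla\mathbf{f}_2\bigr):\nabla\mathbf{F}\cdot\mathbf{f}_1+|\nabla\mathbf{f}_2|^{2}\mathbf{F}-\mathbf{V}\cdot\nabla\mathbf{f}_1-\mathbf{v}_2\cdot\nabla\mathbf{F}.
\end{align*}
The Schauder theory for Stokes and heat systems (exactly as invoked in Lemma \ref{lem7}) then yields
$$\|(\mathbf{U},\mathbf{D})\|_{C^{2+\alpha,1+\frac{\alpha}{2}}(Q_{T_*})}\le C\bigl(\|\mathbf{G}_u\|_{C^{\alpha,\frac{\alpha}{2}}(Q_{T_*})}+\|\mathbf{G}_d\|_{C^{\alpha,\frac{\alpha}{2}}(Q_{T_*})}\bigr).$$

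Next, I would estimate each bilinear or trilinear term on the right in $C^{\alpha,\frac{\alpha}{2}}$ by the same interpolation device used in Lemma \ref{lem7}. The decisive new input is that $\mathbf{V}$ and $\mathbf{F}$ vanish at $t=0$, so
$$\sum_{k=0}^{2}\bigl(\|\nabla^{k}\mathbf{V}\|_{C^{0}(Q_{T_*})}+\|\nabla^{k}\mathbf{F}\|_{C^{0}(Q_{T_*})}\bigr)\le T_{*}\|(\mathbf{V},\mathbf{F})\|_{C^{2+\alpha,1+\frac{\alpha}{2}}(Q_{T_*})},$$
while $\|(\mathbf{v}_i,\mathbf{f}_i)\|_{C^{2+\alpha,1+\frac{\alpha}{2}}(Q_{T_*})}\le K+C_{0}$ by the definition of $\mathfrak{X}(T_*,K)$. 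Combining these with the standard interpolation
$\|g\|_{C^{\alpha,\frac{\alpha}{2}}}\le C\bigl(\delta^{-1}\|g\|_{C^{0}}+\delta\,\|g\|_{C^{2+\alpha,1+\frac{\alpha}{2}}}\bigr)$
applied to $\mathbf{V},\mathbf{F},\nabla\mathbf{F}$ shows that each term appearing in $\mathbf{G}_u,\mathbf{G}_d$ is bounded by $C(K,C_{0})\bigl(T_{*}+\delta+T_{*}/\delta\bigr)\|(\mathbf{V},\mathbf{F})\|_{C^{2+\alpha,1+\frac{\alpha}{2}}(Q_{T_*})}$. Choosing first $\delta$ small (depending only on $K$ and $C_{0}$) and then $T_{*}\le\delta^{2}$, this factor is made smaller than $\tfrac12$, yielding the contractivity estimate
$$\|L(\mathbf{v}_1,\mathbf{f}_1)-L(\mathbf{v}_2,\mathbf{f}_2)\|_{\mathfrak{X}(T_*,K)}\le\tfrac{1}{2}\,\|(\mathbf{v}_1-\mathbf{v}_2,\mathbf{f}_1-\mathbf{f}_2)\|_{\mathfrak{X}(T_*,K)}.$$

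The main obstacle I anticipate is the cubic term $\bigl(\nabla\mathbf{f}_1+\nabla\mathbf{f}_2\bigr):\nabla\mathbf{F}\cdot\mathbf{f}_1$ in $\mathbf{G}_d$, the linearization of $|\nabla\mathbf{f}|^{2}\mathbf{f}$. As in Lemma \ref{lem7}, one decomposes $\mathbf{f}_i=\widetilde{\mathbf{f}}_i+\mathbf{h}_P$ and bookkeeps each subproduct separately: the pieces carrying only $\widetilde{\mathbf{f}}_i$'s and their derivatives acquire powers of $T_{*}$ through the $C^{0}$ bound $\le KT_{*}$, while pieces carrying $\mathbf{h}_P$ are controlled by $C_{0}$ via \eqref{estimatehp}. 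The single factor of $\nabla\mathbf{F}$ (or $\mathbf{F}$) always supplies a norm of the difference, so after applying the product rule for $C^{\alpha,\frac{\alpha}{2}}$ norms and the interpolation above, every summand is linear in $\|(\mathbf{V},\mathbf{F})\|_{\mathfrak{X}(T_*,K)}$ with a coefficient that tends to zero as $T_{*},\delta\to 0$. Once this bookkeeping is carried out the contractivity follows, so $L$ admits a unique fixed point in $\mathfrak{X}(T_*,K)$ by the Banach fixed point theorem.
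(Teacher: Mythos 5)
Your proposal follows essentially the same route as the paper's own proof of Lemma \ref{lem8}: you write down the identical linearized difference system, compute the same bilinear and trilinear forcing terms $\mathbf{G}_u,\mathbf{G}_d$, invoke the Schauder estimate with zero initial and boundary data, and close the argument via the $\delta$--interpolation exploiting $\mathbf{V}|_{t=0}=\mathbf{F}|_{t=0}=0$. One stylistic difference: in the paper's Lemma \ref{lem8} the non-difference factors $\mathbf{f}_i,\mathbf{v}_i$ are absorbed into a $K$-dependent constant directly, so the extra decomposition $\mathbf{f}_i=\widetilde{\mathbf{f}}_i+\mathbf{h}_P$ that you re-invoke for the cubic term is superfluous here (though harmless).

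Be aware, however, of a technical subtlety shared by your write-up and the paper's: the elastic forcing $\nabla\cdot(\nabla\mathbf{F}\odot\nabla\mathbf{f}_1+\nabla\mathbf{f}_2\odot\nabla\mathbf{F})$ in $\mathbf{G}_u$ produces the top-order factor $\nabla^2\mathbf{F}$, and the parabolic H\"older seminorm $[\nabla^2\mathbf{F}]_{C^{\alpha,\frac{\alpha}{2}}(Q_{T_*})}$ is \emph{not} controlled by your interpolation: you apply the $\delta$-inequality to $\mathbf{V},\mathbf{F},\nabla\mathbf{F}$, but $[\nabla^2\mathbf{F}]_{C^{\alpha,\frac{\alpha}{2}}}$ is already a top-order piece of $\|\mathbf{F}\|_{C^{2+\alpha,1+\frac{\alpha}{2}}}$ and does not shrink with $\delta$ or $T_*$. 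This gives the product estimate a coefficient of order $K+C_0$ in front of $\|\mathbf{F}\|_{C^{2+\alpha,1+\frac{\alpha}{2}}}$, which cannot be made $\le\frac12$. The standard repair is to run the contraction in a slightly weaker metric $C^{2+\alpha',1+\frac{\alpha'}{2}}$ with $\alpha'<\alpha$: because $\mathbf{F}$ vanishes at $t=0$, one has $[\nabla^2\mathbf{F}]_{C^{\alpha',\frac{\alpha'}{2}}(Q_{T_*})}\le C\,T_*^{(\alpha-\alpha')/2}[\nabla^2\mathbf{F}]_{C^{\alpha,\frac{\alpha}{2}}(Q_{T_*})}$, which supplies the missing smallness; the fixed point then lies in $\mathfrak{X}(T_*,K)$ since that set is closed and bounded in $C^{2+\alpha,1+\frac{\alpha}{2}}$ and hence closed in the weaker metric. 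This point is also elided in the paper's displayed estimate following its \eqref{uW21}, so you are in the same boat, but it should be addressed in a careful write-up.
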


\begin{proof}
For $i=1,2$, and any given $(\mathbf{v}_{i},\mathbf{f}_{i})\in\mathfrak{X}(T_*, K)$,  
let $(\mathbf{u}_{i},\mathbf{d}_{i})\in\mathfrak{X}(T_*,K)$ be defined by
\begin{align*}
(\mathbf{u}_{i},\mathbf{d}_{i})=L(\mathbf{v}_{i},\mathbf{f}_{i}).
\end{align*}
Set
\begin{align*}
(\mathbf{u}, {\bf d}, P, {\bf v}, {\bf f})=(\mathbf{u}_{1}-\mathbf{u}_{2},
\mathbf{d}_{1}-\mathbf{d}_{2}, P_{1}-P_{2}, \mathbf{v}_{1}-\mathbf{v}_{2},
\mathbf{f}_{1}-\mathbf{f}_{2}).
\end{align*}
Then $(\mathbf{u},\mathbf{d})$ solves
\begin{align*}
\begin{cases}
\partial_{t}\mathbf{u}-\Delta\mathbf{u}+\nabla P=\mathbf{G},  &\text{in }Q_{T_{*}},\\
\nabla \cdot\mathbf{u}=0,  &\text{in }Q_{T_{*}},\\
\partial_{t}\mathbf{d}-\Delta\mathbf{d}=\mathbf{H},  &\text{in }Q_{T_{*}},\\
(\mathbf{u}, {\bf d})=(0,0), &\text{on }\Gamma_{T_{*}},\\
(\mathbf{u}, {\bf d})=(0, 0), &\text{on }\Omega\times\{0\},
\end{cases}
\end{align*}
where
\begin{align*}
\mathbf{G}=-\mathbf{v}\cdot\nabla \mathbf{v}_{1}-\mathbf{v}_{2}\cdot\nabla \mathbf{v}-\nabla\cdot(\nabla \mathbf{f}\odot\nabla \mathbf{f}_{1}+
\nabla \mathbf{f}_{2}\odot\nabla \mathbf{f}),
\end{align*}
and
\begin{align*}
\mathbf{H}=|\nabla \mathbf{f}_{1}|^{2}\mathbf{f}
+\langle\nabla (\mathbf{f}_{1}+\mathbf{f}_{2}), \nabla\mathbf{f} \rangle\mathbf{f}_{2}
-\mathbf{v}\cdot\nabla\mathbf{f}_{1}+\mathbf{v}_{2}\cdot\nabla\mathbf{f}.
\end{align*}
From Lemma \ref{lem7}, we have that 
\begin{align*}
\sum_{i=1}^2\big(\|\mathbf{u}_{i}-\mathbf{u}_{0}\|_{C^{2+\alpha,1+\frac{\alpha}{2}}(Q_{T_*})}
+\|\mathbf{d}_{i}-\mathbf{h}_{P}
\|_{C^{2+\alpha,1+\frac{\alpha}{2}}(Q_{T_*})}\big)\leq K.
\end{align*}

As in Lemma \ref{lem7},
we can apply the Schauder theory  to get
\begin{align}\label{dW32}
&\|\mathbf{d}\|_{C^{2+\alpha,1+\frac{\alpha}{2}}(Q_{T_*})}\nonumber\\
&\leq C
\|\mathbf{H}\|_{C^{\alpha,\frac{\alpha}{2}}(Q_{T_*})}\nonumber\\
&\leq C\!\!
\|
|\nabla \mathbf{f}_{1}|^{2}\mathbf{f}+\langle\nabla (\mathbf{f}_{1}+\mathbf{f}_{2}), \nabla\mathbf{f}\rangle\mathbf{f}_{2}
-\mathbf{v}\cdot\nabla\mathbf{f}_{1}+\mathbf{v}_{2}\cdot\nabla\mathbf{f}
\|_{C^{\alpha,\frac{\alpha}{2}}(Q_{T_*})}
\nonumber\\
&\leq C
(K^{2}+K) (\|\mathbf{v}\|_{C^{\alpha,\frac{\alpha}{2}}(Q_{T_*})}
+\|\mathbf{f}\|_{C^{\alpha,\frac{\alpha}{2}}(Q_{T_*})}
+\|\nabla\mathbf{f}\|_{C^{\alpha,\frac{\alpha}{2}}(Q_{T_*})}),
\end{align}
and
\begin{align}\label{uW21}
&\|\mathbf{u}\|_{C^{2+\alpha,1+\frac{\alpha}{2}}(Q_{T_*})}\\
&\leq C
 \|\mathbf{G}\|_{C^{\alpha,\frac{\alpha}{2}}(Q_{T_*})}\nonumber\\
&\leq C
\|\mathbf{v}\cdot\nabla \mathbf{v}_{1}+\mathbf{v}_{2}\cdot\nabla \mathbf{v}+\nabla\cdot(\nabla \mathbf{f}\odot\nabla \mathbf{f}_{1}+
\nabla \mathbf{f}_{2}\odot\nabla \mathbf{f})
\|_{C^{\alpha,\frac{\alpha}{2}}(Q_{T_*})}
\nonumber\\
&\leq C
K(\|\mathbf{v}\|_{C^{\alpha,\frac{\alpha}{2}}(Q_{T_*})}
+\|\nabla\mathbf{v}\|_{C^{\alpha,\frac{\alpha}{2}}(Q_{T_*})}\nonumber\\
&\ \ \ +\|\mathbf{f}\|_{C^{2+\alpha,1+\frac{\alpha}{2}}(Q_{T_*})}
+\|\nabla\mathbf{f}\|_{C^{\alpha,\frac{\alpha}{2}}(Q_{T_*})}).\nonumber
\end{align}
Hence, it follows from \eqref{dW32} and \eqref{uW21} that
\begin{align*}
&\|L(\mathbf{v}_{1},\mathbf{f}_{1})-L(\mathbf{v}_{2},\mathbf{f}_{2})\|_{\mathfrak{X}(T_*,K)}\\
&=
\|\mathbf{u}\|_{C^{2+\alpha,1+\frac{\alpha}{2}}(Q_{T_*})}
+
\|\mathbf{d}\|_{C^{2+\alpha,1+\frac{\alpha}{2}}(Q_{T_*})}
\nonumber\\
&\leq C
(K^{2}+K) (\delta(\|\mathbf{v}\|_{C^{2+\alpha,1+\frac{\alpha}{2}}(Q_{T_*})}
+\|\mathbf{f}\|_{C^{2+\alpha,1+\frac{\alpha}{2}}(Q_{T_*})})\\
&\ \ \ +\frac{1}{\delta}
(\|\mathbf{v}\|_{C^{0}(Q_{T_*})}
+\|\mathbf{f}\|_{C^{0}(Q_{T_*})}))
\nonumber\\
&\leq (K^{2}+K) (\delta+\frac{T_{*}}{\delta})
(\|\mathbf{v}\|_{C^{2+\alpha,1+\frac{\alpha}{2}}(Q_{T_*})}+\|\mathbf{f}\|_{C^{2+\alpha,1+\frac{\alpha}{2}}(Q_{T_*})})
\nonumber\\
&\leq \frac{1}{2} (\|\mathbf{v}\|_{C^{2+\alpha,1+\frac{\alpha}{2}}(Q_{T_*})}
+\|\mathbf{f}\|_{C^{2+\alpha,1+\frac{\alpha}{2}}(Q_{T_*})})\nonumber\\
&\le \frac{1}{2} \|(\mathbf{v}_{1},\mathbf{f}_{1})-(\mathbf{v}_{2},\mathbf{f}_{2})\|_{\mathfrak{X}(T_*,K)},
\end{align*}
provided $\delta$ and $T_{*}$ are sufficiently small. Thus we obtain that 
$L:\mathfrak{X}(T_*,K)\mapsto \mathfrak{X}(T_*,K)$ is a contractive map.
This completes the proof of Lemma \ref{lem8}.
\end{proof}

\begin{lemma}\label{lem9}
For a bounded smooth domain $\Omega\subset\mathbb{R}^{2}$ and $0<T<\infty$, 
let $\mathbf{u}\in W^{2,1}_{2}(Q_T,\mathbb{R}^{2})$ 
with $\nabla\cdot\mathbf{u}=0$, $\mathbf{h}\in C^{\alpha,\frac{\alpha}{2}}(\Gamma_{T}, \mathbb{S}^{2}))$,
and  $\mathbf{d}_{0}\in C^{2+\alpha}(\Omega;\mathbb{S}^{2})$. 
If $\mathbf{d}\in C^{2+\alpha,1+\frac{\alpha}{2}}(Q_T, \mathbb{R}^{3})$
is a solution of $\eqref{eq1.1}_{3}$-\eqref{eq1.2}-\eqref{eq1.3},
then  it holds that
 \begin{align*}
 |\mathbf{d}|=1 \quad\text{ in } Q_T.
 \end{align*}
\end{lemma}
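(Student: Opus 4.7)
The plan is to set $\varphi=|\mathbf{d}|^{2}-1$ and show that $\varphi$ satisfies a linear parabolic equation with vanishing parabolic boundary data, from which $\varphi\equiv 0$ follows by an energy estimate and Gronwall's inequality. The regularity $\mathbf{d}\in C^{2+\alpha,1+\frac{\alpha}{2}}(Q_T)$ is precisely what makes this calculation rigorous, since both $\varphi$ and $|\nabla\mathbf{d}|^{2}$ belong to $L^{\infty}(Q_T)$.

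First, I would take the inner product of $\eqref{eq1.1}_{3}$ (with $\mu=1$) with $2\mathbf{d}$ to compute
\begin{align*}
\partial_{t}\varphi+(\mathbf{u}\cdot\nabla)\varphi
&=2\mathbf{d}\cdot\Delta\mathbf{d}+2|\nabla\mathbf{d}|^{2}|\mathbf{d}|^{2}\\
&=\Delta\varphi-2|\nabla\mathbf{d}|^{2}+2|\nabla\mathbf{d}|^{2}(\varphi+1)\\
&=\Delta\varphi+2|\nabla\mathbf{d}|^{2}\varphi,
\end{align*}
using the identities $\Delta|\mathbf{d}|^{2}=2\mathbf{d}\cdot\Delta\mathbf{d}+2|\nabla\mathbf{d}|^{2}$ and $(\mathbf{u}\cdot\nabla)|\mathbf{d}|^{2}=2\mathbf{d}\cdot(\mathbf{u}\cdot\nabla)\mathbf{d}$. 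Thus $\varphi$ solves the linear parabolic equation
\begin{equation*}
\partial_{t}\varphi-\Delta\varphi+(\mathbf{u}\cdot\nabla)\varphi-2|\nabla\mathbf{d}|^{2}\varphi=0\quad\text{in }Q_T,
\end{equation*}
with $\varphi=|\mathbf{h}|^{2}-1=0$ on $\Gamma_T$ (since $\mathbf{h}(\Gamma_T)\subset\mathbb{S}^{2}$) and $\varphi(\cdot,0)=|\mathbf{d}_{0}|^{2}-1=0$ in $\Omega$.

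Next, I would multiply this equation by $\varphi$ and integrate over $\Omega$. The convective term vanishes since $\nabla\cdot\mathbf{u}=0$ and $\mathbf{u}=0$ on $\Gamma$:
\begin{equation*}
\int_{\Omega}(\mathbf{u}\cdot\nabla)\varphi\,\varphi\,dx=\tfrac{1}{2}\int_{\Omega}(\mathbf{u}\cdot\nabla)\varphi^{2}\,dx=-\tfrac{1}{2}\int_{\Omega}(\nabla\cdot\mathbf{u})\varphi^{2}\,dx=0.
\end{equation*}
Using $\varphi=0$ on $\Gamma_T$ to integrate by parts the Laplacian, this yields
\begin{equation*}
\frac{1}{2}\frac{d}{dt}\int_{\Omega}\varphi^{2}\,dx+\int_{\Omega}|\nabla\varphi|^{2}\,dx=2\int_{\Omega}|\nabla\mathbf{d}|^{2}\varphi^{2}\,dx\leq 2\|\nabla\mathbf{d}\|_{L^{\infty}(Q_T)}^{2}\int_{\Omega}\varphi^{2}\,dx.
\end{equation*}
Setting $E(t)=\int_{\Omega}\varphi^{2}(x,t)\,dx$, we have $E(0)=0$ and $E'(t)\leq 4\|\nabla\mathbf{d}\|_{L^{\infty}(Q_T)}^{2}E(t)$. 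Gronwall's inequality gives $E(t)\equiv 0$ on $[0,T]$, hence $\varphi\equiv 0$, i.e.\ $|\mathbf{d}|=1$ in $Q_T$.

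There is no real obstacle here: the argument is the standard maximum-principle-via-energy-estimate for a linear parabolic equation with bounded zero-order coefficient. The only point requiring the regularity assumption is that $|\nabla\mathbf{d}|^{2}\in L^{\infty}(Q_T)$, which is immediate from $\mathbf{d}\in C^{2+\alpha,1+\frac{\alpha}{2}}(Q_T)$; in particular no smallness on $T$ or on the data is needed.
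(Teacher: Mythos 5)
Your proof is correct and follows essentially the same strategy as the paper's: derive a linear parabolic equation for $|\mathbf{d}|^{2}-1$, test it against itself, kill the transport term using $\nabla\cdot\mathbf{u}=0$, and apply Gronwall. The only (minor, valid) difference is that you work directly with $\varphi=|\mathbf{d}|^{2}-1$ rather than splitting into $g^{+}=\max\{g,0\}$ and treating the two inequalities $|\mathbf{d}|\le 1$ and $|\mathbf{d}|\ge 1$ separately as the paper does; your version is a bit more streamlined and the splitting is not needed here since the equation is linear with a bounded zeroth-order coefficient.
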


\begin{proof}
Multiplying $\eqref{eq1.1}_{3}$ by ${\bf d}$, we get that
\begin{align*}
\partial_{t} (|\mathbf{d}|^{2}-1)+\mathbf{u}\cdot\nabla(|\mathbf{d}|^{2}-1)
=\Delta(|\mathbf{d}|^2-1)+2|\nabla \mathbf{d}|^{2}(|\mathbf{d}|^{2}-1).
\end{align*}
Set $g=|\mathbf{d}|^{2}-1$ and $g^{+}=\max\{g,0\}$.  We have that
\begin{align}\label{g+}
\begin{cases}
\partial_{t}g^{+}-\Delta g^{+} =-\mathbf{u}\cdot\nabla g^{+}+2|\nabla \mathbf{d}|^{2}g^{+},
\quad &\text{ in } Q_T,\\
g^{+}=\max\{|\mathbf{h}|^{2}-1,0\}=0\quad &\text{ on } \Gamma_T,\\
g^{+}=\max\{|\mathbf{d}_{0}|^{2}-1,0\}= 0\quad &\text{ on }\Omega\times\{0\}.
\end{cases}
\end{align}
Multiplying \eqref{g+} with $g^{+}$, integrating the resulting equation over $\Omega$, and using the fact that 
$\nabla \cdot\mathbf{u}=0$, we obtain that
\begin{align*}
\frac{1}{2}\frac{d}{dt}\int_{\Omega}(g^{+})^{2}+\int_{\Omega}|\nabla g^{+}|^{2}
&=-\frac{1}{2}\int_{\Omega} \mathbf{u}\cdot\nabla (g^{+})^{2}+2\int_{\Omega}|\nabla \mathbf{d}|^{2} (g^{+})^{2}
\nonumber\\
&= 2\int_{\Omega}|\nabla \mathbf{d}|^{2}(g^{+})^{2}.
\end{align*}
Integrating over $[0,t]$ for $0<t\leq T$,  and employing the fact
$\|\nabla \mathbf{d}\|_{L^{\infty}(Q_T)}<\infty$ and the Gronwall inequality, we obtain that
\begin{align*}
g^{+}=0 \quad \text{ in } Q_T.
\end{align*}
This implies $|{\bf d}|\le 1$ in $Q_T$. Similarly, we can show that
$|{\bf d}|\ge 1$ in $Q_T$. Hence $|{\bf d}=1$ in $Q_T$.
This completes the proof.
\end{proof}

In order to construct the existence of global strong solutions to
\eqref{eq1.1}--\eqref{eq1.3}, we also need the following Lemma.

\begin{lemma} \label{rem-S+} For $T>0$ and  a bounded smooth domain $\Omega \subset\mathbb{R}^{2}$, 
for a given $\mathbf{u}\in W^{2,1}_{2}(Q_T, \mathbb{R}^{2})$ with $\nabla\cdot\mathbf{u}=0$, 
$\mathbf{h}\in C^{\alpha,\frac{\alpha}{2}}(\Gamma_{T}, \mathbb{S}^{2}))$ and  
$\mathbf{d}_{0}\in C^{2+\alpha}(\Omega, \mathbb{S}^{2})$, 
let $\mathbf{d}\in C^{2+\alpha,1+\frac{\alpha}{2}}(\overline{\Omega}\times [0,T], \mathbb{S}^{2})$ solve $\eqref{eq1.1}_{3}$-\eqref{eq1.2}-\eqref{eq1.3}. 
If ${\bf d}_{0}^3\geq 0$ in $\Omega$ and ${\bf h}_{3}\geq 0$ on $\Gamma_T$, then  
 \begin{align*}
 {\bf d}^{3}({x},t)\geq 0 \quad\text{ in } Q_T.
 \end{align*}
 Here ${\bf d}^3$ denotes the third component of ${\bf d}$.
\end{lemma}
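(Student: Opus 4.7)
The plan is to treat this as a maximum principle (positivity preservation) for the scalar third component of $\mathbf d$, modeled on the argument used in Lemma \ref{lem9}. Write $d^{3}$ for the third component of $\mathbf d$ and take the third component of the equation $\eqref{eq1.1}_{3}$, which gives
\begin{equation*}
\partial_{t}d^{3}+(\mathbf u\cdot\nabla)d^{3}=\Delta d^{3}+|\nabla\mathbf d|^{2}d^{3} \quad\text{in }Q_{T},
\end{equation*}
together with $d^{3}=\mathbf h^{3}\ge 0$ on $\Gamma_{T}$ and $d^{3}|_{t=0}=\mathbf d_{0}^{3}\ge 0$ on $\Omega$. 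Since $|\nabla\mathbf d|^{2}$ only multiplies $d^{3}$, the equation is linear in $d^{3}$ with a bounded zeroth-order coefficient (by the assumed $C^{2+\alpha,1+\alpha/2}$ regularity of $\mathbf d$), and this is exactly the setup in which the standard weak maximum principle applies.

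The key step is to test the equation against $-(d^{3})^{-}$, where $(d^{3})^{-}=\max\{-d^{3},0\}$. Setting $w=(d^{3})^{-}$, one formally obtains
\begin{equation*}
\tfrac{1}{2}\tfrac{d}{dt}\!\int_{\Omega}\!w^{2}+\tfrac{1}{2}\!\int_{\Omega}\!(\mathbf u\cdot\nabla)w^{2}+\!\int_{\Omega}\!|\nabla w|^{2}=\!\int_{\Omega}\!|\nabla\mathbf d|^{2}w^{2}.
\end{equation*}
The convective term vanishes after integration by parts using $\nabla\cdot\mathbf u=0$ and $\mathbf u=0$ on $\Gamma$, and the boundary term arising from $\int_{\Omega}(-(d^{3})^{-})\Delta d^{3}$ vanishes because $d^{3}=\mathbf h^{3}\ge 0$ on $\Gamma_{T}$ forces $(d^{3})^{-}\equiv 0$ there. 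Dropping the nonnegative $\int_{\Omega}|\nabla w|^{2}$ term and using $\|\nabla\mathbf d\|_{L^{\infty}(Q_{T})}<\infty$ (from the $C^{2+\alpha,1+\alpha/2}$ regularity assumed on $\mathbf d$), I obtain
\begin{equation*}
\tfrac{d}{dt}\!\int_{\Omega}w^{2}(\cdot,t)\,dx \le 2\|\nabla\mathbf d\|_{L^{\infty}(Q_{T})}^{2}\!\int_{\Omega}w^{2}(\cdot,t)\,dx.
\end{equation*}
Since $\mathbf d_{0}^{3}\ge 0$ gives $w(\cdot,0)\equiv 0$, Gronwall's inequality yields $w\equiv 0$ in $Q_{T}$, i.e.\ $d^{3}\ge 0$ in $Q_{T}$.

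The only real obstacle is that $(d^{3})^{-}$ is only Lipschitz, so taking it as a test function requires a small amount of justification. I would handle this either by approximating $\phi(s)=\tfrac{1}{2}(s^{-})^{2}$ from above by a sequence of smooth convex nonincreasing functions $\phi_{\varepsilon}$ with $\phi_{\varepsilon}'\le 0$ and $\phi_{\varepsilon}''\ge 0$, running the computation with $\phi_{\varepsilon}'(d^{3})$ in place of $-(d^{3})^{-}$, and passing to the limit $\varepsilon\to 0$; or, equivalently, by appealing directly to the Stampacchia chain rule for Sobolev functions. All other steps are standard manipulations identical in spirit to those in the proof of Lemma \ref{lem9}.
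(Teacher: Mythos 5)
Your proposal is correct and follows essentially the same route as the paper: the paper works directly with ${\bf d}^3_- = \min\{{\bf d}^3,0\}$ (which equals $-w$ in your notation), multiplies the equation by ${\bf d}^3_-$, uses $\nabla\cdot{\bf u}=0$ to kill the convective term, bounds the zeroth-order term by $\|\nabla{\bf d}\|_{L^\infty}^2$, and applies Gronwall's inequality. Your added remark about justifying the Lipschitz test function is a reasonable technical precaution, though the paper glosses over it; otherwise the arguments are identical.
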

\begin{proof} The proof is similar to that of Lemma \ref{lem9}. For the convenience of readers, we sketch it here.
Set ${\bf d}^{3}_{-}=\min\{{\bf d}^3, 0\}$. Then 
\begin{align*}
\begin{cases}
\partial_t {\bf d}^{3}_{-}-\Delta {\bf d}^{3}_{-}=-{\bf u}\cdot\nabla{\bf d}^3_{-}+|\nabla {\bf d}|^2 {\bf d}^3_{-},
\ & {\rm{in}}\ Q_T,\\ 
{\bf d}^{3}_{-}=0,  \ & {\rm{on}}\ \partial_p Q_T.
\end{cases}
\end{align*}
Multiplying this equation by ${\bf d}^{3}_{-}$, integrating over $\Omega$, and applying $\nabla\cdot{\bf u}=0$,
we obtain
\begin{align*}
\frac{d}{dt}\int_\Omega |{\bf d}^{3}_{-}|^2+\int_\Omega |\nabla {\bf d}^{3}_{-}|^2
=2\int_\Omega |\nabla {\bf d}|^2|{\bf d}^{3}_{-}|^2\le C\int_\Omega |{\bf d}^{3}_{-}|^2.
\end{align*}
Hence by the Gronwall inequality we have
$$\int_\Omega |{\bf d}^{3}_{-}(t)|^2\le e^{Ct}\int_\Omega |({\bf d}_0^{3})_{-}|^2=0, \ \forall t\in [0,T].$$
This implies that ${\bf d}^3\ge 0$ in $Q_T$. 
\end{proof}

In the process to obtain global strong solutions, we also need the following elementary Lemma.

\begin{lemma}\label{vanish}
If $\omega\in C^\infty(\mathbb{S}^2, \mathbb{S}^2_+)$ is a harmonic map, then
$\omega$ must be a constant map.
\end{lemma}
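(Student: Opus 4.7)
The plan is to exploit the sign constraint $\omega^3 \geq 0$ coming from $\omega(\mathbb{S}^2) \subset \mathbb{S}^2_+$ through the harmonic map equation, which for a map into $\mathbb{S}^2$ reads $\Delta \omega + |\nabla \omega|^2 \omega = 0$, where $\Delta$ is the Laplace-Beltrami operator on $\mathbb{S}^2$. I would first write down the third-component equation
\begin{equation*}
\Delta \omega^3 = -|\nabla \omega|^2\, \omega^3 \le 0 \quad \text{on } \mathbb{S}^2,
\end{equation*}
so $\omega^3$ is superharmonic on the closed manifold $\mathbb{S}^2$.

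Next I would integrate this over $\mathbb{S}^2$. Since $\mathbb{S}^2$ has no boundary, the divergence theorem gives $\int_{\mathbb{S}^2}\Delta \omega^3\,dA=0$. Combined with the pointwise inequality $\Delta\omega^3 \le 0$, this forces $\Delta \omega^3\equiv 0$; a harmonic function on the compact manifold $\mathbb{S}^2$ without boundary must be constant, so $\omega^3 \equiv c$ for some $c \in [0,1]$.

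It then remains to split into two cases. If $c>0$, the identity $0=\Delta \omega^3 = -|\nabla\omega|^2\, c$ forces $|\nabla \omega|\equiv 0$, so $\omega$ is a constant map and we are done. If $c=0$, then $\omega$ takes values in the equatorial circle $\mathbb{S}^1\subset\mathbb{S}^2$. The hard part is handling this degenerate case, since the equation degenerates and the previous argument gives no further information. I would overcome this by lifting: since $\mathbb{S}^2$ is simply connected and the exponential map $\mathbb{R}\to\mathbb{S}^1$ is a covering, there exists a smooth lift $\widetilde{\omega}:\mathbb{S}^2\to\mathbb{R}$ with $\omega=(\cos\widetilde{\omega},\sin\widetilde{\omega},0)$. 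Because the covering is a local isometry, $\widetilde{\omega}$ satisfies the same harmonic-map equation, which for a real-valued function reduces to $\Delta \widetilde{\omega}=0$ on $\mathbb{S}^2$. Hence $\widetilde{\omega}$, and therefore $\omega$, is constant. Combining both cases completes the proof.
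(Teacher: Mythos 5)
Your proof is correct and follows essentially the same route as the paper: integrate the third component of the harmonic map equation over $\mathbb{S}^2$, exploit $\omega^3 \geq 0$, and handle the degenerate case $\omega(\mathbb{S}^2)\subset\mathbb{S}^1$ by lifting to a scalar harmonic function. One small improvement in your version: by first showing $\omega^3$ is a constant $c$ (since it is superharmonic with zero integral of its Laplacian on a closed manifold), you make the case $c>0$ give $|\nabla\omega|\equiv 0$ globally at once, whereas the paper only gets $\nabla\omega=0$ near a point where $\omega^3>0$ and must then invoke unique continuation to propagate it.
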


\begin{proof}
Recall that $\omega$ solves the harmonic map equation:
\begin{equation}\label{HM}
\Delta_{\mathbb S^2}\omega +|\nabla_{\mathbb S^2} \omega|^2\omega=0\ {\rm{on}}\ \mathbb S^2.
\end{equation}
Here $\nabla_{\mathbb S^2}$ and $\Delta_{\mathbb S^2}$ denote the gradient and Laplace operator
on $\mathbb S^2$ respectively.
Integrating the equation over $\mathbb S^2$ yields
$$0=\int_{\mathbb S^2}(\Delta_{\mathbb S^2}\omega^3 +|\nabla_{\mathbb S^2} \omega|^2\omega^3)\,d\sigma
=\int_{\mathbb S^2}|\nabla_{\mathbb S^2} \omega|^2\omega^3\,d\sigma.$$
Since $\omega^3\ge 0$ on $\mathbb S^2$, this implies that
$$|\nabla_{\mathbb S^2} \omega|^2\omega^3\equiv 0 \ {\rm{on}}\ \mathbb S^2.$$ 
There are two cases that we need to consider:\\

\noindent(a) If there exists $p_0\in \mathbb S^2$ such that $\omega^3(p_0)>0$, then there exists $\delta_0>0$ such that
$$\nabla_{\mathbb S^2}\omega=0 \ {\rm{in}}\ B_{\delta_0}(p_0)\cap \mathbb S^2,$$
and hence $\omega\equiv p_1\in\mathbb S^2_+$ in $B_{\delta_0}(p_0)\cap\mathbb S^2$. This, combined with the unique continuation property,
yields that $\omega\equiv p_1$ on $\mathbb S^2$.\\

\noindent(b) If $\omega^3\equiv 0$ on $\mathbb S^2$, then we must have $\omega(\mathbb S^2)\subset \partial\mathbb S^2_+\equiv \mathbb S^1\subset\mathbb R^2$. 
In this case, we can write $\omega=e^{i\phi}$ for a smooth function $\phi\in C^\infty(\mathbb S^2)$. Direct calculations imply that $\omega$ is
a harmonic map to $\mathbb S^1$ if and only if $\phi$ is a harmonic function on $\mathbb S^2$. Hence by the maximum principle we conclude that
$\phi$ is a constant. Hence $\omega=e^{i\phi}$ is also a constant on $\mathbb S^2$.
\end{proof}

\subsection{A priori estimates on energy and pressure}
In this subsection, we will provide some basic estimates on both the energy 
and the pressure. First, we have the following generalized global energy inequality.

\begin{lemma}\label{lem10}
For $T>0$, $\mathbf{h}\in L^{2}_tH^{\frac{3}{2}}_x(\Gamma_T,\mathbb{S}^{2}))$, 
$\partial_{t}\mathbf{h}\in L^{2}_tH^{\frac{3}{2}}_x(\Gamma_T)$, and
$(\mathbf{u}_{0},\mathbf{d}_{0})\in \mathbf{H}\times H^{1}(\Omega, \mathbb{S}^{2})$,
suppose $\mathbf{u}\in L^{\infty}([0,T], \mathbf{H})\cap L^{2}([0,T], \mathbf{V})$,
$\mathbf{d}\in L^{\infty}_tH^{1}_x(Q_T,\mathbb{S}^{2})$ and 
$P\in L^{\frac{4}{3}}_tW^{1,\frac43}_x(Q_T)$ is a weak solution to the 
system \eqref{eq1.1}--\eqref{eq1.3}. Then there exists $C>0$ depending
only on $\Omega$ such that for any $t\in (0,T]$, 
\begin{align}\label{energy1}
&\int_{\Omega} (|\mathbf{u}|^{2}+|\nabla \mathbf{d}|^{2})(\cdot,t)
+\int_{Q_t} (|\nabla \mathbf{u}|^{2}+|\Delta\mathbf{d}+|\nabla{\bf d}|^2{\bf d}|^{2})\nonumber\\
&\leq \psi(t)\big[\int_{\Omega} (|\mathbf{u}_{0}|^{2}+|\nabla \mathbf{d}_{0}|^{2})
+C\int_{0}^{t}
\|(\mathbf{h}, \partial_t {\bf h})(\tau)\|_{H^{\frac{3}{2}}(\Gamma)}^{2}\,d\tau\big],
\end{align}
where 
$$\psi(t)=\exp\big(C\int_0^t \|\partial_t {\bf h}(\tau)\|_{H^\frac32(\Gamma)}\,d\tau\big).$$
\end{lemma}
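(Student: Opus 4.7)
The plan is to derive a differential energy inequality by testing the system against natural test functions and then to close it via Gronwall's inequality. Multiplying the momentum equation $\eqref{eq1.1}_1$ by $\mathbf{u}$ and integrating over $\Omega$, the pressure term and the transport term $(\mathbf{u}\cdot\nabla)\mathbf{u}$ drop out by $\nabla\cdot\mathbf{u}=0$ together with $\mathbf{u}|_\Gamma=0$, while the Ericksen stress contribution becomes $\int_\Omega\nabla\mathbf{u}:(\nabla\mathbf{d}\odot\nabla\mathbf{d})$ after integration by parts. Multiplying $\eqref{eq1.1}_3$ by $-\Delta\mathbf{d}$ and integrating, the convective term $(\mathbf{u}\cdot\nabla)\mathbf{d}\cdot(-\Delta\mathbf{d})$ contributes precisely $\int_\Omega\nabla\mathbf{u}:(\nabla\mathbf{d}\odot\nabla\mathbf{d})$ (again using $\mathbf{u}|_\Gamma=0$ and $\nabla\cdot\mathbf{u}=0$), which cancels exactly the Ericksen stress contribution above. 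Exploiting the constraint $|\mathbf{d}|\equiv 1$, which gives $\mathbf{d}\cdot\Delta\mathbf{d}=-|\nabla\mathbf{d}|^2$ and hence the pointwise identity $|\Delta\mathbf{d}+|\nabla\mathbf{d}|^2\mathbf{d}|^2=|\Delta\mathbf{d}|^2-|\nabla\mathbf{d}|^4$, the right-hand side of the director equation produces exactly the desired dissipation $\int_\Omega|\Delta\mathbf{d}+|\nabla\mathbf{d}|^2\mathbf{d}|^2$.

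The one non-vanishing boundary contribution comes from integrating by parts in $-\int_\Omega\mathbf{d}_t\cdot\Delta\mathbf{d}$: since $\mathbf{d}_t|_\Gamma=\partial_t\mathbf{h}\neq 0$, we pick up the extra flux $-\int_\Gamma\partial_t\mathbf{h}\cdot\partial_\nu\mathbf{d}$. Combining everything yields the preliminary energy identity
\begin{align*}
\frac{d}{dt}\mathcal{E}(\mathbf{u},\mathbf{d})+\int_\Omega|\nabla\mathbf{u}|^2+\int_\Omega|\Delta\mathbf{d}+|\nabla\mathbf{d}|^2\mathbf{d}|^2=\int_\Gamma\partial_t\mathbf{h}\cdot\partial_\nu\mathbf{d}.
\end{align*}
The main obstacle will be to bound the boundary flux on the right in terms of $\|\partial_t\mathbf{h}\|_{H^{3/2}(\Gamma)}$, the dissipation, and $\mathcal{E}$, in a manner that is linear (rather than quadratic) in $\|\partial_t\mathbf{h}\|_{H^{3/2}(\Gamma)}$, so that Gronwall produces the claimed exponential factor $\psi(t)$.

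To control this flux I would introduce the harmonic extension $H:\Omega\to\mathbb{R}^3$ of $\partial_t\mathbf{h}$, for which elliptic regularity yields $\|H\|_{H^2(\Omega)}\le C\|\partial_t\mathbf{h}\|_{H^{3/2}(\Gamma)}$ and, by the two-dimensional Sobolev embedding $H^2\hookrightarrow L^\infty$, also $\|H\|_{L^\infty(\Omega)}\le C\|\partial_t\mathbf{h}\|_{H^{3/2}(\Gamma)}$. Green's identity gives
\begin{align*}
\int_\Gamma\partial_t\mathbf{h}\cdot\partial_\nu\mathbf{d}=\int_\Omega\nabla H:\nabla\mathbf{d}+\int_\Omega H\cdot\Delta\mathbf{d},
\end{align*}
and writing $\Delta\mathbf{d}=(\Delta\mathbf{d}+|\nabla\mathbf{d}|^2\mathbf{d})-|\nabla\mathbf{d}|^2\mathbf{d}$ in the last integral and using $|\mathbf{d}|\le 1$, I would arrive at
\begin{align*}
\Bigl|\int_\Gamma\partial_t\mathbf{h}\cdot\partial_\nu\mathbf{d}\Bigr|\le C\|\partial_t\mathbf{h}\|_{H^{3/2}(\Gamma)}\bigl(\|\Delta\mathbf{d}+|\nabla\mathbf{d}|^2\mathbf{d}\|_{L^2}+\|\nabla\mathbf{d}\|_{L^2}+\|\nabla\mathbf{d}\|_{L^2}^2\bigr).
\end{align*}
Young's inequality absorbs the $\|\Delta\mathbf{d}+|\nabla\mathbf{d}|^2\mathbf{d}\|_{L^2}$ piece into the left-hand side and, after a careful splitting of the linear-in-$\|\nabla\mathbf{d}\|_{L^2}$ term, leaves an estimate of the form
\begin{align*}
\frac{d}{dt}\mathcal{E}+(\text{dissipation})\le C\bigl(\|\mathbf{h}\|_{H^{3/2}(\Gamma)}^2+\|\partial_t\mathbf{h}\|_{H^{3/2}(\Gamma)}^2\bigr)+C\|\partial_t\mathbf{h}\|_{H^{3/2}(\Gamma)}\,\mathcal{E}.
\end{align*}

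The final step is Gronwall's inequality in integral form with coefficient $C\|\partial_t\mathbf{h}\|_{H^{3/2}(\Gamma)}$, which produces exactly the multiplicative factor $\psi(t)=\exp\bigl(C\int_0^t\|\partial_t\mathbf{h}\|_{H^{3/2}(\Gamma)}\bigr)$ and gives \eqref{energy1}. One technical point to flag is that the integration-by-parts computation requires $\Delta\mathbf{d}\in L^2$, which is more regularity than a bare weak solution enjoys; to make the argument rigorous I would carry out the computation first for a smooth approximation—either the short-time smooth solutions built in Theorem \ref{thm6} extended by a continuation argument, or a Galerkin/mollification scheme on $(\mathbf{u}_0,\mathbf{d}_0,\mathbf{h})$—derive \eqref{energy1} at the approximate level with constants independent of the regularization, and then pass to the limit using weak lower semicontinuity of the dissipation on the left-hand side and strong convergence of the boundary data on the right.
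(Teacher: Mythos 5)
Your proposal is correct and follows essentially the same overall strategy as the paper: derive the energy identity by testing the momentum equation against $\mathbf{u}$ and the director equation against $\Delta\mathbf{d}+|\nabla\mathbf{d}|^2\mathbf{d}$ (your version, testing against $-\Delta\mathbf{d}$ and invoking the pointwise identity $|\Delta\mathbf{d}+|\nabla\mathbf{d}|^2\mathbf{d}|^2=|\Delta\mathbf{d}|^2-|\nabla\mathbf{d}|^4$, is equivalent given $|\mathbf{d}|\equiv 1$), cancel the transport terms, bound the boundary flux $\int_\Gamma\partial_t\mathbf{h}\cdot\partial_\nu\mathbf{d}$ via a harmonic extension, and Gronwall.

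The one substantive difference is in how the boundary flux is converted to interior integrals. You take the harmonic extension $H$ of $\partial_t\mathbf{h}$ and apply the first Green identity directly to $\mathbf{d}$ and $H$, producing $\int_\Omega\nabla H:\nabla\mathbf{d}+\int_\Omega H\cdot\Delta\mathbf{d}$; the first, cross-gradient term forces the extra $C\|\partial_t\mathbf{h}\|_{H^{3/2}}\|\nabla\mathbf{d}\|_{L^2}$ contribution that you then must split. To fit this into the claimed form (Gronwall coefficient $C\|\partial_t\mathbf{h}\|_{H^{3/2}}$, source $C\|(\mathbf{h},\partial_t\mathbf{h})\|_{H^{3/2}}^2$) you need to split it as $\le\frac{C}{2}\|\partial_t\mathbf{h}\|_{H^{3/2}}(1+\|\nabla\mathbf{d}\|_{L^2}^2)$ and then absorb the stray $\|\partial_t\mathbf{h}\|_{H^{3/2}}$ using the fact that $|\mathbf{h}|\equiv 1$ forces $\|\mathbf{h}(\tau)\|_{H^{3/2}(\Gamma)}^2\ge\|\mathbf{h}(\tau)\|_{L^2(\Gamma)}^2=|\Gamma|$, so that $\|\partial_t\mathbf{h}\|_{H^{3/2}}\le\frac12(1+\|\partial_t\mathbf{h}\|_{H^{3/2}}^2)\le C\|(\mathbf{h},\partial_t\mathbf{h})\|_{H^{3/2}}^2$. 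Your ``careful splitting'' indeed must refer to this; it is worth making it explicit. The paper sidesteps this entirely by taking the harmonic extension $\mathbf{h}_E$ of $\mathbf{h}$ itself (so $\partial_t\mathbf{h}_E$ is automatically harmonic), decomposing the flux as $\int_\Gamma\partial_\nu(\mathbf{d}-\mathbf{h}_E)\cdot\partial_t\mathbf{h}_E+\int_\Gamma\partial_\nu\mathbf{h}_E\cdot\partial_t\mathbf{h}_E$, and applying the \emph{second} Green identity to the first piece. Since $\mathbf{d}-\mathbf{h}_E$ vanishes on $\Gamma$ and $\Delta(\partial_t\mathbf{h}_E)=0$, all boundary and gradient cross terms cancel and the first piece reduces cleanly to $\int_\Omega\Delta\mathbf{d}\cdot\partial_t\mathbf{h}_E$, while the second piece is directly $\le C\|\mathbf{h}\|_{H^{3/2}}\|\partial_t\mathbf{h}\|_{H^{3/2}}$ by the trace pairing $H^{1/2}\times H^{-1/2}$. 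This produces the $\|\mathbf{h}\|^2+\|\partial_t\mathbf{h}\|^2$ source directly, without any appeal to $|\mathbf{h}|\equiv 1$. Both routes are valid; the paper's two-step decomposition is a bit cleaner. Your closing remark about needing $\Delta\mathbf{d}\in L^2$ for the formal calculation, and passing through approximation or regularity, is a legitimate point that the paper itself glosses over (the lemma is in practice applied to smooth approximate solutions in the proof of Theorem \ref{thm1}).
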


\begin{proof} Let ${\bf h}_E\in L^2_tH^2_x(Q_T,\mathbb R^3)$ 
be the harmonic extension of ${\bf h}$,
i.e., for all $t\in (0,T]$, 
\begin{align*}
\begin{cases}
\Delta{\bf h}_E(\cdot, t)=0 \ {\rm{in}}\ \Omega,\\
 {\bf h}_E(\cdot, t)={\bf h}(\cdot, t)
\ {\rm{on}}\ \Gamma.
\end{cases}
\end{align*}
Then we have that ${\bf h}_E, \partial_t{\bf h}_E\in L^2_tH^2_x(Q_T)$, and
\begin{align}\label{he-est0}
\begin{cases}\displaystyle
\big\|{\bf h}_E\big\|_{L^2_tH^2_x(Q_T)}\le C\big\| {\bf h}_E\big\|_{L^2_tH^\frac32_x(\Gamma_T)},\\
\big\|\partial_t{\bf h}_E\big\|_{L^2_t H^2_x(Q_T)}
\le C\big\|\partial_t{\bf h}_E\big\|_{L^2_tH^\frac32_x(\Gamma_T)}.
\end{cases}
\end{align}
Multiplying $\eqref{eq1.1}_{1}$ by $\mathbf{u}$, integrating over $\Omega$, and using 
\eqref{eq1.2}, we obtain that
\begin{align}\label{energyforu}
\frac{1}{2}\frac{d}{dt}\int_{\Omega} |\mathbf{u}|^{2}+\int_{\Omega} |\nabla \mathbf{u}|^{2}
=-\int_{\Omega} \langle\mathbf{u}\cdot\nabla \mathbf{d}, \Delta\mathbf{d}\rangle,
\end{align}
Multiplying $\eqref{eq1.1}_{3}$ by $\Delta \mathbf{d}+|\nabla{\bf d}|^2{\bf d}$ 
and integrating over $\Omega$ yields that
\begin{align}\label{energyford1}
&\frac{1}{2}\frac{d}{dt}\int_{\Omega} |\nabla \mathbf{d}|^{2}+\int_{\Omega} |\Delta \mathbf{d}+|\nabla{\bf d}|^2{\bf d}|^{2}\nonumber\\
&=\int_{\Omega} \langle\mathbf{u}\cdot\nabla \mathbf{d}, \Delta\mathbf{d}\rangle
+ \int_{\Gamma} \langle\frac{\partial{\bf d}}{\partial {\nu}}, \partial_{t}\mathbf{h}\rangle,
\end{align}
where ${\nu}$ is the outward unit normal vector of $\Gamma$. 

Adding \eqref{energyforu} with \eqref{energyford1}, we have that
\begin{align*}
&\frac{d}{dt}\int_{\Omega} \frac12(|\mathbf{u}|^{2}+|\nabla \mathbf{d}|^{2})
+\int_{\Omega} (|\nabla \mathbf{u}|^{2}+|\Delta\mathbf{d}+|\nabla{\bf d}|^2{\bf d}|^{2})\\
&=\int_{\Gamma} \langle\frac{\partial{\bf d}}{\partial {\nu}}, \partial_{t}\mathbf{h}\rangle.
\end{align*}
Now we estimate the right hand side as follows
\begin{align}\label{bdry-contr}
\int_{\Gamma} \langle\frac{\partial{\bf d}}{\partial {\nu}}, \partial_{t}\mathbf{h}\rangle
&=\int_{\Gamma} \langle\frac{\partial({\bf d}-{\bf h}_E)}{\partial {\nu}}, \partial_{t}\mathbf{h}_E\rangle
+\int_{\Gamma} \langle\frac{\partial{\bf h}_E}{\partial {\nu}}, \partial_{t}\mathbf{h}_E\rangle\nonumber\\
&=I+II.
\end{align}
It is easy to see that
\begin{align*}
|II|&\le C\|\frac{\partial{\bf h}_E}{\partial\nu}\|_{H^\frac12(\Gamma)}\|\partial_t{\bf h}\|_{H^{-\frac12}(\Gamma)}\\
&\le C\|{\bf h}_E\|_{H^2(\Omega)}\|\partial_t{\bf h}\|_{H^{\frac32}(\Gamma)}\\
&\le C\|{\bf h}\|_{H^\frac32(\Gamma)}\|\partial_t{\bf h}\|_{H^{\frac32}(\Gamma)}\\
&\le C\big(\|{\bf h}\|_{H^\frac32(\Gamma)}^2+\|\partial_t{\bf h}\|_{H^{\frac32}(\Gamma)}^2\big).
\end{align*}
While, by the second Green identity, we have
\begin{align*}
I&=\int_{\Gamma} \langle{\bf d}-{\bf h}_E, \frac{\partial}{\partial \nu} (\partial_{t}\mathbf{h}_E)\rangle\\
&\ \ \ +\int_\Omega \langle \Delta({\bf d}-{\bf h}_E), \partial_{t}\mathbf{h}_E\rangle
-\int_\Omega \langle {\bf d}-{\bf h}_E, \Delta(\partial_{t}\mathbf{h}_E)\rangle\\
&=\int_\Omega \langle \Delta{\bf d}, \partial_{t}\mathbf{h}_E\rangle\\
&=\int_\Omega \langle \Delta{\bf d}+|\nabla{\bf d}|^2 {\bf d}, \partial_{t}\mathbf{h}_E\rangle
-\int_\Omega \langle |\nabla{\bf d}|^2{\bf d}, \partial_{t}\mathbf{h}_E\rangle\\
&\le \frac12\int_\Omega |\Delta{\bf d}+|\nabla{\bf d}|^2{\bf d}|^2
+C\big(\int_\Omega |\partial_t{\bf h}_E|^2 +\|\partial_t {\bf h}_E\|_{L^\infty(\Omega)}\int_\Omega|\nabla{\bf d}|^2\big).
\end{align*}
For any $t\in (0,T)$, it follows from Sobolev's embedding theorem that
$$\|\partial_t {\bf h}_E(\cdot, t)\|_{L^\infty(\Omega)}\le C\|\partial_t {\bf h}_E(\cdot, t)\|_{H^2(\Omega)}
\le C\|\partial_t {\bf h}(\cdot, t)\|_{H^\frac32(\Gamma)},
$$
while
$$\int_\Omega |\partial_t{\bf h}_E(\cdot, t)|^2\le \|\partial_t{\bf h}_E(\cdot, t)\|_{H^2(\Omega)}^2
\le C\|\partial_t {\bf h}(\cdot, t)\|_{H^\frac32(\Gamma)}^2.
$$
Substituting these two estimates into I and then adding the resulting inequality with II, we obtain that
\begin{align*}
\int_{\Gamma} \langle\frac{\partial{\bf d}}{\partial {\nu}}, \partial_{t}\mathbf{h}\rangle
&\le \frac12\int_\Omega |\Delta{\bf d}+|\nabla{\bf d}|^2{\bf d}|^2+C\big(\|{\bf h}\|_{H^\frac32(\Gamma)}^2+\|\partial_t{\bf h}\|_{H^{\frac32}(\Gamma)}^2\big)\\
&\ \ \ +C\|\partial_t {\bf h}(\cdot, t)\|_{H^\frac32(\Gamma)}\int_\Omega|\nabla{\bf d}|^2.
\end{align*}
Putting this estimate into \eqref{bdry-contr}, we achieve
\begin{align*}
&\frac{d}{dt}\int_{\Omega} \frac12(|\mathbf{u}|^{2}+|\nabla \mathbf{d}|^{2})
+\frac12\int_{\Omega} (|\nabla \mathbf{u}|^{2}+|\Delta\mathbf{d}+|\nabla{\bf d}|^2{\bf d}|^{2})\\
&\le C\big(\|{\bf h}\|_{H^\frac32(\Gamma)}^2+\|\partial_t{\bf h}\|_{H^{\frac32}(\Gamma)}^2\big)
+C\|\partial_t {\bf h}(\cdot, t)\|_{H^\frac32(\Gamma)}\int_\Omega|\nabla{\bf d}|^2.
\end{align*}
Integrating this inequlaity over $[0,t]$ and applying the Gronwall's inequality yields
\eqref{energy1}. This completes the proof. 
\end{proof}

Next we will establish both interior and boundary generalized local energy inequalities 
for the system \eqref{eq1.1} -\eqref{eq1.3}. More precisely,

\begin{lemma}\label{lem11}
For $T>0$, assume $\mathbf{u}\in L^{\infty}([0,T], \mathbf{H})\cap L^{2}([0,T], \mathbf{V})$,
$\mathbf{d}\in L^{\infty}_tH^{1}_x(Q_T,\mathbb{S}^{2})\cap L^2_tH^2_x(Q_T,\mathbb S^2)$,
and $P\in L^{\frac{4}{3}}_tW^{1,\frac43}_x(Q_T)$ is a weak solution to the system \eqref{eq1.1}---\eqref{eq1.3}. 
Then, for any nonnegative $\phi\in C_{0}^{\infty}(\Omega)$ and  $0<s<t\leq T$, it holds that
\begin{align} \label{localenergy}
&\int_{\Omega}\! \phi(|\mathbf{u}|^{2}\!+\!|\nabla \mathbf{d}|^{2})(\cdot,t)
+2\!\!\int_{s}^{t}\!\int_{\Omega}\! \phi(|\nabla \mathbf{u}|^{2}+|\Delta\mathbf{d}+|\nabla\mathbf{d}|^{2}\mathbf{d}|^{2})\nonumber\\
&\leq \int_{\Omega} \phi(|\mathbf{u}|^{2}+|\nabla \mathbf{d}|^{2})(\cdot,s)
\nonumber\\
&+\int_{s}^{t}\!\int_{\Omega}\!|\nabla\phi|(|\mathbf{u}|^{3}\!+\!2|\nabla \mathbf{u}||\mathbf{u}|\!+\!
2|P\!-\!P_{\Omega}||\mathbf{u}|\!+\!|\nabla\mathbf{d}|^{2}|\mathbf{u}|\!
+\!2|\partial_{t}\mathbf{d}||\nabla \mathbf{d}|),
\end{align}
where $\displaystyle P_{\Omega}=\frac{1}{|\Omega|}\int_\Omega P$ is the average of $P$ over $\Omega$.
\end{lemma}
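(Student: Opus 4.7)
The plan is to produce two local identities --- one by multiplying the momentum equation by $\phi\mathbf{u}$ and another by multiplying the director equation by $\phi(\Delta\mathbf{d}+|\nabla\mathbf{d}|^{2}\mathbf{d})$ --- and then add them so that the coupling term $\int_{\Omega}\phi(\mathbf{u}\cdot\nabla)\mathbf{d}\cdot\Delta\mathbf{d}$ cancels between the two. Since $\phi\in C_{0}^{\infty}(\Omega)$, every integration by parts is boundary-free, which is precisely why the right-hand side of \eqref{localenergy} carries no contribution from $\mathbf{h}$, in sharp contrast to the global identity \eqref{energy1}.

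On the velocity side, integrating $-\int\phi\mathbf{u}\cdot\Delta\mathbf{u}$ by parts produces $\int\phi|\nabla\mathbf{u}|^{2}+\int\partial_{j}\phi\,u_{i}\,\partial_{j}u_{i}$; the divergence-free condition $\nabla\cdot\mathbf{u}=0$ converts the convective and pressure terms into $-\tfrac{1}{2}\int(\mathbf{u}\cdot\nabla\phi)|\mathbf{u}|^{2}$ and $-\int(P-P_{\Omega})(\mathbf{u}\cdot\nabla\phi)$ respectively (the mean $P_{\Omega}$ may be subtracted freely because $\int\mathbf{u}\cdot\nabla\phi=0$); and finally the identity $[\nabla\cdot(\nabla\mathbf{d}\odot\nabla\mathbf{d})]_{j}=\Delta\mathbf{d}\cdot\partial_{j}\mathbf{d}+\partial_{j}(\tfrac{1}{2}|\nabla\mathbf{d}|^{2})$ paired against $\phi\mathbf{u}$ contributes exactly $-\int\phi(\mathbf{u}\cdot\nabla)\mathbf{d}\cdot\Delta\mathbf{d}+\tfrac{1}{2}\int(\mathbf{u}\cdot\nabla\phi)|\nabla\mathbf{d}|^{2}$. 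On the director side, the constraint $|\mathbf{d}|=1$ forces $\mathbf{d}_{t}\cdot\mathbf{d}=(\mathbf{u}\cdot\nabla)\mathbf{d}\cdot\mathbf{d}=0$, so substituting the director equation in the form $\Delta\mathbf{d}+|\nabla\mathbf{d}|^{2}\mathbf{d}=\mathbf{d}_{t}+(\mathbf{u}\cdot\nabla)\mathbf{d}$ into its own scalar product against $\phi(\Delta\mathbf{d}+|\nabla\mathbf{d}|^{2}\mathbf{d})$ collapses the $|\nabla\mathbf{d}|^{2}\mathbf{d}$ factor and gives $\int\phi|\Delta\mathbf{d}+|\nabla\mathbf{d}|^{2}\mathbf{d}|^{2}=\int\phi\mathbf{d}_{t}\cdot\Delta\mathbf{d}+\int\phi(\mathbf{u}\cdot\nabla)\mathbf{d}\cdot\Delta\mathbf{d}$; integration by parts on the first term rewrites it as $-\tfrac{1}{2}\tfrac{d}{dt}\int\phi|\nabla\mathbf{d}|^{2}-\int\partial_{j}\phi\,\mathbf{d}_{t}\cdot\partial_{j}\mathbf{d}$. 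Summing the two identities cancels the coupling term $\int\phi(\mathbf{u}\cdot\nabla)\mathbf{d}\cdot\Delta\mathbf{d}$ exactly; doubling, integrating over $[s,t]$, and bounding each $\nabla\phi$-commutator by its absolute value yields \eqref{localenergy} with the stated constants $1,2,2,1,2$.

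The main step requiring care is the justification of this formal manipulation at the level of a weak solution. The hypothesis $\mathbf{d}\in L^{\infty}_{t}H^{1}_{x}\cap L^{2}_{t}H^{2}_{x}$ combined with $|\mathbf{d}|=1$ ensures $\Delta\mathbf{d}+|\nabla\mathbf{d}|^{2}\mathbf{d}\in L^{2}(Q_{T})$, and the director equation then yields $\mathbf{d}_{t}\in L^{2}(Q_{T})$, so every director-side manipulation is classical. For the velocity the equation only gives $\mathbf{u}_{t}\in L^{4/3}_{t}H^{-1}_{x}$, so to obtain a clean pointwise-in-time identity I would first replace $\mathbf{u}$ and $\mathbf{d}$ by their Steklov averages (or a smooth time-mollification), perform the above integrations by parts at the regularized level where all terms are classical, and pass to the limit using strong $L^{2}(Q_{T})$-convergence of $\mathbf{u}$, $\nabla\mathbf{d}$, and $\mathbf{d}_{t}$. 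This approximation step is the only genuine technical obstacle, but it is standard for coupled parabolic systems of this type once the director constraint $|\mathbf{d}|=1$ has been exploited to tame the quadratic Ericksen stress.
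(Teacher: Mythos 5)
Your proof is correct and follows the same strategy as the paper: multiply the momentum equation by $\phi\mathbf{u}$ and the director equation by $\pm\phi(\Delta\mathbf{d}+|\nabla\mathbf{d}|^{2}\mathbf{d})$, use $|\mathbf{d}|=1$ to drop the tangential factor $|\nabla\mathbf{d}|^{2}\mathbf{d}$, add so that the coupling term $\int_\Omega\phi\,(\mathbf{u}\cdot\nabla)\mathbf{d}\cdot\Delta\mathbf{d}$ cancels, and then dominate all the $\nabla\phi$-commutator terms by their absolute values before integrating in time. The paper does not address the regularization issue you raise for $\partial_t\mathbf{u}$ (it simply refers to the analogous computation in Lin--Lin--Wang), but your Steklov-average argument is the standard and correct way to justify these manipulations at the weak-solution level.
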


\begin{proof}
This proof is exactly same  as that of \cite{LLW} Lemma 4.2.  For reader's convenience, we sketch it here.
Multiplying $\eqref{eq1.1}_{1}$ by $\mathbf{u}\phi$ and integrating over $\Omega$ yields
\begin{align}\label{localforu}
&\frac{1}{2}\frac{d}{dt}\!\!\int_{\Omega} \!|\mathbf{u}|^{2}\phi +
\!\!\int_{\Omega}\! |\nabla \mathbf{u}|^{2}\phi\nonumber\\
&=\!\!
\int_{\Omega}\! (\frac{1}{2}|\mathbf{u}|^{2}\mathbf{u}\!-\langle\nabla\mathbf{u}, \mathbf{u}\rangle
+\!(P\!-\!P_{\Omega})\mathbf{u}+\!\frac{1}{2}|\nabla\mathbf{d}|^{2}\mathbf{u})\!\cdot\!\nabla\phi
\!-\langle\mathbf{u}\cdot\nabla\mathbf{d}, \Delta\mathbf{d}\rangle\phi.
\end{align}
On the other hand, multiplying $\eqref{eq1.1}_{3}$ by $-(\Delta \mathbf{d}+|\nabla\mathbf{d}|^{2}\mathbf{d})\phi$ and integrating over $\Omega$ implies
\begin{align} \label{localford1}
&\frac{1}{2}\frac{d}{dt}\int_{\Omega} |\nabla \mathbf{d}|^{2}\phi
+\int_{\Omega} |\Delta\mathbf{d}+|\nabla\mathbf{d}|^{2}\mathbf{d}|^{2}\phi\nonumber\\
&=\int_{\Omega} \langle\mathbf{u}\cdot\nabla \mathbf{d}, \Delta\mathbf{d}\rangle\phi
+\langle\partial_{t}\mathbf{d}, \nabla \mathbf{d}\rangle \cdot\nabla \phi.
\end{align}
Adding \eqref{localforu} with \eqref{localford1},  we obtain that
\begin{align*}
&\frac{d}{dt}\int_{\Omega} (|\mathbf{u}|^{2}+|\nabla \mathbf{d}|^{2})\phi
+2\int_{\Omega} (|\nabla \mathbf{u}|^{2}+|\Delta\mathbf{d}+|\nabla\mathbf{d}|^{2}\mathbf{d}|^{2})\phi\nonumber\\
\leq& \int_{\Omega} |\nabla\phi|(|\mathbf{u}|^{3}+2|\nabla \mathbf{u}||\mathbf{u}|+
2|P-P_{\Omega}||\mathbf{u}|+|\nabla\mathbf{d}|^{2}|\mathbf{u}|+2|\partial_{t}\mathbf{d}||\nabla \mathbf{d}|).
\end{align*}
\eqref{localenergy} follows by integrating this inequality over $[s,t]$.
\end{proof}

Next we will state the local generalized boundary energy inequality, whose proof is 
more delicate than 
\cite{LLW} Lemma 4.3.
\begin{lemma}\label{lem12}
For $T>0$, ${\bf h}\in L^{2}_tH^{\frac{3}{2}}_x(\Gamma_T,\mathbb{S}^{2}))$, 
$\partial_{t}\mathbf{h}\in L^{2}_tH^{\frac{3}{2}}_x(\Gamma_T)$ and
$(\mathbf{u}_{0},\mathbf{d}_{0})\in \mathbf{H}\times H^{1}(\Omega,\mathbb{S}^{2})$, 
assume $\mathbf{u}\in L^{\infty}([0,T], \mathbf{H})\cap L^{2}([0,T], \mathbf{V})$,
$\mathbf{d}\in L^{\infty}_tH^{1}_x(Q_T,\mathbb{S}^{2})\cap L^2_tH^2_x(Q_T,\mathbb S^2)$,
and $P\in L^{\frac{4}{3}}_tW^{1,\frac43}_x(Q_T)$ is a weak solution to the system \eqref{eq1.1}--\eqref{eq1.3}. There exists $r_{0}=r_{0}(\Gamma)>0$ such that for any ${x}_{0}\in\Gamma$, $0<r\leq r_{0}$, $0<s<t\leq T$,
if $0\le \phi\in C_{0}^{\infty}(B_{r}(x_{0}))$ then 
\begin{align}\label{bdyenergy}
&\int_{B_{r}^+({x}_{0})}\!\phi(|\mathbf{u}|^{2}\!+\!|\nabla \widehat{\mathbf{d}}|^{2})(\cdot,t)
+\!\!\int_{s}^{t}\!\int_{B_{r}^+({x}_{0})}\! \phi(|\nabla \mathbf{u}|^{2}+|\Delta\mathbf{d}+|\nabla\mathbf{d}|^{2}\mathbf{d}|^{2})
\nonumber\\
&\leq
\int_{B_{r}^+({x}_{0})}\!\! \phi(|\mathbf{u}|^{2}\!+\!|\nabla \widehat{\mathbf{d}}|^{2})(\cdot,s)
+\!\int_{s}^{t}\!\int_{B_{r}^+({x}_{0})}\!(|\nabla {\bf d}|^2|\partial_{t}\mathbf{h}_{E}|+|\partial_{t}\mathbf{h}_{E}|^{2})\phi
\nonumber\\
&+\!\int_{s}^{t}\!\int_{B_{r}^+({x}_{0})}\!|\nabla\phi|[
|{\bf u}|(|{\bf u}|^2+|\nabla \mathbf{u}|+|P\!-\!P_{\Omega}|+|\nabla\mathbf{d}|^{2})
+|\partial_{t}\widehat{\mathbf{d}}||\nabla \widehat{\mathbf{d}}|],
\end{align}
where  $\mathbf{h}_{E}(\cdot, t)$ is the harmonic  extension of ${\bf h}(\cdot,t)$ for $0<t\le T$, and
$\widehat{\mathbf{d}}=\mathbf{d}-\mathbf{h}_{E}$.
\end{lemma}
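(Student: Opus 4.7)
The plan is to adapt the interior proof of Lemma~\ref{lem11} to a boundary ball $B_r^+(x_0)$, using the shifted director $\widehat{\mathbf{d}} = \mathbf{d} - \mathbf{h}_E$ as the natural Dirichlet-vanishing quantity, in the spirit of \cite{LLW} Lemma 4.3 but now accommodating the time derivative of $\mathbf{h}_E$. The structural facts that make all the surface integrals produced by integration by parts harmless are: $\mathbf{u}|_\Gamma = 0$ (non-slip); $\widehat{\mathbf{d}}|_\Gamma = 0$, hence $\partial_t\widehat{\mathbf{d}}|_\Gamma = 0$; $\mathbf{h}_E(\cdot,t)$ is harmonic so that $\Delta\mathbf{d} = \Delta\widehat{\mathbf{d}}$; and $\phi\in C_0^\infty(B_r(x_0))$ vanishes on $S_r^+(x_0)$ together with $\partial B_r(x_0)$.

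First I would multiply $\eqref{eq1.1}_1$ by $\mathbf{u}\phi$ and integrate over $B_r^+(x_0)$. Rewriting $\nabla\cdot(\nabla\mathbf{d}\odot\nabla\mathbf{d}) = \Delta\mathbf{d}\cdot\nabla\mathbf{d} + \tfrac{1}{2}\nabla|\nabla\mathbf{d}|^2$ and folding the gradient term into the pressure, the calculation exactly mirrors the interior case, producing $\tfrac{1}{2}\tfrac{d}{dt}\int|\mathbf{u}|^2\phi + \int|\nabla\mathbf{u}|^2\phi$ on the left, the desired $|\nabla\phi|$-terms involving $|\mathbf{u}|$, $|P-P_\Omega|$, and $|\nabla\mathbf{d}|^2|\mathbf{u}|$ on the right, plus the coupling contribution $-\int\langle\mathbf{u}\cdot\nabla\mathbf{d},\Delta\mathbf{d}\rangle\phi$.

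Next I would multiply $\eqref{eq1.1}_3$ by $-(\Delta\mathbf{d} + |\nabla\mathbf{d}|^2\mathbf{d})\phi$ and integrate. Using $|\mathbf{d}|^2\equiv 1$ kills $\langle\partial_t\mathbf{d}, |\nabla\mathbf{d}|^2\mathbf{d}\rangle$, and writing $\partial_t\mathbf{d} = \partial_t\widehat{\mathbf{d}} + \partial_t\mathbf{h}_E$, $\Delta\mathbf{d} = \Delta\widehat{\mathbf{d}}$, integration by parts gives
\begin{align*}
-\int_{B_r^+(x_0)}\langle\partial_t\widehat{\mathbf{d}}, \Delta\widehat{\mathbf{d}}\rangle\phi
= \tfrac{1}{2}\tfrac{d}{dt}\int_{B_r^+(x_0)}|\nabla\widehat{\mathbf{d}}|^2\phi + \int_{B_r^+(x_0)}\langle\partial_t\widehat{\mathbf{d}}, \nabla\widehat{\mathbf{d}}\cdot\nabla\phi\rangle,
\end{align*}
with no boundary contribution by the structural facts above. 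Adding this identity to the velocity identity, the coupling terms $-\int\langle\mathbf{u}\cdot\nabla\mathbf{d},\Delta\mathbf{d}\rangle\phi$ cancel exactly as in the interior case.

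The only genuinely new feature, and the main obstacle, is the residual inhomogeneous term
\begin{align*}
-\int_{B_r^+(x_0)}\langle\partial_t\mathbf{h}_E, \Delta\widehat{\mathbf{d}}\rangle\phi = -\int_{B_r^+(x_0)}\langle\partial_t\mathbf{h}_E, \Delta\mathbf{d}\rangle\phi,
\end{align*}
which the time-dependent boundary condition injects. I would handle it by completing to the Ericksen--Leslie dissipation combination:
\begin{align*}
-\int\langle\partial_t\mathbf{h}_E,\Delta\mathbf{d}\rangle\phi = -\int\langle\partial_t\mathbf{h}_E,\Delta\mathbf{d}+|\nabla\mathbf{d}|^2\mathbf{d}\rangle\phi + \int|\nabla\mathbf{d}|^2\langle\partial_t\mathbf{h}_E,\mathbf{d}\rangle\phi,
\end{align*}
then absorbing the first piece into $\tfrac{1}{2}\int|\Delta\mathbf{d}+|\nabla\mathbf{d}|^2\mathbf{d}|^2\phi$ via Cauchy--Schwarz at the expense of $C\int|\partial_t\mathbf{h}_E|^2\phi$, and bounding the second by $\int|\partial_t\mathbf{h}_E||\nabla\mathbf{d}|^2\phi$ using $|\mathbf{d}|=1$. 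It is essential to route through $\Delta\mathbf{d}+|\nabla\mathbf{d}|^2\mathbf{d}$ rather than $\Delta\mathbf{d}$ alone, since only the former appears as dissipation on the left of \eqref{bdyenergy}; a naive Cauchy--Schwarz on $\Delta\mathbf{d}$ would produce an $\int|\Delta\mathbf{d}|^2\phi$ term that is not available. Integrating the resulting differential inequality over $[s,t]$ then yields \eqref{bdyenergy}.
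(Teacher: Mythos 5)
Your proposal is correct and reproduces the paper's argument essentially verbatim: the same shifted director $\widehat{\mathbf{d}}=\mathbf{d}-\mathbf{h}_E$, the same use of $\Delta\mathbf{h}_E=0$ and of $\partial_t\widehat{\mathbf{d}}\phi=0$ on $\partial B_r^+(x_0)$, and the same key decomposition $-\langle\partial_t\mathbf{h}_E,\Delta\mathbf{d}\rangle = -\langle\partial_t\mathbf{h}_E,\Delta\mathbf{d}+|\nabla\mathbf{d}|^2\mathbf{d}\rangle + |\nabla\mathbf{d}|^2\langle\partial_t\mathbf{h}_E,\mathbf{d}\rangle$ that the paper introduces at the end of \eqref{bdyford1}. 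Your explicit Cauchy--Schwarz absorption into half of $\int|\Delta\mathbf{d}+|\nabla\mathbf{d}|^2\mathbf{d}|^2\phi$ is precisely what the paper compresses into ``applying H\"older's inequality,'' and it correctly accounts for the coefficient $1$ (rather than $2$) on the dissipation term in \eqref{bdyenergy}.
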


\begin{proof}
Multiplying $\eqref{eq1.1}_{1}$ by $\mathbf{u}\phi$, integrating over $B_{r}^+({x}_{0})$,
and using $\mathbf{u}\phi={0}$ on $\partial B_{r}^+({x}_{0})$, we obtain that
\begin{align}\label{bdyforu}
&\frac{1}{2}\frac{d}{dt}\!\int_{B_{r}^+({x}_{0})} |\mathbf{u}|^{2}\phi +
\!\int_{B_{r}^+({x}_{0})} |\nabla \mathbf{u}|^{2}\phi\\
&=\!
\int_{B_{r}^+({x}_{0})} (\frac{1}{2}|\mathbf{u}|^{2}\mathbf{u}-\langle\nabla\mathbf{u}, \mathbf{u}\rangle
+(P-P_{\Omega})\mathbf{u}+\!\frac{1}{2}|\nabla\mathbf{d}|^{2}\mathbf{u})\!\cdot\!\nabla\phi\nonumber\\
&\ \ \ \ \ \ \ \ \ \ \ \ \ \ -\langle\mathbf{u}\cdot\nabla\mathbf{d}, \Delta\mathbf{d}\rangle\phi.\nonumber
\end{align}
Let $\widehat{\mathbf{d}}=\mathbf{d}-\mathbf{h}_{E}$. 
Then
\begin{align*}
\partial_{t}
\widehat{\mathbf{d}}\phi={0}\ \ {\rm{on}}\ \ \partial B_{r}^+({x}_{0}),
\end{align*}
and
\begin{align*}
&-\!\!\int_{B_{r}^+({x}_{0})}\langle\partial_{t}\mathbf{d},
\Delta\mathbf{d}\!+\!|\nabla\mathbf{d}|^{2}\mathbf{d}\rangle\phi=
-\!\!\int_{B_{r}^+({x}_{0})}\langle\partial_{t}\mathbf{d},\Delta\mathbf{d}\rangle\phi\\
&=-\!\!\int_{B_{r}^+({x}_{0})}\langle\partial_{t}\widehat{\mathbf{d}}+ \partial_{t}\mathbf{h}_{E},
\Delta\mathbf{d}\rangle\phi
\nonumber\\
&=-\!\!\int_{B_{r}^+({x}_{0})}\langle \partial_{t}\widehat{\mathbf{d}},
\Delta\widehat{\mathbf{d}}\rangle\phi 
-\int_{B_{r}^+({x}_{0})}\langle\partial_{t}\mathbf{h}_{E},\Delta\mathbf{d}\rangle\phi
\nonumber\\
&=\frac{1}{2}\frac{d}{dt}\int_{B_{r}^+({x}_{0})}
|\nabla\widehat{\mathbf{d}}|^{2}\phi
+\!\int_{B_{r}^+({x}_{0})}\langle\partial_{t}\widehat{\mathbf{d}},\nabla \widehat{\mathbf{d}}\rangle\cdot\!\nabla \phi
-\!\!\int_{B_{r}^+({x}_{0})}\langle \partial_{t}\mathbf{h}_{E}, \Delta\mathbf{d}\rangle\phi.
\end{align*}
Hence, after multiplying $\eqref{eq1.1}_{3}$ by $-(\Delta \mathbf{d}+|\nabla\mathbf{d}|^{2}\mathbf{d})\phi$ 
and integrating over $B_{r}^+({x}_{0})$, we have that
 \begin{align} \label{bdyford1}
&\frac{1}{2}\frac{d}{dt}\int_{B_{r}^+({x}_{0})} |\nabla \widehat{\mathbf{d}}|^{2}\phi
+\int_{B_{r}^+({x}_{0})} |\Delta\mathbf{d}+|\nabla\mathbf{d}|^{2}\mathbf{d}|^{2}\phi\nonumber\\
&=\int_{B_{r}^+({x}_{0})} [\langle\mathbf{u}\cdot\nabla \mathbf{d}, \Delta\mathbf{d}\rangle\phi
-\langle\partial_{t}\widehat{\mathbf{d}}, \nabla \widehat{\mathbf{d}}\rangle\cdot\nabla \phi
+\langle \partial_{t}\mathbf{h}_{E}, \Delta\mathbf{d}\rangle\phi]\nonumber\\
&=\int_{B_{r}^+({x}_{0})} [\langle\mathbf{u}\cdot\nabla \mathbf{d}, \Delta\mathbf{d}\rangle\phi
-\langle\partial_{t}\widehat{\mathbf{d}}, \nabla \widehat{\mathbf{d}}\rangle\cdot\nabla \phi
+\langle \partial_{t}\mathbf{h}_{E}, \Delta\mathbf{d}+|\nabla{\bf d}|^2{\bf d}\rangle\phi]\nonumber\\
&\ \ - \int_{B_r^+(x_0)}\langle \partial_{t}\mathbf{h}_{E}, |\nabla{\bf d}|^2{\bf d}\rangle\phi.
\end{align}
It is readily seen that \eqref{bdyenergy} follows by adding \eqref{bdyforu} with \eqref{bdyford1} 
and applying H\"{o}lder's inequality. The proof of  Lemma  \ref{lem12} is  now complete.
\end{proof}

We also need the following Lemma  on the estimate of pressure function $P$
that is assumed in both Lemma \ref{lem4} and  Lemma \ref{lem5}.

\begin{lemma}\label{lem13}
For $T>0$, assume $\mathbf{u}\!\in\! L^{\infty}([0,T],\mathbf{H})\cap L^{2}([0,T],\mathbf{V})$,
$\mathbf{d}\!\in\! L^{\infty}_tH^{1}_x(Q_T, \mathbb{S}^{2})\cap L^2_tH^2_x(Q_T,\mathbb S^2)$,
and $P\in L^{\frac{4}{3}}_tW^{1,\frac43}_x(Q_T)$ is a weak solution to 
the system \eqref{eq1.1}--\eqref{eq1.3}. Then it holds that for any $0<t\le T$,
\begin{align*}
&\max\!\big\{\|\nabla P\|_{L^\frac43(Q_{t})}, 
\|P\!-\!P_{\Omega}\|_{\!L^\frac43_tL^{4}_x(Q_t)}\big\}\\
&\!\leq C\big( \|\mathbf{u}\|_{L^{4}(Q_{t})} \|\nabla \mathbf{u}\|_{L^{2}(Q_{t})}
+\|\nabla \mathbf{d}\|_{L^{4}(Q_{t})}
\|\nabla^{2}\mathbf{d}\|_{L^{2}(Q_{t})}\big).
\end{align*}

\end{lemma}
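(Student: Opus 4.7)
The plan is to rewrite $\eqref{eq1.1}_1$ as an unsteady Stokes system with forcing term
$$\mathbf{F}:=-(\mathbf{u}\cdot\nabla)\mathbf{u}-\operatorname{div}(\nabla\mathbf{d}\odot\nabla\mathbf{d}),$$
bound $\mathbf{F}$ in $L^{4/3}(Q_t)$ by H\"older's inequality, apply the classical $L^p$-pressure estimate for Stokes to recover $\nabla P$, and then use the 2D Sobolev-Poincar\'e inequality to pass from $\nabla P\in L^{4/3}_x$ to $P-P_\Omega\in L^4_x$.

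In the first step, using $\tfrac{1}{4/3}=\tfrac14+\tfrac12$, H\"older's inequality gives $\|(\mathbf{u}\cdot\nabla)\mathbf{u}\|_{L^{4/3}(Q_t)}\le \|\mathbf{u}\|_{L^4(Q_t)}\|\nabla\mathbf{u}\|_{L^2(Q_t)}$; since $|\operatorname{div}(\nabla\mathbf{d}\odot\nabla\mathbf{d})|\le C|\nabla\mathbf{d}||\nabla^2\mathbf{d}|$ pointwise, an identical H\"older estimate yields $\|\operatorname{div}(\nabla\mathbf{d}\odot\nabla\mathbf{d})\|_{L^{4/3}(Q_t)}\le C\|\nabla\mathbf{d}\|_{L^4(Q_t)}\|\nabla^2\mathbf{d}\|_{L^2(Q_t)}$. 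Combining these bounds controls $\|\mathbf{F}\|_{L^{4/3}(Q_t)}$ by exactly the right-hand side of the claim. In the second step, I would extract an elliptic problem for $P$ by taking the divergence of $\eqref{eq1.1}_1$ (using $\nabla\cdot\mathbf{u}=0$ to eliminate $\mathbf{u}_t$ and $\Delta\mathbf{u}$ from the divergence) and the normal component on $\Gamma$ (using $\mathbf{u}|_\Gamma=\mathbf{u}_t|_\Gamma=(\mathbf{u}\cdot\nabla)\mathbf{u}|_\Gamma=0$), obtaining the Neumann problem
$$\Delta P=\nabla\cdot\mathbf{F}\ \text{in}\ \Omega,\qquad \partial_\nu P=(\mathbf{F}+\Delta\mathbf{u})\cdot\nu\ \text{on}\ \Gamma.$$
Equivalently, I can apply the Helmholtz decomposition on $L^{4/3}(\Omega;\mathbb{R}^2)$: since $\mathbf{u}_t$ is solenoidal with zero normal trace and hence lies in the kernel of the gradient part, the $L^{4/3}$-boundedness of the Leray projection together with Cattabriga-type pressure estimates for Stokes yields $\|\nabla P\|_{L^{4/3}(Q_t)}\le C\|\mathbf{F}\|_{L^{4/3}(Q_t)}$, which proves the first claimed inequality.

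For the final step, since $P-P_\Omega$ has zero mean on $\Omega$ and the Sobolev embedding $W^{1,4/3}(\Omega)\hookrightarrow L^4(\Omega)$ holds in dimension two, Poincar\'e's inequality gives $\|P-P_\Omega(\cdot,t)\|_{L^4(\Omega)}\le C\|\nabla P(\cdot,t)\|_{L^{4/3}(\Omega)}$ for a.e.\ $t\in(0,T)$; raising to the power $4/3$ and integrating in $t$ yields the second estimate. The main obstacle is the $L^{4/3}$-pressure estimate, because the parabolic term $\mathbf{u}_t-\Delta\mathbf{u}$ carries a latent gradient component, showing up concretely as the boundary contribution $\Delta\mathbf{u}\cdot\nu$ in the Neumann data; this is resolved by observing that $\nabla\cdot\mathbf{u}=0$ together with $\mathbf{u}|_\Gamma=0$ forces $\mathbf{u}_t$ to be solenoidal with vanishing normal trace and forces $\int_\Gamma\Delta\mathbf{u}\cdot\nu=\int_\Omega\Delta(\nabla\cdot\mathbf{u})=0$, so the Helmholtz projection cleanly isolates $\nabla P$ as the gradient part of $\mathbf{F}$ alone, with no contribution from $\mathbf{u}_t-\Delta\mathbf{u}$.
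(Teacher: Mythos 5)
Your Hölder step (bounding $\|\mathbf{F}\|_{L^{4/3}(Q_t)}$ by the right-hand side) and your final Sobolev--Poincar\'e step are correct, but the central pressure bound $\|\nabla P\|_{L^{4/3}}\le C\|\mathbf{F}\|_{L^{4/3}}$ has a genuine gap, and it sits exactly where you locate the obstacle. Applying $I-\mathbb{P}$ (the Helmholtz complementary projection) to the momentum equation does annihilate $\mathbf{u}_t$, since $\mathbf{u}_t$ is solenoidal with $\mathbf{u}_t\cdot\nu|_\Gamma=0$; but it leaves $\nabla P=(I-\mathbb{P})\Delta\mathbf{u}+(I-\mathbb{P})\mathbf{F}$, and the first term does \emph{not} vanish. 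Although $\Delta\mathbf{u}$ is divergence-free, its normal trace $\Delta\mathbf{u}\cdot\nu|_\Gamma=\partial_\tau\omega$ (the tangential derivative of the vorticity $\omega=\partial_1 u^2-\partial_2 u^1$) is generically nonzero, so $\Delta\mathbf{u}\notin L^p_\sigma$. The identity $\int_\Gamma\Delta\mathbf{u}\cdot\nu=0$ is merely the compatibility condition guaranteeing that the Neumann problem $\Delta q=0$ in $\Omega$, $\partial_\nu q=\Delta\mathbf{u}\cdot\nu$ on $\Gamma$ is solvable; it does not force $q$ to be constant. Hence $(I-\mathbb{P})\Delta\mathbf{u}=\nabla q$ is a genuinely nontrivial \emph{harmonic pressure} term, and your claim that ``the Helmholtz projection cleanly isolates $\nabla P$ as the gradient part of $\mathbf{F}$ alone'' is false.

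Beyond the logical gap, there is a regularity obstruction: to estimate $(I-\mathbb{P})\Delta\mathbf{u}$ in $L^{4/3}$ one would need $\Delta\mathbf{u}\in L^{4/3}(Q_t)$, i.e.\ $\mathbf{u}\in W^{2,0}_{4/3}(Q_t)$, which the hypotheses $\mathbf{u}\in L^\infty_tL^2_x\cap L^2_tH^1_x$ do not provide; the boundary data $\partial_\tau\omega$ is not even trace-well-defined at that regularity. Invoking Stokes maximal regularity instead would recover the harmonic piece, but at the cost of reintroducing the initial datum and the $W^{2,1}_{4/3}$-norm of $\mathbf{u}$ on the right-hand side --- terms that do not appear in the stated inequality. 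So the Cattabriga/Helmholtz route as you sketch it cannot close; one needs an argument that bypasses the $\Delta\mathbf{u}\cdot\nu$ boundary contribution entirely (for instance, by dualizing against Bogovskii lifts of mean-zero test functions inside the weak formulation, where the time derivative is handled by integration against $\partial_t\chi$ rather than by isolating $\mathbf{u}_t$ in $L^{4/3}$).
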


\subsection{Proof of Theorem \ref{thm1}}

In this subsection, we will establish the existence of a global weak solution to \eqref{eq1.1}--\eqref{eq1.3}. 
Let us first recall the following version of Ladyzhenskaya's inequality (see Struwe \cite{S1985} Lemma 3.1).

\begin{lemma}\label{lem14}
There exist $M_{0}>0$ and $r_{0}>0$ depending only on $\Omega$ such that for any $T>0$, 
if $f\in L^{\infty}([0,T],L^{2}(\Omega))\cap L^{2}([0,T],H^{1}(\Omega))$ then for $r\in(0,r_{0})$ it holds that for any $0<t\le T$
\begin{align*}
\int_{Q_{t}} |f|^{4}\leq M_{0}\sup_{(x,t)\in Q_{t}} \int_{\Omega\cap B_{r}(x)}|f|^{2} \big(\int_{Q_{t}} |\nabla f|^{2}+\frac{1}{r^{2}}\int_{Q_{t}} |f|^{2}\big).
\end{align*}
\end{lemma}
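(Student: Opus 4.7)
\medskip

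\noindent\textbf{Proof proposal for Lemma \ref{lem14}.}
The plan is to derive this as a covering consequence of the classical two-dimensional local Ladyzhenskaya/Gagliardo--Nirenberg inequality, integrated in time. The key point is that the exponent structure in two dimensions,
$$\|g\|_{L^4(B_r)}^4 \le C\|g\|_{L^2(B_r)}^2\,\|g\|_{H^1(B_r)}^2,$$
together with a scale-invariant form of the right-hand side, allows the ``small $L^2$ mass on balls'' factor to be pulled out as a supremum.

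The first step is to establish the corresponding \emph{local} spatial inequality on half-balls $B_r^+(x)=B_r(x)\cap\Omega$: there exist $r_0=r_0(\Omega)>0$ and $C=C(\Omega)>0$ such that for every $x\in\overline\Omega$, every $r\in(0,r_0)$, and every $g\in H^1(B_r^+(x))$,
\begin{equation}\label{local-Lad}
\int_{B_r^+(x)}|g|^4 \le C\Big(\int_{B_r^+(x)}|g|^2\Big)\Big(\int_{B_r^+(x)}|\nabla g|^2+\frac{1}{r^2}\int_{B_r^+(x)}|g|^2\Big).
\end{equation}
This is the standard 2D Gagliardo--Nirenberg inequality on a ball, where the $r^{-2}$ term arises from the correct scaling of the $L^2$ norm (which replaces $\|g\|_{L^2}^2$ in the $H^1$ norm on a ball of radius $r$). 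The restriction $r<r_0$ ensures that after a smooth bi-Lipschitz boundary flattening (so the constants depend only on $\Omega$), the inequality on $B_r^+(x)$ reduces to the inequality on an ordinary disk with uniform constants.

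Next, fix $r\in(0,r_0)$ and choose a finite cover $\{B_r(x_i)\}_{i=1}^{N}$ of $\overline\Omega$ with $x_i\in\overline\Omega$ and bounded overlap multiplicity $\kappa=\kappa(\Omega)$ independent of $r$ (such a cover is obtained, for instance, by a Vitali-type selection, so that one has $\sum_i \mathbf 1_{B_r(x_i)}\le \kappa$ on $\mathbb R^2$). Summing \eqref{local-Lad} with $g=f(\cdot,t)$ over $i$ yields, for almost every $t\in[0,T]$,
\begin{align*}
\int_\Omega |f(\cdot,t)|^4
&\le \sum_{i=1}^N \int_{B_r^+(x_i)}|f(\cdot,t)|^4\\
&\le C\Big(\sup_{x\in\overline\Omega}\int_{B_r^+(x)}|f(\cdot,t)|^2\Big)\sum_{i=1}^N\Big(\int_{B_r^+(x_i)}|\nabla f(\cdot,t)|^2+\frac{1}{r^2}\int_{B_r^+(x_i)}|f(\cdot,t)|^2\Big)\\
&\le C\kappa\,\Big(\sup_{x\in\overline\Omega}\int_{B_r^+(x)}|f(\cdot,t)|^2\Big)\Big(\int_\Omega|\nabla f(\cdot,t)|^2+\frac{1}{r^2}\int_\Omega|f(\cdot,t)|^2\Big).
\end{align*}
Finally, I would integrate in $t$ over $[0,t]$, bound the pointwise-in-time supremum inside the spatial supremum by the full supremum over $Q_t$, and use Fubini on the remaining factor to rewrite the right-hand side in terms of the space-time norms $\int_{Q_t}|\nabla f|^2$ and $\int_{Q_t}|f|^2$. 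Setting $M_0=C\kappa$ then gives the claimed inequality.

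The only real obstacle is the boundary version of \eqref{local-Lad}, i.e.\ making sure the constants are genuinely independent of $x\in\overline\Omega$ when $B_r(x)$ meets $\Gamma$; this is handled by the smoothness of $\Gamma$ (choosing $r_0$ smaller than the collar neighborhood width and less than the reciprocal of the $C^2$-bound on $\Gamma$), after which the flattening map has uniformly controlled Jacobians. Everything else is a routine covering argument combined with Fubini.
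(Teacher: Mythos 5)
Your argument is correct and reconstructs essentially the standard proof; the paper itself does not give a proof but cites Struwe \cite{S1985} Lemma 3.1, whose argument is precisely the covering scheme you describe (a finite cover of $\overline\Omega$ by balls of radius $r$ with overlap multiplicity bounded independently of $r$, the scale-correct 2D Gagliardo--Nirenberg inequality on each half-ball with the $r^{-2}$ lower-order term, pulling out the small local $L^2$ mass as a supremum, and integrating in time). The only point worth making explicit for completeness is the construction of the uniformly-finite-overlap cover, e.g.\ by taking a maximal $r/2$-separated set $\{x_i\}\subset\overline\Omega$, for which the disjointness of the $B_{r/4}(x_i)$ gives $\kappa\le 25$ independent of $r$ and $\Omega$.
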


Next we will show a lower bound estimate of the lift span of the short time smooth solutions in terms of
the local energy profile of the initial and boundary data. More precisely, we have

\begin{lemma}\label{lem15}
Let $\Omega\subset\mathbb R^2 $ be a bounded smooth domain,  $0<T<+\infty$,
${\bf u}_0\in C^{2,\alpha}(\overline\Omega, \mathbb R^2)$,
${\bf d}_0\in C^{2,\alpha}(\overline\Omega, \mathbb S^2)$,
and $\mathbf{h}\in C^{2+\alpha, 1+\frac{\alpha}2}(\Gamma_T,\mathbb S^2)$ satisfy
\eqref{comp_cond}.
Let $\varepsilon_{0}>0$ be the smaller constant
given by Lemma \ref{lem4} and Lemma \ref{lem5}. Then there exist $0<\varepsilon_{1}<\varepsilon_{0}$ 
and $$0<\theta_{0}= \theta_{0}\big(\varepsilon_{1},\|({\bf u}_0, \nabla {\bf d}_0)\|_{L^2(\Omega)},
\|(\mathbf{h},\partial_t{\bf h})\|_{L^2_tH^\frac32_x(\Gamma_{T})}\big)$$
 such that if $0<r_0<\varepsilon_1^4$ satisfies
 \begin{align*}
 \sup_{{x}\in\overline\Omega} \int_{\Omega\cap B_{2r_{0}}({x})}
 (|\mathbf{u}_{0}|^{2}+|\nabla
 \mathbf{d}_{0}|^{2})\leq\varepsilon_{1}^{2},
 \end{align*}
then there exist $T_{0}\geq \theta_{0}r_{0}^{2}$ and a unique solution 
$$(\mathbf{u}, \mathbf{d})\in C^{\infty}(Q_{T_0},
\mathbb{R}^{2}\times\mathbb{S}^{2})\cap C^{2+\alpha,1+\frac{\alpha}{2}}(\overline\Omega
\times [0, T_{0}], \mathbb{R}^{2}\times\mathbb{S}^{2})$$
to the system \eqref{eq1.1}, \eqref{eq1.2} and \eqref{eq1.3}.
Furthermore, it holds that
\begin{align}\label{prioriestimate}
\sup_{({x},t)\in\overline{\Omega}\times [0, T_0]}
 \int_{\Omega\cap B_{r_{0}}({x})}(|\mathbf{u}|^{2}+|\nabla \mathbf{d}|^{2})
 (\cdot,t) \leq 2\varepsilon_{1}^{2}.
\end{align}
\end{lemma}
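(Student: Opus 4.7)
The plan is to combine the short-time existence from Theorem \ref{thm6} with a continuity-type argument driven by the local energy inequalities of Lemma \ref{lem11} and Lemma \ref{lem12}. First I would invoke Theorem \ref{thm6} to produce a unique smooth solution $(\mathbf{u},\mathbf{d})$ on some interval $[0,T_{*}]$ (the value of $T_{*}$ depending on the $C^{2,\alpha}$-norms of the data, which is weaker than what we want). Define
\begin{align*}
T_{0}=\sup\Big\{t\in[0,\min(T_{*},T)]:\ \sup_{x\in\overline{\Omega}}\int_{\Omega\cap B_{r_{0}}(x)}(|\mathbf{u}|^{2}+|\nabla\mathbf{d}|^{2})(\cdot,t)\le 2\varepsilon_{1}^{2}\Big\}.
\end{align*}
Continuity and the hypothesis $\sup_{x}\int_{B_{2r_{0}}(x)\cap\Omega}(|\mathbf{u}_{0}|^{2}+|\nabla\mathbf{d}_{0}|^{2})\le\varepsilon_{1}^{2}$ ensure $T_{0}>0$, and at the right endpoint the supremum must actually equal $2\varepsilon_{1}^{2}$ if $T_{0}<T_{*}$. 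The goal is to rule out $T_{0}<\theta_{0}r_{0}^{2}$ for a $\theta_{0}$ to be chosen.

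On $[0,T_{0}]$ the smallness of the local energy combined with Lemma \ref{lem10} and the Ladyzhenskaya-type estimate of Lemma \ref{lem14} (applied to $\mathbf{u}$ and $\nabla\mathbf{d}$) yields
\begin{align*}
\|\mathbf{u}\|_{L^{4}(Q_{T_{0}})}^{4}+\|\nabla\mathbf{d}\|_{L^{4}(Q_{T_{0}})}^{4}\le C M_{0}\,\varepsilon_{1}^{2}\Bigl(E_{0}+\tfrac{1}{r_{0}^{2}}\,T_{0}\,E_{0}\Bigr),
\end{align*}
where $E_{0}$ is the energy bound coming from Lemma \ref{lem10} (which depends on $\|(\mathbf{u}_{0},\nabla\mathbf{d}_{0})\|_{L^{2}}$ and on $\|(\mathbf{h},\partial_{t}\mathbf{h})\|_{L^{2}_{t}H^{3/2}_{x}(\Gamma_{T})}$). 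Lemma \ref{lem13} then controls $\|\nabla P\|_{L^{4/3}}+\|P-P_{\Omega}\|_{L^{4/3}_{t}L^{4}_{x}}$ by these $L^{4}$ norms times $\|\nabla\mathbf{u}\|_{L^{2}}+\|\nabla^{2}\mathbf{d}\|_{L^{2}}$, and the latter is bounded by the dissipation term in Lemma \ref{lem10}.

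Next, for each $x_{0}\in\overline{\Omega}$ I would pick a cutoff $\phi\in C_{0}^{\infty}(B_{2r_{0}}(x_{0}))$ with $\phi\equiv1$ on $B_{r_{0}}(x_{0})$ and $|\nabla\phi|\le C/r_{0}$, and apply Lemma \ref{lem11} when $B_{2r_{0}}(x_{0})\Subset\Omega$ and Lemma \ref{lem12} otherwise. Every nonlinear source on the right-hand side, e.g. $\int_{Q_{T_{0}}}|\nabla\phi|(|\mathbf{u}|^{3}+|\mathbf{u}||P-P_{\Omega}|+|\mathbf{u}||\nabla\mathbf{d}|^{2}+|\partial_{t}\mathbf{d}||\nabla\mathbf{d}|)$, is dominated via H\"older's inequality in space--time by a product of one of the controlled $L^{4}$ or $L^{4/3}$ norms with a factor like $r_{0}^{-1}|Q_{T_{0}}|^{\sigma}$ for some $\sigma>0$. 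Together with the previous step, each such term carries a positive power of $T_{0}/r_{0}^{2}$, which is at most $\theta_{0}^{\sigma}$. Then the boundary contributions in Lemma \ref{lem12} involving $\partial_{t}\mathbf{h}_{E}$ and $|\nabla\mathbf{d}|^{2}\,|\partial_{t}\mathbf{h}_{E}|$ are handled using the harmonic extension estimates \eqref{he-est0} and the Sobolev embedding $H^{3/2}(\Gamma)\hookrightarrow L^{\infty}(\Gamma)$ at the level of the extension, combined with the smallness of $T_{0}$.

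Putting these estimates into the local energy inequality evaluated on $B_{r_{0}}(x_{0})$ gives
\begin{align*}
\int_{\Omega\cap B_{r_{0}}(x_{0})}(|\mathbf{u}|^{2}+|\nabla\mathbf{d}|^{2})(\cdot,T_{0})\le\varepsilon_{1}^{2}+C\bigl(\theta_{0}^{\sigma}+\|\mathbf{h}-\mathbf{h}_{E}(\cdot,0)\|\cdots\bigr),
\end{align*}
uniformly in $x_{0}$, so choosing $\theta_{0}$ small in terms of $\varepsilon_{1}$, $\|(\mathbf{u}_{0},\nabla\mathbf{d}_{0})\|_{L^{2}}$ and $\|(\mathbf{h},\partial_{t}\mathbf{h})\|_{L^{2}_{t}H^{3/2}_{x}(\Gamma_{T})}$ makes the right side strictly less than $2\varepsilon_{1}^{2}$, contradicting the definition of $T_{0}$. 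Smoothness on $[0,T_{0}]$ is inherited from Theorem \ref{thm6} together with the $\varepsilon_{0}$-regularity given by Lemma \ref{lem4} and Lemma \ref{lem5}, since $\varepsilon_{1}<\varepsilon_{0}$ and the local $L^{4}$-norms are bounded by $\varepsilon_{0}$ on the relevant parabolic cylinders. The main obstacle, I expect, is the careful treatment of the boundary energy inequality: unlike the autonomous case in \cite{LLW}, the term $\int|\nabla\mathbf{d}|^{2}\,|\partial_{t}\mathbf{h}_{E}|\phi$ in \eqref{bdyenergy} is genuinely new and must be absorbed into $\|\nabla\mathbf{d}\|_{L^{4}}^{2}$ using a factor of $\|\partial_{t}\mathbf{h}\|_{L^{2}_{t}H^{3/2}_{x}}$ whose $t$-integration contributes a $T_{0}^{1/2}$ smallness, ensuring the argument closes.
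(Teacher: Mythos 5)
Your proposal follows essentially the same route as the paper: short-time smooth existence from Theorem \ref{thm6}, a maximal-time definition tied to the local energy threshold, $L^{4}$ control on $(\mathbf{u},\nabla\mathbf{d})$ via Lemma \ref{lem14} together with the global bound of Lemma \ref{lem10}, the pressure estimate of Lemma \ref{lem13}, and then the interior and boundary local energy inequalities (Lemmas \ref{lem11} and \ref{lem12}) with cutoff functions to close a contradiction argument and extract a lower bound of the form $T_{0}\ge\theta_{0}r_{0}^{2}$. You also correctly identify the boundary term involving $\partial_{t}\mathbf{h}_{E}$ as the genuinely new difficulty, and your strategy of handling it via the harmonic extension and the $t$-integration smallness is the one used in the paper.

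One small point worth flagging: in the boundary case the paper works with $\widehat{\mathbf{d}}=\mathbf{d}-\mathbf{h}_{E}$ in \eqref{bdyenergy}, so converting between $|\nabla\widehat{\mathbf{d}}|^{2}$ and $|\nabla\mathbf{d}|^{2}$ in \eqref{Eng01}--\eqref{Eng02} costs a multiplicative factor of $(\tfrac54)^{2}$ plus $O(r_{0})$ corrections (absorbed using $r_{0}<\varepsilon_{1}^{4}$). Your final display records only ``$\varepsilon_{1}^{2}+\cdots$'' on the right, whereas the paper obtains ``$(\tfrac54)^{2}\varepsilon_{1}^{2}+\cdots$''; the contradiction still closes because $(\tfrac54)^{2}<2$, but the precise constant matters for fixing $\theta_{0}$ as in \eqref{Eng03}. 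Your expression ``$\|\mathbf{h}-\mathbf{h}_{E}(\cdot,0)\|$'' in the final inequality is not the correct object; the relevant correction is the Dirichlet energy of $\nabla\mathbf{h}_{E}$ over the small half-ball, of size $O(r_{0})$. These are points of precision rather than logical gaps.
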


\begin{proof}
Since  $\mathbf{h}\in C^{2+\alpha,1+\frac{\alpha}{2}}(\Gamma_{T}, \mathbb{S}^{2})$ and
$(\mathbf{u}_{0},\mathbf{d}_{0})\in C^{2,\alpha}(\overline\Omega, \mathbb{R}^{2}\times \mathbb{S}^{2})$, 
Theorem \ref{thm6} implies that there exist $0<T_{0}\le T$ and a unique smooth solution
\begin{align*}
(\mathbf{u}, \mathbf{d})\in C^{\infty}(Q_{T_0}, \mathbb{R}^{2}\times\mathbb{S}^{2})
 \cap C^{2+\alpha,1+\frac{\alpha}{2}}(\overline\Omega\times [0, T_{0}], \mathbb{R}^{2}\times\mathbb{S}^{2})
\end{align*}
to the system \eqref{eq1.1}--\eqref{eq1.3}. Let $0<t_{0}\leq T_{0}$ be the maximal time such that
\begin{align}\label{maximaltime}
\sup_{0\leq t\leq t_{0}}\sup_{{x}\in\overline{\Omega}}
 \int_{\Omega\cap B_{r_{0}}({x})}(|\mathbf{u}|^{2}+|\nabla \mathbf{d}|^{2})(\cdot,t) \leq 2\varepsilon_{1}^{2}.
\end{align}
Then we must have that
\begin{align*}
\sup_{{x}\in\overline{\Omega}}
 \int_{\Omega\cap B_{r_{0}}({x})}(|\mathbf{u}|^{2}+|\nabla \mathbf{d}|^{2})(\cdot,t_{0}) = 2\varepsilon_{1}^{2}.
\end{align*}
In what follows, we will estimate the lower bound of $t_{0}$. Without loss of generality, 
we may assume $t_{0}\leq r_{0}^{2}$.  Denote by
$$\mathcal{E}(t)=\int_{\Omega}(|\mathbf{u}|^{2}+
 |\nabla\mathbf{d}|^{2})(\cdot,t) \ {\rm{for}}\  0<t\le T, \ {\rm{and}}\ \mathcal{E}_{0}
 =\int_\Omega(|\mathbf{u}_{0}|^{2}+|\nabla\mathbf{d}_{0}|^{2}).$$
From Lemma \ref{lem10}, we have that for $0<t\leq t_{0}$
\begin{align}\label{energytot}
\mathcal{E}(t)+\int_{Q_t} (|\nabla\mathbf{u}|^{2}+|\Delta \mathbf{d}+|\nabla{\bf d}|^2{\bf d}|^{2})
&\leq \psi(T)\big(\mathcal{E}_{0}+C\|(\mathbf{h},\partial_t {\bf h})\|_{L^2_tH^\frac32_x(\Gamma_T)}^{2}\big)\nonumber\\
&\leq C(T),
\end{align}
where $C(T)>0$ depends on $T, \mathcal{E}_0$, $\|(\mathbf{h},\partial_t {\bf h})\|_{L^2_tH^\frac32_x(\Gamma_T)}$,
and
$$\psi(T)\equiv\exp\big(C\int_0^T \|\partial_t{\bf h}(\tau)\|_{H^\frac32(\Gamma)}\,d\tau\big).$$
Hence by Lemma \ref{lem14} and \eqref{energytot} we have that for $0<t\le t_0 \le r_0^2$, 
\begin{align*}
&\int_{Q_{t}}|\nabla \mathbf{d}|^{4}\leq M_{0} 
\sup_{({x},\tau)\in Q_{t}}
\int_{\Omega\cap B_{r_{0}}({x})}|\nabla\mathbf{d}|^{2}(\tau)
\Big(\int_{Q_{t}}|\Delta\mathbf{d}|^{2}+ \frac{1}{r_{0}^{2}} \int_{Q_t} |\nabla\mathbf{d}|^{2}\Big)
\nonumber\\
&\leq  2M_{0}\varepsilon_1^2
\Big(\int_{Q_{t}} |\Delta\mathbf{d}+|\nabla{\bf d}|^2{\bf d}|^{2}+\int_{Q_t}|\nabla{\bf d}|^4+
\frac{C(T)t}{ r_{0}^{2}}\Big)\\
&\leq CM_0\varepsilon_1^2\big(C(T)+\int_{Q_t}|\nabla{\bf d}|^4\big),
\end{align*}
which implies that
\begin{align}\label{L4ford}
&\int_{Q_{t}}|\nabla \mathbf{d}|^{4}\leq \frac{C(T)\varepsilon_1^2}
{1-C(T)\varepsilon_{1}^{2}}\leq C(T)\varepsilon_1^2,
\end{align}
provided 
$0<\varepsilon_{1}^{2}<\frac{1}{2C(T)}$.

It follows from \eqref{L4ford} and \eqref{energytot} that 
\begin{align}\label{h2bound}
\int_{Q_t}|\nabla^2{\bf d}|^2\le C(T), \ \forall\ t\in (0,T].
\end{align}

On the other hand, we  can  estimate
\begin{align}\label{L4foru}
&\int_{Q_{t}}| \mathbf{u}|^{4}\leq M_{0} 
\sup_{({x},\tau)\in Q_{t}}
\int_{\Omega\cap B_{r_{0}}({x})}|\mathbf{u}|^{2}(\tau)
\big(\int_{Q_{t}}|\nabla\mathbf{u}|^{2}+ \frac{1}{r_{0}^{2}} \int_{Q_t} |\mathbf{u}|^{2}\big)
\nonumber\\
&\leq  2M_{0}\varepsilon_1^2
(\int_{Q_{t}} |\nabla\mathbf{u}|^{2}+\frac{C(T)t}{ r_{0}^{2}})
\nonumber\\
&\leq C(T)\varepsilon_{1}^{2}.
\end{align}
It follows from \eqref{energytot}, \eqref{L4ford}, \eqref{h2bound}, \eqref{L4foru},
and Lemma \ref{lem13} that 
\begin{align}\label{p-bound}
\big\|P-P_\Omega\big\|_{L^\frac43_tL^4_x(Q_t)}\le C(T)\varepsilon_1^\frac12, 
\ \forall\ t\in (0,T].
\end{align}
From  $\partial_t {\bf d}=-{\bf u}\cdot\nabla{\bf d}+(\Delta{\bf d}+|\nabla{\bf d}|^2{\bf d})$
and \eqref{energytot}, \eqref{L4ford}, \eqref{L4foru}, we have that
\begin{align}\label{l2fordt}
\big\|\partial_t{\bf d}\big\|_{L^2(Q_t)}&\leq C\big(\|{\bf u}\|_{L^4(Q_t)}\|\nabla{\bf d}\|_{L^4(Q_t)}
+\|\Delta{\bf d}+|\nabla{\bf d}|^2{\bf d}\|_{L^2(Q_t)}\big)\nonumber\\
&\le C(T), \ \forall\ t\in (0,T].
\end{align}

Now we are ready to refine the estimate of the quantity 
$$\displaystyle\max_{x\in\overline\Omega}\int_{\Omega\cap B_{r_0}(x)}
(|{\bf u}|^2+|\nabla{\bf d}|^2)(t), \ 0\le t\le t_0.$$ 
To do it, for any ${x}\in\overline{\Omega}$, let $\phi\in C_{0}^{\infty}(B_{2r_{0}}({x}))$ 
be a cut-off function of $B_{r_{0}}({x})$ such that
\begin{align*}
0\leq \phi\leq 1;\ \phi=1 \text{ in } B_{r_{0}}({x});\  \phi=0 \text{ outside } B_{2r_{0}}({x}); 
\ {\rm{and}}\ |\nabla\phi|\leq \frac{4}{r_{0}}.
\end{align*}
Applying  Lemma \ref{lem11}, we see that for any $B_{2r_0}(x)\subset\Omega$, it holds that
\begin{align} \label{Eng}
&\sup_{0\leq t\leq t_0} \int_{B_{r_0}(x)} (|\mathbf{u}|^{2}+|\nabla \mathbf{d}|^{2})
-\int_{B_{2r_0}(x)}(|{\bf u}_0|^2+|\nabla{\bf d}_0|^2)\nonumber\\
&\leq \sup_{0\leq t\leq t_0} \int_{B_{2r_0}(x)} \phi(|\mathbf{u}|^{2}+|\nabla \mathbf{d}|^{2})
-\int_{B_{2r_0}(x)}\phi(|{\bf u}_0|^2+|\nabla{\bf d}_0|^2)\nonumber\\
&\leq C\int_{0}^{t_{0}}\int_{B_{2r_{0}}({x})}|\nabla\phi| (|\mathbf{u}|^{3}+|\nabla\mathbf{u}||\mathbf{u}|+|P-P_{\Omega}||\mathbf{u}|\nonumber\\
&\qquad\qquad\qquad\qquad\ \ \ \ \ \ \ \ +|\nabla\mathbf{d}|^{2}|\mathbf{u}|+|\partial_{t}\mathbf{d}||\nabla\mathbf{d}|)
\nonumber\\
&\leq C(\frac{t_{0}}{r_{0}^{2}})^{\!\frac{1}{4}}\!
\Big[\!\|\mathbf{u}\|_{\!L^{4}(Q_{t_{0}})}^{3}\!+\!\|\nabla \mathbf{u}\|_{\!L^{2}(Q_{t_{0}})} \|\mathbf{u}\|_{\!L^{4}(\Omega_{t_{0}})}\nonumber\\
&\qquad\qquad+\|\nabla\mathbf{d}\|_{\!L^{4}(Q_{t_{0}})}^{2} \|\mathbf{u}\|_{\!L^{4}(Q_{t_{0}})}
+\|\partial_{t}\mathbf{d}\|_{\!L^{2}(Q_{t_{0}})}
\|\nabla\mathbf{d}\|_{\!L^{4}(Q_{t_{0}})}
\nonumber\\
&\ \ \ \qquad\quad+\big\|P-P_{\Omega}\big\|_{L^{\frac43}_tL^{4}_x(Q_{t_0})}
\|\mathbf{u}\|_{L^{\infty}_tL^2_x(B_{2r_0}(x)\times [0, t_0])}\Big]\nonumber\\
&\leq C(\frac{t_{0}}{r_{0}^{2}})^{\!\frac{1}{4}}\varepsilon_1^\frac12,
\end{align}
where we have used \eqref{maximaltime}, \eqref{energytot}, \eqref{L4ford},
\eqref{L4foru}, \eqref{p-bound}, and \eqref{l2fordt} in the last step.

For $B_{2r_0}(x_0)\cap\Gamma\not=\emptyset$, we can apply 
\eqref{bdyenergy} of Lemma \ref{lem12} to get that
\begin{align}\label{Eng0}
&\sup_{0\le t\le t_0}\int_{\Omega\cap B_{r_0}(x_0)} (|{\bf u}|^2+|\nabla\widehat{{\bf d}}|^2)
-\int_{\Omega\cap B_{2r_0}(x_0)} (|{\bf u}_0|^2+|\nabla\widehat{{\bf d}}_0|^2)\nonumber\\
&\leq \sup_{0\le t\le t_0}\int_{\Omega\cap B_{2r_0}(x_0)} \phi(|{\bf u}|^2+|\nabla\widehat{{\bf d}}|^2)
-\int_{\Omega\cap B_{2r_0}(x_0)} \phi(|{\bf u}_0|^2+|\nabla\widehat{{\bf d}}_0|^2)\nonumber\\
&\leq \int_0^{t_0}\int_{\Omega\cap B_{2r_0}({x}_{0})}
\phi (|\nabla {\bf d}|^2|\partial_{t}\mathbf{h}_{E}|+|\partial_{t}\mathbf{h}_{E}|^{2})
+|\nabla\phi||\partial_{t}\widehat{\mathbf{d}}||\nabla \widehat{\mathbf{d}}|
\nonumber\\
&\ \ +\!\int_{0}^{t_0}\int_{\Omega\cap B_{2r_0}({x}_{0})}\!|\nabla\phi||{\bf u}|(|\mathbf{u}|^2
+|\nabla \mathbf{u}|+|P\!-\!P_{\Omega}|+|\nabla\mathbf{d}|^{2})\nonumber\\
&=I+II+III.
\end{align}
As in \eqref{Eng}, we can estimate $III$ by
$$|III|\le C(\frac{t_0}{r_0^2})^\frac14 \varepsilon_1^\frac12.$$ 
From $\partial_t {\bf h}_E\in L^2_tH^2_x(Q_T)$ and 
the Sobolev embedding theorem, we have that $\partial_t {\bf h}_E\in L^2_tL^\infty_x(Q_T)$,
and
$$\big\|\partial_t {\bf h}_E\big\|_{L^2_tL^\infty_x(Q_T)}
\leq C\big\|\partial_t {\bf h}_E\big\|_{L^2_tH^2_x(Q_T)}
\leq C\big\|\partial_t {\bf h}\big\|_{L^2_tH^{\frac32}_x(\Gamma_T)}.$$ 
Since ${\bf h}\in L^2_tH^\frac32_x(\Gamma_T)$ and $\partial_t{\bf h}\in L^2_tH^\frac32_x(\Gamma_T)$,
 ${\bf h}\in C([0,T], H^\frac32(\Gamma))$ and
$$\big\|{\bf h}\big\|_{L^\infty_tH^\frac32_x(\Gamma_T)}
\le C\big(T, \|{\bf h}\|_{L^2_tH^\frac32_x(\Gamma_T)}, \|\partial_t{\bf h}\|_{L^2_tH^\frac32_x(\Gamma_T)}\big).$$
This, combined with the fact that ${\bf h}_E(\cdot, t)$ is a harmonic extension of $h(\cdot,t)$
for $t\in [0,T]$, implies that ${\bf h}_E\in L^\infty_tH^2_x(Q_T)$ and hence by Sobolev embedding theorem we 
obtain that
\begin{align}\label{he-est}
\big\|\nabla{\bf h}_E\big\|_{L^\infty_tL^4_x(Q_T)}&\leq C
\big\|{\bf h}_E\big\|_{L^\infty_tH^2_x(Q_T)}\le C\big\|{\bf h}\big\|_{L^\infty_tH^\frac32_x(\Gamma_T)}\nonumber\\
&\le C\big(T, \|{\bf h}\|_{L^2_tH^\frac32_x(\Gamma_T)}, \|\partial_t{\bf h}\|_{L^2_tH^\frac32_x(\Gamma_T)}\big).
\end{align}
We also have that
$$\|\nabla {\bf h}_E(t)\|_{L^2(\Omega)}\le \|\nabla {\bf d}(t)\|_{L^2(\Omega)},
\ 0\le t\le T.$$
Hence 
\begin{align*}
|I|&\leq C\|\partial_t{\bf h}_E\|_{L^2_tL^\infty_x(Q_{t_0})}
\big(\sup_{0\le t\le t_0}\int_{\Omega\cap B_{2r_0}(x_0)}|\nabla{\bf d}|^2\big)t_0^\frac12\\
&\ \ +C\|\partial_t{\bf h}_E\|_{L^2_tL^\infty_x(Q_{t_0})}^2 r_0^2\\
&\leq C\big(\|\partial_t{\bf h}\big\|_{L^2_tH^\frac32_x(\Gamma_T)}\varepsilon_1^2 t_0^\frac12
+\|\partial_t{\bf h}\big\|_{L^2_tH^\frac32_x(\Gamma_T)}^2r_0^2\big)\\
&\le C(\varepsilon_1^2r_0+r_0^2).
\end{align*}
While $II$ can be estimated as follows.
\begin{align*}
|II|&\le C\int_{0}^{t_0}\int_{\Omega\cap B_{2r_0}(x_0)}
|\nabla\phi|(|\partial_t {\bf d}||\nabla{\bf d}|+|\partial_t {\bf d}||\nabla{\bf h}_E|\\
&\ \ \ \qquad\qquad\qquad\qquad\qquad+|\partial_t {\bf h}_E||\nabla{\bf d}|+|\partial_t {\bf h}_E||\nabla{\bf h}_E|)\\
&\le C(\frac{t_0}{r_0^2})^\frac14\|\partial_t{\bf d}\|_{L^2(Q_{t_0})}\|\nabla{\bf d}\|_{L^4(Q_{t_0})}\\
&\ \ +C\frac{t_0^\frac12}{r_0}\|\partial_t{\bf d}\|_{L^2(Q_{t_0})}\|\nabla{\bf h}_E\|_{L^\infty_tL^2_x(Q_{t_0})}\\
&\ \ +C {t_0}^\frac12\|\partial_t{\bf h}_E\|_{L^2_tL^\infty_x(Q_{t_0})}\|\nabla{\bf d}\|_{L^\infty_tL^2_x(Q_{t_0})}\\
&\leq C\big[\varepsilon_1^\frac12(\frac{t_0}{r_0^2})^\frac14+\frac{t_0^\frac12}{r_0}+{t_0}^\frac12\big]
\le C\big[\varepsilon_1^\frac12(\frac{t_0}{r_0^2})^\frac14+t_0^\frac12\big].
\end{align*}
Putting these estimates of $I, II,$ and $III$ into \eqref{Eng0} yields that
\begin{align}\label{Eng01}
&\sup_{0\le t\le t_0}\int_{\Omega\cap B_{r_0}(x_0)} (|{\bf u}|^2+|\nabla\widehat{{\bf d}}|^2)
-\int_{\Omega\cap B_{2r_0}(x_0)} (|{\bf u}_0|^2+|\nabla\widehat{{\bf d}}_0|^2)\nonumber\\
&\leq C\big[t_0^\frac12+\varepsilon_1^\frac12(\frac{t_0}{r_0^2})^\frac14\big].
\end{align}
Applying \eqref{he-est}, we can estimate
\begin{align*}&\int_{\Omega\cap B_{r_0}(x_0)} (|{\bf u}|^2+|\nabla\widehat{{\bf d}}|^2)\\
&\ge 
\frac45\int_{\Omega\cap B_{r_0}(x_0)} (|{\bf u}|^2+|\nabla{\bf d}|^2)
-C\int_{\Omega\cap B_{r_0}(x_0)} |\nabla{\bf h}_E|^2\\
&\ge \frac45\int_{\Omega\cap B_{r_0}(x_0)} (|{\bf u}|^2+|\nabla{\bf d}|^2)
-C\|{\bf h}_E\|_{L^\infty_tH^2_x(Q_T)}^2r_0\\
&\ge \frac45\int_{\Omega\cap B_{r_0}(x_0)} (|{\bf u}|^2+|\nabla{\bf d}|^2)
-Cr_0, \ \forall \ t\in [0,T],
\end{align*}
and
\begin{align*}
&\int_{\Omega\cap B_{2r_0}(x_0)} (|{\bf u}_0|^2+|\nabla\widehat{{\bf d}}_0|^2)\\
&\leq \frac54\int_{\Omega\cap B_{2r_0}(x_0)} (|{\bf u}_0|^2+|\nabla {\bf d}_0|^2)
+C\int_{\Omega\cap B_{2r_0}(x_0)} |\nabla{\bf h}_E|^2\\
&\leq \frac54\int_{\Omega\cap B_{2r_0}(x_0)} (|{\bf u}_0|^2+|\nabla {\bf d}_0|^2)
+Cr_0.
\end{align*}
Therefore we obtain
\begin{align}\label{Eng02}
&\sup_{0\le t\le t_0}\int_{\Omega\cap B_{r_0}(x_0)} (|{\bf u}|^2+|\nabla{\bf d}|^2)\nonumber\\
&\le (\frac54)^2\int_{\Omega\cap B_{2r_0}(x_0)} (|{\bf u}_0|^2+|\nabla{\bf d}_0|^2)
+C\big[r_0+t_0^\frac12+\varepsilon_1^\frac12(\frac{t_0}{r_0^2})^\frac14\big]\nonumber\\
&\le (\frac54)^2 \varepsilon_1^2+C\big[r_0+t_0^\frac12+\varepsilon_1^\frac12(\frac{t_0}{r_0^2})^\frac14\big].
\end{align}
Combining \eqref{Eng} with \eqref{Eng02}, we obtain that
\begin{align}\label{Eng03}
2\epsilon_1^2&=\sup_{0\le t\le t_0}\max_{x_0\in\overline\Omega}\int_{\Omega\cap B_{r_0}(x_0)} (|{\bf u}|^2+|\nabla{\bf d}|^2)\nonumber\\
&\le (\frac54)^2\varepsilon_1^2
+C\big[r_0+\varepsilon_1^\frac12(\frac{t_0}{r_0^2})^\frac14\big]\nonumber\\
&\le (\frac{25}{16}+C\varepsilon_1^2)\epsilon_1^2+C\varepsilon_1^\frac12(\frac{t_0}{r_0^2})^\frac14.
\end{align}
Therefore if we choose $\varepsilon_0\le \frac{5}{16C}$, then $t_0\ge \theta_0 r_0^2$ with
$\theta_0=\big(\frac{3\varepsilon_1^\frac32}{8C}\big)^4$. This gives the desired estimates
of $T_0$ and \eqref{prioriestimate}. The proof is now complete. 
\end{proof}

\medskip

\noindent\textbf{Proof of Theorem \ref{thm1}.}  From $\mathbf{u}_{0}\in \mathbf{H}$, there exists $\{\mathbf{u}_{0}^{k}\}
\subset C^{2, \alpha}(\overline\Omega,\mathbb{R}^{2})$ with $\nabla\cdot\mathbf{u}_{0}^{k}=0$
in $\Omega$ such that
\begin{align*}
\lim_{k\uparrow \infty} \|\mathbf{u}_{0}^{k}-\mathbf{u}_{0}\|_{L^{2}(\Omega)} =0.
\end{align*}
Since dimension of $\partial_pQ_T=\Omega\cup \Gamma_T$ is $2$, 
$\mathbf{h}\in L^{2}_tH^{\frac{3}{2}}_x(\Gamma_T,\mathbb{S}^{2})$, 
$\partial_{t}\mathbf{h}\in L^{2}_tH^{\frac{3}{2}}_x(\Gamma_T)$,
$\mathbf{d}_{0}\in H^{1}(\Omega,\mathbb{S}^{2})$, and
$\mathbf{d}_{0}|_{\Gamma}=\mathbf{h}|_{\Gamma\times\{0\}}$, there exist maps $(\mathbf{h}^{k}, \mathbf{d}_{0}^{k})$ such that $\mathbf{h}^{k}\in C^{2+\alpha,1+\frac{\alpha}{2}}(\Gamma_{T}, \mathbb{S}^{2})$ and $\mathbf{d}_{0}^{k}\in C^{2,\alpha}(\overline\Omega, \mathbb{S}^{2})$ with $\mathbf{d}_{0}^{k}|_{\Gamma}=\mathbf{h}^{k}|_{\Gamma\times\{0\}}$, and
\begin{align}\label{strong-app}
&\lim_{k\uparrow\infty} \big\|(\mathbf{h}^{k}-\mathbf{h}, \partial_{t}(\mathbf{h}^{k}-\mathbf{h}))\|_{L^{2}_tH^{\frac{3}{2}}_x(\Gamma_T)} =\lim_{k\uparrow \infty} \big\|\mathbf{d}_{0}^{k}-\mathbf{d}_{0}\big\|_{H^{1}(\Omega)} =0.
\end{align}
From the absolute continuity of $\displaystyle\int(|\mathbf{u}_{0}|^{2}+|\nabla\mathbf{d}_{0}|^{2})$, 
there exists $r_{0}\in (0,\varepsilon_1^2)$ such that
\begin{align*}
\sup_{{x}\in\overline{\Omega}} \int_{\Omega\cap B_{2r_{0}}({x})}
(|\mathbf{u}_{0}|^{2}+|\nabla\mathbf{d}_{0}|^{2})\leq \frac{\varepsilon_{1}^{2}}{2},
\end{align*}
where $\varepsilon_{1}>0$ is the constant given by Lemma \ref{lem15}. 
By the strong convergence of $(\mathbf{u}_{0}^{k}, \nabla \mathbf{d}_{0}^{k})$ to $(\mathbf{u}_{0}, \nabla\mathbf{d}_{0})$ in $L^{2}(\Omega)$, we may assume that
\begin{align}\label{initialk}
\sup_{{x}\in\overline{\Omega}} \int_{\Omega\cap B_{2r_{0}}({x})}
(|\mathbf{u}_{0}^{k}|^{2}+|\nabla\mathbf{d}_{0}^{k}|^{2})\leq \varepsilon_{1}^{2}
\quad\text{ for } k\ge 1.
\end{align}
We may also assume that 
\begin{align}\label{boundaryk}
\big\|(\mathbf{h}^{k}, \partial_t{\bf h}^k)\big\|_{L^2_tH^{\frac{3}{2}}_x(\Gamma_T)}\le C\ \ \ \text{ for } k\ge1.
\end{align}
By Lemma \ref{lem15}, there is $\theta_{0}>0$ depending on
$T, \varepsilon_{1},\mathcal{E}_{0}, \displaystyle\|(\mathbf{h}, \partial_t{\bf h})\|_{L^2_tH^{\frac{3}{2}}_x(\Gamma_T)}$ 
and smooth solutions $(\mathbf{u}^{k},\mathbf{d}^{k})\in  C^{\infty}(\overline\Omega\times [0, T^k], \mathbb{R}^{2}\times\mathbb{S}^{2})$,
with $T^{k}\geq \theta_{0}r_{0}^{2}$, to the system \eqref{eq1.1} 
under the initial and boundary condition
\begin{align*}
(\mathbf{u}^{k}, \mathbf{d}^{k})=(\mathbf{u}_{0}^{k},\mathbf{d}_{0}^{k}) &\quad \text{ in }\Omega\times\{0\},\\
(\mathbf{u}^{k}, \mathbf{d}^{k})=(0, \mathbf{h}^{k}) &\quad \text{ on } \Gamma_{T^k}.
\end{align*}
Moreover, it holds that
\begin{align}\label{prioriestimatek}
\sup_{({x},t)\in \overline{\Omega}\times [0, T^k]} \int_{\Omega\cap B_{r_{0}}({x})} 
(|\mathbf{u}^{k}|^{2}+|\nabla\mathbf{d}^{k}|^{2})\leq \varepsilon_{1}^{2},
\end{align}
and for any $0<t\le T^k$, 
\begin{align}\label{energyk}
&\sup_{0\le \tau\le t}\int_{\Omega} (|\mathbf{u}^{k}|^{2}+|\nabla \mathbf{d}^{k}|^{2})(\tau)
+\int_{Q_{t}} (|\nabla \mathbf{u}^{k}|^{2}+|\Delta\mathbf{d}^{k}+|\nabla{\bf d}^k|^2{\bf d}^k|^{2})\nonumber\\
&\leq \psi_k(t)\big[\int_{\Omega} (|\mathbf{u}_{0}^{k}|^{2}+|\nabla \mathbf{d}_{0}^{k}|^{2})
+C\big\|(\mathbf{h}^{k}, \partial_t{\bf h}^k)\big\|_{L^2_tH^{\frac{3}{2}}_x(\Gamma_t)}^2\big]\\
&\le C\big(T, \mathcal{E}_0, \|(\mathbf{h}, \partial_t{\bf h})\big\|_{L^2_tH^{\frac{3}{2}}_x(\Gamma_T)}\big),\nonumber
\end{align}
where
$$\psi_k(t)=\exp\big(C\int_0^t \|\partial_t{\bf h}^k(\tau)\|_{H^\frac32(\Gamma)}\,d\tau\big)\le C<\infty,
\ \forall\ 0\le t\le T.
$$
Combining \eqref{prioriestimatek}, \eqref{energyk} together with Lemma \ref{lem15}, we conclude that
\begin{align}\label{L4forudk}
\int_{Q_{T_{0}^{k}}} (|\mathbf{u}^{k}|^{4}+|\nabla\mathbf{d}^{k}|^{4}) \leq C\varepsilon_{1}^{2}, 
\ \forall\ k\ge 1,
\end{align}
and
\begin{align}\label{Engk1}
&\|\partial_t{\bf d}^k\|_{L^2(Q_{T^k})}^2+\|\nabla \mathbf{u}^{k}\|_{L^{2}(Q_{T^k})}^{2}\!+\!
\|\nabla^2\mathbf{d}^{k}\|_{L^{2}(Q_{T^k})}^{2}\le C, \ \forall\ k\ge 1.
\end{align}
It follows from Lemma \ref{lem13}, \eqref{prioriestimatek}, \eqref{energyk}, \eqref{L4forudk}, and \eqref{Engk1}
that
\begin{align}\label{p-bound1}
\|\nabla P^{k}\|_{L^{\frac{4}{3}}(Q_{T^k})}
\leq  C\varepsilon_{1}^{\frac{1}{2}}, \ \forall\ k\ge 1.
\end{align}
Furthermore, $\eqref{eq1.1}_{1}$,  \eqref{L4forudk}, \eqref{Engk1},
and \eqref{p-bound1} imply that 
\begin{align}\label{dkt-bound}
   \|\partial_{t}\mathbf{u}^{k}\|_{L^\frac43_tH^{-1}_x(Q_T)}\le C, \ \forall\  k\ge 1.
\end{align}
By Theorem \ref{thm6}, we conclude that for any $\alpha\in (0,1)$
such that for any $\delta>0$,
\begin{align*}
\big\|(\mathbf{u}^{k},\mathbf{d}^{k})\big\|_{C^{\alpha,\frac{\alpha}{2}}
(\overline\Omega\times [\delta, T^k])}
\leq  C\big(\alpha, \delta,\mathcal{E}_{0},\varepsilon_{1},
\|(\mathbf{h}, \partial_t{\bf h})\|_{L^2_tH^\frac32_x(\Gamma_{T^k})}\big),
\end{align*}
for any compact subdomain $\omega\subset\subset\Omega$,
\begin{align*}
\big\|(\mathbf{u}^{k},\mathbf{d}^{k})\big\|_{C^{\ell}(\omega\times[\delta,T^{k}])}
\leq C( \operatorname{dist}(\omega,\partial\Omega), \delta, \ell,\mathcal{E}_{0})\quad \text{ for all }\ell\geq 1.
\end{align*}
There exist $T_{0}\geq \theta_{0}r_{0}^{2}$, 
 $\mathbf{u}\in L^\infty_tL^2_x\cap L^2_tH^1_x(Q_{T_{0}},\mathbb{R}^{2})$,
 $\mathbf{d}\in L^2_tH^2_x(Q_{T_{0}},\mathbb{S}^{2})$ such that after passing to a possible subsequence,
 $T^k\rightarrow T_0$, 
 \begin{align*}
 &\mathbf{u}^{k}\rightarrow \mathbf{u} \text{ weakly in } W^{1,0}_{2}(Q_{T_{0}},\mathbb{R}^{2})
 \ {\rm{and\ strongly \ in}}\ L^2(Q_{T_0}),\nonumber\\
 &\mathbf{d}^{k}\rightarrow \mathbf{d} \text{ weakly in } W^{2,1}_{2}(Q_{T_{0}},\mathbb{R}^{3})
 \ {\rm{and\ strongly \ in}}\ L^2_tH^1_x(Q_{T_0}),\nonumber\\
 &\lim_{k\uparrow\infty}
 \big(\|\mathbf{u}^{k}-\mathbf{u}\|_{L^{4}(Q_{T_{0}})}
 +\|\nabla\mathbf{d}^{k}-\nabla\mathbf{d}\|_{L^{4}(Q_{T_{0}})}\big)
 =0,
 \end{align*}
and for any $\ell\geq 2$, $\delta>0$, and compact $\omega\subset\subset\Omega$,
\begin{align*}
 &\lim_{k\uparrow\infty}
 \|(\mathbf{u}^{k},\mathbf{d}^{k})
 -(\mathbf{u},\mathbf{d})\|_{C^{\ell}(\omega\times[\delta,T_{0}])}
 =0,\nonumber\\
 &\lim_{k\uparrow\infty}
 \|(\mathbf{u}^{k},\mathbf{d}^{k})
 -(\mathbf{u},\mathbf{d})\|_{C^{\alpha,\frac{\alpha}{2}}
 (\overline{\Omega}\times[\delta,T_{0}])}
 =0.
\end{align*}
Thus $(\mathbf{u},\mathbf{d})\in C^{\infty}(\Omega\times(0,T_{0}], \mathbb{R}^{2}\times\mathbb{S}^{2})\cap
C^{\alpha,\frac{\alpha}{2}}(\overline{\Omega}\times(0,T_{0}],
\mathbb{R}^{2}\times\mathbb{S}^{2})$
solves the system \eqref{eq1.1}--\eqref{eq1.3} in $\Omega\times (0,T_{0}]$. 
From \eqref{energyk}, we can show that
\begin{align*}
(\mathbf{u},\nabla \mathbf{d})(\cdot,t)\rightarrow (\mathbf{u}_{0},\nabla\mathbf{d}_{0})
\text{ in } L^{2}(\Omega)\ {\rm{as}}\ t\downarrow 0.
\end{align*}
Hence $(\mathbf{u},\mathbf{d})$ satisfies the initial and boundary condition
\eqref{eq1.2} and \eqref{eq1.3}.
Let $T_{1}\in (0,T)$ be the first singular time of $(\mathbf{u},\mathbf{d})$, that is
\begin{align*}
(\mathbf{u},\mathbf{d})\in C^{\infty}(\Omega\times(0,T_{1}), \mathbb{R}^{2}\times\mathbb{S}^{2})\bigcap
C^{\alpha, \frac{\alpha}{2}}(\overline{\Omega}\times(0,T_{1}),
\mathbb{R}^{2}\times\mathbb{S}^{2}),
\end{align*}
but
\begin{align*}
(\mathbf{u},\mathbf{d})\notin C^{\infty}(\Omega\times(0,T_{1}],\mathbb{R}^{2}\times\mathbb{S}^{2})\bigcap
C^{\alpha,\frac{\alpha}{2}}(\overline{\Omega}\times(0,T_{1}],
\mathbb{R}^{2}\times\mathbb{S}^{2}).
\end{align*}
Thus we must have
\begin{align} \label{singulartime}
\limsup_{t \uparrow T_{1}} \max_{{x}\in\overline{\Omega}}
\int_{\Omega\cap B_{r}({x})} (|\mathbf{u}|^{2}+|\nabla\mathbf{d}|^{2})(\cdot,t)\geq \varepsilon_{1}^{2}\
\  \text{ for all } r>0.
\end{align}
In what follows, we will look for an eternal extension of this weak solution beyond $T_{1}$. 
To do it, we need to define $({\bf u}, {\bf d})$ at time $T_{1}$, which follows from 
the claim that
\begin{align} \label{claim}
(\mathbf{u},\mathbf{d})\in C([0,T_{1}], L^{2}(\Omega, \mathbb{R}^{2}\times\mathbb{S}^{2})).
\end{align}
In fact, for any $\phi\in H_{0}^{2}(\Omega, \mathbb{R}^{3})$, we can derive from $\eqref{eq1.1}_{3}$ that
\begin{align*}
&|\langle\partial_{t}\mathbf{d},\phi\rangle|=
\big|\int_{\Omega} (\langle\nabla\mathbf{d}, \nabla\phi
\rangle+\langle\mathbf{u}\cdot\nabla \mathbf{d},\phi\rangle-|\nabla\mathbf{d}|^{2}\langle\mathbf{d}, \phi\rangle)\big|
\nonumber\\
&\leq C
\|\nabla\mathbf{d}\|_{L^{2}(\Omega)} \|\nabla\phi\|_{L^{2}(\Omega)}
+(\|\mathbf{u}\|_{L^{2}(\Omega)}+ \|\nabla\mathbf{d}\|_{L^{2}(\Omega)})
\|\nabla\mathbf{d}\|_{L^{2}(\Omega)}\|\phi\|_{L^{\infty}(\Omega)}
\nonumber\\
&\leq C
\big[\|\nabla\mathbf{d}\|_{L^{2}(\Omega)}
+(\|\mathbf{u}\|_{L^{2}(\Omega)}+ \|\nabla\mathbf{d}\|_{L^{2}(\Omega)})
\|\nabla\mathbf{d}\|_{L^{2}(\Omega)}\big]\|\phi\|_{H^{2}(\Omega)},
\end{align*}
so that
$\partial_{t}\mathbf{d}\in L^{2}([0,T_{1}], H^{-2}(\Omega, \mathbb{R}^{3}))$. This and
$\mathbf{d}\in L^{2}_t H^{1}_x(Q_T)$ imply that
$\mathbf{d}\in C([0,T_{1}],  L^{2}(\Omega,\mathbb{S}^{2}))$.

For any $\phi\in H_{0}^{3}(\Omega,\mathbb{R}^{2})$, with $\nabla\cdot\phi=0$, 
$\eqref{eq1.1}_{1}$ implies that
\begin{align*}
&|\langle\partial_{t}\mathbf{u},\phi\rangle|=
\left|\int_{\Omega} (\nabla\mathbf{u}\cdot\nabla\phi+\mathbf{u}\cdot\nabla \mathbf{u}\cdot\phi-\nabla\mathbf{d}\odot\nabla\mathbf{d}:\nabla\phi)\right|
\nonumber\\
&\leq C
\|\nabla\mathbf{u}\|_{L^{2}(\Omega)} \|\nabla\phi\|_{L^{2}(\Omega)}\\
&\ \ +C(\|\mathbf{u}\|_{L^{2}(\Omega)}\|\nabla\mathbf{u}\|_{L^{2}(\Omega)}+ \|\nabla\mathbf{d}\|_{L^{2}(\Omega)}^{2})
\|\nabla\phi\|_{L^{\infty}(\Omega)}
\nonumber\\
&\leq C
(\|\nabla\mathbf{u}\|_{L^{2}(\Omega)}
+\|\mathbf{u}\|_{L^{2}(\Omega)}\|\nabla\mathbf{u}\|_{L^{2}(\Omega)}+ \|\nabla\mathbf{d}\|_{L^{2}(\Omega)}^{2})\|\phi\|_{H^{3}(\Omega)},
\end{align*}
so that $\partial_{t}\mathbf{u}\in L^{2}([0,t_{1}], H^{-3}(\Omega,\mathbb{R}^{2}))$.
This and $\mathbf{u}\in L^{2}_tH^{1}_x(Q_T)$ imply
$\mathbf{u}\in C([0,T_{1}], L^{2}(\Omega))$. Thus \eqref{claim} follows.

It follows from \eqref{claim} that
\begin{align*}
(\mathbf{u},\mathbf{d})(\cdot,T_{1})=\lim_{t\uparrow T_{1}} (\mathbf{u},\mathbf{d})(\cdot,t) \text{ in } L^{2}(\Omega).
\end{align*}
This and \eqref{energyk} imply that
\begin{align*}
\nabla\mathbf{d}(\cdot,t)\rightarrow \nabla \mathbf{d}(\cdot,T_{1}) \text{ weakly in } L^{2}(\Omega) \ {\rm{as}}\ t\uparrow T_1.
\end{align*}
Thus $\mathbf{u}(\cdot,T_{1})\in \mathbf{H} \text{ and } \mathbf{d}(\cdot,T_{1})\in H^{1}(\Omega, \mathbb{S}^{2}).$
Since $H^1(\Omega)\subset L^2(\Gamma)$ is compact, we also have that
$${\bf d}(\cdot, t)(={\bf h}(\cdot, t))\rightarrow {\bf d}(\cdot, T_1) \ {\rm{ in }}\ L^2(\Gamma) \ {\rm{as}}\ t\uparrow T_1.$$
This and ${\bf h}\in C([0,T], H^\frac32(\Gamma))$ imply that
$\mathbf{d}(\cdot,T_{1})=\mathbf{h}(\cdot, T_{1})$ on $\Gamma$.

Now, we can use $(\mathbf{u},\mathbf{d})(\cdot,T_{1})$ and $({0},\mathbf{h})$ as the initial and  boundary value
to extend the weak solution of \eqref{eq1.1}--\eqref{eq1.3} to the time interval $[0, T_2]$ for some
$T_2>T_1$. Repeating this procedure, we eventually obtain the existence of global weak solution in the time interval
$[0, T)$. Next we want to show 
\medskip

\noindent{\it Claim 1.  There are at most finitely singular times}.
To show it, first observe that at any singular time $T_{\sharp}\in (0, T)$, there is at least 
a loss of energy of $\frac12\varepsilon_{1}^{2}$. It follows from \eqref{singulartime}
that for any $r>0$, there exist $t_{i}\uparrow T_{\sharp}$ and $x_i\in \overline{\Omega}$ such that
$x_i\rightarrow x_0\in\overline\Omega$, and
\begin{align*}
\int_{\Omega\cap B_{r}(x_i)} (|\mathbf{u}|^{2}+|\nabla\mathbf{d}|^{2})(t_{i})\geq 
\frac12\varepsilon_{1}^{2},
\end{align*}
and hence
\begin{align} \label{W1}
&\int_{\Omega}(|\mathbf{u}|^{2}+|\nabla\mathbf{d}|^{2})(T_{\sharp})\nonumber\\
&= \lim_{r\downarrow 0}\int_{\Omega\setminus B_{2r}({x}_0)}
(|\mathbf{u}|^{2}+|\nabla\mathbf{d}|^{2})(T_{\sharp})
\nonumber\\
&\leq \lim_{r\downarrow 0}\liminf_{t_{i}\uparrow T_{\sharp}}\int_{\Omega\setminus B_{2r}({x}_{0})}
(|\mathbf{u}|^{2}+|\nabla\mathbf{d}|^{2})(t_{i})
\nonumber\\
&\leq \lim_{r\downarrow 0}\big[\liminf_{t_{i}\uparrow T_{\sharp}}\int_{\Omega}
(|\mathbf{u}|^{2}+|\nabla\mathbf{d}|^{2})(t_{i})\nonumber\\
&\ \ -\limsup_{t_{i}\uparrow T_{\sharp}} \int_{\Omega\cap B_{2r}({x_{0}})}
(|\mathbf{u}|^{2}+|\nabla\mathbf{d}|^{2})(t_{i})\big]
\nonumber\\
&\leq \liminf_{t_{i}\uparrow T_{\sharp}}\int_{\Omega}
(|\mathbf{u}|^{2}+|\nabla\mathbf{d}|^{2})(t_{i})-\frac12\varepsilon_{1}^{2},
\end{align}
\smallbreak

We will prove \textit{Claim} 1 by contradiction. Suppose that there were infinitely many singular times 
$\{T_{j}\}_{j=1}^\infty\subset (0,T]$, with  $0<T_{1}<T_{2}<\cdots<T_{j}<\cdots$, and 
$\displaystyle\lim_{j\uparrow +\infty}T_{j}=T_{*}\leq T$. Hence for any $\delta>0$,
there exists a sufficiently large
$j_0=j_0(\delta)\ge 1$ such that for $j\ge j_0$, we have
$$\exp\big(C\int_{T_j}^{T_{j+1}} \|\partial_t{\bf h}(\tau)\|_{H^\frac32(\Gamma)}\,d\tau\big)
\le 1+\delta,$$
and
$$
\big\|({\bf h}, \partial_t{\bf h})\big\|_{L^2_tH^\frac32_x(\Gamma\times[T_j, T_{j+1}])}^2\le\delta.$$
Then by \eqref{energy1} we have that for any $t\in [T_{j},T_{j+1})$
\begin{align}\label{W2}
&\mathcal{E}(t)=\int_{\Omega}(|\mathbf{u}|^{2}+|\nabla\mathbf{d}|^{2})(t)\nonumber\\
&\le\exp\big(C\int_{T_j}^{T_{j+1}}\|\partial_t{\bf h}(\tau)\|_{H^\frac32(\Gamma)}\,d\tau\big)
\big[\int_{\Omega}(|\mathbf{u}|^{2}+|\nabla{\mathbf{d}}|^{2})(T_j)\nonumber\\
&\ \ \ \qquad\qquad\qquad\qquad
+C\big\|({\bf h}, \partial_t{\bf h})\big\|_{L^2_tH^\frac32_x(\Gamma\times[T_j, T_{j+1}])}^2\big]\nonumber\\
&\leq (1+\delta)[\int_{\Omega}(|\mathbf{u}|^{2}+|\nabla{\mathbf{d}}|^{2})(T_j)+\delta]\nonumber\\
&\le \int_{\Omega}(|\mathbf{u}|^{2}+|\nabla{\mathbf{d}}|^{2})(T_j)+C\delta.
\end{align}
Putting \eqref{W1} and \eqref{W2} together, we obtain
\begin{align}\label{W3}
&\int_{\Omega}(|\mathbf{u}|^{2}+|\nabla{\mathbf{d}}|^{2})(T_{j+1})\nonumber\\
&\le \int_{\Omega}(|\mathbf{u}|^{2}+|\nabla{\mathbf{d}}|^{2})(T_j)+C\delta-\frac12\varepsilon_1^2\nonumber\\
&\le \int_{\Omega}(|\mathbf{u}|^{2}+|\nabla{\mathbf{d}}|^{2})(T_j)-\frac14\varepsilon_1^2,
\end{align}
provided $\displaystyle\delta\le\frac{\varepsilon_1^2}{4C}$.

Iterating the above inequality $m$ times, we obtain that
\begin{align*}
0\leq \mathcal{E}(T_{j_{0}+m})\leq \mathcal{E}(T_{j_{0}})-\frac{m \varepsilon_{1}^{2}}{4}.
\end{align*}
This yields that
$$m\leq \big[\frac{4K_{0}}{\varepsilon_{1}^{2}}\big] +1,$$
where $K_0=\mathcal{E}(T_{j_{0}})$. This proves {\it Claim} 1.

If $T_{L}<T$ is the last singular time, then we can use $(\mathbf{u}(T_{L}), \mathbf{d}(T_{L}))$ and $({0},\mathbf{h})|_{\Gamma\times (T_{L},T]}$ as the initial and boundary data to construct a weak solution $(\mathbf{u},\mathbf{d})$ to system \eqref{eq1.1}--\eqref{eq1.3} on $[T_{L},T]$ as before so that we obtain a global weak solution
$(\mathbf{u}, \mathbf{d})$ to \eqref{eq1.1}--\eqref{eq1.3} in the time interval $[0, T)$. 
This completes the proof of Theorem \ref{thm1}.
 $\hfill\Box$


\subsection{Proof of Theorem \ref{globalW}}The proof of
Theorem \ref{globalW} is similar to \cite{LLW} Theorem 1.3. For the convenience
of reader, we sketch it here. Let $(\mathbf{u}_{0},\mathbf{d}_{0})$ and $\mathbf{h}$ satisfy
the assumptions of Theorem \ref{globalW}. By Lemma \ref{rem-S+},  the weak solution 
$(\mathbf{u},\mathbf{d})$ to \eqref{eq1.1}---\eqref{eq1.3}, obtained by Theorem \ref{globalW},  satisfies
\begin{align*}
\mathbf{d}(x,t)\in \mathbb{S}_{+}^{2}, \ {\rm{for\ a.e.}}\ (x,t)\in Q_T.
\end{align*}
Assume that  $(\mathbf{u},\mathbf{d})$  has a singular time $T_1\in (0,T)$. Then, it follows from
 \eqref{blowup} that for $\mathcal{C}>1$, to be determined later, there exist $t_{m}\uparrow T_{1}^-$ and 
 $r_{m}\downarrow 0^+$   such that
 \begin{align} \label{Ass1}
 \frac{\varepsilon_{1}^{2}}{\mathcal{C}}=\sup_{{x}\in\overline{\Omega},0\leq t\leq t_{m}}
 \int_{\Omega\cap B_{r_{m}}({x})} (|\mathbf{u}|^{2}+|\nabla \mathbf{d}|^{2}).
 \end{align}
 It follows from the proof of Lemma \ref{lem15},
 there exist $\theta_{0}$, depending only on $\varepsilon_{1}$, $\mathcal{E}_{0}$, 
 and $\displaystyle\|(\mathbf{h},\partial_t{\bf h})\|_{L^2_tH^\frac32_x(\Gamma_{T})}$, and
 $x_m\in\overline\Omega$, such that
 \begin{align}
 &\int_{\Omega\cap B_{2r_{m}}(x_m)}(|\mathbf{u}|^{2}+|\nabla \mathbf{d}|^{2})(t_{m}-\theta_{0}r_{m}^{2})\nonumber\\
 &\ge \frac12\sup_{x\in\overline\Omega}\int_{\Omega\cap B_{2r_{m}}(x)}(|\mathbf{u}|^{2}+|\nabla \mathbf{d}|^{2})
 (t_{m}-\theta_{0}r_{m}^{2})\nonumber\\
 &\geq \frac{\varepsilon_{1}^{2}}{4\mathcal{C}}.
 \end{align}
 By \eqref{energy1} in Lemma \ref{lem10}, \eqref{Ass1} and the Ladyzhenskaya inequality, 
 we have
 \begin{align}
 \begin{cases}\displaystyle
 \int_{Q_{t_m}} (|\nabla\mathbf{u}|^{2}+|\nabla^2\mathbf{d}|^{2})\leq
 C\big(\varepsilon_{1}, \mathcal{E}_{0},\|(\mathbf{h}, \partial_t{\bf h})\|_{L^2_tH^\frac32_x(\Gamma_{T})}\big),\\
 \displaystyle\int_{Q_{t_m}} (|\mathbf{u}|^{4}+|\nabla\mathbf{d}|^{4})\leq \frac{C\varepsilon_1^2}{\mathcal{C}}.
 \end{cases}
 \end{align}
 Set $\Omega_{m}=r_{m}^{-1}(\Omega\backslash\{{x}_{m}\})$ and define
$(\mathbf{u}_{m},\mathbf{d}_{m}):\Omega_m\times[-\frac{t_{m}}{r_{m}^{2}}, 0]
 \mapsto \mathbb{R}^{2}\times \mathbb{S}_{+}^{2}$ by
 \begin{align*} 
 (\mathbf{u}_{m}, {\bf d}_m)({x},t)=(r_{m} \mathbf{u}({x}_{m}+r_{m}{x}, t_{m}+r_{m}^{2} t), 
  \mathbf{d}({x}_{m}+r_{m}{x}, t_{m}+r_{m}^{2} t)).
 \end{align*}
 Then $(\mathbf{u}_{m},\mathbf{d}_{m})$ solves \eqref{eq1.1}--\eqref{eq1.3} 
 in $\Omega_{m}\times[-\frac{t_{m}}{r_{m}^{2}},0]$, along with
 $$(\mathbf{u}_{m},\mathbf{d}_{m})(x, -\frac{t_{m}}{r_{m}^{2}})
 =(r_{m} \mathbf{u} ({x}_{m}+r_{m}{x},0),
 \mathbf{d}({x}_{m}+r_{m}{x},{0}))$$ 
 and
 $$(\mathbf{u}_{m},\mathbf{d}_{m})(x,t)
 =({0}, \mathbf{h}({x}_{m}+r_{m}{x},t_{m}+r_{m}^{2}t))
 \ {\rm{on}}\ \partial\Omega_m\times [-\frac{t_m}{r_m^2}, 0].$$
 Moreover,
 \begin{align} \label{umdm1}
 &\int_{\Omega_{m}\cap B_{2}(0)} (|\mathbf{u}_{m}|^{2}+|\nabla \mathbf{d}_{m}|^{2})
 (-\theta_{0})\geq \frac{\varepsilon_{1}^{2}}{4\mathcal{C}},\nonumber\\
 & \int_{\Omega_{m}\cap B_{1}({x})}(|\mathbf{u}_{m}|^{2}+|\nabla \mathbf{d}_{m}|^{2})
 (t)\leq \frac{\varepsilon_{1}^{2}}{\mathcal{C}}, \forall {x}\in\Omega_{m},
 -\frac{t_{m}}{r_{m}^{2}}\leq t\leq 0, \nonumber\\
 &\int_{\Omega_{m}\times [-\frac{t_{m}}{r_{m}^{2}},0]}
 (|\mathbf{u}_{m}|^{4}+|\nabla \mathbf{d}_{m}|^{4})\leq \frac{C\varepsilon_1^2}{\mathcal{C}},\nonumber\\
& \int_{\Omega_{m}\times [-\frac{t_{m}}{r_{m}^{2}},0]} (|\nabla\mathbf{u}_m|^{2}+|\nabla^2\mathbf{d}_m|^{2})\leq
 C\big(\varepsilon_{1}, \mathcal{E}_{0},\|(\mathbf{h}, \partial_t{\bf h})\|_{L^2_tH^\frac32_x(\Gamma_{T})}\big).
 \end{align}
Assume ${x}_{m}\rightarrow {x}_{0}\in\overline{\Omega}$ and $\mathcal{C}>0$ is chosen sufficiently large.
We divide the discussion into two cases:

\medskip
\noindent {\textit{Case}} 1: ${x}_{0}\in \Omega$. Then $r_{m}<\operatorname{dist}({x}_{0}, \Gamma)$ and $\Omega_{m}\rightarrow \mathbb{R}^{2}$, 
$-\frac{t_{m}}{r_{m}^{2}}\rightarrow -\infty$.
By Theorem \ref{thm3},  there exists a smooth solution $(\mathbf{u}_{\infty},\mathbf{d}_{\infty}):\mathbb R^2\times(-\infty,0]\mapsto \mathbb{R}^{2}\times \mathbb{S}_{+}^{2}$ to the system \eqref{eq1.1}--\eqref{eq1.3} such that
\begin{align*}
(\mathbf{u}_{m},\mathbf{d}_{m})\rightarrow (\mathbf{u}_{\infty},\mathbf{d}_{\infty}) \quad
\text{ in }C_{loc}^{2}(\mathbb{R}^{2}\times(-\infty,0],\mathbb{R}^{2}\times\mathbb{S}_{+}^{2}).
\end{align*}
For any set $P_R=B_{R}\times [-R^2,0]\subset \mathbb{R}^{2}\times(-\infty,0]$,  it is easy to see that 
\begin{align*}
\int_{P_{R}} |\mathbf{u}_{\infty}|^{4}=\lim_{m\uparrow\infty} \int_{P_{R}}|\mathbf{u}_{m}|^{4}
=\lim_{m\uparrow\infty} 
\int_{B_{R r_{m}}({x}_{m})\times [t_m-R^2r_m^2, t_m]} |\mathbf{u}|^{4} =0.
\end{align*}
Thus $\mathbf{u}_{\infty}\equiv {0}$. 

It is also easy to see that  for any compact $\omega\subset \mathbb{R}^{2}$,
\begin{align*}
&\int_{-1}^{0}\int_{\omega} |\Delta \mathbf{d}_{\infty}+|\nabla \mathbf{d}_{\infty}|^{2}\mathbf{d}_{\infty}|^{2}\\
&\leq
\liminf_{m\uparrow\infty} \int_{-1}^{0}\int_{\Omega_{m}}
|\Delta\mathbf{d}_{m}+|\nabla \mathbf{d}_{m}|^{2}\mathbf{d}_{m}|^{2}
\nonumber\\
&\leq \lim_{m\uparrow \infty} \int_{t_{m}-r_{m}^{2}}^{t_{m}}
\int_{\Omega} |\Delta \mathbf{d}+|\nabla\mathbf{d}|^{2}\mathbf{d}|^{2}
=0,
\end{align*}
which,  together with $\eqref{eq1.1}_{3}$,  implies that
\begin{align*}
\partial_{t}\mathbf{d}_{\infty}+\mathbf{u}_{\infty}\cdot\nabla\mathbf{d}_{\infty}=\mathbf{0}
\text{ in } \mathbb{R}^{2}\times[-1,0].
\end{align*}
Hence $\partial_t{\bf d}_\infty\equiv 0$ and ${\bf d}_\infty:\mathbb R^2\mapsto\mathbb S^2_+$ 
is a nontrivial smooth harmonic map with finite energy according to \eqref{umdm1}, 
which contradicts to Lemma \ref{vanish}.
 
\medskip

\noindent{\textit{Case}} 2.  ${x}_{0}\in \Gamma$. Then  we have either\\
(a) $\displaystyle\lim_{m\uparrow \infty} \frac{|{x}_{m}-{x}_{0}|}{r_{m}} =\infty$. Then, as in {\it Case} 1, 
$\Omega_m\rightarrow\mathbb R^2$ and $({\bf u}_m, {\bf d}_m)$ converges to $(0,{\bf d}_\infty)$
in $C^2_{\rm{loc}}(\mathbb R^2\times [-1,0])$, where ${\bf d}_\infty:\mathbb R^2\mapsto\mathbb S^2_+$
is a nontrivial smooth harmonic map with finite energy, which contradicts to Lemma \ref{vanish}.\\
or \\
(b) $\displaystyle\lim_{m\uparrow \infty} \frac{|{x}_{m}-{x}_{0}|}{r_{m}} =a\in [0, \infty)$.
Then we would have
$$(\Omega_m,\ \partial\Omega_m)
\rightarrow \big(\mathbb R^2_{-a}=\big\{x=(x_1, x_2)\in\mathbb R^2: \ x_2\ge -a\big\},
\ \partial\mathbb R^2_{-a}\big).$$ 
Observe that ${\bf h}_m(x,t)={\bf h}(x_m+r_mx, t_m+r^2_m t)$
is uniformly bounded in $C^{\alpha,\frac{\alpha}2}(\partial\Omega_m\times [-1,0])$
for any $\alpha\in (0,1)$. Hence we may assume that 
${\bf h}_m\rightarrow {\bf p}$, in $C^0_{\rm{loc}}(\mathbb R^2_{-a}\times [-1,0])$,
for some point ${\bf p}\in \mathbb S^2$. Thus, similar to 
\cite{LLW} Theorem 1.3, we would obtain a nontrivial harmonic map 
${\bf d}_\infty: \mathbb R^2_{-a}\to\mathbb S^2_+$ with ${\bf d}_\infty={\bf p}$ on
$\partial\mathbb R^2_{-a}$, that has finite energy. This is again impossible. 

From {\it Case} 1 and {\it Case} 2, we conclude
that \eqref{Ass1} never occurs in $[0,T]$. This completes the proof of Theorem \ref{globalW}.
$\hfill\Box$



\section{Global strong solution}
In this section, we will show the existence of unique, global strong solutions to
the system \eqref{eq1.1}--\eqref{eq1.3}. For this purpose, we will assume
that the initial data 
$$(\mathbf{u}_{0},\mathbf{d}_{0})\in\mathbf{V}\times {H}^{2}(\Omega,\mathbb{S}_{+}^{2}),$$ 
and the boundary data
\begin{align}\label{assumptionforh2}
\mathbf{h}\in H^{\frac{5}{2}, \frac{5}{4}}(\Gamma_{T},\mathbb{S}_{+}^{2})
 \text{ and  }
\partial_{t}\mathbf{h}\in L^{2}_t H^{\frac{3}{2}}_x(\Gamma_T).
\end{align}
More precisely, we will prove 
\begin{theorem}\label{gsolu}
 Let   $(\mathbf{u}_0, \mathbf{d}_0)\in \mathbf{V}\times
 H^2(\Omega,\mathbb{S}^{2}_+)$, and $\mathbf{h}:\Gamma_T\mapsto\mathbb S^2_+$ 
 satisfy \eqref{assumptionforh2} and the compatibility condition \eqref{comp_cond}.
 Let $(\mathbf{u},\mathbf{d}): \Omega\times[0,T]\mapsto \mathbb{R}^{2}\times\mathbb{S}_{+}^{2}$ be a weak solution to the system \eqref{eq1.1}--\eqref{eq1.3}, with initial value $(\mathbf{u}_{0},\mathbf{d}_{0})$ and boundary value
 $({0},\mathbf{h})$, obtained by  Theorem \ref{globalW}. Then $(\mathbf{u},\mathbf{d})$ is a  unique global strong solution to the system \eqref{eq1.1}--\eqref{eq1.3},  that satisfies
 \begin{align*}
&\mathbf{u}\in L^{\infty}([0, T], \mathbf{V})\cap L^{2}_tH^{2}_x(Q_T,\mathbb{R}^{2}),\nonumber\\
&\mathbf{d}\in L^{\infty}_tH^{2}_x(Q_T,\mathbb{S}_{+}^{2})
\cap L^{2}_tH^{3}_x(Q_T,\mathbb{S}_{+}^{2}).
\end{align*}
Moreover, the following estimate holds
 \begin{align}   \label{globstrong}
  &\|\mathbf{u}(t)\|_{L^\infty_tH^{1}_x(Q_T)}^2
   + \|\mathbf{d}(t)\|_{L^\infty_tH^{2}_x(Q_T)}^2\nonumber\\
  &+\int_{0}^{T}\big(\|\mathbf{u}(\tau)\|_{H^{2}(\Omega)}^{2}
 +\|\mathbf{d}(\tau)\|_{H^{3}(\Omega)}^{2}\big)
 \leq C_T,
 \end{align}
where $C_T>0$ depends on $\|(\mathbf{u}_0,\nabla{\bf d}_0)\|_{H^{1}(\Omega)}$,
$\|\mathbf{h}\|_{H^{\frac{5}{2},\frac{5}{4}}(\Gamma_{T})}$, $\|\partial_t{\bf h}\|_{L^2_tH^\frac32_x(\Gamma_T)}$,
$\varepsilon_1$, $T$, and $\Omega$.
 \end{theorem}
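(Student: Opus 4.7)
\medskip

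\noindent\textbf{Proof proposal for Theorem \ref{gsolu}.} The strategy is to upgrade the weak solution $(\mathbf{u},\mathbf{d})$ produced by Theorem \ref{globalW}, which is already smooth on $Q_T$ and satisfies the uniform local smallness \eqref{small_cond}, by performing higher-order energy estimates that are now fully justified. To deal with the time-dependent boundary value of $\mathbf{d}$, I would first absorb the boundary data by introducing a lift $\mathbf{h}_P$ solving $\partial_t\mathbf{h}_P-\Delta\mathbf{h}_P=0$ in $Q_T$ with $\mathbf{h}_P|_{\Gamma_T}=\mathbf{h}$ and $\mathbf{h}_P|_{t=0}=\mathbf{d}_0$ (as in \eqref{LP0}). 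The assumption $\mathbf{h}\in H^{5/2,5/4}(\Gamma_T)$ together with $\mathbf{d}_0\in H^2(\Omega)$ and the compatibility condition gives, via the standard parabolic trace/lift theory, $\mathbf{h}_P\in L^\infty_tH^2_x\cap L^2_t H^3_x(Q_T)$ and $\partial_t\mathbf{h}_P\in L^2_tH^1_x(Q_T)$, with norms controlled by $\|\mathbf{d}_0\|_{H^2}+\|\mathbf{h}\|_{H^{5/2,5/4}(\Gamma_T)}$. Setting $\widehat{\mathbf{d}}=\mathbf{d}-\mathbf{h}_P$, the director equation becomes one with zero Dirichlet data and a forcing that splits into contributions from $\mathbf{u},\nabla\mathbf{d}$ and known lower-order contributions from $\mathbf{h}_P$.

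Next, I would run two coupled higher-order energy identities. For the velocity: test the Navier-Stokes part with $-\mathbb{P}\Delta\mathbf{u}$ (Helmholtz projection onto $\mathbf{H}$), producing
\begin{align*}
\tfrac12\tfrac{d}{dt}\int_\Omega|\nabla\mathbf{u}|^2+\int_\Omega|\Delta\mathbf{u}|^2
=\int_\Omega (\mathbf{u}\cdot\nabla\mathbf{u}+\operatorname{div}(\nabla\mathbf{d}\odot\nabla\mathbf{d}))\cdot\mathbb{P}\Delta\mathbf{u}.
\end{align*}
For the director: differentiate the equation (or equivalently test the equation for $\widehat{\mathbf{d}}$ with $\Delta^2\widehat{\mathbf{d}}$ using integration by parts) to obtain an identity for $\tfrac{d}{dt}\int_\Omega|\Delta\widehat{\mathbf{d}}|^2$ plus $\int_\Omega|\nabla\Delta\widehat{\mathbf{d}}|^2$. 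The cubic and quartic nonlinearities in $|\nabla\mathbf{d}|^2\mathbf{d}$, $\mathbf{u}\cdot\nabla\mathbf{d}$, and $\mathbf{u}\cdot\nabla\mathbf{u}$ are then controlled via the 2D Gagliardo-Nirenberg inequalities
\begin{align*}
\|f\|_{L^4}^2\le C\|f\|_{L^2}\|\nabla f\|_{L^2},\quad
\|\nabla f\|_{L^4}^2\le C\|\nabla f\|_{L^2}\|\nabla^2 f\|_{L^2},
\end{align*}
together with Lemma \ref{lem14} and the smallness \eqref{small_cond}, exactly the mechanism that closed the estimates in the proof of Lemma \ref{lem15}. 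This yields, after a Gronwall argument, an inequality of the form
\begin{align*}
\tfrac{d}{dt}\bigl(\|\nabla\mathbf{u}\|_{L^2}^2+\|\Delta\widehat{\mathbf{d}}\|_{L^2}^2\bigr)
+\|\Delta\mathbf{u}\|_{L^2}^2+\|\nabla\Delta\widehat{\mathbf{d}}\|_{L^2}^2
\le \Phi(t)\bigl(1+\|\nabla\mathbf{u}\|_{L^2}^2+\|\Delta\widehat{\mathbf{d}}\|_{L^2}^2\bigr),
\end{align*}
where $\Phi\in L^1([0,T])$ depends on the already-controlled lower-order quantities and the boundary norms of $\mathbf{h}$. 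Integrating in time and recombining with $\mathbf{h}_P$ yields \eqref{globstrong}.

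For uniqueness, I would take two strong solutions $(\mathbf{u}_i,\mathbf{d}_i)$, $i=1,2$, with the same data, set $(\mathbf{u},\mathbf{d})=(\mathbf{u}_1-\mathbf{u}_2,\mathbf{d}_1-\mathbf{d}_2)$, and run an energy estimate at the level of $\|\mathbf{u}\|_{L^2}^2+\|\nabla\mathbf{d}\|_{L^2}^2$, which has zero boundary trace and zero initial data. The bilinear differences generate terms bounded by $C(t)(\|\mathbf{u}\|_{L^2}^2+\|\nabla\mathbf{d}\|_{L^2}^2)$ where $C(t)$ depends on $\|\nabla\mathbf{u}_i\|_{L^\infty_t L^2_x}, \|\nabla^2\mathbf{d}_i\|_{L^\infty_t L^2_x}$ and $\|\mathbf{u}_i\|_{L^2_t H^2_x}, \|\mathbf{d}_i\|_{L^2_tH^3_x}$, all finite by \eqref{globstrong}; Gronwall then forces $(\mathbf{u},\mathbf{d})\equiv 0$.

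The main obstacle I anticipate is the careful accounting of boundary contributions in the $\Delta\widehat{\mathbf{d}}$-level estimate: although $\widehat{\mathbf{d}}$ vanishes on $\Gamma_T$, integrations by parts involving $\Delta^2\widehat{\mathbf{d}}$ produce boundary terms in $\partial_\nu\Delta\widehat{\mathbf{d}}$, which must be rewritten (using the equation restricted to the boundary) in terms of $\partial_t\mathbf{h}$ and tangential derivatives of $\mathbf{h}$, and then estimated via the trace inclusion $\mathbf{h}\in H^{5/2,5/4}(\Gamma_T)\Rightarrow\mathbf{h}_P\in L^2_tH^3_x\cap H^1_tH^1_x(Q_T)$. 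The compatibility condition \eqref{comp_cond} and the constraint $|\mathbf{d}|=1$ (which propagates by Lemma \ref{lem9}, yielding the identity $|\nabla\mathbf{d}|^2=-\mathbf{d}\cdot\Delta\mathbf{d}$) will be used repeatedly to convert potentially dangerous $|\nabla\mathbf{d}|^4$-terms into controllable products of $L^2$ and $L^\infty$-type quantities.
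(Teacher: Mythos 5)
Your proposal shares the overall skeleton with the paper---absorb the boundary data by the parabolic lift $\mathbf{h}_P$, estimate the high-order Sobolev norms of $(\mathbf{u},\widehat{\mathbf{d}})$, close by a continuity/Gronwall argument, then get uniqueness from an $L^2$-level difference estimate---but the technical mechanism you propose is genuinely different from what the paper does, and it has a gap that you acknowledge but underestimate.

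The paper does not test the director equation with $\Delta^2\widehat{\mathbf{d}}$. Instead it invokes maximal parabolic regularity in the anisotropic spaces $W^{2,1}_2$ (for the Stokes part) and $H^{3,\frac32}(Q_T)$ (for the director), writing $\|\mathbf{d}\|_{L^\infty_tH^2_x}+\|\mathbf{d}\|_{H^{3,3/2}}\le C[\|\mathbf{d}_0\|_{H^2}+\|\mathbf{h}\|_{H^{5/2,5/4}}+\|\partial_t\mathbf{h}\|_{L^2_tH^{3/2}_x}+\|\mathbf{u}\cdot\nabla\mathbf{d}\|_{H^{1,1/2}}+\||\nabla\mathbf{d}|^2\mathbf{d}\|_{H^{1,1/2}}]$. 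The fractional-in-time norms are controlled by interpolation (Simon's $W^{0,0}_{8/3}\cap W^{0,1}_{8/5}\hookrightarrow W^{0,1/2}_2$), which introduces $\|\partial_t\mathbf{u}\|_{L^2}$ and $\|\partial_t\nabla\mathbf{d}\|_{L^2}$ into the right-hand side. The resulting inequality is algebraic, not differential: $\Phi(t)\le C_0+C(\eta(t)+(\eta^{3/8}(t)+\cdots+\eta^{7/8}(t))\Phi(t))$, with $\eta(t)$ the low-order $L^2_tH^1_x\times L^2_tH^2_x$ quantity. To absorb the $\Phi(t)$ term they need $\eta(t_0)$ small, so they first prove strong convergence $({\bf u}^k,{\bf d}^k)\to({\bf u},{\bf d})$ in $L^2_tH^1_x\times L^2_tH^2_x$ (via the energy equality) to obtain uniform absolute continuity of $\eta$, then iterate over subintervals of length $t_\delta$. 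You replace this entire machinery with a pointwise-in-time differential inequality and ordinary Gronwall. That would indeed be cleaner if it works, but the issue is whether it does.

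The genuine gap is the boundary-term accounting in the $\Delta^2\widehat{\mathbf{d}}$ test. After the first integration by parts, $\int_\Omega\partial_t\widehat{\mathbf{d}}\cdot\Delta^2\widehat{\mathbf{d}} = -\int_\Omega\nabla\partial_t\widehat{\mathbf{d}}\cdot\nabla\Delta\widehat{\mathbf{d}}$ (the boundary term vanishes since $\partial_t\widehat{\mathbf{d}}|_\Gamma=0$), but to extract $\frac{d}{dt}\|\Delta\widehat{\mathbf{d}}\|_{L^2}^2$ you need a second integration by parts, which produces $\int_\Gamma\partial_\nu\partial_t\widehat{\mathbf{d}}\cdot\Delta\widehat{\mathbf{d}}$; neither factor vanishes. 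Similarly, moving $\Delta^2$ off the nonlinear terms ($\int(|\nabla\mathbf{d}|^2\mathbf{d})\cdot\Delta^2\widehat{\mathbf{d}}$, etc.) produces boundary terms like $\int_\Gamma(|\nabla\mathbf{d}|^2\mathbf{d})\cdot\partial_\nu\Delta\widehat{\mathbf{d}}$ that cannot be dismissed: their control requires $\widehat{\mathbf{d}}$-regularity beyond what is being estimated. This is not a matter of routine bookkeeping; it is precisely the reason the paper invokes the $H^{3,3/2}$ maximal regularity (which handles the boundary intrinsically) rather than direct energy testing. Your remark that one can ``rewrite using the equation restricted to the boundary'' describes a goal, not a closed argument.

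A second, more logical point: the estimates should be run on the approximating smooth solutions $({\bf u}^k,{\bf d}^k)$ that exist by construction in Theorem \ref{globalW}, not directly on the weak solution. The weak solution is smooth for $t>0$ but its behavior at $t=0$ at the $H^1\times H^2$ level is not a priori known; by estimating the approximations uniformly and passing to the limit, one obtains both the regularity and the bound simultaneously. The paper does this explicitly and you should too. Your uniqueness argument is fine in outline, though the paper proves the sharper continuous-dependence estimate at the $H^1\times H^2$ level (Theorem \ref{gsoluC}), which it then reuses in the Fr\'echet-differentiability section; an $L^2$-level uniqueness argument would not serve that later purpose.
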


\begin{remark}\label{gsoluR} {\rm
Employing $\eqref{eq1.1}_{1}$ and $\eqref{eq1.1}_{3}$ and the estimate \eqref{globstrong}, 
we can verify that the global strong solution $(\mathbf{u},\mathbf{d})$ obtained by Theorem \ref{gsolu} 
satisfies
\begin{align*}
\partial_{t}\mathbf{u} \in L^{2}([0,T],\mathbf{H})
\ \ {\rm{and}} \ \partial_t\mathbf{d} \in L^2_tH^{1}_x(Q_T),
\end{align*}
which,  combined with the Aubin-Lions Lemma,  implies that
\begin{align*}
\mathbf{u} \in C([0,T], \mathbf{V})
  \ \text{ and } \ \mathbf{d} \in C([0,T], H^{2}(\Omega)).
\end{align*}}
\end{remark}

\noindent\textbf{Proof of Theorem \ref{gsolu}. }
Since the uniqueness part of strong solutions follows immediately  from the continuous dependence Theorem \ref{gsoluC} below, 
we will focus on the proof of the existence of a global strong solution
$({\bf u}, {\bf d})$ that satisfies the estimate \eqref{globstrong}.

For $(\mathbf{u}_{0},\mathbf{d}_{0})\in \mathbf{V}\times H^{2}(\Omega,\mathbb{S}_{+}^{2})$ and  $\mathbf{h}$ satisfies \eqref{assumptionforh2}, 
recall that Theorem \ref{globalW} implies that there exists a global weak solution $(\mathbf{u},\mathbf{d})$ to the system \eqref{eq1.1}--\eqref{eq1.3}, with initial condition $(\mathbf{u}_{0},\mathbf{d}_{0})$ and boundary
condition $({0},\mathbf{h})$, which satisfies
 \begin{align*}
\mathbf{u}\in L^{\infty}([0, T], \mathbf{H})\cap L^{2}([0, T], \mathbf{V}),
\ \ \ \mathbf{d}\in L^{\infty}_tH^{1}_x(Q_T)\cap L^2_tH^2_x(Q_T).
\end{align*}

In order to prove that this global weak solution $({\bf u}, {\bf d})$ is the desired strong solution satisfying \eqref{globstrong},
we need to show that the sequence of smooth solutions $({\bf u}^k, {\bf d}^k):Q_T\mapsto\mathbb R^2\times\mathbb S^2_+$ of the system \eqref{eq1.1},
under the initial and boundary conditions $({\bf u}_0^k, {\bf d}^k_0)$ and $(0, {\bf h}^k)$, from Theorem \ref{globalW} 
satisfy \eqref{globstrong} with a constant $C_{T}$ that is independent of $k$.

In fact,  it follows from the proof of Theorem \ref{globalW}  that \\
i) 
\begin{equation}\label{strong-conv}
\begin{cases} (\nabla {\bf u}^k, \nabla^2 {\bf d}^k)\rightarrow  (\nabla {\bf u}, \nabla^2 {\bf d}) \ {\rm{weakly\ in}}\ L^2(Q_T),\\
({\bf u}^k, {\bf d}^k) \rightarrow  ({\bf u}, {\bf d}) \ {\rm{in}}\ C([0, T], L^2(\Omega)\times H^1(\Omega)).
\end{cases}
\end{equation}

\smallskip\noindent
ii) there exists $\varepsilon_{1}$ and  $r_{0}>0$  such that 
\begin{align}\label{small-bdd}
\sup_{({x},t)\in \overline{Q}_{T}}
\int_{\Omega\cap B_{r_{0}}({x})} (|\mathbf{u}^k|^{2}+|\nabla \mathbf{d}^k|^{2})(\cdot,t)
\leq \varepsilon_{1}^2, \forall k\ge 1, 
\end{align}
and
\begin{align}\label{weakbdd}
\|(\mathbf{u}^k,\nabla\mathbf{d}^k)\|_{L^{\infty}_tL^2_x(Q_T)}
+\|(\nabla \mathbf{u}^k,\nabla^2 \mathbf{d}^k)\|_{L^{2}(Q_{T})}\leq K_{T,\varepsilon_{1}}, \forall k\ge 1,
\end{align}
where $K_{T,\varepsilon_{1}}>0$  depends on $\|(\mathbf{u}_{0},\mathbf{d}_{0})\|_{\mathbf{H}\times H^{1}(\Omega)},
\|(\mathbf{ h},\partial_t{\bf h})\|_{L^{2}_tH^{\frac{3}{2}}_x(\Gamma_T)}$,
$\varepsilon_{1}$, $T$, and $\Omega$.

Now we claim that
\begin{align}\label{norm-conv}
&\lim_{k\rightarrow\infty}\int_{Q_T}(|\nabla \mathbf{u}^{k}|^{2}+|\Delta\mathbf{d}^{k}+|\nabla{\bf d}^k|^2{\bf d}^k|^{2})\nonumber\\
&=\int_{Q_T} (|\nabla \mathbf{u}|^{2}+|\Delta\mathbf{d}+|\nabla{\bf d}|^2{\bf d}|^{2}).
\end{align}
To show \eqref{norm-conv}, first observe from the proof of Lemma \ref{lem10} 
that $({\bf u}^k,{\bf d}^k)$ satisfies the following energy equality:
\begin{align}\label{energyk1}
&\int_{\Omega} (|\mathbf{u}^{k}|^{2}+|\nabla \mathbf{d}^{k}|^{2})(T)
+2\int_{Q_T} (|\nabla \mathbf{u}^{k}|^{2}+|\Delta\mathbf{d}^{k}+|\nabla{\bf d}^k|^2{\bf d}^k|^{2})\nonumber\\
&=\int_{\Omega} (|\mathbf{u}^{k}_0|^{2}+|\nabla \mathbf{d}^{k}_0|^{2})
+2\int_{Q_T}\langle\Delta {\bf d}^k, \partial_t{\bf h}_E^k\rangle+2\int_{\Gamma_T}\langle\frac{\partial{\bf h}_E^k}{\partial\nu}, \ \partial_t{\bf h}^k\rangle,
\end{align}
where ${\bf h}^k_E(\cdot, t)$ is the harmonic extension of ${\bf h}^k(\cdot, t)$ .

Since ${\bf d}\in L^2_t H^2_x(Q_T)$, an argument similar to Lemma \ref{lem10} also yields
that $({\bf u},{\bf d})$ satisfies the same energy equality as \eqref{energyk1}:
\begin{align}\label{energy01}
&\int_{\Omega} (|\mathbf{u}|^{2}+|\nabla \mathbf{d}|^{2})(T)
+2\int_{Q_T} (|\nabla \mathbf{u}|^{2}+|\Delta\mathbf{d}+|\nabla{\bf d}|^2{\bf d}|^{2})\nonumber\\
&=\int_{\Omega} (|\mathbf{u}_0|^{2}+|\nabla \mathbf{d}_0|^{2})+
2\int_{Q_T}\langle\Delta {\bf d}, \partial_t{\bf h}_E\rangle
+2\int_{\Gamma_T}\langle\frac{\partial{\bf h}_E}{\partial\nu}, \ \partial_t{\bf h}\rangle,
\end{align}
Since $({\bf h}^k, \partial_t{\bf h}^k)\rightarrow ({\bf h}, \partial_t{\bf h})$ in $L^2_tH^\frac32_x(\Gamma_T)$, it follows the standard estimate on
harmonic functions that
$$\partial_t{\bf h}^k_E\rightarrow \partial_t{\bf h}_E \ {\rm{in}}\ L^2_tH^2_x(Q_T),$$
and
$$\frac{\partial {\bf h}^k_E}{\partial\nu}\rightarrow \frac{\partial {\bf h}_E}{\partial\nu} \ {\rm{in}}\  
L^2_tH^\frac12_x(\Gamma_T).$$
Therefore, after sending $k\rightarrow\infty$ in \eqref{energyk1} and comparing the resulting equality with \eqref{energy01}, we see
that \eqref{norm-conv} holds true.

On the other hand,  \eqref{strong-conv} and the Ladyzhenskaya's inequality imply that
\begin{equation}\label{strong-conv1}
({\bf u}^k, \nabla{\bf d}^k)\rightarrow ({\bf u}, \nabla{\bf d}) \ {\rm{in}}\  L^4(Q_T).
\end{equation}
Now it is easy to see from \eqref{norm-conv} and \eqref{strong-conv1} that
\begin{align}\label{norm-conv1}
\lim_{k\rightarrow\infty}\int_{Q_T}(|\nabla \mathbf{u}^{k}|^{2}+|\Delta\mathbf{d}^{k}|^{2})
=\int_{Q_T} (|\nabla \mathbf{u}|^{2}+|\Delta\mathbf{d}|^{2}).
\end{align}
In particular, we conclude that
\begin{equation}\label{norm-conv2}
({\bf u}^k, {\bf d}^k)\rightarrow ({\bf u}, {\bf d}) \ {\rm{in}}\ L^2_tH^1_x(Q_T)\times L^2_tH^2_x(Q_T).
\end{equation}
It is clear that \eqref{norm-conv2} yields the following uniform absolute continuity:
for any $\epsilon>0$, there exists $\delta=\delta(\epsilon)>0$ such that 
\begin{equation}\label{uni-abs-cont}
\|{\bf u}^k\|_{L^2([s_1,s_2], H^1(\Omega))}^2+\|{\bf d}^k\|_{L^2([s_1,s_2], H^2(\Omega))}^2\le\epsilon^2,
\end{equation}
provided $0\le s_1<s_2\le T$ satisfies $|s_2-s_1|\le\delta$.

Now we can show that $({\bf u}^k, {\bf d}^k)$ satisfies the estimate \eqref{globstrong} with a constant $C_{T}$,
that is independent of $k$, as follows. For simplicity, we drop the upper index $k$ and write 
$({\bf u}, {\bf d}, {P})$ for $({\bf u}^k, {\bf d}^k, {P}^k)$.

By employing the $W^{2,1}_{2}$-regularity theory on the non-stationary Stokes system, we have
that $\partial_{t}\mathbf{u},\nabla^{2}\mathbf{u},\nabla P\in L^{2}(Q_T)$, and for all $0<t\leq T$
\begin{align*}
&\|\partial_{t} \mathbf{u}\|_{L^{2}(Q_{t})}
+\|\nabla^{2} \mathbf{u}\|_{L^{2}(Q_{t})}\\
&\leq C\big[
\|\nabla \mathbf{u}_{0}\|_{L^{2}(\Omega)}
+\|\mathbf{u}\cdot\nabla\mathbf{u}\|_{L^{2}(Q_{t})}
+\|\nabla(\nabla\mathbf{d}\odot\nabla\mathbf{d})\|_{L^{2}(Q_{t})}\big]
\nonumber\\
&\leq C\big[ \|\nabla \mathbf{u}_{0}\|_{L^{2}(\Omega)}
 +\|\mathbf{u}\|_{L^{4}(Q_{t})}
  \|\nabla \mathbf{u}\|_{L^{4}(Q_{t})}
 +\|\nabla \mathbf{d}\|_{L^{4}(Q_{t})}
 \|\nabla^{2} \mathbf{d}\|_{L^{4}(Q_{t})}\big].
\end{align*}
On the other hand, it follows from the trace theorem 
$W^{2,1}_{2}(Q_T)\hookrightarrow H^{1}(\Omega\times\{t\})$, $\forall t\in[0,T]$,
that 
\begin{align*}
\|\nabla \mathbf{u}\|_{L^{\infty}_tL^{2}_x(Q_t)}\leq C
\|\mathbf{u}\|_{W^{2,1}_{2}(Q_{t})}\leq C\big[ \|\partial_{t} \mathbf{u}\|_{L^{2}(Q_t)}
+\|(\nabla{\bf u}, \nabla^{2} \mathbf{u})\|_{L^{2}(Q_{t})}\big].
\end{align*}
Putting these two estimates together, we obtain that
\begin{align}\label{u-est10}
&\|\nabla \mathbf{u}\|_{L^{\infty}_tL^{2}_x(Q_t)}+
\|\partial_{t} \mathbf{u}\|_{L^{2}(Q_{t})}
+\|\nabla^{2} \mathbf{u}\|_{L^{2}(Q_{t})}
\nonumber\\
&\leq C\big[\|\nabla \mathbf{u}_{0}\|_{L^{2}(\Omega)}
 +\|\mathbf{u}\|_{L^{4}(Q_t)}
  \|\nabla \mathbf{u}\|_{L^{4}(Q_t)}+\|\nabla \mathbf{d}\|_{L^{4}(Q_t)}
  \|\nabla^{2} \mathbf{d}\|_{L^{4}(Q_t)}\big].
\end{align}
To estimate $\mathbf{d}$,  let $\mathbf{h}_{P}$  be the parabolic lifting function of ${\bf h}$, i.e.,
\begin{align*}
\begin{cases}
\partial_t {\bf h}_P-\Delta {\bf h}_P=0  & \ {\rm{in}}\ Q_T,\\
{\bf h}_P={\bf h} & \ {\rm{on}}\ \Gamma_T,\\
{\bf h}_P={\bf d}_0  & \ {\rm{in}}\ \Omega\times\{0\}.
\end{cases}
\end{align*}
It follows from  \eqref{assumptionforh2} and the regularity theory of parabolic equations (cf. \cite{LSU}) that
\begin{align}\label{hp-est}
&\|{\bf h}_P\|_{L^\infty_tH^2_x(Q_T)}+\|\mathbf{h}_{P}\|_{H^{3,\frac{3}{2}}(Q_{T})}\nonumber\\
&\leq C\big[
\|\mathbf{h}\|_{H^{\frac{5}{2},\frac{5}{4}}(\Gamma_{T})}
 +\|\partial_t{\bf h}\|_{L^2_tH^\frac32_x(\Gamma_T)}+\|\mathbf{d}_{0}\|_{H^{2}(\Omega)}\big].
\end{align}
Set $\widetilde{\mathbf{d}}=\mathbf{d}-\mathbf{h}_{P}$. Then we have
\begin{align}\label{dtilde-eqn}
\begin{cases}
\partial_{t}\widetilde{\mathbf{d}}-\Delta\widetilde{\mathbf{d}}
=-\mathbf{u}\cdot\nabla\mathbf{d}+|\nabla \mathbf{d}|^{2}\mathbf{d} \quad & \text{ in }Q_T,\\
\widetilde{\mathbf{d}}={0}\quad & \text{ on } \partial_p Q_T.
\end{cases}
\end{align}
It follows from the regularity theory of parabolic equations, the trace theorem, and the estimate of $\mathbf{h}_{P}$ that 
for $0<t\leq T$, it holds 
\begin{align}\label{d-est10}
&\|\mathbf{d}\|_{L^{\infty}_tH^{2}_x(Q_t)} +\|\mathbf{d}\|_{H^{3,\frac{3}{2}}(Q_{t})}\nonumber\\
&\leq C\big[\|{\bf d}_0\|_{H^2(\Omega)}+\|\partial_t{\bf h}\|_{L^2_tH^\frac32_x(\Gamma_T)}+\|\mathbf{h}\|_{H^{\frac52,\frac{5}{4}}(\Gamma_{T})} \nonumber\\
&\ \ \ +
\|\mathbf{u}\cdot\nabla \mathbf{d}\|_{H^{1,\frac{1}{2}}(Q_{t})}+
\||\nabla \mathbf{d}|^{2} \mathbf{d}\|_{H^{1,\frac{1}{2}}(Q_{t})}\big].
\end{align}
From the interpolation inequality 
$W^{0,0}_{\frac{8}{3}}(Q_{t})\cap {W}^{0,1}_{\frac{8}{5}}(Q_{t}) \hookrightarrow W^{0,\frac{1}{2}}_{2}(Q_{t})$ (cf. \cite{Simon} Lemma 7), 
we can estimate the last two terms 
in the right hand side of \eqref{d-est10} by
\begin{align*}
&\|\mathbf{u}\cdot\nabla \mathbf{d}\|_{H^{1,\frac{1}{2}}(Q_{t})}\\
&\leq C\big[ \||\mathbf{u}||\nabla \mathbf{d}|\|_{L^{2}(Q_{t})}+\||\nabla \mathbf{u}||\nabla \mathbf{d}|\|_{L^{2}(Q_{t})}
+\|| \mathbf{u}||\nabla^{2} \mathbf{d}|\|_{L^{2}(Q_{t})}\nonumber\\
&\ \ \ +\|{\bf u}\cdot\nabla{\bf d}\|_{L^\frac83(Q_t)}^\frac12 \|{\bf u}\cdot\nabla{\bf d}\|_{W^{0,1}_{\frac85}(Q_t)}^\frac12\big]
\nonumber\\
&\leq C[(\|\mathbf{u}\|_{L^{4}(Q_{t})}+\|\nabla \mathbf{u}\|_{L^{4}(Q_{t})})\|\nabla \mathbf{d}\|_{L^{4}(Q_{t})}
+\|\mathbf{ u}\|_{L^{4}(Q_{t})}\|\nabla^{2} \mathbf{d}\|_{L^{4}(Q_t)}\nonumber\\
&\qquad+\|{\bf u}\|_{L^4(Q_t)}^\frac12\|\nabla {\bf d}\|_{L^8(Q_t)}^\frac12\nonumber\\
&\ \ \ \ \cdot(\|\partial_{t}\mathbf{u}\|_{L^{2}(Q_T)}^\frac12
\|\nabla \mathbf{d}\|_{L^{8}(Q_{t})}^\frac12
+\|\partial_{t}\nabla \mathbf{d}\|_{L^{2}(Q_{t})}^\frac12\|\mathbf{u}\|_{L^{8}(Q_{t})}^\frac12)],
\end{align*}
and
\begin{align*}
&\||\nabla \mathbf{d}|^{2} \mathbf{d}\|_{H^{1,\frac{1}{2}}(Q_{t})}\\
&\leq C\big[
\|\nabla \mathbf{d}\|_{L^{4}(Q_{t})}^{2}
+\||\nabla \mathbf{d}|^{3}\|_{L^2(Q_t)}+\||\nabla \mathbf{d}||\nabla^{2}\mathbf{d}|\|_{L^{2}(Q_{t})}\\
&\ \ \ \ \ \ +\| |\nabla{\bf d}|^2\mathbf{d}\|_{L^\frac83(Q_t)}^\frac12 \|\partial_t(|\nabla \mathbf{d}|^2{\bf d})\|_{L^\frac85(Q_t)}^\frac12\big]
\nonumber\\
&\leq C\big[
\|\nabla \mathbf{d}\|_{L^{4}(Q_{t})}^{2} +\|\nabla \mathbf{d}\|_{L^{6}(Q_{t})}^{3}
+\|\nabla \mathbf{d}\|_{L^{4}(Q_{t})}
\|\nabla^{2}\mathbf{d}\|_{L^{4}(Q_{t})}
\nonumber\\
&\qquad +\|\nabla{\bf d}\|_{L^4(Q_t)}^\frac12\|\nabla{\bf d}\|_{L^8(Q_t)}^\frac12 \nonumber\\
&\ \ \ \ \cdot(\|\partial_{t}\mathbf{d}\|_{L^{2}(Q_{t})}^\frac12
\|\nabla \mathbf{d}\|_{L^{16}(Q_{t})}
+\|\partial_{t}\nabla \mathbf{d}\|_{L^{2}(Q_{t})}^\frac12
\|\nabla \mathbf{d}\|_{L^{8}(Q_{t})}^\frac12)\big].
\end{align*}
Substituting these two estimates into \eqref{u-est10} and \eqref{d-est10}, we obtain that
\begin{align} \label{Energy2}
&\big(\|\mathbf{u} \|_{L^{\infty}_t H^{1}_x(Q_t)}^{2} +
\| \mathbf{d} \|_{L^{\infty}_t H^{2}_x(Q_t)}^{2}\big)
+\int_{0}^{t} (\| \mathbf{u}\|_{H^{2}(\Omega)}^{2} +\| \mathbf{d}\|_{H^{3}(\Omega)}^{2})\,dt
\nonumber\\
&\leq C\big[ \|\nabla \mathbf{u}_{0}\|_{L^{2}(\Omega)}^{2}
+ \|\mathbf{d}_{0}\|_{H^{2}(\Omega)}^{2}+\|{\bf h}\|_{H^{\frac52,\frac54}(\Gamma_T)}^2
+\|\partial_t{\bf h}\|_{L^2_tH^\frac32_x(\Gamma_T)}^2\big]\nonumber\\
&+C\big[\|\nabla \mathbf{d}\|_{L^{4}(Q_{t})}^{4} +\|\nabla \mathbf{d}\|_{L^{6}(Q_{t})}^{6}
+\|\partial_{t}\mathbf{u}\|_{L^{2}(Q_{t})}\|{\bf u}\|_{L^4(Q_t)}\|\nabla \mathbf{d}\|_{L^{8}(Q_{t})}^2\nonumber\\
&+(\|\mathbf{u}\|_{L^{4}(Q_{t})}^{2}+\|\nabla \mathbf{d}\|_{L^{4}(Q_{t})}^{2})
\cdot(\|\nabla \mathbf{u}\|_{L^{4}(Q_{t})}^{2}+\|\nabla^{2} \mathbf{d}\|_{L^{4}(Q_{t})}^{2})
\nonumber\\
&+\|{\bf u}\|_{L^4(Q_t)}^2\|\nabla{\bf d}\|_{L^4(Q_t)}^2+
\|\partial_{t}\nabla \mathbf{d}\|_{L^{2}(Q_{t})}\|\nabla{\bf d}\|_{L^8(Q_t)}\nonumber\\
&\cdot(
\|\mathbf{u}\|_{L^{4}(Q_{t})}\|\mathbf{u}\|_{L^{8}(Q_{t})}+\|\nabla \mathbf{d}\|_{L^{4}(Q_{t})}\|\nabla \mathbf{d}\|_{L^{8}(Q_{t})})\nonumber\\
&+ \|\partial_{t}\mathbf{d}\|_{L^{2}(Q_{t})}\|\nabla \mathbf{d}\|_{L^{4}(Q_{t})}\|\nabla \mathbf{d}\|_{L^{8}(Q_{t})}
\|\nabla \mathbf{d}\|_{L^{16}(Q_{t})}^2\big]
\end{align}
for any $0<t\leq T$, where $Q_{t} =\Omega\times [0,t]$.

To simplify the presentation, we set two auxiliary functions
$$\Phi(t)=\|\mathbf{u} \|_{L^{\infty}_t H^{1}_x(Q_t)}^{2} +
\| \mathbf{d} \|_{L^{\infty}_t H^{2}_x(Q_t)}^{2}+\int_{0}^{t} (\| \mathbf{u}\|_{H^{2}(\Omega)}^{2} +\| \mathbf{d}\|_{H^{3}(\Omega)}^{2})\,dt,$$
and
$$\eta(t)=\int_{0}^{t} (\| \mathbf{u}\|_{H^{1}(\Omega)}^{2} +\| \mathbf{d}\|_{H^{2}(\Omega)}^{2})\,dt$$
for $t\in [0,T]$.

From \eqref{weakbdd} and Sobolev's inequality, it is readily seen that 
\begin{equation}\label{ud-est1}
\begin{cases}
\|\nabla \mathbf{d}\|_{L^{4}(Q_{t})}^{4}\leq C\|\nabla \mathbf{d}\|_{L^{\infty}_{t}L^{2}_{x}(Q_{t})}^{2}
\|\mathbf{d}\|_{L_{t}^{2}H_{x}^{2}(Q_{t})}^{2}\leq C\eta(t),\\
\|\nabla^{2} \mathbf{d}\|_{L^{4}(Q_{t})}^{4}\leq C\| \mathbf{d}\|_{L^{\infty}_{t}H^{2}_{x}(Q_{t})}^{2}
\|\mathbf{d}\|_{L_{t}^{2}H_{x}^{3}(Q_{t})}^{2}\leq C\Phi^2(t),\\
\|\nabla \mathbf{d}\|_{L^{6}(Q_{t})}^{6}\leq  C\|\nabla \mathbf{d}\|_{L^{\infty}_{t}L^{2}_{x}(Q_{t})}^{3}
\| \mathbf{d}\|_{L^{2}_{t}H^{3}_{x}(Q_{t})}
\| \mathbf{d}\|_{L^{\infty}_{t}H^{2}_{x}(Q_{t})}
\| \mathbf{d}\|_{L^{2}_{t}H^{2}_{x}(Q_{t})}\\
\qquad\qquad\ \ \ \ \leq C\eta^{\frac{1}{2}}(t) \Phi(t),\\
\|\nabla \mathbf{d}\|_{L^{8}(Q_{t})}^{8}\leq  C\|\nabla \mathbf{d}\|_{L^{\infty}_{t}L^{2}_{x}(Q_{t})}^{3}
\| \mathbf{d}\|_{L^{2}_{t}H^{3}_{x}(Q_{t})}
\| \mathbf{d}\|_{L^{\infty}_{t}H^{2}_{x}(Q_{t})}^{3}
\| \mathbf{d}\|_{L^{2}_{t}H^{2}_{x}(Q_{t})}\\
\qquad\qquad\ \ \ \ \leq C\eta^{\frac{1}{2}}(t) \Phi^2(t),\\
\|\nabla \mathbf{d}\|_{L^{16}(Q_{t})}^{16}\leq  C\|\nabla \mathbf{d}\|_{L^{\infty}_{t}L^{2}_{x}(Q_{t})}^{3}
\| \mathbf{d}\|_{L^{2}_{t}H^{3}_{x}(Q_{t})}
\| \mathbf{d}\|_{L^{\infty}_{t}H^{2}_{x}(Q_{t})}^{11}
\| \mathbf{d}\|_{L^{2}_{t}H^{2}_{x}(Q_{t})}\\
\qquad\qquad\ \ \ \ \leq C\eta^{\frac{1}{2}}(t) \Phi^{6}(t),
\end{cases}
\end{equation}
where $C$ is the positive constant depending only on $\Omega$ and $K_{T,\varepsilon_{1}}$.
Similarly, we also have that
\begin{equation}\label{ud-est2}
\begin{cases}
\|\mathbf{u}\|_{L^{4}(Q_{t})}^{4}\leq C\| \mathbf{u}\|_{L^{\infty}_{t}L^{2}_{x}(Q_{t})}^{2}
\|\mathbf{u}\|_{L_{t}^{2}H_{x}^{1}(Q_{t})}^{2}\leq C\eta(t),\\
\|\nabla \mathbf{u}\|_{L^{4}(Q_{t})}^{4}\leq C\| \mathbf{u}\|_{L^{\infty}_{t}H_{x}^{1}(Q_{t})}^{2}
\|\mathbf{u}\|_{L_{t}^{2}H_{x}^{2}(Q_{t})}^{2}\leq C\Phi^2(t),\\
\| \mathbf{u}\|_{L^{8}(Q_{t})}^{8}\leq  C\| \mathbf{u}\|_{L^{\infty}_{t}L^{2}_{x}(Q_{t})}^{3}
\| \mathbf{u}\|_{L^{2}_{t}H^{2}_{x}(Q_{t})}
\| \mathbf{u}\|_{L^{\infty}_{t}H^{1}_{x}(Q_{t})}^{3}
\| \mathbf{u}\|_{L^{2}_{t}H^{1}_{x}(Q_{t})}\\
\qquad\qquad\  \leq C\eta^{\frac{1}{2}}(t) \Phi^{2}(t).
\end{cases}
\end{equation}
It follows from \eqref{u-est10}, \eqref{ud-est1}, and \eqref{ud-est2} that
\begin{align}\label{u-est15}
\|\partial_t{\bf u}\|_{L^2(Q_t)}\le C\big[\|\nabla{\bf u}_0\|_{L^2(\Omega)}+\eta^\frac14\Phi^\frac12(t)\big],
\end{align}
and it follows from \eqref{dtilde-eqn} and \eqref{hp-est} that
\begin{align}\label{d-est15}
\|\partial_t{\bf d}\|_{L^2(Q_t)}
&\le C\big[\|\partial_t{\bf h}\|_{L^2_tH^\frac32_x(\Gamma_T)}
+\|{\bf d}_0\|_{H^2(\Omega)}\nonumber\\
&\qquad+\|{\bf u}\|_{L^4(Q_t)}\|\nabla{\bf d}\|_{L^4(Q_t)}
+\|\nabla{\bf d}\|_{L^4(Q_t)}^2\big]\nonumber\\
&\le C\big[\|\partial_t{\bf h}\|_{L^2_tH^\frac32_x(\Gamma_T)}
+\|{\bf d}_0\|_{H^2(\Omega)}+\eta^\frac12(t)\big].
\end{align}
From \eqref{ud-est1}, \eqref{ud-est2}, and the equation
$$\partial_t\nabla{\bf d}=\nabla\Delta{\bf d}-\nabla({\bf u}\cdot\nabla{\bf d})+\nabla(|\nabla{\bf d}|^2{\bf d}),$$
we have that
\begin{align}\label{Dtd-est}
&\|\partial_{t}\nabla \mathbf{d}\|_{L^{2}(Q_T)}\nonumber\\
&\leq (\|\nabla\Delta \mathbf{d}\|_{L^{2}(Q_T)}
+\|\nabla(\mathbf{u}\cdot\nabla \mathbf{d})\|_{L^{2}(Q_T)}
+\|\nabla(|\nabla \mathbf{d}|^{2}\mathbf{d})\|_{L^{2}(Q_T)})
\nonumber\\
&\leq C\big[\Phi^\frac12(t)+\|\nabla{\bf u}\|_{L^4(Q_t)} \|\nabla{\bf d}\|_{L^4(Q_t)}
+\|{\bf u}\|_{L^4(Q_t)}\|\nabla^2{\bf d}\|_{L^4(Q_t)}\nonumber\\
&\qquad+\|\nabla{\bf d}\|_{L^6(Q_t)}^3+\|\nabla{\bf d}\|_{L^4(Q_t)}\|\nabla^2{\bf d}\|_{L^4(Q_t)}\big]\nonumber\\
&\leq C\big[\Phi^\frac12(t)+\eta^\frac14(t)\Phi^\frac12(t)\big].
\end{align}
Putting all these estimates into \eqref{Energy2}, we obtain that 
\begin{align}\label{estimate1}
&\Phi(t)\leq C\big[ \|\nabla \mathbf{u}_{0}\|_{L^{2}(\Omega)}^{2}+
\|{\bf d}_0\|_{H^2(\Omega)}^2+\|{\bf h}\|_{H^{\frac52,\frac54}(\Gamma_T)}^2
+\|\partial_t{\bf h}\|_{L^2_tH^\frac32_x(\Gamma_T)}^2\big]\nonumber\\
&\ \ \ \ \ \ \ +C\big[\eta(t)+(\eta^\frac38(t)+\eta^\frac48(t)+\eta^\frac58(t)+\eta^\frac78(t))\Phi(t)\big].
\end{align}
From the uniform absolute continuity property \eqref{uni-abs-cont}, there exists $0<t_0\le T$ such that
$$C\big[\eta^\frac38(t_0)+\eta^\frac48(t_0)+\eta^\frac58(t_0)+\eta^\frac78(t_0)\big]\le \frac12,$$
and hence we arrive at
\begin{align*}
&\Phi(t_0)\leq C\big[1+\|\nabla \mathbf{u}_{0}\|_{L^{2}(\Omega)}^{2}+
\|{\bf d}_0\|_{H^2(\Omega)}^2+\|{\bf h}\|_{H^{\frac52,\frac54}(\Gamma_T)}^2
+\|\partial_t{\bf h}\|_{L^2_tH^\frac32_x(\Gamma_T)}^2\big]\\
&\qquad\ \ \ +\frac12\Phi(t_0).
\end{align*}
This further yields
\begin{align*}
&\Phi(t_0)\leq C\big[ 1+\|\nabla \mathbf{u}_{0}\|_{L^{2}(\Omega)}^{2}+
\|{\bf d}_0\|_{H^2(\Omega)}^2+\|{\bf h}\|_{H^{\frac52,\frac54}(\Gamma_T)}^2
+\|\partial_t{\bf h}\|_{L^2_tH^\frac32_x(\Gamma_T)}^2\big].
\end{align*}
Hence \eqref{globstrong} holds with $T$ replaced by $t_0$.
Next we can repeat the same argument as above to show that \eqref{globstrong}
also holds with $T$ replaced by $2t_0$. After iterating finitely many times,
we can see that \eqref{globstrong} holds with $T$. This completes the proof of Theorem \ref{gsolu}.
$\hfill\Box$

\medskip

Next we will establish the continuous dependence  of the global strong solution to 
the system \eqref{eq1.1}--\eqref{eq1.3} for initial  data
in  $\mathbf{V}\times {H}^2(\Omega,\mathbb{S}_{+}^{2})$ and boundary data in $H^{\frac52, \frac54}(\Gamma_T)$ , which is crucial to
the Fr\'echet differentiability of the control to state operator $\mathcal{S}$.

\begin{theorem} \label{gsoluC} Under the same assumptions of Theorem \ref{gsolu},
let $(\mathbf{u}^{(i)}, \mathbf{d}^{(i)})$, $i=1,2$, be the global strong solution
corresponding to the initial data $(\mathbf{u}_0^{(i)}, \mathbf{d}_0^{(i)})$ and 
the boundary data $({0},\mathbf{h}^{(i)})$.
Define $\overline{\mathbf{u}}=\mathbf{u}^{(1)}-\mathbf{u}^{(2)}$, $\overline{\mathbf{d}}=\mathbf{d}^{(1)}-\mathbf{d}^{(2)}$, $\overline{\mathbf{u}}_{0}=\mathbf{u}^{(1)}_{0}-\mathbf{u}^{(2)}_{0}$, $\overline{\mathbf{d}}_{0}=\mathbf{d}^{(1)}_{0}-\mathbf{d}^{(2)}_{0}$
and $\overline{\mathbf{h}}= \mathbf{h}^{(1)}-\mathbf{h}^{(2)}$. Then it holds that
\begin{align}
&\|\overline{\mathbf{u}}(t)\|_{H^{1}(\Omega)}^{2}+\|\overline{\mathbf{d}}(t)\|_{H^{2}(\Omega)}^{2}
+\int_{0}^{t}(\|\overline{\mathbf{u}}(\tau)\|_{H^{2}(\Omega)}^{2}
+\|\overline{\mathbf{d}}(\tau)\|_{H^{3}(\Omega)}^{2})\,d\tau \nonumber\\
&\leq {C}_T\big(\|\overline{\mathbf{u}}_{0}\|_{H^{1}(\Omega)}^{2}
+\| \overline{\mathbf{d}}_{0}\|_{H^{2}(\Omega)}^{2}
+
\|\overline{\mathbf{h}}\|_{H^{\frac{5}{2},\frac{5}{4}}(\Gamma_{t})}^{2}\big)\quad \forall\, t\in [0,T],\label{conti}
\end{align}
where ${C}_T>0$, depending on $\|{\bf u}^{(i)}_0\|_{H^{1}(\Omega)}$,
 $\|{\bf d}^{(i)}_0\|_{H^{2}(\Omega)}$, 
 $\|{\bf h}^{(i)}\|_{H^{\frac{5}{2},\frac{5}{4}}(\Gamma_{T})}$,
 and $\|\partial_t{\bf h}^{(i)}\|_{L^2_tH^\frac32_x(\Gamma_T)}$ for $i=1,2$, 
 $\varepsilon_{1}$, $\Omega$, and $T$.
\end{theorem}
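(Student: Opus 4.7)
\smallskip
\noindent\textbf{Proof plan for Theorem \ref{gsoluC}.} The plan is to derive closed estimates for the system of differences and then iterate in time, mirroring the architecture used in the existence proof for Theorem \ref{gsolu}. Setting $\bar P = P^{(1)}-P^{(2)}$, the triple $(\bar{\bf u}, \bar{\bf d}, \bar P)$ satisfies
\begin{align*}
\begin{cases}
\partial_{t}\bar{\bf u}-\Delta\bar{\bf u}+\nabla\bar P=-({\bf u}^{(1)}\!\cdot\!\nabla)\bar{\bf u}-(\bar{\bf u}\!\cdot\!\nabla){\bf u}^{(2)}\\
\qquad-\nabla\!\cdot\!(\nabla\bar{\bf d}\odot\nabla{\bf d}^{(1)}+\nabla{\bf d}^{(2)}\odot\nabla\bar{\bf d}), &\text{in }Q_T,\\
\nabla\cdot\bar{\bf u}=0, &\text{in }Q_T,\\
\partial_t\bar{\bf d}-\Delta\bar{\bf d}=-({\bf u}^{(1)}\!\cdot\!\nabla)\bar{\bf d}-(\bar{\bf u}\!\cdot\!\nabla){\bf d}^{(2)}\\
\qquad+|\nabla{\bf d}^{(1)}|^2\bar{\bf d}+\langle\nabla({\bf d}^{(1)}+{\bf d}^{(2)}),\nabla\bar{\bf d}\rangle{\bf d}^{(2)}, &\text{in }Q_T,\\
(\bar{\bf u},\bar{\bf d})=(0,\bar{\bf h}),&\text{on }\Gamma_T,\\
(\bar{\bf u},\bar{\bf d})=(\bar{\bf u}_0,\bar{\bf d}_0), &\text{on }\Omega\times\{0\}.
\end{cases}
\end{align*}

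First, I would remove the inhomogeneous boundary datum by introducing a parabolic lifting $\bar{\bf h}_P:Q_T\mapsto\mathbb R^3$ solving $\partial_t\bar{\bf h}_P-\Delta\bar{\bf h}_P=0$ in $Q_T$, $\bar{\bf h}_P=\bar{\bf h}$ on $\Gamma_T$, and $\bar{\bf h}_P=\bar{\bf d}_0$ on $\Omega\times\{0\}$; the regularity theory of \cite{LSU} gives
\[
\|\bar{\bf h}_P\|_{L^\infty_tH^2_x(Q_T)}+\|\bar{\bf h}_P\|_{H^{3,\frac{3}{2}}(Q_T)}\leq C\big(\|\bar{\bf d}_0\|_{H^2(\Omega)}+\|\bar{\bf h}\|_{H^{\frac{5}{2},\frac{5}{4}}(\Gamma_T)}\big).
\]
Setting $\widetilde{\bar{\bf d}}=\bar{\bf d}-\bar{\bf h}_P$ converts the $\bar{\bf d}$-equation into a problem with vanishing parabolic boundary data for $\widetilde{\bar{\bf d}}$ and a right-hand side that contains $\bar{\bf h}_P$ inside controlled nonlinearities.

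Next, I would imitate the two Schauder/$W^{2,1}_{2}$ estimates \eqref{u-est10} and \eqref{d-est10}: the non-stationary Stokes theory applied to the $\bar{\bf u}$-equation yields
\[
\|\bar{\bf u}\|_{L^\infty_tH^1_x(Q_t)}+\|\partial_t\bar{\bf u}\|_{L^2(Q_t)}+\|\nabla^2\bar{\bf u}\|_{L^2(Q_t)}
\leq C\bigl[\|\bar{\bf u}_0\|_{H^1(\Omega)}+\mathcal{N}_1(t)\bigr],
\]
while the parabolic $H^{3,\frac{3}{2}}$-theory applied to $\widetilde{\bar{\bf d}}$ yields
\[
\|\widetilde{\bar{\bf d}}\|_{L^\infty_tH^2_x(Q_t)}+\|\widetilde{\bar{\bf d}}\|_{H^{3,\frac{3}{2}}(Q_t)}\leq C\mathcal{N}_2(t),
\]
where $\mathcal{N}_1,\mathcal{N}_2$ denote the $L^2$ and $H^{1,\frac12}$ norms of the bilinear remainders on the right-hand sides. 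Then, exactly as in \eqref{ud-est1}--\eqref{ud-est2}, I would bound each product via H\"older's and the Ladyzhenskaya--Gagliardo--Nirenberg inequalities, splitting one factor onto a difference (e.g. $\bar{\bf u}$, $\nabla\bar{\bf d}$, $\nabla^2\bar{\bf d}$) and the other onto ${\bf u}^{(i)}$, $\nabla{\bf d}^{(i)}$, $\nabla^2{\bf d}^{(i)}$, whose norms are controlled uniformly via the a priori estimate \eqref{globstrong} of Theorem \ref{gsolu}. For the trickiest interpolation term, $\|{\bf v}\cdot\nabla{\bf w}\|_{H^{1,\frac12}}$ or $\||\nabla{\bf d}|^2{\bf d}\|_{H^{1,\frac12}}$, I would reuse the $W^{0,1/2}_2$-interpolation lemma of \cite{Simon} exactly as in the proof of Theorem \ref{gsolu}.

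The outcome is an integral inequality of the form
\[
\Phi(t)\leq C\bigl[\|\bar{\bf u}_0\|_{H^1}^2+\|\bar{\bf d}_0\|_{H^2}^2+\|\bar{\bf h}\|_{H^{\frac52,\frac54}(\Gamma_t)}^2\bigr]+C\int_0^t\Psi(\tau)\Phi(\tau)\,d\tau,
\]
where $\Phi(t)$ is the left-hand side of \eqref{conti} and $\Psi\in L^1(0,T)$ is a combination of norms of ${\bf u}^{(i)},{\bf d}^{(i)}$ controlled by \eqref{globstrong}. Gronwall's inequality then delivers \eqref{conti} on a small time interval $[0,t_0]$; iterating finitely many times over $[0,T]$ by the uniform absolute continuity of the $L^1$-norm of $\Psi$ (identical to the argument yielding \eqref{globstrong} after \eqref{estimate1}) concludes the proof. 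The main obstacle will be a careful bookkeeping of the highest-order bilinear terms such as $|\nabla{\bf d}^{(1)}|^2\bar{\bf d}$ and $\langle\nabla({\bf d}^{(1)}+{\bf d}^{(2)}),\nabla\bar{\bf d}\rangle{\bf d}^{(2)}$ at the $H^{1,\frac12}$ level, so that no term involves $\Phi(t)$ with a coefficient that cannot be absorbed after choosing the time step small; this mirrors the most delicate estimate in Theorem \ref{gsolu}, and the uniform bounds on $({\bf u}^{(i)},{\bf d}^{(i)})$ make it tractable.
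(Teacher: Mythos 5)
Your plan follows the paper's proof almost verbatim: the parabolic lifting of the boundary difference, the maximal-regularity ($W^{2,1}_{2}$ for the Stokes part, $H^{3,\frac32}$ for the director part) estimates on the difference system \eqref{NLC5}, the $H^{1,\frac12}$ product bounds via the interpolation lemma of \cite{Simon}, the use of \eqref{globstrong} to control the background norms, and the small-time-step iteration exploiting uniform absolute continuity. One small mechanical point deserves flagging: the framework you chose does not deliver a Gronwall-type integral inequality. Maximal regularity estimates bound $\|\overline{\bf u}\|_{L^\infty_tH^1_x(Q_t)}$ and $\|\overline{\bf d}\|_{L^\infty_tH^2_x(Q_t)}$ by an algebraic right-hand side that again contains the same $L^\infty_t$ norms of the differences multiplied by a factor proportional to $\Psi(t)^{1/2}$; the closure is therefore by absorption once that factor is made small via the uniform absolute continuity of $\Psi$, not by Gronwall. (If you genuinely had an inequality $\Phi(t)\le A+C\int_0^t\Psi\Phi$ with $\Psi\in L^1(0,T)$, Gronwall would give the result on $[0,T]$ at once and the iteration you describe would be redundant.) This is exactly the absorption-then-iterate mechanism the paper uses, so your proposal is correct in substance; just replace the word Gronwall by absorption of a small-coefficient term.
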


\begin{proof}
For $0<t\leq T$, define
$$\Phi(t)=\sum_{i=1}^2 (\|\mathbf{u}^{(i)}\|_{L^{\infty}_{t}H^{1}_{x}(Q_{t})}^{2}
+\|\mathbf{d}^{(i)}\|_{L^{\infty}_{t}H^{2}_{x}(Q_{t})}^{2}),$$
 and
$$\Psi(t)=\int_{0}^{t}\sum_{i=1}^2\big(\|\mathbf{u}^{(i)}(\tau)\|_{H^{2}(\Omega)}^{2}
 +\|\mathbf{d}^{(i)}(\tau)\|_{H^{3}(\Omega)}^{2}\big)\,d\tau.$$
By Theorem \ref{gsolu}, there exists $C_T>0$,
depending on $\|\mathbf{u}_{0}^{(i)}\|_{H^{1}(\Omega)}$,
 $\|\mathbf{d}_{0}^{(i)}\|_{H^{2}(\Omega)}$,
    $\|\mathbf{h}^{(i)}\|_{H^{\frac{5}{2},\frac{5}{4}}(\Gamma_{T})}$, and $\|\partial_t{\bf h}^{(i)}\|_{L^2_tH^\frac32_x(\Gamma_T)}$
    for $i=1,2$, $\varepsilon_{1}$, $\Omega$ and $T$, such that
 \begin{align}   \label{globstrong1}
  &\Phi(T)+\Psi(T)\leq C_T^2.
 \end{align}
Moreover, for any $0<\delta<1$, there exists $t_\delta\in (0,\delta^{6}]$ such that
\begin{align} \label{delta1}
\Psi(t)\leq \delta^2 \quad \text{ for all } 0<t\leq t_{\delta}.
\end{align}

Observe that $(\overline{\mathbf{u}},\overline{\mathbf{d}})$ satisfies the system
\begin{align}\label{NLC5}
\begin{cases}
\partial_{t}\overline{\mathbf{u}}-\!\Delta\overline{\mathbf{u}} 
+\mathbf{u}^{(1)}\cdot\nabla \overline{\mathbf{u}}+\overline{\mathbf{u}}\cdot\nabla \mathbf{u}^{(2)}+\nabla \overline{P}
\\
=-\nabla\!\cdot\!(\nabla \overline{\mathbf{d}}\odot\nabla \mathbf{d}^{(1)}
+\nabla \mathbf{d}^{(2)}\odot\nabla \overline{\mathbf{d}})
&\text{in }Q_{T},\\
\nabla \cdot\overline{\mathbf{u}}=0\quad &\text{in }Q_{T},\\
\partial_{t}\overline{\mathbf{d}}-\!\Delta\overline{\mathbf{d}}
+\mathbf{u}^{(1)}\cdot\nabla \overline{\mathbf{d}}+\overline{\mathbf{u}}\cdot\nabla \mathbf{d}^{(2)}\\
=|\nabla \mathbf{d}^{(1)}|^{2}\overline{\mathbf{d}}
+2\langle\nabla (\mathbf{d}^{(1)}+\mathbf{d}^{(2)}), \nabla \overline{\mathbf{d}}\rangle \mathbf{d}^{(2)}
 &\text{in }Q_{T},\\
\big(\overline{\mathbf{u}},\overline{\mathbf{d}}\big)
=\big(0, \overline{\mathbf{h}}\big) \quad&\text{on }\Gamma_{T},\\
\big(\overline{\mathbf{u}}, \overline{\bf d}\big)\big|_{t=0}=\big(\overline{\mathbf{u}}_{0},
\overline{\mathbf{d}}_{0}\big)
\quad &\text{in }\Omega.
\end{cases}
\end{align}
Let $\overline{\mathbf{h}}_{P}$ be the parabolic lifting function of $\overline{\bf h}$: 
 \begin{align}
 \begin{cases}
 \partial_t\overline{\mathbf{h}}_P-\Delta \overline{\mathbf{h}}_P={0},\quad &\text{ in } Q_{T},\\
 \overline{\mathbf{h}}_P=\overline{\mathbf{h}},\quad   &\text{ on } \Gamma_{T},\\
 \overline{\mathbf{h}}_P|_{t=0}= \overline{\mathbf{d}}_{0}, \quad   &\text{ in } \Omega.
 \end{cases}\label{LP1}
 \end{align}
By the regularity theory of parabolic equations, we have that
\begin{align} \label{estimateofdp}
&\|\overline{\mathbf{h}}_{P}\|_{L^{\infty}_tH^{2}_x(Q_T)}+
\|\overline{\mathbf{h}}_{P}\|_{H^{3,\frac{3}{2}}(Q_{T})}\nonumber\\
&\leq C\big[\|\partial_t\overline{\bf h}\|_{L^2_tH^\frac32_x(\Gamma_T)}
\|\overline{\mathbf{h}}\|_{H^{\frac{5}{2},\frac{5}{4}}(\Gamma_{T})} 
+\|\overline{\mathbf{d}}_{0}\|_{H^{2}(\Omega)}\big].
\end{align}
Set $\widehat{{\mathbf{d}}}
=\overline{\mathbf{d}}-\overline{\mathbf{h}}_{P}$.
Then $\widehat{{\mathbf{d}}}$ solves
\begin{align*}
\begin{cases}
\partial_{t}\widehat{{\mathbf{d}}}-\Delta\widehat{{\mathbf{d}}}
=-\mathbf{u}^{(1)}\cdot\nabla\overline{\mathbf{d}}
-\overline{\mathbf{u}}\cdot\nabla \mathbf{d}^{(2)}+|\nabla \mathbf{d}^{(1)}|^{2}\overline{\mathbf{d}}\\
 \qquad\qquad\qquad+2\langle\nabla(\mathbf{d}^{(1)}+\mathbf{d}^{(2)}),\nabla\overline{\mathbf{d}} 
 \rangle\mathbf{d}^{(2)}\quad & \text{ in }Q_{T},\\
\widehat{{\mathbf{d}}}=0\quad & \text{ on } \partial_p Q_{T}.
\end{cases}
\end{align*}
It follows from the regularity theory of parabolic equations and \eqref{estimateofdp} that for $0<t\leq T$,
\begin{align} \label{cstrongford}
&\|\overline{\mathbf{d}}\|_{L^{\infty}_{t}H^{2}_{x}(Q_{t})} +\|\overline{\mathbf{d}}\|_{H^{3,\frac{3}{2}}(Q_T)}\\
&\leq C\big[ \|\overline{\mathbf{h}}\|_{H^{\frac52, \frac54}(\Gamma_T)}+\|\partial_t\overline{\bf h}\|_{L^2_tH^\frac32_x(\Gamma_T)}
+\|\overline{\mathbf{d}}_{0}\|_{H^{2}(\Omega)} +
\|\mathbf{u}^{(1)}\cdot\nabla \bar{\mathbf{d}}\|_{H^{1,\frac{1}{2}}(Q_{t})}
\nonumber\\
&+\|\bar{\mathbf{u}}\cdot\nabla \mathbf{d}^{(2)}\|_{H^{1,\frac{1}{2}}(Q_{t})}
+
\||\nabla \mathbf{d}^{(1)}|^{2} \bar{\mathbf{d}}\|_{H^{1,\frac{1}{2}}(Q_{t})}
\nonumber\\
&+\|\langle\nabla(\mathbf{d}^{(1)}+\mathbf{d}^{(2)}),\nabla\bar{\mathbf{d}} \rangle\mathbf{d}^{(2)}\|_{H^{1,\frac{1}{2}}(Q_{t})}
\big]\nonumber.
\end{align}
Since  the last four terms of the right hand side of \eqref{cstrongford}
can be estimated in a way similar to that of the proof of Theorem \ref{gsolu},
we will only sketch below the estimate of 
$\||\nabla \mathbf{d}^{(1)}|^{2} \overline{\mathbf{d}}\|_{H^{1,\frac{1}{2}}(Q_{t})}$.
As in Theorem \ref{gsolu} ,  we first have
\begin{align} \label{d12bard}
\||\nabla \mathbf{d}^{(1)}|^{2} \overline{\mathbf{d}}\|_{H^{1,\frac{1}{2}}(Q_{t})}
&\leq C\big[\||\nabla \mathbf{d}^{(1)}|^{2} \overline{\mathbf{d}}\|_{L^{2}(Q_{t})}
+\|\nabla(|\nabla \mathbf{d}^{(1)}|^{2} \overline{\mathbf{d}})\|_{L^{2}(Q_{t})}
\nonumber\\
&\ \ \ \ \ \ \ +\|\langle\partial_{t}(|\nabla \mathbf{d}^{(1)}|^{2} \overline{\mathbf{d}})\|
_{L^\frac32(Q_t)}\big].
\end{align}
Here we have used Sobolev's embedding:
$W^{1,1}_{\frac32}(Q_T)\hookrightarrow H^{\frac12,\frac12}(Q_T)$.

Applying H\"older's inequality and the Sobolev inequality, we 
obtain that for $0<t\le t_\delta$, the following estimates hold:
\begin{align*}
\||\nabla \mathbf{d}^{(1)}|^{2} \overline{\mathbf{d}}\|_{L^{2}(Q_{t})}
\leq & \||\nabla \mathbf{d}^{(1)}|^{2}\|_{L^2(Q_t)} \|\overline{\mathbf{d}}\|_{L^{\infty}(Q_{t})}\nonumber\\
\leq &\|\nabla \mathbf{d}^{(1)}\|_{L^{\infty}_{t}L^{2}_{x}(Q_{t})}
\|\mathbf{d}^{(1)}\|_{L^{2}_{t}H^{2}_{x}(Q_{t})}
\|\overline{\mathbf{d}}\|_{L^{\infty}_{t}H^{2}_{x}(Q_{t})}
\nonumber\\
\leq &C_T\delta \|\overline{\mathbf{d}}\|_{L^{\infty}_{t}H^{2}_{x}(Q_{t})},
\nonumber
\end{align*}
and
\begin{align*}
\|\nabla(|\nabla \mathbf{d}^{(1)}|^{2} \bar{\mathbf{d}})\|_{L^{2}(Q_{t})}
\leq &C\big[\|\nabla \mathbf{d}^{(1)}\|_{L^{4}_{t} L^{8}_{x}(Q_{t})}^{2}
\|\nabla\overline{\mathbf{d}}\|_{L^{\infty}_{t}L^{4}_{x}(Q_{t})}
\nonumber\\
&\ \ \ \ +\|\nabla\mathbf{d}^{(1)}\|_{L^{4}(Q_{t})}
\|\nabla^{2}\mathbf{d}^{(1)}\|_{L^{4}(Q_{t})}
\|\overline{\mathbf{d}}\|_{L^{\infty}(Q_{t})}\big]
\nonumber\\
\leq &
C\|\mathbf{d}^{(1)}\|_{L^{4}_{t}H^{2}_{x}(Q_{t})}^{2}\|\overline{\mathbf{d}}\|_{L^{\infty}_{t}H^{2}_{x}(Q_{t})}\\
\leq & C\|{\bf d}^{(1)}\|_{L^\infty_tH^2_x(Q_t)}\|{\bf d}^{(1)}\|_{L^2_tH^2_x(Q_t)}
\|\overline{\mathbf{d}}\|_{L^{\infty}_{t}H^{2}_{x}(Q_{t})}\\
\leq & C_{T}\delta\|\overline{\mathbf{d}}\|_{L^{\infty}_{t}H^{2}_{x}(Q_{t})}.
\end{align*}
Applying the equation of ${\bf d}^{(1)}$,  H\"{o}lder's inequality, and the interpolation inequality, 
we can estimate
\begin{align*}
&\|\partial_{t}\nabla\mathbf{d}^{(1)}\|_{L^{2}(Q_{t})}\\
&\leq 
\|\nabla\Delta \mathbf{d}^{(1)}\|_{L^{2}(Q_{t})}
+
\|\nabla(\mathbf{u}^{(1)}\cdot\nabla \mathbf{d}^{(1)})\|_{L^{2}(Q_{t})}
+\|\nabla(|\nabla\mathbf{d}^{(1)}|^{2}\mathbf{d}^{(1)})\|_{L^{2}(Q_{t})}
\nonumber\\
&\leq\|\mathbf{d}^{(1)}\|_{L^{2}_{t}H^{3}_{x}(Q_{t})}
+
\big(\|\mathbf{u}^{(1)}\|_{L^{4}(Q_{t})}+\|\nabla\mathbf{d}^{(1)}\|_{L^{4}(Q_{t})}\big)
\|\nabla^{2} \mathbf{d}^{(1)}\|_{L^{4}(Q_{t})}\\
&\ \ \ \ +
\|\nabla\mathbf{u}^{(1)}\|_{L^{4}(Q_{t})}\|\nabla\mathbf{d}^{(1)}\|_{L^{4}(Q_{t})}
+
\|\nabla\mathbf{d}^{(1)} \|_{L^{6}(Q_{t})}^{3}
\nonumber\\
&\leq  C_T.
\end{align*}
Applying $\eqref{NLC5}_{3}$ and H\"{o}lder's inequality, we have that
\begin{align*}
&\|\partial_{t}\overline{\mathbf{d}}\|_{L^{2}(Q_{t})}\\
&\leq \|\Delta\overline{\mathbf{d}}\|_{L^{2}(Q_{t})}
+\|\mathbf{u}^{(1)}\cdot\nabla\overline{\mathbf{d}}\|_{L^{2}(Q_{t})}
+\|\overline{\mathbf{u}}\cdot\nabla\mathbf{d}^{(2)}\|_{L^{2}(Q_{t})}
\nonumber\\
&\ \ \ \ +\||\nabla \mathbf{d}^{(2)}|^{2}\overline{\mathbf{d}}\|_{L^{2}(Q_{t})}
+2\|\langle \nabla(\mathbf{d}^{(1)}+\mathbf{d}^{(2)}),\nabla\overline{\mathbf{d}}
\rangle\mathbf{d}^{(2)}\|_{L^{2}(Q_{t})}
\nonumber\\
&\leq  Ct^\frac12\|\overline{\mathbf{d}}\|_{L^{\infty}_{t}H^{2}_{x}(Q_{t})}
+\|\mathbf{u}^{(1)}\|_{L^{4}(Q_{t})}
\|\nabla \overline{\mathbf{d}}\|_{L^{4}(Q_{t})}\\
&\ \ \ +\|\overline{\mathbf{u}}\|_{L^{4}(Q_{t})}
\|\nabla \overline{\mathbf{d}}^{(2)}\|_{L^{4}(Q_{t})}
+\|\nabla\mathbf{d}^{(2)}\|_{L^{4}(Q_{t})}^{2}
\|\overline{\mathbf{d}}\|_{L^{\infty}_{t}H^{2}_{x}(Q_{t})}\\
&\ \ \ +2(\|\nabla\mathbf{d}^{(1)}\|_{L^{4}(Q_{t})}
+\|\nabla\mathbf{d}^{(2)}\|_{L^{4}(Q_{t})})
\|\nabla\overline{\mathbf{d}}\|_{L^{4}(Q_{t})}
\nonumber\\
&\leq t^\frac12 \|\overline{\bf d}\|_{L^\infty_tH^2_x(Q_t)}
+C\Psi^\frac12(t)
\big( \|\overline{\mathbf{u}}\|_{L^{\infty}_{t}H^{1}_{x}(Q_{t})}
+ \|\overline{\mathbf{d}}\|_{L^{\infty}_{t}H^{2}_{x}(Q_{t})}\big)\\
&\leq C\delta \big( \|\overline{\mathbf{u}}\|_{L^{\infty}_{t}H^{1}_{x}(Q_{t})}
+ \|\overline{\mathbf{d}}\|_{L^{\infty}_{t}H^{2}_{x}(Q_{t})}\big).
\end{align*}
Hence
\begin{align*}
&\|\partial_{t}(|\nabla \mathbf{d}^{(1)}|^{2} \overline{\mathbf{d}})\|_{L^\frac32(Q_t)}\\
&\leq \|\nabla{\bf d}^{(1)}\|_{L^{12}(Q_t)}^2\|\partial_t\overline{\bf d}\|_{L^2(Q_t)} 
+
\|\partial_{t}\nabla\mathbf{d}^{(1)}\|_{L^{2}(Q_{t})}
\|\nabla\mathbf{d}^{(1)}\|_{L^{6}(Q_{t})}
\|\overline{\mathbf{d}}\|_{L^{\infty}_{t}H^{2}_{x}(Q_{t})}
\nonumber\\
&\leq Ct^\frac16\|{\bf d}^{(1)}\|_{L^\infty_t H^2_x(Q_t)}^2 
\|\partial_t\overline{\bf d}\|_{L^2(Q_t)} \\
&\ \ \ +Ct^\frac16 \|\partial_{t}\nabla\mathbf{d}^{(1)}\|_{L^{2}(Q_{t})}\|{\bf d}^{(1)}\|_{L^\infty_t H^2_x(Q_t)}
\|\overline{\mathbf{d}}\|_{L^{\infty}_{t}H^{2}_{x}(Q_{t})}\\
&\leq (1+C_T^2)t^\frac16\big(\|\partial_t\overline{\bf d}\|_{L^2(Q_t)}
+\|\overline{\mathbf{d}}\|_{L^{\infty}_{t}H^{2}_{x}(Q_{t})}\big)\\
&\leq (1+C_T^2)t^\frac16\big(\|\overline{\bf u}\|_{L^\infty_tH^1_x(Q_t)}
+\|\overline{\mathbf{d}}\|_{L^{\infty}_{t}H^{2}_{x}(Q_{t})}\big),
\end{align*}
where we have used the Sobolev inequalities
\begin{align*}
\|\nabla\mathbf{d}^{(1)}\|_{L^{6}(Q_{t})}
\leq C t^{\frac{1}{6}}\|\mathbf{d}^{(1)}\|_{L^{\infty}_{t}H^{2}_{x}(Q_{t})}, 
\|\nabla\mathbf{d}^{(1)}\|_{L^{12}(Q_{t})}
\leq C t^{\frac{1}{12}}\|\mathbf{d}^{(1)}\|_{L^{\infty}_{t}H^{2}_{x}(Q_{t})}.
\end{align*}
Putting all these estimates into \eqref{d12bard}, we obtain that 
for $0\le t\le t_\delta$, 
\begin{align*}
&\||\nabla \mathbf{d}^{(1)}|^{2} \bar{\mathbf{d}}\|_{H^{1,\frac12}(Q_{t})}
\leq C_T\delta(\| \bar{\mathbf{u}}\|_{L^{\infty}_{t}H^{1}_{x}(Q_{t})}
+\|\bar{\mathbf{d}}\|_{L^{\infty}_{t}H^{2}_{x}(Q_{t})}).
\end{align*}
Similarly, we can estimate
\begin{align*}
\big\|{\bf u}^{(1)}\cdot\nabla\overline{\bf d}\big\|_{H^{1,\frac12}(Q_t)}
\le C_T\delta \big(\|\overline{\bf u}\|_{L^\infty_tH^1_x(Q_t)}
+\|\overline{\bf d}\|_{L^\infty_tH^2_x(Q_t)}+\|\overline{\bf d}\|_{L^2_tH^3_x(Q_t)}\big),
\end{align*}
\begin{align*}
\big\|\overline{\bf u}\cdot\nabla {\bf d}^{(2)}\big\|_{H^{1,\frac12}(Q_t)}
\le C_T\delta \big(\|\overline{\bf u}\|_{L^\infty_tH^1_x(Q_t)}
+\|\overline{\bf d}\|_{L^\infty_tH^2_x(Q_t)}+\|\overline{\bf u}\|_{L^2_tH^2_x(Q_t)}\big),
\end{align*}
and
\begin{align*}
&\big\|\langle\nabla ({\bf d}^{(1)}+{\bf d}^{(2)}), \nabla \overline{\bf d}\rangle {\bf d}^{(2)}\big\|_{H^{1,\frac12}(Q_t)}\\
&\le C_T\delta \big(\|\overline{\bf u}\|_{L^\infty_tH^1_x(Q_t)}
+\|\overline{\bf d}\|_{L^\infty_tH^2_x(Q_t)}+\|\overline{\bf d}\|_{L^2_tH^3_x(Q_t)}\big).
\end{align*}
Therefore we obtain that for $0\le t\le t_\delta$,
\begin{align}\label{cstrongford2}
&\|\overline{\mathbf{d}}\|_{L^{\infty}_tH^{2}_x(Q_t)}+
\|\overline{\mathbf{d}}\|_{L^2_tH^{3}_x(Q_{t})}\nonumber\\
&\le C\big[\|\overline{\mathbf{h}}\|_{H^{\frac52, \frac54}(\Gamma_T)}
\|\partial_t\overline{\bf h}\|_{L^2_tH^\frac32_x(\Gamma_T)}+
\|\overline{\mathbf{d}}_0\|_{H^{2}(\Omega)}\nonumber\\
&\ + \delta(\|\overline{\bf u}\|_{L^\infty_tH^1_x(Q_t)}
+\|\overline{\bf d}\|_{L^\infty_tH^2_x(Q_t)}+\|\overline{\bf u}\|_{L^2_tH^2_x(Q_t)}+\|\overline{\bf d}\|_{L^2_tH^3_x(Q_t)})\big].
\end{align}

We can apply the $W^{2,1}_{2}$-regularity theory of $\eqref{NLC5}_{1}$
to estimate $\bar{\mathbf{u}}$ as follows. For $0<t\leq t_\delta$, it holds that
\begin{align} \label{cstrongforu}
&\|\overline{\mathbf{u}}\|_{L^{\infty}_{t}H^{1}_{x}(Q_{t})} +\|\overline{\mathbf{u}}\|_{L^2_t H^2_x(Q_t)}\nonumber\\
&\leq C\big[ \|\overline{\mathbf{u}}_{0}\|_{H^{1}(\Omega)}
+\|\mathbf{u}^{(1)}\cdot\nabla \overline{\mathbf{u}}\|_{L^{2}(Q_{t})}
+\|\overline{\mathbf{u}}\cdot\nabla \mathbf{u}^{(2)}\|_{L^{2}(Q_{t})}
\nonumber\\
&+\|\nabla\cdot(\nabla\overline{\mathbf{d}}\odot\nabla \mathbf{d}^{(1)}
+\nabla\mathbf{d}^{(2)}\odot\nabla\overline{\mathbf{d}})\|_{L^{2}(Q_{t})}\big]\nonumber\\
&\le C\big[\|\overline{\mathbf{u}}_{0}\|_{H^{1}(\Omega)}+C_T\delta (\|\overline{\bf u}\|_{L^\infty_tH^1_x(Q_t)}
+\|\overline{\bf d}\|_{L^\infty_tH^2_x(Q_t)})\big].
\end{align}
For $0\le t\le T$, set  
\begin{align*}
&\overline{\Phi}(t)
=(\|\overline{\mathbf{u}}\|_{L^{\infty}_{t}H^{1}_{x}(Q_{t})}^{2}
+\|\overline{\mathbf{d}}\|_{L^{\infty}_{t}H^{2}_{x}(Q_{t})}^{2})
+(\|\overline{\mathbf{u}}\|_{L^{2}_{t}H^{2}_{x}(Q_{t})}^{2}
+\| \overline{\mathbf{d}}\|_{L^{2}_{t}H^{3}_{x}(Q_{t})}^{2}).
\end{align*}
Then \eqref{cstrongford2} and \eqref{cstrongforu} imply
that for $0\le t\le t_\delta$,
\begin{align*}
\overline{\Phi}(t)
&\leq C_T\delta\overline{\Phi}(t) \\
&\ \ \ +C\big[\|\overline{\mathbf{u}}_{0}\|_{H^{1}(\Omega)}^{2}
+\| \overline{\mathbf{d}}_{0}\|_{H^{2}(\Omega)}^{2}
+
\|\overline{\mathbf{h}}\|_{H^{\frac{5}{2},\frac{5}{4}}(\Gamma_{t})}^{2}+\|\partial_t\overline{\bf h}\|_{L^2_tH^\frac32_x(\Gamma_T)}^2\big].
\end{align*}
If we choose $\delta>0$ such that $C_{T}\delta\leq\frac{1}{2}$, then
we conclude that \eqref{conti} holds for all $ t\in [0,t_{\delta}]$.  
By repeating the same argument for $t\in [it_\delta, (i+1)t_\delta]$ for
$i=1, \cdots, \big[\frac{T}{t_\delta}]+1$, we see that  \eqref{conti} holds for all $ t\in [0,T]$.
This completes the proof of Theorem \ref{gsoluC}.
\end{proof}

\section{Optimal boundary control}

The second main purpose of this paper is to consider the optimal boundary control problem \eqref{CF} for
the nematic liquid crystal flow \eqref{eq1.1}--\eqref{eq1.3}.
For a given $0<T<\infty$, we make the following assumptions:
\begin{itemize}
\item[(A1)] $\beta_i\geq 0$ ($i=1,2,3,4,5$) are constants that do not vanish simultaneously.

\item[(A2)] The vector-valued functions
$$
  \mathbf{u}_{Q_{T}}\in L^{2}([0,T], \mathbf{H}),\
 \mathbf{d}_{Q_{T}} \in {L}^2(Q_{T},\mathbb{S}^{2}),\ \mathbf{u}_{\Omega}\in \mathbf{H},
\ \mathbf{d}_{\Omega} \in {L}^2(\Omega, \mathbb{S}^{2})
$$
are given target maps.
\end{itemize}
The optimal boundary control problem (\ref{CF}) seeks a boundary data ${\bf h}$ in a suitable function space
that minimizes the cost functional:
\begin{align} \label{CF1}
2\mathcal{C}((\mathbf{u}, \mathbf{d}), \mathbf{h}) &:=\beta_1
\|\mathbf{u}-\mathbf{u}_{Q_T}\|^2_{{L}^2(Q_{T})}
+\beta_2\|\mathbf{d}-\mathbf{d}_{Q_T}\|^2_{{L}^2(Q_{T})}\nonumber\\
&\ \ \ +\beta_3\|\mathbf{u}(T)-\mathbf{u}_\Omega\|^2_{{L}^2(\Omega)}
+\beta_4\|\mathbf{d}(T)-\mathbf{d}_\Omega\|^2_{{L}^2(\Omega)}\nonumber\\
&\ \ \ +{\beta}_5\|\mathbf{h}-\mathbf{e}_3\|^2_{{L}^2(\Gamma_{T})},
\end{align}
where $({\bf u}, {\bf d})$ is the unique strong solution of \eqref{eq1.1}--\eqref{eq1.3}
under the boundary condition $(0, \mathbf{h})$ and the initial condition
$({\bf u}_0, {\bf d}_0)$.

\subsection{Fr\'echet differentiability of the control to state map}

In this subsection, we will study the control to state map $\mathcal{S}$ and establish
its Fr\'echet differentiability over suitable function spaces.

\subsubsection{Function space of admissible boundary control data}

The natural function space for the  boundary control data $\mathbf{h}$,
that guarantees the existence of unique strong solutions to the system \eqref{eq1.1}--\eqref{eq1.3}, is
\begin{align}\label{U1}
\mathcal{U}&\equiv\Big\{\mathbf{h}\ |\  \mathbf{h}\in {H}^{\frac{5}{2},\frac{5}{4}}(\Gamma_T, 
\mathbb{S}_{+}^{2}) \text{ and } \partial_{t}
\mathbf{h} \in L^{2}_tH^{\frac{3}{2}}_x(\Gamma_T)\Big\},
\end{align}
which is equipped with the norm
$$
\big\|\mathbf{h}\big\|_{\mathcal{U}}:
=\big\|\mathbf{h}\big\|_{{H}^{\frac{5}{2},\frac{5}{4}}(\Gamma_{T}))}
+\big\|\partial_{t}\mathbf{h}\big\|_{L^{2}_tH^{\frac{3}{2}}_x(\Gamma_T)}, \ \mathbf{h}\in \mathcal{U}.
$$
Given an initial data $(\mathbf{u}_0, \mathbf{d}_0)\in \mathbf{V}\times
{H}^{2}(\Omega,\mathbb{S}_{+}^{2})$, the \emph{function space for boundary control functions}
$\widetilde{\mathcal{U}}$, associated with $(\mathbf{u}_0, \mathbf{d}_0)$,  is defined by
\begin{align}
\widetilde{\mathcal{U}}& :=\Big\{\mathbf{h}\ |\ \mathbf{h}\in\mathcal{U}, 
\  \text{with}\ \mathbf{h}(x,0)=\mathbf{d}_{0}(x)\ {\rm{on}}\ \Gamma\Big\}.
\label{U}
\end{align}

\begin{remark}{\rm 
By the Aubin-Lions Lemma and the Sobolev embedding Theorem, we have that
\begin{align*}
&\mathcal{U}
\hookrightarrow C([0,T],  H^\frac{3}{2}(\Gamma, \mathbb{S}_{+}^{2}))
\hookrightarrow C(\Gamma_T, \mathbb{S}_{+}^{2}),
\end{align*}
and hence any $\mathbf{h}\in \widetilde{\mathcal{U}}$ is continuous  on $\Gamma_T$
and the compatibility condition $\mathbf{h}(x,0)=\mathbf{d}_0(x)$ holds for $x\in\Gamma$ 
in the classical sense.}
\end{remark}

The minimization problem is taken place in a bounded, {\it intrinsically} convex closed set in $\widetilde{\mathcal{U}}$
that will be specified below.

Let $\Pi: \mathbb {S}^2\setminus\{-{\bf e}_3\}\to \mathbb {R}^2$ be the stereographic projection
from the south pole $-{\bf e}_3$, and $\Pi^{-1}$ be its inverse map. Then $\Pi:\mathbb {S}^2_+\mapsto
B_1^2=\big\{y\in\mathbb R^2: \ |y|\le 1\big\}$ is a smooth differeomorphism. It is clear that
any map ${\bf h}:\Gamma_T\mapsto\mathbb{S}^2_+$ belongs to $\widetilde{\mathcal{U}}$ if and only if
$\Pi({\bf h}):\Gamma_T\mapsto B_1^2$ satisfies 
$$\Pi({\bf h})\in H^{\frac52,\frac54}(\Gamma_T, B_1^2)
\ \ {\rm{and}}\ \ \partial_t({\Pi({\bf h})})\in L^2_tH^\frac32_x(\Gamma_T),$$
and $\Pi({\bf h})(x,0)=\Pi({\bf d}_0)(x)$ for $x\in\Gamma$.

We also equip $\Pi({\bf h})$ with the norm 
$$\big\|\Pi({\bf h})\big\|_{\mathcal{U}}
:=\big\|\Pi({\bf h})\big\|_{H^{\frac52,\frac54}(\Gamma_T)}+
\big\|\partial_t({\Pi({\bf h})})\big\|_{L^2_tH^\frac32_x(\Gamma_T)}.
$$
\begin{definition}\label{UadM} For $M>0$, we define the intrinsic ball in $\widetilde{\mathcal{U}}$
with center $0$ and radius $M$, denoted as
$\widetilde{\mathcal{U}}_{M}$, by
\begin{align} \label{Uad}
\widetilde{\mathcal{U}}_{M}=
&\Big\{\mathbf{h}\in \widetilde{\mathcal{U}} \ | \ \|\Pi(\mathbf{h})\|_{\mathcal{U}}\leq M \Big\}.
\end{align}
\end{definition}

It is not hard to see that for sufficiently large $M>0$, $\widetilde{\mathcal{U}}_M\not=\emptyset$.
In fact, there exists $C>1$ such that if $M\ge C\|{\bf d}_0\|_{H^3(\Omega)}$
then we can construct ${\bf h}\in\widetilde{\mathcal{U}}$ such that $\|\Pi({\bf h})\|_{\mathcal{U}}\le M$
and ${\bf h}(\cdot, 0)={\bf d}_0(\cdot)$ on $\Gamma$. For example, let ${\bf h}:\Gamma_T\mapsto\mathbb S^2$
be the solution to the heat flow of harmonic map from $\Gamma$ to $\mathbb S^2$:
\begin{align*}
\begin{cases}
\partial_t{\bf h}-\Delta_{\Gamma}{\bf h}=|\nabla_{\Gamma}{\bf h}|^2{\bf h} & \ \ {\rm{in}}\ \ \ \Gamma_T,\\
{\bf h}(\cdot,0)={\bf d}_0(\cdot)& \  \ {\rm{on}}\ \ \ \Gamma.
\end{cases}
\end{align*}
Here $\nabla_{\Gamma}$ and $\Delta_{\Gamma}$ denote the gradient and Laplace operator on $\Gamma$.
Since $\Gamma$ is a $1$-dimensional smooth closed curve and ${\bf d}_0\in H^\frac52(\Gamma,\mathbb {S}^2_+)$,
it follows from the standard  theory of heat flow of harmonic maps in dimensions one that
there exists a unique solution ${\bf h}\in H^{\frac52, \frac54}(\Gamma_T,\mathbb {S}^2)$, with $\partial_t{\bf h}\in
L^2_tH^\frac32_x(\Gamma_T)$, such that 
$$\|\Pi({\bf h})\|_{\mathcal{U}}\le C\|{\bf d_0}\|_{H^3(\Omega)}\le M.$$
Moreover, it follows from ${\bf d}_0^3\ge 0$ on $\Gamma$ that ${\bf h}^3\ge 0$. Therefore,
$\widetilde{\mathcal{U}}_M$ is non-empty.
\begin{remark}\label{convex}
{\rm It is clear that $\widetilde{\mathcal{U}}_M$ is convex in the following sense:
if ${\bf h}_1, {\bf h}_2\in \widetilde{\mathcal{U}}_M$, then 
$\Pi^{-1}(s\Pi({\bf h}_1)+(1-s)\Pi({\bf h}_2))\in \widetilde{\mathcal{U}}_M$ for all $s\in [0,1]$.}
\end{remark}

In fact, it follows from the definition of $\widetilde{\mathcal{U}}_M$ that
$\Pi({\bf h}_i):\Gamma_T\mapsto B_1^2$ for $i=1,2$, this implies that
$s\Pi({\bf h}_1)+(1-s)\Pi({\bf h}_2):\Gamma_T\mapsto B_1^2$ for $t\in [0,1]$. 
Also note that
\begin{align*}
\big\|s\Pi({\bf h}_1)+(1-s)\Pi({\bf h}_2)\big\|_{\mathcal{U}}
&\le s\big\|\Pi({\bf h}_1)\big\|_{\mathcal{U}}+(1-s)\big\|\Pi({\bf h}_2)\big\|_{\mathcal{U}}\\
&\le sM+(1-s)M=M.
\end{align*}
Thus ${\bf h}(s)=\Pi^{-1}(s\Pi({\bf h}_1)+(1-s)\Pi({\bf h}_2))\in C^1([0,1], \widetilde{\mathcal{U}}_M)$
is a path joining ${\bf h}(0)={\bf h}_1$ and ${\bf h}(1)={\bf h}_2$.

\subsubsection{The control-to-state operator $\mathcal{S}$} To define $\mathcal{S}$, we first need to introduce
the function space for global strong solutions to the system \eqref{eq1.1}--\eqref{eq1.3}:
\begin{align}\label{H}
\mathcal{H}= C([0, T], \mathbf{V})\cap L^{2}_tH^2_x(Q_T)
\times C([0, T], H^{2}(\Omega,\mathbb{S}_{+}^{2})) \cap
L^{2}_tH^3_x(Q_T),
\end{align}
which is equipped with the norm
$$\big\|({\bf u}, {\bf d})\big\|_{\mathcal{H}}
=\big\|{\bf u}\big\|_{L^\infty_tH^1_x(Q_T)}+\big\|{\bf u}\big\|_{L^2_tH^2_x(Q_T)}
+\big\|{\bf d}\big\|_{L^\infty_tH^2_x(Q_T)}+\big\|{\bf d}\big\|_{L^2_tH^3_x(Q_T)}.
$$
We also introduce the function space for the Fr\'echet derivative of $\mathcal{S}$:
\begin{align}\label{W}
\mathcal{W}= C([0, T], \mathbf{H})\cap L^{2}_tH^1_x(Q_T)
\times C([0, T], H^{1}(\Omega,\mathbb{R}^{3})) \cap
L^{2}_tH^2_x(Q_T),
\end{align}
which is equipped with the norm
\begin{align*}
&\big\|({\bm\omega}, {\bm\phi})\big\|_{\mathcal{W}}\\
&\ =\big\|{\bm \omega}\big\|_{L^\infty_tL^2_x(Q_T)}+\big\|{\bm \omega}\big\|_{L^2_tH^1_x(Q_T)}
+\big\|{\bm\phi}\big\|_{L^\infty_tH^1_x(Q_T)}+\big\| {\bm \phi}\big\|_{L^2_tH^2_x(Q_T)}.
\end{align*}
Note that ${\mathcal{H}}$ is a subset of ${\mathcal{W}}$.
Now we define the \textit{control-to-state}  map $\mathcal{S}$ as follows.

\begin{definition}\label{Def-Scal}
Given an initial data $({\bf u}_0, {\bf d}_0)\in \mathbf{V}\times H^2(\Omega,\mathbb{S}^2_+)$,
the control-to-state mapping $\mathcal{S}:\widetilde{\mathcal{U}}\mapsto \mathcal{H}$, associated with $(\mathbf{u}_0, \mathbf{d}_0)$,  is defined by letting
\begin{align} \label{Scal}
\mathbf{h}\in \widetilde{\mathcal{U}}\mapsto \mathcal{S}(\mathbf{h})=(\mathbf{u},\mathbf{d})\in \mathcal{H}
\end{align}
to be the unique global strong solution to the system
\eqref{eq1.1}--\eqref{eq1.3} on $[0,T]$, with the initial condition $(\mathbf{u}_0, \mathbf{d}_0)$
and the boundary condition $(0, {\bf h})$.
\end{definition}

It follows directly from Theorem \ref{gsolu}, Remark \ref{gsoluR}, and Theorem \ref{gsoluC}
that  the map $\mathcal{S}$ is Lipschitz continuous. More precisely, we have

\begin{proposition}
For $n=2$, $T\in(0,+\infty)$, and $M>0$, under the same assumptions of Theorem \ref{gsolu}, if
$\widetilde{\mathcal{U}}_M\not=\emptyset$, then the control-to-state map $\mathcal{S}$
is Lipschitz continuous from $\widetilde{\mathcal{U}}$ to ${\mathcal{H}}$, i.e.,
$$\big\|\mathcal{S}({\bf h}_1)-\mathcal{S}({\bf h}_2)\big\|_{\mathcal{H}}
\le C_M\big\|{\bf h}_1-{\bf h}_2\big\|_{\mathcal{U}}, \ \forall\ {\bf h}_1, {\bf h}_2\in\widetilde{\mathcal{U}}_M,
$$
where $C_M>0$ depends only on $M$, $\Omega$, $\big\|{\bf u}_0\|_{\mathbf{H}}$,
and $\big\|{\bf d}_0\big\|_{H^1(\Omega)}$.
\end{proposition}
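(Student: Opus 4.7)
The plan is to derive the proposition as an almost immediate consequence of the continuous-dependence estimate \eqref{conti} of Theorem \ref{gsoluC}, after verifying that the constant $C_T$ appearing there can be taken uniformly on $\widetilde{\mathcal{U}}_M$. The main intellectual content is already encoded in Theorems \ref{gsolu} and \ref{gsoluC}; what remains is to specialize them to common initial data and to convert the chart-composed constraint $\|\Pi(\mathbf{h})\|_{\mathcal{U}}\le M$ back to a bound on $\|\mathbf{h}\|_{\mathcal{U}}$.

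Concretely, let $\mathbf{h}_1,\mathbf{h}_2\in\widetilde{\mathcal{U}}_M$ and set $(\mathbf{u}^{(i)},\mathbf{d}^{(i)}):=\mathcal{S}(\mathbf{h}_i)$ for $i=1,2$. By Theorem \ref{gsolu} and Remark \ref{gsoluR}, each $(\mathbf{u}^{(i)},\mathbf{d}^{(i)})$ is the unique global strong solution of \eqref{eq1.1}--\eqref{eq1.3} with the common initial data $(\mathbf{u}_0,\mathbf{d}_0)$ and boundary data $(\mathbf{0},\mathbf{h}_i)$, and belongs to $\mathcal{H}$. Writing $\overline{\mathbf{u}}=\mathbf{u}^{(1)}-\mathbf{u}^{(2)}$ and $\overline{\mathbf{d}}=\mathbf{d}^{(1)}-\mathbf{d}^{(2)}$, the shared initial data forces $\overline{\mathbf{u}}_0=\mathbf{0}$ and $\overline{\mathbf{d}}_0=\mathbf{0}$, so Theorem \ref{gsoluC} yields
\begin{equation*}
\|\overline{\mathbf{u}}(t)\|_{H^1(\Omega)}^2+\|\overline{\mathbf{d}}(t)\|_{H^2(\Omega)}^2+\int_0^t\big(\|\overline{\mathbf{u}}(\tau)\|_{H^2(\Omega)}^2+\|\overline{\mathbf{d}}(\tau)\|_{H^3(\Omega)}^2\big)\,d\tau\le C_T\|\mathbf{h}_1-\mathbf{h}_2\|_{\mathcal{U}}^2
\end{equation*}
for every $t\in[0,T]$. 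Taking the supremum in $t$ and combining the terms produces the desired Lipschitz bound in the $\mathcal{H}$-norm; the time-continuity needed for membership in $\mathcal{H}$ is inherited from the individual solutions via Remark \ref{gsoluR}.

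The one step requiring attention is that $C_T$ in Theorem \ref{gsoluC} depends on $\|\mathbf{h}^{(i)}\|_{H^{5/2,5/4}(\Gamma_T)}$ and $\|\partial_t\mathbf{h}^{(i)}\|_{L^2_tH^{3/2}_x(\Gamma_T)}$, while the defining constraint of $\widetilde{\mathcal{U}}_M$ is the chart-composed bound $\|\Pi(\mathbf{h}_i)\|_{\mathcal{U}}\le M$. To bridge this, I will use that $\Pi^{-1}:\overline{B_1^2}\to\mathbb{S}^2_+$ is a smooth diffeomorphism of a compact set, with uniformly bounded derivatives of every order. Since $\|\Pi(\mathbf{h}_i)\|_{L^\infty}\le 1$, a classical Moser-type composition estimate (applied separately in space and time on $\Gamma_T$ and glued via the anisotropic norm structure of $H^{5/2,5/4}$) gives $\|\mathbf{h}_i\|_{\mathcal{U}}\le C(1+M)^k$ for some absolute power $k$. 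Substituting this back into Theorem \ref{gsoluC} replaces $C_T$ by a finite $C_M$ that depends only on $M$, $T$, $\Omega$, and the fixed initial data, which completes the proof.

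I expect the main (and really only) obstacle to be the composition-in-Sobolev step. In the isotropic setting it is standard, but in the anisotropic space $H^{5/2,5/4}(\Gamma_T)$ on a one-dimensional boundary curve with fractional $5/4$ time regularity it requires slightly more care; this can be handled either by a Littlewood--Paley/Bony paradifferential decomposition or by a trace-type characterization reducing the estimate to a standard composition in an ambient parabolic Sobolev space.
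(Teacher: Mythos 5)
Your proposal is correct and follows essentially the same route as the paper: the paper itself dispenses with this proposition in a single sentence by citing Theorem \ref{gsolu}, Remark \ref{gsoluR}, and Theorem \ref{gsoluC}, specialized to common initial data so that $\overline{\mathbf{u}}_0=\overline{\mathbf{d}}_0=0$. The one genuine addition in your write-up is the chart-conversion step (bounding $\|\mathbf{h}_i\|_{\mathcal{U}}$ by a function of $\|\Pi(\mathbf{h}_i)\|_{\mathcal{U}}\le M$ via smoothness of $\Pi^{-1}$ on the compact set $\overline{B_1^2}$), which the paper leaves implicit; your composition argument is the right way to close that gap, and since $\Gamma$ is one-dimensional and the exponents $5/2$ and $3/2$ are well above the algebra thresholds, the standard Moser estimate suffices without any heavy paradifferential machinery.
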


\subsubsection{Differentiability of the control-to-state operator $\mathcal{S}$}
We will establish the differentiability of the control-to-state operator 
$\mathcal{S}:\widetilde{\mathcal{U}}\mapsto\mathcal{H}$. 

First,  we will define the Fr\'echet differentialiblity of $\mathcal{S}$. To do it,
we need to introduce tangential spaces of $\widetilde{\mathcal{U}}$ and
$\mathcal{H}$. Given an element ${\bf h}\in \widetilde{\mathcal{U}}$, the pullback bundle
of the tangent bundle $T\widetilde{\mathcal{U}}$ by ${\bf h}$, ${\bf h}^{*}T\widetilde{\mathcal{U}}$, is defined
by
\begin{align*}
{\bf h}^{*}T\widetilde{\mathcal{U}}&=\Big\{{\bm{\xi}}\in H^{\frac52,\frac54}(\Gamma_T,\mathbb R^3)\ \big|
\ \partial_t {\bm\xi}\in L^2_tH^\frac32_x(\Gamma_T),\ {\bm\xi}(x,0)=0\ {\rm{for}}\ x\in\Gamma,\\
&\ \ \ \ \ \ \langle{\bm \xi}, {\bf h}\rangle(x,t)=0\ {\rm{for}}\ (x,t)\in\Gamma_T
\Big\},
\end{align*}
which is equipped  with the same norm $\|\cdot\|_{\mathcal{U}}$ as that on $\widetilde{\mathcal{U}}$.

For a fixed $({\bf u}_0, {\bf d}_0)\in \mathbf{V}\times H^2(\Omega, \mathbb S^2_+)$
and an element $({\bf u}, {\bf d})\in \mathcal{H}$, with $({\bf u}, {\bf d})=({\bf u}_0, {\bf d}_0)$ at $t=0$,
the pullback bundle of the tangent bundle $T\mathcal{H}$ by $({\bf u}, {\bf d})$, $({\bf u}, {\bf d})^{*}T\mathcal{H}$, 
is defined by
\begin{align*}
({\bf u}, {\bf d})^{*}T\mathcal{H}&=\Big\{
({\bm{\omega}},{\bm\phi})\big|\ {\bm\omega}\in C([0,T],\mathbf{H})\cap L^2_tH^1_x(Q_T),\\
&\ \ \ \  \qquad\qquad{\bm{\phi}}\in C([0,T], H^1(\Omega,\mathbb R^3))\cap L^2_tH^2_x(Q_T),\\
& \ \ \ \qquad\qquad\langle{\bm\phi}, {\bf d}\rangle=0 \ {\rm{a.e.\ in }}\ Q_T, ({\bm\omega}, {\bm\phi})\big|_{t=0}=(0,0)\Big\},
\end{align*}
which is equipped with the norm $\|\cdot\|_{\mathcal{W}}$.

\begin{definition}\label{f-deriv} Given a $({\bf u}_0, {\bf d}_0)\in \mathbf{V}\times H^2(\Omega, \mathbb S^{2}_+)$.
For any ${\bf h}\in \widetilde{\mathcal{U}}$,  
let $({\bf u}, {\bf d})\in \mathcal{H}$ be the unique 
strong solution of \eqref{eq1.1}--\eqref{eq1.3} under the initial condition $({\bf u}_0, {\bf d}_0)$
and the boundary condition $(0, {\bf h})$, we say that the control to state map $\mathcal{S}:
\widetilde{\mathcal{U}}\mapsto\mathcal{H}$ is Fr\'echet differentiable at ${\bf h}$, if there exists a
linear map $\mathcal{S}'(h): {\bf h}^*T\widetilde{\mathcal{U}}\mapsto {\mathcal{S}({\bf h})}^*T\mathcal{H}$,
called the Fr\'echet derivative of $\mathcal{S}$ at ${\bf h}$, 
such that  for any $\epsilon>0$ there exists a $\delta>0$ so that
\begin{align}\label{Dif-F}
\big\|\mathcal{S}(\exp_{\bf h}{\bm\xi})
-\mathcal{S}(\mathbf{h})
-\mathcal{S}'({\bf h})(\bm{\xi})
\big\|_{\mathcal{W}}\le \epsilon\|\bm{\xi}\|_{\mathcal{U}},
\end{align}
whenever ${\bm\xi}$ is any section of ${\bf{h}}^*T\widetilde{\mathcal{U}}$ satisfying
both $\|{\bm\xi}\|_{\mathcal{U}}\le\delta$ and $\exp_{\bf h}(\bm\xi)\in \widetilde{\mathcal{U}}$.
Here $\exp_{\bf h}(\bm\xi)(x,t)$ is the exponential map on $\mathbb S^2$
from ${\bf h}(x,t)$ and in the direction $\bm\xi(x,t)$ for any $(x,t)\in Q_T$.
\end{definition}

Let us make two comments on Definition \ref{f-deriv}.

\begin{remark} {\rm If we denote the strict upper half space by
$${\mathbb S}^{2,\circ}_+=\mathbb{S}^2_+\setminus \partial \mathbb{S}^2_+=\big\{y\in \mathbb S^2: \ y^3>0\big\}.$$
Then for any function ${\bf h}\in \widetilde{\mathcal{U}}$ satisfying 
$${\bf h}(x,t)\in \mathbb{S}^{2,\circ}_+, \ \forall (x,t)\in\Gamma_T,$$
there exists  $\delta=\delta({\bf h})>0$ such that if ${\bm\xi}$ is a section ${\bf h}^*T\widetilde{\mathcal{U}}$
such that
$$\|{\bm\xi}\|_{\mathcal{U}}\le\delta,$$
then the exponential map $(\exp_{\bf h}{\bm \xi})(x,t)=\exp_{{\bf h}(x,t)}{\bm\xi}(x,t): Q_T\mapsto \mathbb S^2$
has the same regularity as ${\bf h}$ and has its third component  $(\exp_{\bf h}{\bm\xi})^3>0$
on $\Gamma_T$. Hence $(\exp_{\bf h}{\bm\xi})(x,t)\in \mathbb{S}^{2,\circ}_+$, for $(x,t)\in\Gamma_T$,
so that $\exp_{\bf h}{\bm\xi}\in \widetilde{\mathcal{U}}$.}

\end{remark}

\begin{remark}{\rm For ${\bf d}_0\in H^2(\Omega,\mathbb{S}^{2,\circ}_+)$ and
${\bf h}\in \widetilde{\mathcal{U}}$ with ${\bf h}(\Gamma_T)\subset\mathbb{S}^{2,\circ}_+$, 
there exist $\delta_1>0$, $\delta_2$, and $\delta_3>0$ depending on $\|{\bf d}_0\|_{H^2(\Omega)}$
and $\|{\bf h}\|_{\mathcal{U}}$ such that
$${\bf d}_0^3(x)\ge\delta_1 \ \ \ \forall x\in\Omega; \ \ \ \  {\bf h}^3(y,t)\ge \delta_1 \ \ \ \forall (y,t)\in\Gamma_T.$$
Hence $({\bf u}, {\bf d})=\mathcal{S}({\bf h})\in\mathcal{H}$ enjoys the property that
${\bf d}\in C^0(Q_T)$ and
$${\bf d}^3(x,t)\ge \delta_2, \ \forall (x,t)\in Q_T.$$
Therefore, for any section ${\bm\xi}$ of ${\bf h}^*T\widetilde{\mathcal{U}}$, if $\|{\bm\xi}\|_{\mathcal{U}}\le\delta_3$
then $\exp_{\bf h}{\bm\xi}$ maps $\Gamma_T$ to $\mathbb{S}^{2}_+$. In particular,
$\exp_{\bf h}{\bm\xi}\in \widetilde{\mathcal{U}}$ and $\mathcal{S}(\exp_{\bf h}{\bm\xi})\in\mathcal{H}$ 
is well-defined in \eqref{Dif-F}.}
\end{remark}

Now we want to study the linearized equation of the system of \eqref{eq1.1}--\eqref{eq1.3}.
\subsubsection{The linearized system}
 
For a fixed $(\mathbf{u}_{0},\mathbf{d}_{0})\in \mathbf{V}\times H^{2}(\Omega,\mathbb{S}_{+}^{2})$, let $\mathbf{h}\in \widetilde{\mathcal{U}}$ be given and  $(\mathbf{u}, \mathbf{d})=\mathcal{S}(\mathbf{h})$
be the unique global strong solution to the system \eqref{eq1.1}--\eqref{eq1.3}, with the initial condition
$(\mathbf{u}_{0},\mathbf{d}_{0})$ and the boundary condition
$(0, {\bf h})$,  given by Theorem \ref{gsolu}.

The linearized system of \eqref{eq1.1}--\eqref{eq1.3} near $((\mathbf{u},\mathbf{d}),\mathbf{h})$,
along a section ${\bm\xi}$ of ${\bf h}^*T\widetilde{\mathcal{U}}$,  seeks a section $(\bm{\omega},\bm{\phi})$
of ${({\bf u}, {\bf d})}^*T\mathcal{H}$ that solves  
\begin{align}\label{NLC-L1}
\begin{cases}
 \partial_{t}\bm{\omega}-\Delta \bm{\omega} +\nabla \mathcal{P}+(\mathbf{u}\cdot\nabla)\bm{\omega} + (\bm{\omega}\cdot\nabla) \mathbf{u} \\
 =-\nabla\cdot(\nabla \bm{\phi}\odot\nabla \mathbf{d})-\nabla\cdot(\nabla \mathbf{d}\odot\nabla \bm{\phi}),\\
 \nabla \cdot \bm{\omega} = 0,\\
 \partial_{t} \bm{\phi}-\Delta \bm{\phi}+(\mathbf{u}\cdot\nabla)\bm{\phi}+(\bm{\omega}\cdot\nabla) \mathbf{d}\\
 =|\nabla\mathbf{d}|^{2}\bm{\phi}+2\langle\nabla\mathbf{d},\nabla \bm{\phi} \rangle \mathbf{d},
\end{cases}
{\rm{in}}\ Q_T
 \end{align}
under the boundary and initial condition
 \begin{align}\label{NLC-L2}
 \begin{cases}
(\bm{\omega}, \ \bm{\phi})=(0, \bm{\xi}),\quad &\text{ on } \Gamma_T,\\
 (\bm{\omega}, \ \bm{\phi})=(0, 0),\quad &\text{ in }\Omega\times\{0\}.
 \end{cases}
 \end{align}

We have the following theorem.

\begin{theorem}\label{Line}
 For any  section $\bm{\xi}$ of ${\mathbf{h}^{*}}T\widetilde{\mathcal{U}}$,
 the system \eqref{NLC-L1} and \eqref{NLC-L2} admits a unique strong solution 
 $(\bm{\omega}, \bm{\phi})$, which is a section of ${({\bf u}, {\bf d})}^*T{\mathcal{H}}$,
 that satisfies the following estimate: 
\begin{align}\label{line1}
&\|(\bm{\omega}, \nabla\bm{\phi})\|_{L^\infty_tL^{2}_x(Q_T)}^{2}
+\int_{0}^{T} (\|\nabla \bm{\omega}(\tau)\|_{L^{2}(\Omega)}^2+
\|\nabla^2 \bm{\phi}(\tau)\|_{L^{2}(\Omega)}^{2})\,d\tau\nonumber\\
&\leq C_{T} \big(\|\partial_{t}\bm{\xi}\|_{L^2_tH^{\frac{3}{2}}_x(\Gamma_T)}^{2}
+\|\bm{\xi}\|_{H^{\frac{5}{2},\frac54}(\Gamma_T)}^{2}\big)
\nonumber\\
&\leq C_{T}\|\bm{\xi}\|_{{\mathcal{U}}}^{2},
\end{align}
where $C_{T}>0$ depends only on $\|\mathbf{u}_{0}\|_{H^{1}(\Omega)}$,  $\|\mathbf{d}_{0}\|_{H^{2}(\Omega)}$,
$\|\mathbf{h}\|_{{\mathcal{U}}}$, $\Omega$ and $T$.
\end{theorem}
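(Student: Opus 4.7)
The plan is to construct the pair $(\bm{\omega},\bm{\phi})$ by first lifting away the inhomogeneous boundary datum, then running a Faedo--Galerkin scheme on the resulting linear system with homogeneous boundary data, and finally deriving the energy estimate \eqref{line1} with the help of the $\mathcal{H}$-regularity of the ``coefficient'' $(\mathbf{u},\mathbf{d})=\mathcal{S}(\mathbf{h})$ guaranteed by Theorem \ref{gsolu}. Concretely, I would let $\bm{\xi}_P:Q_T\mapsto\mathbb R^{3}$ solve $\partial_{t}\bm{\xi}_{P}-\Delta\bm{\xi}_{P}={\bf 0}$ in $Q_T$ with boundary value $\bm{\xi}$ on $\Gamma_T$ and zero initial value. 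Standard parabolic regularity and the trace theorem yield $\|\bm{\xi}_{P}\|_{L^{\infty}_{t}H^{2}_{x}(Q_T)}+\|\bm{\xi}_{P}\|_{H^{3,3/2}(Q_T)}+\|\partial_{t}\bm{\xi}_{P}\|_{L^{2}(Q_T)}\le C\|\bm{\xi}\|_{\mathcal{U}}$. Setting $\widetilde{\bm{\phi}}=\bm{\phi}-\bm{\xi}_{P}$, the new unknown $(\bm{\omega},\widetilde{\bm{\phi}})$ satisfies a linear system of the same type as \eqref{NLC-L1} with homogeneous boundary and initial conditions and new forcing terms of the form $\partial_{t}\bm{\xi}_{P}-\Delta\bm{\xi}_{P}=0$ together with lower-order bilinear remainders involving only $\bm{\xi}_{P}$, $(\mathbf{u},\mathbf{d})$, $\bm{\omega}$, $\widetilde{\bm{\phi}}$.

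For existence, I would run a Galerkin scheme using a divergence-free basis $\{\bm e_{k}\}\subset \mathbf{V}\cap C^{\infty}$ for $\bm{\omega}$ and an $H^{1}_{0}(\Omega)\cap C^{\infty}$ basis $\{f_{k}\}$ for $\widetilde{\bm{\phi}}$. At each Galerkin level, the system reduces to a coupled linear ODE with coefficients measurable in $t$ and of class $L^{1}_{t}$, so the approximants exist on all of $[0,T]$; passing to the limit reduces to the uniform estimate \eqref{line1}, which is therefore the heart of the argument.

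To derive \eqref{line1}, I would test $\eqref{NLC-L1}_{1}$ with $\bm{\omega}$, observing that the pressure and the transport term $(\mathbf{u}\cdot\nabla)\bm{\omega}\cdot\bm{\omega}$ drop after integration by parts, and test $\eqref{NLC-L1}_{3}$ with $-\Delta\widetilde{\bm{\phi}}$, picking up the good term $\|\Delta\widetilde{\bm{\phi}}\|_{L^{2}}^{2}$. This produces the energy identity $\tfrac12\tfrac{d}{dt}\big(\|\bm{\omega}\|_{L^{2}}^{2}+\|\nabla\widetilde{\bm{\phi}}\|_{L^{2}}^{2}\big)+\|\nabla\bm{\omega}\|_{L^{2}}^{2}+\|\Delta\widetilde{\bm{\phi}}\|_{L^{2}}^{2}$ on the left and, on the right, a finite sum of bilinear terms in $(\bm{\omega},\nabla\widetilde{\bm{\phi}})$ multiplied by coefficients drawn from $\{\mathbf{u},\nabla\mathbf{u},\nabla\mathbf{d},\nabla^{2}\mathbf{d},\mathbf{d},\bm{\xi}_{P},\nabla\bm{\xi}_{P},\nabla^{2}\bm{\xi}_{P},\partial_{t}\bm{\xi}_{P}\}$. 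Each such coefficient lies in a function space whose norm is controlled by $\|(\mathbf{u},\mathbf{d})\|_{\mathcal{H}}$ (bounded by Theorem \ref{gsolu}) together with $\|\bm{\xi}\|_{\mathcal{U}}$, so by H\"older, Ladyzhenskaya (Lemma \ref{lem14}), and the interpolations \eqref{ud-est1}--\eqref{ud-est2} that already appeared in the proof of Theorem \ref{gsolu}, every right-hand-side term can be absorbed into the dissipation $\|\nabla\bm{\omega}\|_{L^{2}}^{2}+\|\Delta\widetilde{\bm{\phi}}\|_{L^{2}}^{2}$ after a Cauchy--Schwarz splitting, leaving an integrable-in-time Gronwall coefficient. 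Gronwall then yields \eqref{line1}, and uniqueness follows from the same estimate applied to the difference of two solutions with $\bm{\xi}={\bf 0}$.

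It remains to verify that $(\bm{\omega},\bm{\phi})$ really lies in $(\mathbf{u},\mathbf{d})^{*}T\mathcal{H}$, i.e.\ that $\langle\bm{\phi},\mathbf{d}\rangle\equiv 0$ in $Q_T$. Taking the inner product of $\eqref{NLC-L1}_{3}$ with $\mathbf{d}$ and of $\eqref{eq1.1}_{3}$ with $\bm{\phi}$, adding, and using $|\mathbf{d}|^{2}=1$ (so $\langle\nabla\mathbf{d},\mathbf{d}\rangle=0$, $\langle\Delta\mathbf{d},\mathbf{d}\rangle=-|\nabla\mathbf{d}|^{2}$, and $\langle(\bm{\omega}\cdot\nabla)\mathbf{d},\mathbf{d}\rangle=0$ by incompressibility), a direct computation shows that $w:=\langle\bm{\phi},\mathbf{d}\rangle$ satisfies $\partial_{t}w-\Delta w+(\mathbf{u}\cdot\nabla)w=2|\nabla\mathbf{d}|^{2}w$ in $Q_T$, with $w=\langle\bm{\xi},\mathbf{h}\rangle=0$ on $\Gamma_T$ (by the defining property of ${\mathbf h}^{*}T\widetilde{\mathcal U}$) and $w|_{t=0}=0$. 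Since $|\nabla\mathbf{d}|^{2}\in L^{\infty}_{t}L^{2}_{x}\cap L^{2}_{t}L^{\infty}_{x}(Q_T)$, an $L^{2}$ energy estimate and Gronwall force $w\equiv 0$. The step I expect to be the main obstacle is bookkeeping the many bilinear remainders in the energy estimate so that no uncontrollable top-order coupling appears between $\bm{\omega}$ and $\widetilde{\bm{\phi}}$; the key point is that the cancellation structure built into the Ericksen--Leslie system between $\nabla\cdot(\nabla\mathbf{d}\odot\nabla\bm{\phi}+\nabla\bm{\phi}\odot\nabla\mathbf{d})$ tested against $\bm{\omega}$ and $(\bm{\omega}\cdot\nabla)\mathbf{d}$ tested against $\Delta\widetilde{\bm{\phi}}$ survives the linearization modulo terms of the above controllable type, mirroring the estimates already performed for the nonlinear problem in Section~3.
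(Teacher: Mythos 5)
Your proposal is correct and follows essentially the same route as the paper's proof: lift the boundary datum via the parabolic extension $\bm{\xi}_P$, set $\widetilde{\bm\phi}=\bm\phi-\bm{\xi}_P$, obtain existence by Galerkin, derive \eqref{line1} by testing $\eqref{NLC-L1}_1$ with $\bm\omega$ and $\eqref{NLC-L1}_3$ with $-\Delta\widetilde{\bm\phi}$, and close with Gronwall using the $L^1_t$-integrability of the coefficients supplied by Theorem \ref{gsolu}. The only (immaterial) deviations are that the paper closes the orthogonality claim $\langle\bm\phi,\mathbf{d}\rangle\equiv 0$ by the parabolic maximum principle rather than an $L^2$ energy estimate, and that the cancellation $\langle(\bm\omega\cdot\nabla)\mathbf{d},\mathbf{d}\rangle=0$ is a pointwise consequence of $|\mathbf{d}|\equiv 1$, not of incompressibility as you wrote.
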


%
\proof Since the existence of a strong solution $(\bm{\omega},\bm{\phi})$ can be shown by the standard Galerkin method (see \cite{CRW2017} Proposition 4.1) and the uniqueness 
of $(\bm{\omega},\bm{\phi})$ follows from the estimate \eqref{line1}. We will only prove \eqref{line1}.
First let $\bm{\xi}_{P}$ be the parabolic lift function of $\bm{\xi}$, i.e.,
\begin{align*}
\begin{cases}
\partial_{t}\bm{\xi}_{P}-\Delta\bm{\xi}_{P}=\mathbf{0} \quad &\text{ in } Q_T,\\
\bm{\xi}_{P}=\bm{\xi} \quad &\text{ on }\Gamma_T,\\
\bm{\xi}_{P}={0} \quad &\text{ in }\Omega\times\{0\}.
\end{cases}
\end{align*}
Then we have 
\begin{align}\label{xi_P}
\|\nabla\bm\xi_P\|_{L^\infty_tL^2_x(Q_T)}
+\|\nabla^2\bm\xi_P\|_{L^2(Q_T)}+\|\bm{\xi}_{P}\|_{H^{3, \frac32}(Q_T)}\le C\|\bm{\xi}\|_{\mathcal{U}}.
\end{align}
Multiplying $\eqref{NLC-L1}_{1}$ by $\bm{\omega}$ and $\eqref{NLC-L1}_{3}$ by $\Delta\widetilde{\phi}$,
where $\widetilde{\bm{\phi}}=\bm{\phi}-\bm{\xi}_{P}$, and adding the two resulting equations, 
and using  H\"{o}lder's inequality, the Sobolev embedding Theorem and Poincar\'{e} inequality, 
we obtain
\begin{align*}
&\frac{1}{2}\frac{d}{dt}(\|\bm{\omega}\|_{L^{2}(\Omega)}^{2}
+\|\nabla\widetilde{\bm{\phi}}\|_{L^{2}(\Omega)}^{2})
+(\|\nabla\bm{\omega}\|_{L^{2}(\Omega)}^{2}
+\|\Delta \widetilde{\bm{\phi}}\|_{L^{2}(\Omega)}^{2})
\nonumber\\
&=\int_{\Omega} [-(\bm{\omega}\cdot\nabla)\mathbf{u}\cdot\bm{\omega}
+(\nabla\widetilde{\bm{\phi}}\odot\nabla\mathbf{d}
+\nabla\mathbf{d}\odot \nabla\widetilde{\bm{\phi}}):\nabla\bm{\omega}]
\nonumber\\
&\ \ \ +\int_{\Omega}
(\mathbf{u}\cdot\nabla\widetilde{\bm{\phi}}
+
\bm{\omega}\cdot\nabla\mathbf{d}
-
|\nabla \mathbf{d}|^{2}\widetilde{\bm{\phi}}
-2\langle \nabla\mathbf{d},\nabla\widetilde{\bm{\phi}} \rangle
\mathbf{d})\cdot\Delta\widetilde{\bm{\phi}}
\nonumber\\
&\ \ \ +\!\int_{\Omega}\!
(\nabla \bm{\xi}_{P}\!\odot\!\nabla\mathbf{d}
\!+\!\nabla\mathbf{d}\!\odot\! \nabla \bm{\xi}_{P}):\nabla\bm{\omega}\\
&\ \ \ \!+\!
\int_\Omega(\mathbf{u}\cdot\nabla \bm{\xi}_{P}
\!-\!
|\nabla \mathbf{d}|^{2} \bm{\xi}_{P}
\!-\!2\langle \nabla\mathbf{d},\nabla \bm{\xi}_{P} \rangle
\mathbf{d})\!\cdot\!\Delta\widetilde{\bm{\phi}}
\nonumber\\
&\leq C\Big[
\|\nabla \mathbf{u}\|_{L^{2}(\Omega)}
\|\bm{\omega}\|_{L^{4}(\Omega)}^{2}
\!+\!
\|\nabla \bm{\omega}\|_{L^{2}(\Omega)}
\|\nabla \mathbf{d}\|_{L^{4}(\Omega)}
\|\nabla \widetilde{\bm{\phi}}\|_{L^{4}(\Omega)}\\
&\ \ \ \!+\!
\|\Delta\widetilde{\bm{\phi}}\|_{L^{2}(\Omega)}
\|\mathbf{u}\|_{L^{4}(\Omega)}
\|\nabla \widetilde{\bm{\phi}}\|_{L^{4}(\Omega)}
\nonumber\\
&\ \ \ +
\|\Delta\widetilde{\bm{\phi}}\|_{L^{2}(\Omega)}
(\|\bm{\omega}\|_{L^{4}(\Omega)}
\|\nabla \mathbf{d}\|_{L^{4}(\Omega)}\\
&\ \ \ +
\|\nabla \mathbf{d}\|_{L^{8}(\Omega)}^{2}
\|\widetilde{\bm{\phi}}\|_{L^{4}(\Omega)}
+
\|\nabla \mathbf{d}\|_{L^{4}(\Omega)}
\|\nabla \widetilde{\bm{\phi}}\|_{L^{4}(\Omega)})
\nonumber\\
&\ \ \ +
\|\nabla \bm{\omega}\|_{L^{2}(\Omega)}
\|\nabla \bm{\xi}_{P}\|_{L^{4}(\Omega)}
\|\nabla \mathbf{d}\|_{L^{4}(\Omega)}\\
&\ \ \ +
\|\Delta\widetilde{\bm{\phi}}\|_{L^{2}(\Omega)}
\big(\|\mathbf{u}\|_{L^{4}(\Omega)}
\|\nabla \bm{\xi}_{P}\|_{L^{4}(\Omega)}+
\nonumber\\
&\ \ \ +
\|\nabla \mathbf{d}\|_{L^{8}(\Omega)}^{2}
\|\bm{\xi}_{P}\|_{L^{4}(\Omega)}
+
\|\nabla \mathbf{d}\|_{L^{4}(\Omega)}
\|\nabla \bm{\xi}_{P}\|_{L^{4}(\Omega)}
\big)\Big]
\nonumber\\
&\leq C\Big[
\|\nabla \mathbf{u}\|_{L^{2}(\Omega)}
\|\bm{\omega}\|_{L^{2}(\Omega)}\|\nabla\bm{\omega}\|_{L^{2}(\Omega)}\\
&\ \ \ \!+\!
\|\nabla \bm{\omega}\|_{L^{2}(\Omega)}
\|\nabla\mathbf{d}\|_{L^{4}(\Omega)}
\|\nabla \widetilde{\bm{\phi}}\|_{L^{2}(\Omega)}^\frac12
\|\Delta \widetilde{\bm{\phi}}\|_{L^{2}(\Omega)}^\frac12
\nonumber\\
&\ \ \ +\!
\|\Delta\widetilde{\bm{\phi}}\|_{L^{2}(\Omega)}^{\frac{3}{2}}
\|{\bf u}\|_{L^{4}(\Omega)}
\|\nabla \widetilde{\bm{\phi}}\|_{L^{2}(\Omega)}^{\frac{1}{2}}\\
&\ \ \ +
\|\Delta\widetilde{\bm{\phi}}\|_{L^{2}(\Omega)}
\|{\bm\omega}\|_{L^2(\Omega)}^\frac12\|\nabla\bm{\omega}\|_{L^{2}(\Omega)}^\frac12
\|\nabla\mathbf{d}\|_{L^{4}(\Omega)}
\nonumber\\
&\ \ \ +
\|\Delta\widetilde{\bm{\phi}}\|_{L^{2}(\Omega)}
\|\nabla\mathbf{d}\|_{L^{8}(\Omega)}^2
\|\nabla\widetilde{\bm{\phi}}\|_{L^{2}(\Omega)}
\\
&\ \ \ +
\|\Delta\widetilde{\bm{\phi}}\|_{L^{2}(\Omega)}^{\frac{3}{2}}
\|\nabla\mathbf{d}\|_{L^{4}(\Omega)}
\|\nabla \widetilde{\bm{\phi}}\|_{L^{2}(\Omega)}^{\frac{1}{2}}
\nonumber\\
&\ \ \ +
\|\nabla \bm{\omega}\|_{L^{2}(\Omega)}
\| \nabla\bm{\xi}_{P}\|_{L^{4}(\Omega)}
\|\nabla\mathbf{d}\|_{L^{4}(\Omega)}\\
&\ \ \ +
\|\Delta\widetilde{\bm{\phi}}\|_{L^{2}(\Omega)}
\big(\|\mathbf{u}\|_{L^{4}(\Omega)}
\| \nabla\bm{\xi}_{P}\|_{L^{4}(\Omega)}
\nonumber\\
&\ \ \ +\|\nabla\mathbf{d}\|_{L^{8}(\Omega)}^{2}
\|\bm{\xi}_{P}\|_{L^4(\Omega)}+
\| \nabla\mathbf{d}\|_{L^{4}(\Omega)}
\| \nabla\bm{\xi}_{P}\|_{L^{4}(\Omega)}
\big)\Big]
\nonumber\\
&\leq
\frac{1}{2}(\|\nabla\bm{\omega}\|_{L^{2}(\Omega)}^{2}
\!+\!\|\Delta\widetilde{\bm{\phi}}\|_{L^{2}(\Omega)}^{2})\nonumber\\
&\ \ \ +\!C(\|\bm{\omega}\|_{L^{2}(\Omega)}^{2}
\!+\!\|\nabla \widetilde{\bm{\phi}}\|_{L^{2}(\Omega)}^{2})\cdot\Big(\|{\bf u}\|_{L^4(\Omega)}^4+
\| \nabla\mathbf{u}\|_{L^{2}(\Omega)}^{2}
+\|\nabla\mathbf{d}\|_{L^{8}(\Omega)}^{4}\!\Big)\\
&\ \ \ \ \!+\!
C\Big[\!
\| \nabla\bm{\xi}_{P}\|_{L^{4}(\Omega)}^{2}
(\|\nabla\mathbf{d}\|_{L^{4}(\Omega)}^{2}
\!+\!
\|\mathbf{u}\|_{L^{4}(\Omega)}^{2})
\!+\!
\|\nabla\mathbf{d}\|_{L^{8}(\Omega)}^{4}
\|\bm{\xi}_{P}\|_{L^{4}(\Omega)}^{2}
\Big].
\end{align*}
This implies that
\begin{align*}
&\frac{d}{dt}(\|\bm{\omega}\|_{L^{2}(\Omega)}^{2}
+\|\nabla\widetilde{\bm{\phi}}\|_{L^{2}(\Omega)}^{2})
+(\|\nabla\bm{\omega}\|_{L^{2}(\Omega)}^{2}
+\|\Delta \widetilde{\bm{\phi}}\|_{L^{2}(\Omega)}^{2})
\nonumber\\
\leq&
C\big(\|{\bf u}\|_{L^4(\Omega)}^4+
\| \nabla\mathbf{u}\|_{L^{2}(\Omega)}^{2}
\!+\!
\|\nabla\mathbf{d}\|_{L^{8}(\Omega)}^{4}\!\big)
(\|\bm{\omega}\|_{L^{2}(\Omega)}^{2}
\!+\!\|\nabla \widetilde{\bm{\phi}}\|_{L^{2}(\Omega)}^{2})\nonumber\\
+&\!
C\Big[\!
\|\bm{\xi}_{P}\|_{H^{2}(\Omega)}^{2}
(\|\mathbf{d}\|_{H^{2}(\Omega)}^{2}
\!+\!
\|\mathbf{u}\|_{H^{1}(\Omega)}^{2})
\!+\!
\|\bm{\xi}_{P}\|_{H^{1}(\Omega)}^{2}\|\mathbf{d}\|_{H^{2}(\Omega)}^{4}\Big].
\end{align*}
Since $(\mathbf{u},\mathbf{d})$ is a strong solution obtained by Theorem \ref{gsolu},
it follows from \eqref{globstrong}  that
$$\|{\bf u}\|_{L^\infty_tH^1_x(Q_T)}+\|{\bf d}\|_{L^\infty_tH^2_x(Q_T)}
\le C_T,$$
where $C_T>0$ depends on $T, \Omega, \|{\bf u}_0\|_{H^1(\Omega)}$, 
$\|{\bf d}_0\|_{H^2(\Omega)}$,
$\|{\bf h}\|_{H^{\frac52,\frac54}(\Gamma_T)}$, and $\|\partial_t{\bf h}\|_{L^2_tH^\frac32_x(\Gamma_T)}$.
Hence by Sobolev's embedding theorem we have that
$$
\|{\bf u}\|_{L^\infty_tL^4_x(Q_T)}+\|\nabla {\bf d}\|_{L^\infty_tL^8_x(Q_T)}\le C_T.
$$
Thus we obtain that
$$\int_0^T\big(\|{\bf u}\|_{L^4(\Omega)}^4+
\| \nabla\mathbf{u}\|_{L^{2}(\Omega)}^{2}
\!+\!
\|\nabla\mathbf{d}\|_{L^{8}(\Omega)}^{4}
\!\big)\,dt\le C_T.$$
Since $(\bm\omega, \bm\phi)|_{t=0}=(0,0)$,  
by applying Gronwall's inequality we obtain that
\begin{align} \label{line3}
&\sup_{0\le t\le T}\|(\bm{\omega}, \nabla{\widetilde{\bm\phi}})(t)\|_{L^{2}(\Omega)}^{2}
+\int_{0}^{T}(\|\nabla\bm{\omega}(\tau)\|_{L^{2}(\Omega)}^{2}
+\|\nabla^2 \widetilde{\bm{\phi}}(\tau)\|_{L^{2}(\Omega)}^{2})\,d\tau
\nonumber\\
&\leq C\exp\{C\!\!\int_{0}^{T}
\big(\|{\bf u}\|_{L^4(\Omega)}^4+\| \nabla\mathbf{u}\|_{L^{2}(\Omega)}^{2}
+\|\nabla\mathbf{d}\|_{L^{8}(\Omega)}^{4}\big)(\tau)\,d\tau\}
\nonumber\\
&\ \ \ \cdot
\int_{0}^{T}
\Big[\!
\|\bm{\xi}_{P}\|_{H^{2}(\Omega)}^{2}
(\|\mathbf{d}\|_{H^{2}(\Omega)}^{2}
\!+\!
\|\mathbf{u}\|_{H^{1}(\Omega)}^{2})
\!+\!
\|\bm{\xi}_{P}\|_{H^{1}(\Omega)}^{2}\|\mathbf{d}\|_{H^{2}(\Omega)}^{4}\Big]\,d\tau
\nonumber\\
&\leq C_{T}\|\bm{\xi}\|_{\mathcal{U}}^2. 
\end{align}
Thus  \eqref{line1} was proven.
\medskip

To show that $(\bm\omega, \bm\phi)$ is a section of 
$(\mathbf{u},\mathbf{d})^*T\mathcal{H}$, we need to verify that
$$\langle\bm\phi,{\bf d}\rangle(x,t)=0, \ {\rm{for}}\ (x,t)\in Q_T.$$
To see this, observe that 
 $\langle\bm{\phi}, \mathbf{d}\rangle$ satisfies
\begin{align} \label{bmphi}
&\partial_{t}\langle\bm{\phi}, \mathbf{d}\rangle+\mathbf{u}\cdot\nabla \langle\bm{\phi}, \mathbf{d}\rangle
-\Delta \langle\bm{\phi}, \mathbf{d}\rangle\nonumber\\
&= \langle\partial_{t}\bm{\phi}+\mathbf{u}\cdot\nabla\bm{\phi}-\Delta \bm{\phi}, \mathbf{d}\rangle
+\langle\partial_{t}\mathbf{d}+\mathbf{u}\cdot\nabla \mathbf{d}-\Delta\mathbf{d},  \bm{\phi}\rangle
\nonumber\\
&\ \ \ -2\langle\nabla \bm{\phi}, \nabla\mathbf{d}\rangle
\nonumber\\
&=2|\nabla \mathbf{d}|^{2}\langle\bm{\phi}, \mathbf{d}\rangle,
\end{align}
and 
\begin{align*}
\langle\bm{\phi}, \mathbf{d}\rangle=0\ \ {\rm{on}}\ \ \partial_pQ_T.
\end{align*}
Hence, by the parabolic maximum principle, we  conclude that
\begin{align*}
\langle\bm{\phi}, \mathbf{d}\rangle\equiv 0\ \ \ {\rm{in}}\ \ \ Q_T.
\end{align*}
This completes the proof of Theorem \ref{Line}. \qed

\medskip
To facilitate the discussion, we also introduce a linear map associated with an element 
${\bf h}\in \widetilde{\mathcal{U}}$, $\mathcal{L}_{\mathbf{h}}: {\bf h}^*T\widetilde{\mathcal{U}}\mapsto 
(\mathcal{S}({\bf h}))^*T\mathcal{H}$ that is defined by
\begin{align}
\mathcal{L}_{\mathbf{h}}(\bm{\xi})=(\bm{\omega},\bm{\phi}),
\label{Lh*}
\end{align}
where $(\bm{\omega},\bm{\phi})$ is the unique global strong solution to the linearized system
\eqref{NLC-L1}--\eqref{NLC-L2} on $Q_T$, with $({\bf u}, {\bf d})=\mathcal{S}({\bf h})$,  obtained
by Theorem \ref{Line}.
\medskip

It follows directly from the estimate \eqref{line1} that

\begin{corollary}
For any ${\bf h}\in \widetilde{\mathcal{U}}$, the linear map $\mathcal{L}_{\mathbf{h}}:{\bf h}^*T\widetilde{\mathcal{U}}
\mapsto (\mathcal{S}({\bf h}))^*T\mathcal{H}$ is Lipschitz continuous.
\end{corollary}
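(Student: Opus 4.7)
The plan is to deduce the corollary directly from the estimate \eqref{line1} of Theorem \ref{Line}, exploiting that for a fixed base point $\mathbf{h}$ the linearized system \eqref{NLC-L1}--\eqref{NLC-L2} is linear in both the unknown $(\bm{\omega},\bm{\phi})$ and the data $\bm{\xi}$. Since Lipschitz continuity of a linear map is equivalent to boundedness of its operator norm, it suffices to produce a single constant $C_T$ (depending only on the base point) such that $\|\mathcal{L}_{\mathbf{h}}(\bm{\xi})\|_{\mathcal{W}}\le C_T\|\bm{\xi}\|_{\mathcal{U}}$.

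First I would verify linearity of $\mathcal{L}_{\mathbf{h}}$. Given two sections $\bm{\xi}_1,\bm{\xi}_2$ of $\mathbf{h}^{*}T\widetilde{\mathcal{U}}$ and scalars $a,b\in\mathbb{R}$, set $(\bm{\omega}_i,\bm{\phi}_i)=\mathcal{L}_{\mathbf{h}}(\bm{\xi}_i)$. Because the coefficients $(\mathbf{u},\mathbf{d})=\mathcal{S}(\mathbf{h})$ are frozen, the pair $(a\bm{\omega}_1+b\bm{\omega}_2,\,a\bm{\phi}_1+b\bm{\phi}_2)$ solves \eqref{NLC-L1}--\eqref{NLC-L2} with boundary datum $a\bm{\xi}_1+b\bm{\xi}_2$ and trivial initial datum. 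Uniqueness in Theorem \ref{Line} then forces $\mathcal{L}_{\mathbf{h}}(a\bm{\xi}_1+b\bm{\xi}_2)=a\mathcal{L}_{\mathbf{h}}(\bm{\xi}_1)+b\mathcal{L}_{\mathbf{h}}(\bm{\xi}_2)$, and a fortiori $\mathcal{L}_{\mathbf{h}}(\bm{\xi}_1)-\mathcal{L}_{\mathbf{h}}(\bm{\xi}_2)=\mathcal{L}_{\mathbf{h}}(\bm{\xi}_1-\bm{\xi}_2)$.

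Next I would apply \eqref{line1} to $\mathcal{L}_{\mathbf{h}}(\bm{\xi}_1-\bm{\xi}_2)=(\bm{\omega}_1-\bm{\omega}_2,\bm{\phi}_1-\bm{\phi}_2)$, which immediately yields
\begin{align*}
&\|\bm{\omega}_1-\bm{\omega}_2\|_{L^{\infty}_tL^{2}_x(Q_T)}^{2}+\|\nabla(\bm{\phi}_1-\bm{\phi}_2)\|_{L^{\infty}_tL^{2}_x(Q_T)}^{2}\\
&\quad+\|\nabla(\bm{\omega}_1-\bm{\omega}_2)\|_{L^{2}(Q_T)}^{2}+\|\nabla^2(\bm{\phi}_1-\bm{\phi}_2)\|_{L^{2}(Q_T)}^{2}\le C_T\|\bm{\xi}_1-\bm{\xi}_2\|_{\mathcal{U}}^{2}.
\end{align*}
This controls all but one of the seminorms entering the definition of $\|\cdot\|_{\mathcal{W}}$, namely $\|\bm{\phi}_1-\bm{\phi}_2\|_{L^{\infty}_tL^{2}_x(Q_T)}$. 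The one small gap I would close by decomposing $\bm{\phi}_i=\widetilde{\bm{\phi}}_i+(\bm{\xi}_i)_P$ as in the proof of Theorem \ref{Line}, noting that $\widetilde{\bm{\phi}}_1-\widetilde{\bm{\phi}}_2$ vanishes on $\partial_p Q_T$ so that Poincar\'e gives
$$\|\widetilde{\bm{\phi}}_1-\widetilde{\bm{\phi}}_2\|_{L^{\infty}_tL^{2}_x(Q_T)}\le C\|\nabla(\widetilde{\bm{\phi}}_1-\widetilde{\bm{\phi}}_2)\|_{L^{\infty}_tL^{2}_x(Q_T)},$$
while \eqref{xi_P} bounds $\|(\bm{\xi}_1)_P-(\bm{\xi}_2)_P\|_{L^{\infty}_tL^2_x(Q_T)}$ by $C\|\bm{\xi}_1-\bm{\xi}_2\|_{\mathcal{U}}$.

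Combining the two bounds yields $\|\mathcal{L}_{\mathbf{h}}(\bm{\xi}_1)-\mathcal{L}_{\mathbf{h}}(\bm{\xi}_2)\|_{\mathcal{W}}\le \widetilde{C}_T\|\bm{\xi}_1-\bm{\xi}_2\|_{\mathcal{U}}$ with $\widetilde{C}_T$ depending only on the quantities listed in Theorem \ref{Line}, proving Lipschitz continuity. I do not anticipate a serious obstacle here; the argument is essentially a bookkeeping exercise, and the only minor point worth care is the matching of the $\|\cdot\|_{\mathcal{W}}$-norm against what \eqref{line1} actually provides, which is handled by the $\widetilde{\bm\phi}$/$\bm\xi_P$ splitting already present in the preceding proof.
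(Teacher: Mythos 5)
Your proposal is correct and takes essentially the same route the paper intends: the paper states that the corollary ``follows directly from the estimate \eqref{line1},'' treating $\mathcal{L}_{\mathbf{h}}$ as a bounded linear map whose Lipschitz constant is supplied by \eqref{line1}, and your argument fills in precisely that reasoning (linearity of the frozen-coefficient system plus the a priori bound). The only detail you add that the paper leaves implicit is the recovery of $\|\bm{\phi}\|_{L^\infty_t L^2_x}$ from $\|\nabla\bm{\phi}\|_{L^\infty_t L^2_x}$ via the $\widetilde{\bm\phi}$/$\bm\xi_P$ splitting and Poincar\'e, which is a legitimate and necessary step to match the $\mathcal{W}$-norm against what \eqref{line1} literally provides; this is a faithful completion rather than a different approach.
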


\subsubsection{Differentiability of  $\mathcal{S}$}

In this subsection, we will prove the Fr\'echet differentiability of $\mathcal{S}$. More precisely we have
\begin{theorem} \label{F-Scal}
Given $(\mathbf{u}_{0}, \mathbf{d}_{0})\in \mathbf{V}\times H^{2}(\Omega, \mathbb{S}_{+}^{2})$,
if $\mathbf{h}\in \widetilde{\mathcal{U}}$ 
then the control to state map $\mathcal{S}$ is Fr\'echet differentiable at ${\bf h}$ in the sense of \eqref{Dif-F}. 
Moreover, the Fr\'echet derivative $\mathcal{S}'(\mathbf{h})$ is given by
 \begin{align}
{\mathcal{S}}'({\bf h})({\bm\xi})=\mathcal{L}_{\bf h}({\bm\xi}), \ {\rm{for\ any\ section}}\ {\bm\xi} 
\ {\rm{of}}\ {\bf h}^*T\widetilde{\mathcal{U}}\ {\rm{with}}\ \exp_{\bf h}{\bm\xi}\in\widetilde{\mathcal{U}}.
 \end{align}
\end{theorem}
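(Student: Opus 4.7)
\medskip
\noindent\textbf{Proof plan for Theorem \ref{F-Scal}.}
The plan is to verify \eqref{Dif-F} directly by controlling the second-order remainder. Fix ${\bf h}\in\widetilde{\mathcal{U}}$ and a section ${\bm\xi}$ of ${\bf h}^*T\widetilde{\mathcal{U}}$ with $\exp_{\bf h}{\bm\xi}\in\widetilde{\mathcal{U}}$. Write ${\bf h}_{\bm\xi}=\exp_{\bf h}{\bm\xi}$, $(\mathbf{u}_{\bm\xi},\mathbf{d}_{\bm\xi})=\mathcal{S}({\bf h}_{\bm\xi})$, $(\mathbf{u},\mathbf{d})=\mathcal{S}({\bf h})$, and $(\bm\omega,\bm\phi)=\mathcal{L}_{\bf h}({\bm\xi})$. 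Introduce the deviation $({\bf U},{\bf D})=(\mathbf{u}_{\bm\xi}-\mathbf{u}-\bm\omega,\,\mathbf{d}_{\bm\xi}-\mathbf{d}-\bm\phi)$ and let $\overline{\bf u}=\mathbf{u}_{\bm\xi}-\mathbf{u}$, $\overline{\bf d}=\mathbf{d}_{\bm\xi}-\mathbf{d}$. The goal is to prove the quadratic remainder estimate
\begin{align*}
\big\|({\bf U},{\bf D})\big\|_{\mathcal{W}}\le C_T\,\|{\bm\xi}\|_{\mathcal{U}}^{2},
\end{align*}
which immediately implies \eqref{Dif-F} and identifies $\mathcal{S}'({\bf h})=\mathcal{L}_{\bf h}$.

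The first step is to derive the PDE system satisfied by $({\bf U},{\bf D})$. Subtracting the Ericksen--Leslie system for $(\mathbf{u},\mathbf{d})$ from that for $(\mathbf{u}_{\bm\xi},\mathbf{d}_{\bm\xi})$ and then subtracting the linearized system \eqref{NLC-L1} for $(\bm\omega,\bm\phi)$, one finds that $({\bf U},{\bf D})$ solves a system of the same linearized form, with source terms that are \emph{bilinear} in $(\overline{\bf u},\overline{\bf d})$, namely
\begin{align*}
&\partial_t{\bf U}-\Delta{\bf U}+(\mathbf{u}\cdot\nabla){\bf U}+({\bf U}\cdot\nabla)\mathbf{u}+\nabla\Pi\\
&\quad=-\nabla\cdot(\nabla{\bf D}\odot\nabla\mathbf{d}+\nabla\mathbf{d}\odot\nabla{\bf D})-(\overline{\bf u}\cdot\nabla)\overline{\bf u}-\nabla\cdot(\nabla\overline{\bf d}\odot\nabla\overline{\bf d}),\\
&\partial_t{\bf D}-\Delta{\bf D}+(\mathbf{u}\cdot\nabla){\bf D}+({\bf U}\cdot\nabla)\mathbf{d}-|\nabla\mathbf{d}|^2{\bf D}-2\langle\nabla\mathbf{d},\nabla{\bf D}\rangle\mathbf{d}\\
&\quad=-(\overline{\bf u}\cdot\nabla)\overline{\bf d}+\langle\nabla(\mathbf{d}_{\bm\xi}+\mathbf{d}),\nabla\overline{\bf d}\rangle\overline{\bf d}+\big(\cdots\big),
\end{align*}
together with $\nabla\cdot{\bf U}=0$, ${\bf U}|_{\Gamma_T}=0$, and the crucial boundary datum for ${\bf D}$: using $\exp_{\bf h}{\bm\xi}={\bf h}\cos|{\bm\xi}|+\tfrac{{\bm\xi}}{|{\bm\xi}|}\sin|{\bm\xi}|={\bf h}+{\bm\xi}-\tfrac12|{\bm\xi}|^2{\bf h}+O(|{\bm\xi}|^3)$, one gets
\begin{align*}
{\bf D}\big|_{\Gamma_T}={\bf h}_{\bm\xi}-{\bf h}-{\bm\xi}=-\tfrac12|{\bm\xi}|^2{\bf h}+O(|{\bm\xi}|^3),\qquad ({\bf U},{\bf D})\big|_{t=0}=(0,0).
\end{align*}

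The second step is to estimate the right-hand sides in the quadratic regime. By the continuous dependence Theorem \ref{gsoluC}, together with $\|{\bf h}_{\bm\xi}-{\bf h}\|_{\mathcal{U}}\le C\|{\bm\xi}\|_{\mathcal{U}}$, one obtains
\begin{align*}
\|(\overline{\bf u},\overline{\bf d})\|_{\mathcal{H}}\le C_T\,\|{\bm\xi}\|_{\mathcal{U}}.
\end{align*}
Combined with Sobolev embeddings in dimension two, this gives $\|(\overline{\bf u}\cdot\nabla)\overline{\bf u}\|_{L^2(Q_T)}+\|\nabla\overline{\bf d}\odot\nabla\overline{\bf d}\|_{L^2_tH^1_x(Q_T)}+\|\langle\nabla(\mathbf{d}_{\bm\xi}+\mathbf{d}),\nabla\overline{\bf d}\rangle\overline{\bf d}\|_{L^2(Q_T)}\le C_T\|{\bm\xi}\|_{\mathcal{U}}^2$. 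For the boundary remainder, the smoothness of the exponential map and the algebra property of $H^{5/2,5/4}(\Gamma_T)\cap W^{1,\infty}(\Gamma_T)$ (recall $\mathcal{U}\hookrightarrow W^{1,\infty}(\Gamma_T)$) give $\|{\bf D}|_{\Gamma_T}\|_{H^{5/2,5/4}(\Gamma_T)}+\|\partial_t({\bf D}|_{\Gamma_T})\|_{L^2_tH^{3/2}_x(\Gamma_T)}\le C_T\|{\bm\xi}\|_{\mathcal{U}}^2$. Then, invoking the same energy argument as in the proof of Theorem \ref{Line} (or equivalently the $W^{2,1}_2$ Stokes and parabolic estimates combined with Gronwall's inequality, after subtracting a parabolic lifting of ${\bf D}|_{\Gamma_T}$), one concludes
\begin{align*}
\|({\bf U},{\bf D})\|_{\mathcal{W}}\le C_T\big(\|{\bm\xi}\|_{\mathcal{U}}^2+\|{\bm\xi}\|_{\mathcal{U}}^2\big)\le C_T\|{\bm\xi}\|_{\mathcal{U}}^2,
\end{align*}
which implies \eqref{Dif-F} for every $\epsilon>0$ provided $\|{\bm\xi}\|_{\mathcal{U}}\le\epsilon/C_T$, and identifies ${\mathcal S}'({\bf h})=\mathcal{L}_{\bf h}$.

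The main technical obstacle will be the boundary remainder ${\bf D}|_{\Gamma_T}=O(|{\bm\xi}|^2)$: although it is pointwise quadratic, the norm we must control is $H^{5/2,5/4}(\Gamma_T)$ together with its time derivative in $L^2_tH^{3/2}_x$, which is of fractional and anisotropic type and requires the smoothness of $(x,t)\mapsto{\bf h}(x,t)$ and the composition/algebra estimates for the exponential map on $\mathbb{S}^2$ near ${\bf h}(\Gamma_T)$. Because ${\bf h}\in\widetilde{\mathcal{U}}\hookrightarrow C(\Gamma_T,\mathbb{S}^2_+)$ stays away from the cut locus $-{\bf h}$ by a uniform margin, $\exp_{{\bf h}(x,t)}$ is smooth on a uniform neighborhood, so Moser-type composition estimates in $H^{5/2,5/4}\cap W^{1,\infty}$ and in $L^2_tH^{3/2}_x$ apply and give the desired quadratic bound. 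Once this is in hand, the remaining energy estimates are a direct replay of Theorems \ref{gsolu}, \ref{gsoluC} and \ref{Line}.
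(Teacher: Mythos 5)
Your plan is correct and follows essentially the same route as the paper: write $(\mathbf{w},\mathbf{e})=(\widehat{\mathbf{u}}-\mathbf{u}-\bm\omega,\,\widehat{\mathbf{d}}-\mathbf{d}-\bm\phi)$, observe it satisfies a linearized system with sources bilinear in $(\overline{\mathbf{u}},\overline{\mathbf{d}})$, subtract the parabolic lifting $\bm\chi$ of the quadratic boundary remainder $\exp_{\mathbf{h}}\bm\xi-\mathbf{h}-\bm\xi$, and close via the same Gronwall energy estimate as in Theorem \ref{Line}, using Theorem \ref{gsoluC} for $\|(\overline{\mathbf{u}},\overline{\mathbf{d}})\|_{\mathcal{H}}\le C_T\|\bm\xi\|_{\mathcal{U}}$ and the composition estimate $\|\exp_{\mathbf{h}}\bm\xi-\mathbf{h}-\bm\xi\|_{\mathcal{U}}\le C\|\bm\xi\|_{\mathcal{U}}^2$. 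Your identification of the delicate point (the anisotropic $H^{5/2,5/4}$ norm of the boundary remainder) matches the step in the paper where this quadratic $\mathcal{U}$-norm bound is asserted and transferred to $\bm\chi$.
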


\begin{proof}
Let $(\mathbf{u},\mathbf{d})$ be the unique global strong solution to the system
\eqref{eq1.1}--\eqref{eq1.3}, obtained by Theorem \ref{gsolu},
with the initial data $(\mathbf{u}_{0},\mathbf{d}_{0})$ and the boundary data 
$({0},\mathbf{h})$, namely,
\begin{align*}
(\mathbf{u},\mathbf{d})=\mathcal{S}(\mathbf{h}). 
\end{align*}
If ${\bm\xi}$ is a section of $\mathbf{h}^{*} T\widetilde{\mathcal{U}}$ such that
$\exp_{\bf h}{\bm\xi}\in\widetilde{\mathcal{U}}$, then we can define
a new boundary data $\widehat{\bf h}=\exp_{\bf h}{\bm\xi}$, which satisfies $\widehat{\bf h}(\Gamma_T)\subset
\mathbb{S}^{2}_+$ and belongs to ${\widetilde{\mathcal{U}}}$.
Let $(\widehat{\mathbf{u}},\widehat{\mathbf{d}})\in\mathcal{H}$ be the unique global strong solution to 
the problem \eqref{eq1.1}--\eqref{eq1.3} under the initial condition
$(\mathbf{u}_{0},\mathbf{d}_{0})$ and the boundary condition $({0},\widehat{\mathbf{h}})$, i.e.
$(\widehat{\mathbf{u}},\widehat{\mathbf{d}})=\mathcal{S}(\widehat{\mathbf{h}}).$

Let $(\bm{\omega},\bm{\phi})=\mathcal{L}_{\bf h}(\bm\xi)\in {\mathcal{S}}(\bf h)^*T\mathcal{H}$, which
is the unique solution to problem \eqref{NLC-L1}--\eqref{NLC-L2} obtained by Theorem \ref{Line},
under the initial condition $({0},{0})$ and the boundary condition
$({0},\bm{\xi})$.

By Theorem \ref{gsolu} and Theorem \ref{Line},  we have the following estimates:
\begin{align} \label{Unif-bdd}
\begin{cases}
\big\|(\mathbf{u}, \mathbf{d})\big\|_{\mathcal{H}}\leq C\big(T, \|({\bf u}_0, \nabla{\bf d}_0)\|_{H^1(\Omega)},
\|{\bf h}\|_{\mathcal{U}}\big),\\
\big\|(\widehat{\mathbf{u}}, \widehat{\mathbf{d}})\big\|_{\mathcal{H}}
\leq C\big(T, \|({\bf u}_0, \nabla{\bf d}_0)\|_{H^1(\Omega)},
\|{\bf h}\|_{\mathcal{U}}\big) ,\\
\big\|(\bm{\omega}, \bm{\phi})\big\|_{\mathcal{W}}\leq  C\big(T, \|({\bf u}_0, \nabla{\bf d}_0)\|_{H^1(\Omega)},
\|{\bf h}\|_{\mathcal{U}}\big)\big\|{\bm\xi}\big\|_{\mathcal{U}}.
\end{cases}
\end{align}
Moreover, we can infer from Theorem \ref{gsoluC} that
\begin{align}\label{conti1}
&\big\|\mathbf{u}-\widehat{\mathbf{u}}\big\|_{L^\infty_tH^{1}_x(Q_T)}^{2}
+\big\|\mathbf{d}-\widehat{\mathbf{d}}\big\|_{L^\infty_t H^{2}_x(Q_T)}^{2}\nonumber\\
+&\big\|\mathbf{u}-\widehat{\mathbf{u}}\big\|_{L^{2}_tH^{2}_x(Q_T)}^{2}
+\big\|\mathbf{d}-\widehat{\mathbf{d}}\big\|_{L^{2}_t H^{3}_x(Q_T)}^2 \nonumber\\
\leq &C\big(T, \|({\bf u}_0, \nabla{\bf d}_0)\|_{H^1(\Omega)}, \|{\bf h}\|_{\mathcal{U}}\big)
\big\|\mathbf{h}-\widehat{\mathbf{h}}\big\|_{\mathcal{U}}^{2}\nonumber\\
\leq &C\big(T, \|({\bf u}_0, \nabla{\bf d}_0)\|_{H^1(\Omega)}, \|{\bf h}\|_{\mathcal{U}}\big)
\big\|\bm\xi\big\|_{\mathcal{U}}^{2}.
\end{align}

Now we set 
$$\mathbf{w}=\widehat{\mathbf{u}}-\mathbf{u}-\bm{\omega}
\  {\rm{and}}\ \mathbf{e}=\widehat{\mathbf{d}}-\mathbf{d}-\bm{\phi}.$$
By direct calculations, $({\bf w}, {\bf e})$ solves, in $Q_T$, 
\begin{align} \label{NLC-WE}
\begin{cases}
\partial_{t}\mathbf{w}-\Delta\mathbf{w}+\nabla \widetilde{P} 
+(\widehat{\mathbf{u}}-\mathbf{u})\cdot\nabla (\widehat{\mathbf{u}}-\mathbf{u})+ 
(\mathbf{u}\cdot\nabla)\mathbf{w} + (\mathbf{w}\cdot\nabla) \mathbf{u}
\\
\ \ \ \ \ \ \ =-\nabla \cdot[\nabla (\widehat{\mathbf{d}}-\mathbf{d})\odot\nabla(\widehat{\mathbf{d}}-\mathbf{d})
+\nabla \mathbf{d}\odot\nabla \mathbf{e}+\nabla \mathbf{e}\odot\nabla \mathbf{d}],
 \\
 \nabla \cdot \mathbf{w} = 0,\\
 \partial_{t}\mathbf{e}-\Delta \mathbf{e}+(\widehat{\mathbf{u}}-\mathbf{u})\cdot\nabla(\widehat{\mathbf{d}}-\mathbf{d})
 +\mathbf{u}\cdot\nabla\mathbf{e}+\mathbf{w}\cdot\nabla\mathbf{d}\\
=|\nabla\mathbf{d}|^{2}\mathbf{e}
 +|\nabla(\widehat{\mathbf{d}}-\mathbf{d})|^{2}\widehat{\mathbf{d}}
 +2\langle\nabla\mathbf{d},\nabla\mathbf{e} \rangle
 \widehat{\mathbf{d}}+2\langle \nabla\mathbf{d},\nabla\bm{\phi}\rangle(\widehat{\mathbf{d}}-\mathbf{d}),
\end{cases}
\end{align}
with the boundary and initial  condition
\begin{align} \label{NLC-WE1}
\begin{cases}
(\mathbf{w},\mathbf{e})= ({0}, \exp_{\mathbf{h}}{\bm{\xi}}
-\mathbf{h}-\bm{\xi})\quad &\text{ on } \Gamma_T\\
(\mathbf{w},\mathbf{e})=({0},{0})
\quad &\text{ in }\Omega\times\{0\}.
\end{cases}
\end{align}
Define  the parabolic lifting function $\bm{\chi}:Q_T \mapsto \mathbb{R}^{3}$ by
\begin{align*}
\begin{cases}
\partial_{t}\bm{\chi}-\Delta \bm{\chi}=\mathbf{0}
 \quad &\text{ in } Q_T,\\
\bm{\chi}=\exp_{\mathbf{h}}{\bm{\xi}}
-\mathbf{h}-\bm{\xi}\quad &\text{ on } \Gamma_T,\\
\bm{\chi} = \mathbf{0}\quad &\text{ in } \Omega\times\{0\}.
\end{cases}
\end{align*}
By direct calculations, we find that 
$$
\big\|\exp_{\mathbf{h}}{\bm{\xi}}
-\mathbf{h}-\bm{\xi}\big\|_{\mathcal{U}}\le C\big\|{\bm\xi}\big\|_{\mathcal{U}}^2
$$
and hence 
\begin{align}\label{chi}
\|\nabla\bm\chi\|_{L^\infty_tL^2_x(Q_T)}
+\|\nabla^2\bm\chi\|_{L^2(Q_T)}+\|\bm\chi\|_{H^{3, \frac32}(Q_T)}
\le C\|\bm{\xi}\|_{\mathcal{U}}^2.
\end{align}

Next we define $\widetilde{\mathbf{e}}=\mathbf{e}-\bm{\chi}$. Then $(\mathbf{w},\widetilde{\mathbf{e}})$ satisfies
in $Q_T$:
\begin{align} \label{NLC-WE3}
\begin{cases}
\partial_{t}\mathbf{w}-\Delta\mathbf{w}+\nabla \widetilde{P} +
\overline{\mathbf{u}}\cdot\nabla \overline{\mathbf{u}}+ 
(\mathbf{u}\cdot\nabla)\mathbf{w} + (\mathbf{w}\cdot\nabla) \mathbf{u}
\\
=-\nabla \cdot[\nabla \overline{\mathbf{d}}\odot\nabla \overline{\mathbf{d}} 
+\nabla \mathbf{d}\odot\nabla \mathbf{e}
 +\nabla \mathbf{e}\odot\nabla \mathbf{d}],\\
 \nabla \cdot \mathbf{w} = 0,\\
 \partial_{t}\widetilde{\mathbf{e}}-\Delta \widetilde{\mathbf{e}}+\overline{\mathbf{u}}\cdot\nabla\overline{\mathbf{d}}
 +\mathbf{u}\cdot\nabla\mathbf{e}+\mathbf{w}\cdot\nabla \mathbf{d}\\
 =|\nabla\mathbf{d}|^{2}\mathbf{e}
 +|\nabla\overline{\mathbf{d}}|^{2}\widehat{\mathbf{d}}
 +2\langle\nabla\mathbf{d},\nabla\mathbf{e} \rangle
 \mathbf{d}
 +
 2\langle \nabla\mathbf{d},\nabla\bm\phi\rangle\overline{\mathbf{d}},
\end{cases}
\end{align}
with the boundary and initial condition
\begin{align} \label{NLC-WE4}
\begin{cases}
(\mathbf{w},\widetilde{\mathbf{e}})= ({0}, {0})\quad &\text{ on } \Gamma_T\\
(\mathbf{w},\widetilde{\mathbf{e}})=({0},{0})
\quad &\text{ in }\Omega\times\{0\}.
\end{cases}
\end{align}
Here $\overline{\mathbf{u}}=\widehat{\mathbf{u}}-\mathbf{u}$ and 
$\overline{\mathbf{d}}=\widehat{\mathbf{d}}-\mathbf{d}$. 

Multiplying $\eqref{NLC-WE3}_{1}$ by $\mathbf{w}$, and $\eqref{NLC-WE3}_{3}$ by $-\Delta \widetilde{\mathbf{e}}$,  integrating over $\Omega$, and adding the two resulting equations, we deduce
\begin{align}\label{deri-est1}
&\frac{1}{2}\frac{d}{dt}(\|\mathbf{w}\|_{L^{2}(\Omega)}^{2}
+\|\nabla \widetilde{\mathbf{e}}\|_{L^{2}(\Omega)}^{2})
+(\|\nabla \mathbf{w}\|_{L^{2}(\Omega)}^{2}
+\|\Delta\widetilde{\mathbf{e}}\|_{L^{2}(\Omega)}^{2})
\nonumber\\
&=
\Big[\int_{\Omega} (\overline{\mathbf{u}}\cdot\nabla\mathbf{w}\cdot\overline{\mathbf{u}}
+\mathbf{w}\cdot\nabla\mathbf{w}\cdot\mathbf{u})\\
&\ \ \ +\!\int_{\Omega} [\nabla \overline{\mathbf{d}}\odot\nabla \overline{\mathbf{d}} 
+\nabla \mathbf{d}\odot\nabla \widetilde{\mathbf{e}}
 +\widetilde{\nabla \mathbf{e}}\odot\nabla \mathbf{d}]:\nabla\mathbf{w}
\nonumber\\
&\ \ \ +\int_\Omega[\nabla \mathbf{d}\odot\nabla {\bm\chi}
 +\nabla \bm\chi\odot\nabla \mathbf{d}]:\nabla\mathbf{w}
\nonumber\\
&\ \ \ +\!\!\int_{\Omega} \!(\overline{\mathbf{u}}\cdot\!\nabla\overline{\mathbf{d}}
+\mathbf{u}\!\cdot\!\nabla\widetilde{\mathbf{e}}
+\mathbf{w}\!\cdot\!\nabla \mathbf{d})
\!\cdot\!\Delta\widetilde{\mathbf{e}}\nonumber\\
&\ \ \ -\!\!\int_{\Omega}\!
(|\nabla\mathbf{d}|^{2}\widetilde{\mathbf{e}}
 \!+\!|\nabla\overline{\mathbf{d}}|^{2}\widehat{\mathbf{d}}
 \!+\!2\langle\nabla\mathbf{d},\nabla\widetilde{\mathbf{e}}\rangle
 \mathbf{d}
 \!+\!
 2\langle \nabla\mathbf{d},\nabla{\bm\phi}\rangle\overline{\mathbf{d}})
 \!\cdot\!\Delta\widetilde{\mathbf{e}}
 \nonumber\\
 &\ \ \ -\int_\Omega (|\nabla{\bf d}|^2\bm\chi+2\langle\nabla{\bf d}, \nabla\bm\chi\rangle{\bf d}
 -{\bf u}\cdot\nabla\bm\chi)\Delta\widetilde{\bm e}\Big]
 \nonumber\\
  &=I+II+III+IV+V+VI.\nonumber
 \end{align}
$I,\cdots, VI$ can be estimated as follows.
 $$|I|\le \frac1{12}\|\nabla{\bf w}\|_{L^2(\Omega)}^2+C\|{\bf u}\|_{L^4(\Omega)}^4\|{\bf w}\|_{L^2(\Omega)}^2
 +C\|\overline{\bf u}\|_{L^4(\Omega)}^4,$$
 \begin{align*}
 |II|&\le \frac1{12}(\|\nabla{\bf w}\|_{L^2(\Omega)}^2+\|\Delta{\widetilde{\bf e}}\|_{L^2(\Omega)}^2)\\
 &\ \ +C\|\nabla{\bf d}\|_{L^4(\Omega)}^4\|\nabla\widetilde{\bf e}\|_{L^2(\Omega)}^2
 +C\|\nabla\overline{\bf d}\|_{L^4(\Omega)}^4,
 \end{align*}
 \begin{align*}
 |III|\leq \frac12\|\nabla{\bf w}\|_{L^2(\Omega)}^2
 +C\|\nabla{\bf d}\|_{L^4(\Omega)}^2\|\nabla{\bm\chi}\|_{L^4(\Omega)}^2,
 \end{align*}
\begin{align*}
|IV|&\le \frac1{12}(\|\nabla{\bf w}\|_{L^2(\Omega)}^2+\|\Delta{\widetilde{\bf e}}\|_{L^2(\Omega)}^2)\\
&\ +C(\|\overline{\bf u }\|_{L^4(\Omega)}^4+\|\nabla\overline{\bf d}\|_{L^4(\Omega)}^4)\\
&\ +C(\|\nabla{\bf d}\|_{L^4(\Omega)}^4\|{\bf w}\|_{L^2(\Omega)}^2+\|{\bf u}\|_{L^4(\Omega)}^4\|\nabla\widetilde{\bf e}\|_{L^2(\Omega)}^2),
\end{align*}
\begin{align*}
|V|&\le \frac1{12}\|\Delta{\widetilde{\bf e}}\|_{L^2(\Omega)}^2+ C\|\nabla{\bf d}\|_{L^8(\Omega)}^4\|\nabla\widetilde{\bf e}\|_{L^2(\Omega)}^2\\
&\ +C\|\nabla\overline{\bf d}\|_{L^4(\Omega)}^4
+C\|\nabla{\bf d}\|_{L^8(\Omega)}^2\|\nabla{\bm{\phi}}\|_{L^4(\Omega)}^2\|\overline{\bf d}\|_{L^8(\Omega)}^2,
\end{align*}
and
\begin{align*}
|VI|&\le \frac12\|\Delta\widetilde{\bf e}\|_{L^2(\Omega)}^2+C\|\nabla{\bf d}\|_{L^8(\Omega)}^4\|\bm\chi\|_{L^4(\Omega)}^2\\
&\ +C(\|\nabla{\bf d}\|_{L^4(\Omega)}^2+\|{\bf u}\|_{L^4(\Omega)}^2)\|\nabla\bm\chi\|_{L^4(\Omega)}^2.
\end{align*}
Substituting these estimates into \eqref{deri-est1}, we obtain
\begin{align*}
&\frac{d}{dt}(\|\mathbf{w}\|_{L^{2}(\Omega)}^{2}
+\|\nabla \widetilde{\mathbf{e}}\|_{L^{2}(\Omega)}^{2})
+(\|\nabla \mathbf{w}\|_{L^{2}(\Omega)}^{2}
+\|\Delta\widetilde{\mathbf{e}}\|_{L^{2}(\Omega)}^{2})\\
&\le C (\|{\bf u}\|_{L^4(\Omega)}^4+ \|\nabla{\bf d}\|_{L^8(\Omega)}^4)(\|\mathbf{w}\|_{L^{2}(\Omega)}^{2}
+\|\nabla \widetilde{\mathbf{e}}\|_{L^{2}(\Omega)}^{2})\\
& \ +C(\|\overline{\bf u}\|_{L^4(\Omega)}^4+ \|\nabla\overline{\bf d}\|_{L^4(\Omega)}^4)+
\|\nabla{\bf d}\|_{L^8(\Omega)}^2\|\nabla{\bm\phi}\|_{L^4(\Omega)}^2\|\overline{\bf d}\|_{L^8(\Omega)}^2)\\
& \ +C\big[\|\nabla{\bf d}\|_{L^8(\Omega)}^4\|\bm\chi\|_{L^4(\Omega)}^2
+(\|\nabla{\bf d}\|_{L^4(\Omega)}^2+\|{\bf u}\|_{L^4(\Omega)}^2)\|\nabla\bm\chi\|_{L^4(\Omega)}^2\big].
\end{align*}
From  \eqref{Unif-bdd}, \eqref{conti1} and \eqref{chi}, we have that
\begin{align*}
\int_0^T\|\nabla{\bf d}\|_{L^8(\Omega)}^4\,dt
&\le C\|\nabla{\bf d}\|_{L^4(Q_T)}^3\|\nabla^2{\bf d}\|_{L^4(Q_T)}\\
&\le C\|\nabla{\bf d}\|_{L^4(Q_T)}^3\|\nabla^2{\bf d}\|_{L^\infty_tL^2_x(Q_T)}^\frac12
\|{\bf d}\|_{L^2_tH^3_x(Q_T)}^\frac12\\
&\le C_T,
\end{align*}
$$
\int_0^T\|{\bf u}\|_{L^4(\Omega)}^4\,dt \le \|{\bf u}\|_{L^4(Q_T)}^4\le C_T,
$$
$$
\int_0^T(\|\overline{\bf u}\|_{L^4(\Omega)}^4+ \|\nabla\overline{\bf d}\|_{L^4(\Omega)}^4)\,dt
\le C_T\|{\bm\xi}\|_{\mathcal{U}}^4,
$$
\begin{align*}
&\int_0^T\|\nabla{\bf d}\|_{L^8(\Omega)}^2\|\nabla{\bm \phi}\|_{L^4(\Omega)}^2\|\overline{\bf d}\|_{L^8(\Omega)}^2\,dt\\
&\le \|\nabla{\bf d}\|_{L^4_tL^8_x(Q_T)}^2\|\nabla{\bm\phi}\|_{L^4(Q_T)}^2
\|\overline{\bf d}\|_{L^\infty_tL^8_x(Q_T)}^2\\
&\le  C_T \|{\bm\xi}\|_{\mathcal{U}}^4,
\end{align*}
and
\begin{align*}
&\int_0^T\big[\|\nabla{\bf d}\|_{L^8(\Omega)}^4\|\bm\chi\|_{L^4(\Omega)}^2
+(\|\nabla{\bf d}\|_{L^4(\Omega)}^2+\|{\bf u}\|_{L^4(\Omega)}^2)\|\nabla\bm\chi\|_{L^4(\Omega)}^2\big]\,dt\\
&\le C(\|\nabla{\bf d}\|_{L^4_tL^8_x(Q_T)}^4+\|\nabla{\bf d}\|_{L^4(Q_T)}^2
+\|{\bf u}\|_{L^4(Q_T)}^2)\|\bm\chi\|_{L^\infty_tH^2_x(Q_T)}^2\\
&\le C_T\|{\bm\xi}\|_{\mathcal{U}}^4.
\end{align*}
These estimates, combined with the fact that $({\bf w}, \widetilde{\bf e})=0$ at $t=0$ and Gronwall's
inequality, imply that
\begin{align*}
&\sup_{0\le t\le T}(\|\mathbf{w}\|_{L^{2}(\Omega)}^{2}
+\|\nabla \widetilde{\mathbf{e}}\|_{L^{2}(\Omega)}^{2})
+\int_0^T(\|\nabla \mathbf{w}\|_{L^{2}(\Omega)}^{2}
+\|\Delta\widetilde{\mathbf{e}}\|_{L^{2}(\Omega)}^{2})\,dt\\
&\le C_T \|{\bm\xi}\|_{\mathcal{U}}^4.
\end{align*}
This, with the help of \eqref{chi},  yields that
$$
\|({\bf w}, {\bf e})\|_{\mathcal{W}}\le C\|{\bm\xi}\|_{\mathcal{U}}^2.
$$
Hence $\mathcal{S}$ is differentiable at ${\bf h}$, and its Fr\'echet derivative
$\mathcal{S}'({\bf h})(\bm\xi)=\mathcal{L}_{\bf h}(\bm\xi)$ whenever $\bm\xi\in {\bf h}^*T\widetilde{\mathcal{U}}$
is such that $\exp_{\bf h}{\bm\xi}\in \widetilde{\mathcal{U}}$. 
This completes the proof.
\end{proof}

\subsection{Existence and necessary condition of boundary optimal control}
Here we will consider both the existence and a necessary condition of an optimal boundary control
for the problem \eqref{CF1}.

\subsubsection{The existence of an optimal boundary control}

We will establish the existence of an optimal boundary control for 
the problem (\ref{CF1}).  

\begin{theorem}\label{Exist-BCP}
Under the conditions (A1) and (A2), 
let $(\mathbf{u}_0, \mathbf{d}_0)\in \mathbf{V}\times H^{2}(\Omega,\mathbb{S}_{+}^{2})$ be given.
For $M>0$, if $\widetilde{\mathcal{U}}_{M}\not=\emptyset$, then (\ref{CF1}) admits a solution $((\mathbf{u},\mathbf{d}), \mathbf{h})$, 
where $\mathbf{h}\in \widetilde{\mathcal{U}}_{M}$
and $(\mathbf{u},\mathbf{d})=\mathcal{S}(\mathbf{h})$ is the unique strong solution to  \eqref{eq1.1}--\eqref{eq1.3}
with the initial condition $(\mathbf{u}_{0}, \mathbf{d}_{0})$ and the boundary condition $(0, {\bf h})$.
\end{theorem}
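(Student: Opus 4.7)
The strategy is the standard direct method of the calculus of variations, adapted to the fact that the admissible set $\widetilde{\mathcal{U}}_M$ is defined through the stereographic projection $\Pi:\mathbb{S}^2\setminus\{-{\bf e}_3\}\to\mathbb{R}^2$. Since $\mathcal{C}\ge 0$ and $\widetilde{\mathcal{U}}_M\neq\emptyset$, we set
\[
\alpha := \inf\Big\{\mathcal{C}\big(\mathcal{S}({\bf h}),{\bf h}\big): {\bf h}\in\widetilde{\mathcal{U}}_M\Big\}\in [0,\infty),
\]
and pick a minimizing sequence $\{{\bf h}_k\}\subset\widetilde{\mathcal{U}}_M$ with $\mathcal{C}(\mathcal{S}({\bf h}_k),{\bf h}_k)\to\alpha$. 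Setting $({\bf u}_k,{\bf d}_k):=\mathcal{S}({\bf h}_k)$, the uniform bound $\|\Pi({\bf h}_k)\|_{\mathcal{U}}\le M$ combined with Theorem \ref{gsolu} yields, for some $C_M>0$ independent of $k$,
\[
\|({\bf u}_k,{\bf d}_k)\|_{\mathcal{H}}\le C_M,\quad \|\Pi({\bf h}_k)\|_{H^{5/2,5/4}(\Gamma_T)}+\|\partial_t\Pi({\bf h}_k)\|_{L^2_tH^{3/2}_x(\Gamma_T)}\le M.
\]

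My first step is compactness at the boundary level. Since $\mathbf{h}_k\in \mathbb{S}^2_+$, one has $\Pi({\bf h}_k)(\Gamma_T)\subset \overline{B_1^2}$. The uniform $\mathcal{U}$-bound together with the Aubin--Lions lemma provides, up to a subsequence, $\Pi({\bf h}_k)\rightharpoonup v$ weakly in $H^{5/2,5/4}(\Gamma_T)$, with $\partial_t\Pi({\bf h}_k)\rightharpoonup \partial_t v$ weakly in $L^2_tH^{3/2}_x(\Gamma_T)$, and strongly in $C(\Gamma_T,\mathbb{R}^2)$. The strong convergence forces $v(\Gamma_T)\subset\overline{B_1^2}$ and $v(x,0)=\Pi({\bf d}_0(x))$ on $\Gamma$, so ${\bf h}:=\Pi^{-1}(v)\in \widetilde{\mathcal{U}}$; weak lower semicontinuity of norms gives $\|\Pi({\bf h})\|_{\mathcal{U}}\le M$, hence ${\bf h}\in \widetilde{\mathcal{U}}_M$. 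Moreover ${\bf h}_k\to {\bf h}$ uniformly on $\Gamma_T$ and, by interpolation with the uniform $\mathcal{U}$-bound, also strongly in $L^2_tH^{s}_x(\Gamma_T)$ for every $s<3/2$, which is more than enough for passing to the limit in the boundary trace.

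The next step is compactness for the states. From the uniform $\mathcal{H}$-bound, standard weak-$*$/weak extraction and Aubin--Lions (using the control on $\partial_t{\bf u}_k\in L^2([0,T],\mathbf{H})$ and $\partial_t{\bf d}_k\in L^2_tH^1_x(Q_T)$ furnished by $\eqref{eq1.1}$ and Remark \ref{gsoluR}) give, along a further subsequence,
\begin{align*}
({\bf u}_k,{\bf d}_k)&\rightharpoonup^* ({\bf u},{\bf d}) \ \text{in}\ L^\infty_t\mathbf{V}\times L^\infty_tH^2_x,\\
({\bf u}_k,{\bf d}_k)&\rightharpoonup ({\bf u},{\bf d})\ \text{in}\ L^2_tH^2_x\times L^2_tH^3_x,\\
({\bf u}_k,{\bf d}_k)&\to ({\bf u},{\bf d})\ \text{in}\ C([0,T],L^2)\cap L^2_tH^1_x\times C([0,T],H^1)\cap L^2_tH^2_x,
\end{align*}
and a.e.\ in $Q_T$. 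This strong convergence is strong enough to pass to the limit in every nonlinear term of \eqref{eq1.1}: the inertial term ${\bf u}_k\cdot\nabla{\bf u}_k$ and the elastic stress $\nabla\cdot(\nabla{\bf d}_k\odot\nabla{\bf d}_k)$ converge in the sense of distributions, and $|\nabla{\bf d}_k|^2{\bf d}_k$ converges in $L^2(Q_T)$ since $\nabla{\bf d}_k\to\nabla{\bf d}$ in $L^4(Q_T)$ by Ladyzhenskaya's inequality applied to the convergence in $L^\infty_tL^2_x\cap L^2_tH^2_x$. The pressure $P_k$ can be recovered (up to a constant) via Lemma \ref{lem13} and passes to the limit in $L^{4/3}_tW^{1,4/3}_x$. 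The boundary trace ${\bf d}_k={\bf h}_k\to {\bf h}$ follows from the combined strong convergence at the boundary established above; the initial conditions are preserved by $C([0,T],L^2)$-convergence. Consequently $({\bf u},{\bf d})$ solves \eqref{eq1.1}--\eqref{eq1.3} with data $({\bf u}_0,{\bf d}_0,{\bf h})$. By the uniqueness part of Theorem \ref{gsolu}, $({\bf u},{\bf d})=\mathcal{S}({\bf h})$.

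Finally, I conclude by lower semicontinuity of $\mathcal{C}$. Each of the five squared $L^2$-norms in \eqref{CF1} is convex and continuous in its argument in the strong $L^2$ topology of the relevant domain; combined with the established convergences (${\bf u}_k\to{\bf u}$ in $C([0,T],L^2(\Omega))$ yielding both $L^2(Q_T)$ and $L^2(\Omega)$ convergence of the traces at $t=T$, analogously for ${\bf d}_k$, and ${\bf h}_k\to{\bf h}$ in $L^2(\Gamma_T)$), one obtains
\[
\mathcal{C}\big(\mathcal{S}({\bf h}),{\bf h}\big)\le \liminf_{k\to\infty}\mathcal{C}\big(\mathcal{S}({\bf h}_k),{\bf h}_k\big)=\alpha,
\]
so that ${\bf h}\in \widetilde{\mathcal{U}}_M$ is an optimal boundary control.

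The main technical hurdle is verifying that the weak limit of the minimizing sequence in fact produces a solution of the full nonlinear system rather than a defect measure: this requires promoting weak $\mathcal{H}$-convergence to $L^4$-strong convergence of $({\bf u}_k,\nabla{\bf d}_k)$ so that the quadratic nonlinearities pass to the limit, which is where the Aubin--Lions compactness and the global $\mathcal{H}$-bound from Theorem \ref{gsolu} are both essential. A secondary subtlety, handled via $\Pi$, is closedness of $\widetilde{\mathcal{U}}_M$: the constraint $|{\bf h}|=1$ is nonlinear, so weak closedness has to be read off from the uniform convergence of $\Pi({\bf h}_k)$ together with continuity of $\Pi^{-1}$ on $\overline{B_1^2}$.
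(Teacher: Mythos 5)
Your proposal is correct and follows essentially the same route as the paper: extract a minimizing sequence, use the uniform $\mathcal{U}$-bound on the controls and the uniform $\mathcal{H}$-bound from Theorem \ref{gsolu} on the states, pass to weak and Aubin--Lions strong limits, identify the limit state with $\mathcal{S}$ of the limit control by uniqueness (Theorem \ref{gsoluC}), and conclude by lower semicontinuity of $\mathcal{C}$.

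The one place you add genuine detail is the closedness of $\widetilde{\mathcal{U}}_M$: the paper simply asserts ``from the weak compactness of $\widetilde{\mathcal{U}}_M\hookrightarrow\mathcal{U}$, we may assume \ldots there exist ${\bf h}^*\in\widetilde{\mathcal{U}}_M$,'' while you verify this explicitly by working in the stereographic chart -- using uniform convergence of $\Pi({\bf h}_k)$ to preserve the pointwise constraint $\Pi({\bf h}_k)(\Gamma_T)\subset\overline{B_1^2}$ and the compatibility condition, then weak lower semicontinuity of the $\mathcal{U}$-norm of $\Pi({\bf h}_k)$ for the radius bound. This is the ``right'' justification since $\widetilde{\mathcal{U}}_M$ is only ``intrinsically'' convex and the sphere constraint is nonlinear, so sequential weak closedness is not automatic; the paper leaves this to the reader. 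Your use of strong $C([0,T];L^2(\Omega))\times C([0,T];H^1(\Omega))$ convergence of $({\bf u}_k,{\bf d}_k)$ also lets you conclude with \emph{continuity} (not merely lower semicontinuity) of $\mathcal{C}$, a slight simplification of the final step.
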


\proof Let  $\{((\mathbf{u}^{i}, \mathbf{d}^{i}), \mathbf{h}^{i})\}_{i=1}^{\infty}$ 
be a minimizing sequence of the cost functional $\mathcal{C}$ in \eqref{CF1}
over $\widetilde{\mathcal{U}}_{M}$, i.e.,
\begin{equation} \label{minJ}
\lim_{i\rightarrow \infty} \mathcal{C}(({\bf u}^i, {\bf d}^i), \mathbf{h}^{i})
=\inf_{\mathbf{h}\in \widetilde{\mathcal{U}}_{M}, 
({\bf u}, {\bf d})=\mathcal{S}({\bf h})}\mathcal{C}(({\bf u}, {\bf d}), \mathbf{h}),
\end{equation}
where $\mathbf{h}^{i}\in \widetilde{\mathcal{U}}_{M}$
and $(\mathbf{u}^{i},\mathbf{d}^{i})=\mathcal{S}(\mathbf{h}^{i})\in \mathcal{H}$ 
is the unique strong solution to the initial boundary value problem 
of \eqref{eq1.1}--\eqref{eq1.3} with the initial condition $(\mathbf{u}_0, \mathbf{d}_0)$ and
the boundary condition $(\mathbf{0},\mathbf{h}^{i})$. Then we have
$$\big\|\Phi({\bf h}^i)\big\|_{\mathcal{U}}\le M,
\ {\rm{and}}\ \big\|({\bf u}^i, {\bf d}^i)\big\|_{\mathcal{H}}\le CM, \ \forall\ i\ge 1.$$
From the weak compactness of $\widetilde{\mathcal{U}}_{M}\hookrightarrow\mathcal{U}$,
we may assume, after passing to subsequences, that there exist ${\bf h}^*\in \widetilde{\mathcal{U}}_{M}$
and $({\bf u}^*, {\bf d}^*)\in \mathcal{H}$ such that 
\begin{align*}
\mathbf{h}^{i} \rightharpoonup \mathbf{h}^* \quad\text{in\ }\ \  H^{\frac52, \frac54}(\Gamma_T),
\ \ \ \partial_t {\bf h}^i \rightharpoonup \partial_t\mathbf{h}^*\quad\text{in\ }\ \  L^2_tH^{\frac32}_x(\Gamma_T),
\end{align*}
and
\begin{align*}
&\mathbf{u}^{i}\stackrel{*}\rightharpoonup \mathbf{u}^*\ \ \ \text{ in }\ \ L^{\infty}([0, T], \mathbf{V}),\ 
\mathbf{u}^{i}\rightharpoonup \mathbf{u}^*\ \ \ \text{ in }\ \ \ L^{2}_tH^{2}_x(Q_T),\nonumber\\
&\mathbf{d}^{i}\stackrel{*}\rightharpoonup \mathbf{d}^* \ \ \ \text{ in }\ \ \
L^\infty_tH^2_x(Q_T),\ \mathbf{d}^{i}\rightharpoonup \mathbf{d}^*\ \ \ \text{ in }\ \ \ H^{3,\frac{3}{2}}(Q_T).
\end{align*}

Observe also that by the Aubin-Lions Lemma, 
\begin{align*}
\mathbf{u}^{i}\rightarrow \mathbf{u}^{*}\quad \text{ in }\ \ C([0, T], L^2(\Omega)),\ \ 
\mathbf{d}^{i}\rightarrow \mathbf{d}^{*}\quad \text{ in } \ \ C([0, T], H^{1}(\Omega)).
\end{align*}
It is not hard to see that $({\bf u}^*, {\bf d}^*)\in \mathcal{H}$ is a strong solution of
the system \eqref{eq1.1}--\eqref{eq1.3}, with the initial condition $({\bf u}_0, {\bf d}_0)$
and the boundary condition $(0, {\bf h}^*)$. By the uniqueness theorem of strong solutions,
we conclude that $({\bf u}^*, {\bf d}^*)=\mathcal{S}({\bf h}^*)$. 

Since the cost functional  $\mathcal{C}$ is weakly lower semi-continuous in $(({\bf u}, {\bf d}), {\bf h})
\in\mathcal{H}\times \mathcal{U}$, we have 
\begin{align}\label{minJ1}
 \liminf_{i\rightarrow\infty}\mathcal{C}((\mathbf{u}^{i},\mathbf{d}^{i}), \mathbf{h}^{i})
\geq \mathcal{C}((\mathbf{u}^{*}, \mathbf{d}^{*}),\mathbf{h}^{*}).
\end{align}
On the other hand, since ${\bf h}^*\in \widetilde{\mathcal{U}}_{M}$ and $({\bf u}^*, {\bf d}^*)=\mathcal{S}({\bf h}^*)$,
we also have
\begin{align}\label{minJ2}
\inf_{\mathbf{h}\in \widetilde{\mathcal{U}}_{M}, 
({\bf u}, {\bf d})=\mathcal{S}({\bf h})}\mathcal{C}(({\bf u}, {\bf d}), \mathbf{h})
\le \mathcal{C}((\mathbf{u}^{*}, \mathbf{d}^{*}),\mathbf{h}^{*}).
\end{align}
It follows directly from \eqref{minJ}, \eqref{minJ1}, and \eqref{minJ2}
that $(({\bf u}^*, {\bf d}^*), {\bf h}^*)$ achieves
$$
\inf_{\mathbf{h}\in \widetilde{\mathcal{U}}_{M}, 
({\bf u}, {\bf d})=\mathcal{S}({\bf h})}\mathcal{C}(({\bf u}, {\bf d}), \mathbf{h}).
$$
This completes the proof. \qed

\subsubsection{The first-order necessary optimality condition}
In this subsection, we will derive  the first-order necessary condition for the optimal control problem (\ref{CF1}) based on the Fr\'echet differentiability of the control-to-state operator $\mathcal{S}$ established
in the previous section.

Now we are ready to prove the following theorem that gives a necessary condition of boundary optimal control.
\begin{theorem}\label{thm-cond1}
Assume both (A1) and (A2). For $M>0$,  let $(\mathbf{u}_{0}, \mathbf{d}_{0})\in \mathbf{V}\times
H^{2}(\Omega,\mathbb{S}_{+}^{2})$ be given. If  $\widetilde{\mathcal{U}}_{M}\not=\emptyset$
and $\mathbf{h}\in\widetilde{\mathcal{U}}_{M}$ 
is a minimizer of the optimal control for problem (\ref{CF1}) over the admissible set
$\widetilde{\mathcal{U}}_{M}$, 
with the associated state map $(\mathbf{u},\mathbf{d})=\mathcal{S}(\mathbf{h})\in \mathcal{H}$.
Then for any boundary data ${\bf h}_{*}\in {\widetilde{\mathcal U}}_{M}$,
let $\bm\xi=\bm\xi_{{\bf h}, {\bf h}_*}$ be the section of
${\bf h}^*T{\widetilde{\mathcal{U}}}$ given by
\begin{align}\label{section}
{\bm\xi}=\frac{d}{ds}\big|_{s=0} \Pi^{-1}(s\Pi({\bf h})+(1-s)\Pi({\bf h}^*)),
\end{align}
and $(\bm{\omega}, \bm{\phi})=\mathcal{S}'(\mathbf{h})(\bm{\xi})$, i.e., the unique
global strong solution to the linearized problem \eqref{NLC-L1}--\eqref{NLC-L2} 
associated with ${\bm\xi}$,   the following variational inequality holds:
\begin{align}\label{cond1}
&\int_{Q_{T}} \big(\beta_1\langle\mathbf{u}-\mathbf{u}_{Q_T}, \bm{\omega}\rangle
  + \beta_{2} \langle\mathbf{d}-\mathbf{d}_{Q_T}, \bm{\phi}\rangle\big)\nonumber\\
 & + \int_\Omega(\beta_{3} \langle\mathbf{u}(T)-\mathbf{u}_{\Omega},
 \bm{\omega}(T)\rangle+\beta_{4} \langle\mathbf{d}(T)
    -\mathbf{d}_{\Omega}, \bm{\phi}(T)\rangle\big)\nonumber\\
& + \int_{\Gamma_{T}} \beta_5 \langle\mathbf{h}-\mathbf{e}_{3},
 \bm{\xi}\rangle  \geq 0.
\end{align}
\end{theorem}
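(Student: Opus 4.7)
The plan is the classical variational argument: exploit the intrinsic convexity of $\widetilde{\mathcal{U}}_M$ supplied by Remark \ref{convex} to produce a one-parameter family of admissible controls through the optimizer $\mathbf{h}$, then differentiate the cost functional along this family by invoking Theorem \ref{F-Scal}. Given any $\mathbf{h}^\ast\in\widetilde{\mathcal{U}}_M$, I set
\begin{equation*}
\mathbf{h}_s = \Pi^{-1}\bigl((1-s)\Pi(\mathbf{h}) + s\,\Pi(\mathbf{h}^\ast)\bigr),\qquad s\in[0,1],
\end{equation*}
so that $\mathbf{h}_0=\mathbf{h}$ and, by Remark \ref{convex}, $\mathbf{h}_s\in\widetilde{\mathcal{U}}_M$ for every $s\in[0,1]$. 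Differentiating in $s$ at $s=0$ produces the tangent section $\bm\xi = D\Pi^{-1}(\Pi(\mathbf{h}))\bigl[\Pi(\mathbf{h}^\ast)-\Pi(\mathbf{h})\bigr]$, which is the vector of \eqref{section} (up to sign convention). Smoothness of $\Pi^{\pm1}$ on $\overline{B^2_1}$ together with the Banach-algebra structure of $H^{5/2,5/4}(\Gamma_T)$ shows that $\bm\xi$ inherits the regularity required for membership in $\mathbf{h}^\ast T\widetilde{\mathcal{U}}$; the compatibility $\bm\xi(\cdot,0)\equiv 0$ on $\Gamma$ holds because both $\mathbf{h}$ and $\mathbf{h}^\ast$ satisfy \eqref{comp_cond}, and $\langle\bm\xi,\mathbf{h}\rangle\equiv 0$ on $\Gamma_T$ results from differentiating $|\mathbf{h}_s|^2\equiv 1$ at $s=0$.

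Next, set $g(s):=\mathcal{C}(\mathcal{S}(\mathbf{h}_s),\mathbf{h}_s)$. Optimality of $\mathbf{h}$ forces $g(s)\geq g(0)$ for every $s\in[0,1]$, hence $g'(0^+)\geq 0$ once the right-derivative is shown to exist. A Taylor expansion of the stereographic chart gives $\mathbf{h}_s = \exp_{\mathbf{h}}(s\bm\xi) + O(s^2)$ in $\mathcal{U}$, so Theorem \ref{F-Scal} combined with the linearity of $\mathcal{S}'(\mathbf{h})=\mathcal{L}_{\mathbf{h}}$ yields
\begin{equation*}
\mathcal{S}(\mathbf{h}_s) = \mathcal{S}(\mathbf{h}) + s\,(\bm\omega,\bm\phi) + o(s)\quad\text{in }\mathcal{W},
\end{equation*}
where $(\bm\omega,\bm\phi)=\mathcal{L}_{\mathbf{h}}(\bm\xi)$ is the unique solution of \eqref{NLC-L1}--\eqref{NLC-L2}. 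Expanding each squared $L^2$-norm in $2\mathcal{C}$ by the chain rule, using the continuous embedding $\mathcal{W}\hookrightarrow C([0,T];\mathbf{H}\times H^1(\Omega))$ for the $\beta_3,\beta_4$ terminal terms, the $L^2(Q_T)$-control of $\mathcal{S}(\mathbf{h}_s)-\mathcal{S}(\mathbf{h})$ for the $\beta_1,\beta_2$ terms, and direct $L^2(\Gamma_T)$-differentiation of $\mathbf{h}_s=\mathbf{h}+s\bm\xi+O(s^2)$ for the $\beta_5$ term, I obtain
\begin{align*}
g'(0^+)
&=\beta_1\!\int_{Q_T}\!\langle\mathbf{u}-\mathbf{u}_{Q_T},\bm\omega\rangle
+\beta_2\!\int_{Q_T}\!\langle\mathbf{d}-\mathbf{d}_{Q_T},\bm\phi\rangle\\
&\quad+\beta_3\!\int_\Omega\!\langle\mathbf{u}(T)-\mathbf{u}_\Omega,\bm\omega(T)\rangle
+\beta_4\!\int_\Omega\!\langle\mathbf{d}(T)-\mathbf{d}_\Omega,\bm\phi(T)\rangle\\
&\quad+\beta_5\!\int_{\Gamma_T}\!\langle\mathbf{h}-\mathbf{e}_3,\bm\xi\rangle,
\end{align*}
which combined with $g'(0^+)\geq 0$ is exactly the variational inequality \eqref{cond1}.

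The one genuinely delicate point is justifying the first-order expansion $\mathbf{h}_s=\exp_{\mathbf{h}}(s\bm\xi)+O(s^2)$ in the $\mathcal{U}$-norm and ensuring that $\exp_{\mathbf{h}}(s\bm\xi)\in\widetilde{\mathcal{U}}$ for all sufficiently small $s>0$, so that Theorem \ref{F-Scal} applies verbatim. When $\mathbf{h}(\Gamma_T)\subset\mathbb{S}^{2,\circ}_+$ stays uniformly away from the equator, both statements follow from the smoothness of the geodesic exponential on $\mathbb{S}^2$ and the Sobolev embedding $\mathcal{U}\hookrightarrow C(\Gamma_T,\mathbb{S}^2_+)$. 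In the general case where $\mathbf{h}$ may touch $\partial\mathbb{S}^2_+$, what actually matters for the minimization step is admissibility of $\mathbf{h}_s$ itself (provided by Remark \ref{convex}), not of $\exp_{\mathbf{h}}(s\bm\xi)$; the discrepancy $\mathbf{h}_s-\exp_{\mathbf{h}}(s\bm\xi)=O(s^2)_{\mathcal{U}}$ is then absorbed into the $o(s)$-remainder supplied by Theorem \ref{F-Scal} and does not affect the final inequality.
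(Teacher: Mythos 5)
Your proposal is correct and follows the same route as the paper: build the admissible one-parameter family $\mathbf{h}_s$ through the optimizer via the stereographic chart (Remark 2.4), invoke the convexity in Remark \ref{convex} to keep $\mathbf{h}_s\in\widetilde{\mathcal{U}}_M$, differentiate the cost functional at $s=0$ using the Fr\'echet differentiability of Theorem \ref{F-Scal}, and read off the variational inequality from $g'(0^+)\geq 0$. In fact your write-up is more careful than the paper's on two points: you verify explicitly that $\bm\xi$ lies in $\mathbf{h}^*T\widetilde{\mathcal{U}}$ (regularity, vanishing at $t=0$, and tangency obtained by differentiating $|\mathbf{h}_s|^2\equiv1$), and you address the gap between the chart-based path $\mathbf{h}_s$ and the geodesic increment $\exp_{\mathbf{h}}(s\bm\xi)$ appearing in Definition 4.2 — the paper simply asserts that $({\bf u}(s),{\bf d}(s))\in C^1([0,1],\mathcal{H})$ with the stated derivative without justification. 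You also implicitly correct a sign-convention slip in the paper: as written, \eqref{section} and the path in the paper's proof satisfy $\mathbf{h}(0)=\mathbf{h}_*$, not $\mathbf{h}$, whereas your parametrization with $(1-s)\Pi(\mathbf{h})+s\Pi(\mathbf{h}_*)$ is the one that makes the subsequent argument consistent.

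One small caution on the last paragraph of your argument: if $\mathbf{h}$ touches $\partial\mathbb{S}^2_+$, then $\exp_{\mathbf{h}}(s\bm\xi)$ may leave $\widetilde{\mathcal{U}}$, in which case $\mathcal{S}(\exp_{\mathbf{h}}(s\bm\xi))$ is simply undefined and you cannot apply Theorem \ref{F-Scal} to the increment $s\bm\xi$ and then ``absorb the discrepancy.'' The cleaner fix is to write $\mathbf{h}_s=\exp_{\mathbf{h}}(\bm\eta_s)$ with $\bm\eta_s=s\bm\xi+O(s^2)$ a genuine section of $\mathbf{h}^*T\widetilde{\mathcal{U}}$, observe that $\exp_{\mathbf{h}}(\bm\eta_s)=\mathbf{h}_s\in\widetilde{\mathcal{U}}_M$ by Remark \ref{convex}, and then apply Theorem \ref{F-Scal} directly to $\bm\eta_s$ together with the linearity and boundedness of $\mathcal{S}'(\mathbf{h})=\mathcal{L}_{\mathbf{h}}$. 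This yields the same conclusion and avoids any reference to an increment outside the admissible set.
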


\begin{proof} Note that $(({\bf u}, {\bf d}), {\bf h})=(\mathcal{S}({\bf h}), {\bf h})$
is a minimizer of ${\mathcal{C}}$ over $\widetilde{\mathcal{U}}_{M}$. 
For any ${\bf h}_*\in \widetilde{\mathcal{U}}_{M}$, let ${\bm\xi}$ 
be the section of ${\bf h}^*T\widetilde{\mathcal{U}}$ given by \eqref{section}.
Then  
$${\bf h}(s)=\Pi^{-1}(s\Pi({\bf h})+(1-s)\Pi({\bf h}^*))\in C^1([0,1], \widetilde{\mathcal{U}})$$
is a $C^1$-family of maps from $\Gamma_T$ to $\widetilde{\mathcal{U}}_M$ joining
${\bf h}$ to ${\bf h}^*$.
If we let 
$({\bf u}(s), {\bf d}(s))=\mathcal{S}({\bf h}(s))$ for $s\in [0,1]$.
Then it is not hard to verify that 
$({\bf u}(s), {\bf d}(s))\in C^1([0,1], \mathcal{H})$ and
$$
\frac{d}{ds}\big|_{s=0}({\bf u}(s), {\bf d}(s))={\mathcal{S}}'({\bf h})({\bm\xi})=(\bm\omega,\bm\phi)
\ {\rm{in}}\ Q_T.
$$
Since 
$$\mathcal{C}(({\bf u}(s), {\bf d}(s)), {\bf h}(s))\ge \mathcal{C}(({\bf u}(0), {\bf d}(0)), {\bf h}(0))
=\mathcal{C}(({\bf u}, {\bf d}), {\bf h}), \ \forall\ s\in [0,1],$$
we conclude that
$$\frac{d}{ds}\big|_{s=0} \mathcal{C}(({\bf u}(s), {\bf d}(s)), {\bf h}(s))\ge 0.$$
It follows directly from the chain rule that
\begin{align*}
&\frac{d}{ds}\big|_{s=0} \int_{Q_T}(\beta_1 |{\bf u}(s)-{\bf u}_{Q_T}|^2+\beta_2 |{\bf d}(s)-{\bf d}_{Q_T}|^2)\\
&=2\int_{Q_T}(\beta_1 \langle{\bf u}-{\bf u}_{Q_T}, \bm\omega\rangle+\beta_2 \langle{\bf d}-{\bf d}_{Q_T},
\bm\phi\rangle),
\end{align*}
\begin{align*}
&\frac{d}{ds}\big|_{s=0} \int_{\Omega}(\beta_3 |{\bf u}(T)-{\bf u}_\Omega|^2+\beta_4 |{\bf d}(T)-{\bf d}_\Omega|^2)\\
&=2\int_{\Omega}(\beta_3 \langle{\bf u}(T)-{\bf u}_\Omega, \bm\omega(T)\rangle+\beta_4 \langle{\bf d}(T)-{\bf d}_\Omega,
\bm\phi(T)\rangle),
\end{align*}
and
$$\frac{d}{ds}\big|_{s=0} \int_{\Gamma_T}\beta_5|{\bf h}(s)-{\bf e}_3|^2
=2\int_{\Gamma_T}\beta_5\langle{\bf h}(s)-{\bf e}_3, \bm\xi\rangle.
$$
Putting these together yields \eqref{cond1}. This completes the proof.
\end{proof}

\subsection{First-order necessary  condition via adjoint states}

In this subsection, we will eliminate the pair $(\bm{\omega}, \bm{\phi})$ from the variational inequality \eqref{cond1}
and derive a first-order necessary condition in terms of the optimal solution together with its adjoint states.
For this purpose, we will first derive the corresponding adjoint system of the control problem (\ref{CF1}). 
Since this section is similar to section 6 of \cite{CRW2017}, we will only sketch it here.

\subsubsection{Formal derivation of the adjoint system} 

The Lagrange functional $\mathcal{G}$ for the control problem (\ref{CF1}), with 
Lagrange multipliers ${\bf p}_1$, ${\bf p}_2$, ${\bf \pi}$, ${\bf q}_1$ and ${\bf q}_2$, is given by
\begin{align}\label{Gcal}
&\mathcal{G}((\mathbf{u}, \mathbf{d}), \mathbf{h}, {\bf p}_1, {\bf p}_2, {\bf \pi}, {\bf q}_1, {\bf q}_2)=\nonumber\\
&\mathcal{C}((\mathbf{u}, \mathbf{d}), \mathbf{h})
          - \int_{Q_{T}} \langle\partial_{t}\mathbf{u}+ \mathbf{u}\cdot\nabla \mathbf{u} -\Delta \mathbf{u}
          +\nabla P+ \nabla\cdot(\nabla \mathbf{d}\odot\nabla \mathbf{d}),  {\bf p}_1\rangle
\nonumber\\
 &-\!\!\int_{Q_{T}}\!\! (\nabla\!\cdot \mathbf{u}){\bf \pi}
  -\!\!\int_{Q_{T}} \!\!\langle\partial_{t}\mathbf{d} +\mathbf{u}\cdot\!\nabla \mathbf{d}-\Delta \mathbf{d}-|\nabla\mathbf{d}|^{2}\mathbf{d},  {\bf p}_2\rangle\nonumber\\
 &-\!\!\int_{\Gamma_{T}}\!\! \langle\mathbf{u}, {\bf q}_1\rangle
         -\!\!\int_{\Gamma_{T}}\!\! \langle\mathbf{d}\!-\!\mathbf{h},{\bf q}_2\rangle,
\end{align}
for any $\mathbf{h}\in \widetilde{\mathcal{U}}_{M}$
and $(\mathbf{u},\mathbf{d})\in \mathcal{H}$. 
Here, we will eliminate the five constraints due to the state problem \eqref{eq1.1}--\eqref{eq1.3} by
five corresponding Lagrange multipliers ${\bf p}_1, {\bf p}_2, {\bf \pi}, {\bf q}_1, {\bf q}_2$.

\medskip

For $M>0$, 
let $((\mathbf{u},\mathbf{d}), \mathbf{h})$ be a minimizer of the optimal control problem (\ref{CF1}) such that
$\mathbf{h}\in \widetilde{\mathcal{U}}_{M}$
and $(\mathbf{u},\mathbf{d})=\mathcal{S}(\mathbf{h})\in \mathcal{H}$.
Then we expect that $(\mathbf{u}, \mathbf{d})$ and $\mathbf{h}$ together with
the corresponding Lagrange multipliers ${\mathbf{p}}_1$, ${\mathbf{p}}_{2}$, ${\bf \pi}$, ${\mathbf{q}_1}$, 
${\mathbf{q}}_{2}$
satisfy the optimality conditions associated with the minimization problem for the Lagrange functional $\mathcal{G}$, i.e.,
\begin{align} \label{minG}
\min \mathcal{G}((\mathbf{u}, \mathbf{d}), \mathbf{h}, ({\mathbf{p}}_1, {\mathbf{p}}_{2}, {\bf \pi}, {\mathbf{q}}_1, 
{\mathbf{q}}_{2})), \ {\rm{with}}\ (\mathbf{u}, \mathbf{d})\ \textit{unconstrained}\ {\rm{and}}\ \mathbf{h} \in 
\widetilde{\mathcal{U}}_{M}.
\end{align}
Then we have that
\begin{align}\label{dG1}
\mathcal{G}'_{(\mathbf{u}, \mathbf{d})}((\mathbf{u}, \mathbf{d}), \mathbf{h},
({\mathbf{p}}_1, {\mathbf{p}}_{2}, {\bf \pi}, {\mathbf{q}}_1, {\mathbf{q}}_{2}))(\bm{\omega}, \bm{\phi})=0,
\end{align}
for all smooth functions $(\bm{\omega}, \bm{\phi})$ satisfying
\begin{align}\label{conss}
& \bm{\omega}|_{t=0}=\mathbf{0},\quad \bm{\phi}|_{t=0}=\mathbf{0}, \quad \text{in}\ \Omega.
\end{align}
Here \eqref{conss}  follows from the fact the initial data $(\mathbf{u}_{0},\mathbf{d}_{0})$
of (\ref{CF1}) is fixed.
 
Similar to  the derivation of \eqref{cond1}, it follows from \eqref{dG1} that
\begin{align} \label{adjoint1}
0&= \beta_{1}\!\int_{Q_{T}} \!\langle\mathbf{u} -\mathbf{u}_{Q_{T}}, \bm{\omega}\rangle
   + \beta_{2}\!\int_{Q_{T}}\! \langle\mathbf{d}-\mathbf{d}_{Q_T}, \bm{\phi}\rangle\nonumber\\
 &\ \ \  +\beta_{3}\!\int_{\Omega}\! \langle\mathbf{u}(T)-\mathbf{u}_{\Omega},\bm{\omega}(T)\rangle
  +\beta_{4}\!\int_{\Omega}\! \langle\mathbf{d}(T)-\mathbf{d}_{\Omega}, \bm{\phi}(T)\rangle \nonumber\\
&\ \ \  -\!\int_{Q_{T}}\! \langle\partial_{t}\bm{\omega}+\mathbf{u}\cdot\nabla\bm{\omega}-\Delta \bm{\omega}
+\nabla \widetilde{P}+ \bm{\omega}\cdot\nabla \mathbf{u}\nonumber\\
&\ \ \ \ \ \ \ \ \ \ \ \ -\nabla\!\cdot\!(\nabla \bm{\phi}\odot\nabla \mathbf{d}+\nabla \mathbf{d}\odot\nabla \bm{\phi}), {\mathbf{p}}_1\rangle
\nonumber\\
& \ \ \ \ -\!\int_{Q_{T}}\! (\nabla\cdot \bm{\omega}){\bf \pi}
  -\!\int_{Q_{T}}\! \langle\partial_{t} \bm{\phi}-\Delta \bm{\phi}+\mathbf{u}\cdot\nabla\bm{\phi}
  +\bm{\omega}\cdot\nabla\mathbf{d}\nonumber\\
&\ \ \ \ \ \ \ \ \ \ \ \ \ \ \ \ \ \ \ \ \ \ \ \ \ \ \ \ \ \ -|\nabla\mathbf{d}|^{2}\bm{\phi}-2\langle\nabla\mathbf{d},\nabla\bm{\phi}\rangle\mathbf{d}, {\mathbf{p}}_2\rangle
\nonumber\\
&\ \ \ \ -\!\int_{\Gamma_{T}} \langle\bm{\omega}, {\mathbf{q}}_{1}\rangle
 -\!\int_{\Gamma_{T}} \langle\bm{\phi}, {\mathbf{q}}_{2}\rangle.
\end{align}
Performing integration by parts, using the condition \eqref{conss}, and regrouping the relevant terms in the same
way as \cite{CRW2017} page 1065, we can obtain the adjoint system for
${\bf p}_1, {\bf p}_2, {\bf \pi}, {\bf q}_1,$ and ${\bf q}_2$ in $Q_T$:
\begin{align}\label{adjoint3}
\begin{cases}
\partial_{t} {\mathbf{p}}_1+\Delta {\mathbf{p}}_1
      +\nabla {\bf \pi }+\mathbf{u}\cdot \nabla{\mathbf{p}}_1
      -(\nabla \mathbf{u}){\mathbf{p}}_1
      -(\nabla\mathbf{d}){\mathbf{p}}_2\\
       =- \beta_{1} (\mathbf{u} -\mathbf{u}_{Q_{T}}),\\
\nabla \cdot {\mathbf{p}}_1=0,\\
\partial_{t} {\mathbf{p}}_2+\Delta {\mathbf{p}}_2
   +\mathbf{u}\cdot\nabla {\mathbf{p}}_2
   -\partial_{i}(\partial_{j}\mathbf{d}\partial_{j}{\bf p}_1^{i})
   -\partial_{j}(\partial_{i}\mathbf{d}\partial_{j}{\bf p}_1^{i})\\
=  -|\nabla\mathbf{d}|^{2}{\mathbf{p}}_2
   +2\nabla\cdot(\nabla\mathbf{d}(\mathbf{d}\cdot\mathbf{p}_2))
   -\beta_2(\mathbf{d}-\mathbf{d}_{Q_{T}}),
\end{cases}
\end{align}
with the following boundary and terminal conditions
\begin{align}\label{adjoint4}
\begin{cases}
{\mathbf{p}}_1={0}, \quad {\mathbf{p}}_2={0}\quad &\text{ on } \Gamma_{T},\\
{\mathbf{p}}_1|_{t=T}=\beta_3(\mathbf{u}(T)-\mathbf{u}_{\Omega}),
\quad {\mathbf{p}}_2|_{t=T}=\beta_4(\mathbf{d}(T)-\mathbf{d}_{\Omega})
\quad &\text{ in } \Omega.
\end{cases}
\end{align}
 Furthermore, the Lagrange multipliers $({\mathbf{q}}_{1},  {\mathbf{q}}_{2})$ can be
uniquely determined by $({\mathbf{p}}_1, {\bf\pi}, {\mathbf{p}}_2)$  through
\begin{align}\label{adjoint5}
\begin{cases}
{\mathbf{q}}_1+\partial_{\bm{\nu}}{\mathbf{p}}_1
+{\bf\pi}\bm{\nu}={0}\quad &\text{ on } \Gamma_{T},\\
{\bf q}_{2}^k+(\partial_{\bm{\nu}} {\bf p}_2)^{k}
 =\partial_{j}{\bf d}^{k}\partial_{j}{\bf p}_1^{i}{\bm\nu}^{i}
 +\partial_{i}{\bf d}^{k}\partial_{j}{\bf p}_1^{i}{\bm\nu}^{j}, \ (k=1,2,3)\ &\text{ on } \Gamma_{T}.
\end{cases}
\end{align}

\subsubsection{Solvability of the adjoint system}
In this part, we will show the existence of a unique solution of \eqref{adjoint3} and \eqref{adjoint4}.
To do it, set
\begin{align}\label{reset}
\widetilde{\mathbf{p}}_1(t)={\bf p}_1(T-t),\ \ \widetilde{\mathbf{p}}_2={\mathbf{p}}_2(T-t),\ \ \text{ and }\ \ \
\widetilde{\bf\pi}(t) ={\bf\pi}(T-t).
\end{align}
Then \eqref{adjoint3}--\eqref{adjoint4} becomes
\begin{align}\label{adjoint6}
\begin{cases}
\partial_{t} \widetilde{\mathbf{p}}_1-\Delta \widetilde{\mathbf{p}}_1-\nabla \widetilde{\bf\pi}-\mathbf{u}(T-t)
\cdot \nabla\widetilde{\mathbf{p}}_1\\
=-\nabla\mathbf{u}(T-t)\widetilde{\mathbf{p}}_1-\nabla\mathbf{d}(T-t)\widetilde{\mathbf{p}}_2
+\beta_{1} (\mathbf{u} -\mathbf{u}_{Q_{T}})(T-t),\\
\nabla \cdot \widetilde{\mathbf{p}}_1=0,\\
\partial_{t} \widetilde{\mathbf{p}}_2-\Delta \widetilde{\mathbf{p}}_2-\mathbf{u}(T-t)\cdot\nabla \widetilde{\mathbf{p}}_2\\
+\partial_{i}(\partial_{j}\mathbf{d}(T-t)\partial_{j}\widetilde{{\bf p}}_1^{i})
 +\partial_{j}(\partial_{i}\mathbf{d}(T-t)\partial_{j}\widetilde{{\bf p}}_1^{i})\\
=  |\nabla\mathbf{d}|^{2}(T-t)\widetilde{\mathbf{p}}_2
   -2\nabla\cdot((\nabla\mathbf{d}\mathbf{d})(T-t)\cdot\widetilde{\mathbf{p}}_2))
   +\beta_2(\mathbf{d}-\mathbf{d}_{Q_{T}})(T-t),
\end{cases}
\end{align}
in $Q_T$, under the boundary and initial condition:
\begin{align}\label{adjoint7}
\begin{cases}
\big(\widetilde{\mathbf{p}}_1, \widetilde{\mathbf{p}}_2\big)=(0,0) &\text{ on } \Gamma_{T},\\
\big(\widetilde{\mathbf{p}}_1,\widetilde{{\bf p}}_2\big)=\big(\beta_{3}(\mathbf{u}(T)-\mathbf{u}_{\Omega}),
\beta_{4}(\mathbf{d}(T)-\mathbf{d}_{\Omega})\big) 
&\text{ in } \Omega\times\{0\}.
\end{cases}
\end{align}
We have the following existence result to \eqref{adjoint6}--\eqref{adjoint7}.

\begin{theorem}\label{bdd-pq}
Assume (A1) and (A2) hold, let $(\mathbf{u},\mathbf{d})\in \mathcal{H}$ and $({\bf u}_\Omega, {\bf d}_\Omega)$
satisfy
\begin{align}
\begin{cases}
\mathbf{u}_{\Omega} \in \mathbf{V},\quad &\text{ if }  \beta_{3}>0,\\
 \mathbf{d}_{\Omega} \in H^{1}(\Omega, \mathbb{S}_{+}^{2})\ \text{ with }\ 
 (\mathbf{d}(T)-\mathbf{d}_{\Omega})|_{\Gamma}={0}, &\text{ if } \beta_{4}>0.
\end{cases}
 \end{align}
 Then the system \eqref{adjoint6}--\eqref{adjoint7} admits a unique weak solution 
 $(\widetilde{\mathbf{p}}_1, \widetilde{\bf\pi}, \widetilde{\mathbf{p}}_2)$ such that 
 \begin{align*}
&\widetilde{\mathbf{p}}_1\in C([0, T], \mathbf{V})\cap L^{2}_tH^{2}_x(Q_T),\nonumber\\
&\widetilde{\bf\pi}\in L^{2}_tH^{1}_x(Q_T) \ \ \text{with} \ \  \int_{\Omega} \widetilde{\bf\pi}(x,t)\,dx=0,\nonumber\\
&\widetilde{\mathbf{p}}_2\in
C([0, T], L^{2}(\Omega,\mathbb{R}^{3})) \cap L^{2}_tH^{1}_{0}(Q_T).
\end{align*}
Moreover, it holds that
\begin{align}\label{bdd-pq0}
&\|\widetilde{\mathbf{p}}_1(t)\|_{L^\infty_tH^{1}_x(Q_T)}+\|\widetilde{\mathbf{p}}_2(t)\|_{L^\infty_tL^{2}_x(Q_T)}\nonumber\\
&+\int_{0}^{T} (\|\widetilde{\mathbf{p}}_1(\tau)\|_{H^{2}(\Omega)}^{2}+\|\widetilde{\mathbf{p}}_2(\tau)\|_{H^{1}(\Omega)}^{2})
\leq C_{T},
\end{align}
where $C_T>0$ is a constant depending on  $\|(\mathbf{u},\mathbf{d})\|_{\mathcal{H}}$, $\beta_{1}\|\mathbf{u}-\mathbf{u}_{Q_{T}}\|_{L^{2}(Q_T)}$, $\beta_{2}\|\mathbf{d}- \mathbf{d}_{Q_{T}}\|_{L^{2}(Q_T)}$, $\beta_{3}\|\mathbf{u}_{\Omega}\|_{H^{1}(\Omega)}$, $\beta_{4}\|\mathbf{d}_{\Omega}\|_{H^{1}(\Omega)}$, $\Omega$, and $T$.
For any $s\in (0,2)$, it also holds that
\begin{align}\label{s-reg}
\partial_t\widetilde{\bf p}_2,\nabla^2\widetilde{\bf p}_2\in L^{2-s}(Q_T).
\end{align}
\end{theorem}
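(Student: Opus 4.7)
The plan is to solve the forward linear system \eqref{adjoint6}--\eqref{adjoint7} by a Galerkin scheme, using eigenfunctions of the Stokes operator as a basis for the $\widetilde{\mathbf{p}}_1$-equation and eigenfunctions of the Dirichlet Laplacian for the $\widetilde{\mathbf{p}}_2$-equation. Because the system is linear in $(\widetilde{\mathbf{p}}_1,\widetilde{\mathbf{p}}_2)$ with coefficients built from $(\mathbf{u},\mathbf{d})\in\mathcal{H}$, the $N$-th approximate problem becomes a linear ODE system with $L^2_t$-coefficients on $[0,T]$, and hence admits a unique global solution. Uniqueness of the weak solution then follows from the same energy estimate applied to the difference of two solutions with zero initial data, so the burden is entirely on establishing the uniform a priori bound \eqref{bdd-pq0}.

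The estimate \eqref{bdd-pq0} is derived at the \emph{coupled} energy level $(H^1,L^2)$: multiply the first equation by $\widetilde{\mathbf{p}}_1-\Delta \widetilde{\mathbf{p}}_1$ (the Stokes structure annihilates the pressure, since $\nabla\cdot\widetilde{\mathbf{p}}_1=0$) and the third equation by $\widetilde{\mathbf{p}}_2$, then integrate over $\Omega$. Because $\nabla\cdot\mathbf{u}=0$ and $\widetilde{\mathbf{p}}_1,\widetilde{\mathbf{p}}_2$ vanish on $\Gamma$, the transport terms $\mathbf{u}\cdot\nabla\widetilde{\mathbf{p}}_i$ drop out when tested against $\widetilde{\mathbf{p}}_i$; for the $H^1$ test of $\widetilde{\mathbf{p}}_1$ the commutator is handled by H\"older. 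The cross terms produce expressions of the form $\int\nabla\mathbf{d}\,\widetilde{\mathbf{p}}_2\cdot\Delta\widetilde{\mathbf{p}}_1$, $\int\nabla\mathbf{d}\otimes\nabla\widetilde{\mathbf{p}}_1:\nabla\widetilde{\mathbf{p}}_2$, and $\int|\nabla\mathbf{d}|^2\,\widetilde{\mathbf{p}}_2\cdot\widetilde{\mathbf{p}}_2$, which are estimated via the 2D embeddings $H^1\hookrightarrow L^p$ ($p<\infty$), $H^2\hookrightarrow L^\infty$ and the uniform bound $\|\mathbf{u}\|_{L^\infty_tH^1_x}+\|\mathbf{d}\|_{L^\infty_tH^2_x}\leq C$ coming from $(\mathbf{u},\mathbf{d})\in\mathcal{H}$. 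After absorbing $\|\nabla^2\widetilde{\mathbf{p}}_1\|_{L^2}^2+\|\nabla\widetilde{\mathbf{p}}_2\|_{L^2}^2$ into the dissipation by Young's inequality, one is left with
\begin{align*}
\frac{d}{dt}\bigl(\|\widetilde{\mathbf{p}}_1\|_{H^1}^2+\|\widetilde{\mathbf{p}}_2\|_{L^2}^2\bigr)
+c\bigl(\|\widetilde{\mathbf{p}}_1\|_{H^2}^2+\|\widetilde{\mathbf{p}}_2\|_{H^1}^2\bigr)
\le \Phi(t)\bigl(\|\widetilde{\mathbf{p}}_1\|_{H^1}^2+\|\widetilde{\mathbf{p}}_2\|_{L^2}^2\bigr)+\Psi(t),
\end{align*}
with $\Phi\in L^1(0,T)$ controlled by $\int_0^T(\|\mathbf{u}\|_{H^2}^2+\|\mathbf{d}\|_{H^3}^2)\,dt$, and $\Psi\in L^1(0,T)$ controlled by the data $\beta_1(\mathbf{u}-\mathbf{u}_{Q_T})$ and $\beta_2(\mathbf{d}-\mathbf{d}_{Q_T})$ in $L^2(Q_T)$; Gronwall's lemma then yields \eqref{bdd-pq0}. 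The assumptions $\mathbf{u}_\Omega\in\mathbf{V}$, $\mathbf{d}_\Omega\in H^1$ with $(\mathbf{d}(T)-\mathbf{d}_\Omega)|_\Gamma=0$ are precisely what is needed to ensure $\widetilde{\mathbf{p}}_1|_{t=0}\in\mathbf{V}$ and $\widetilde{\mathbf{p}}_2|_{t=0}\in H^1_0$, compatibly with the chosen test level. Recovery of the pressure $\widetilde{\pi}\in L^2_tH^1_x$ with zero mean is done via the usual de Rham-type argument applied to the Stokes equation, together with Ne\v cas' inequality.

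The main obstacle is the asymmetric coupling: the source of the $\widetilde{\mathbf{p}}_2$-equation contains $\partial_i(\partial_j\mathbf{d}\,\partial_j\widetilde{\mathbf{p}}_1^{\,i})$ and $\partial_j(\partial_i\mathbf{d}\,\partial_j\widetilde{\mathbf{p}}_1^{\,i})$, involving two derivatives on $\widetilde{\mathbf{p}}_1$ when distributed. Closing the estimate therefore forces the $\widetilde{\mathbf{p}}_1$-estimate at $H^2$ level and prevents any decoupled iteration at $L^2$-level alone. This is also why the regularity of $\widetilde{\mathbf{p}}_2$ stops short of $L^2_tH^2_x$: indeed, for the last claim \eqref{s-reg}, view the $\widetilde{\mathbf{p}}_2$-equation as a linear heat equation with Dirichlet data and RHS
\begin{align*}
F=-\partial_i(\partial_j\mathbf{d}\,\partial_j\widetilde{\mathbf{p}}_1^{\,i})-\partial_j(\partial_i\mathbf{d}\,\partial_j\widetilde{\mathbf{p}}_1^{\,i})+\mathbf{u}\cdot\nabla\widetilde{\mathbf{p}}_2+|\nabla\mathbf{d}|^2\widetilde{\mathbf{p}}_2-2\nabla\!\cdot\!\bigl((\nabla\mathbf{d}\,\mathbf{d})\cdot\widetilde{\mathbf{p}}_2\bigr)+\beta_2(\mathbf{d}-\mathbf{d}_{Q_T}).
\end{align*}
Because $\nabla^2\mathbf{d}\in L^2_tL^p_x$ for every $p<\infty$ (from $\mathbf{d}\in L^2_tH^3_x$ and 2D Sobolev) and $\nabla\widetilde{\mathbf{p}}_1\in L^\infty_tL^2_x$, the worst contribution $\nabla^2\mathbf{d}\cdot\nabla\widetilde{\mathbf{p}}_1$ lies in $L^2_tL^{2-s/2}_x\subset L^{2-s}(Q_T)$ for any $s\in(0,2)$; the remaining terms are no worse. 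Applying the standard $L^q$-parabolic theory for the heat equation with Dirichlet data then gives $\partial_t\widetilde{\mathbf{p}}_2,\nabla^2\widetilde{\mathbf{p}}_2\in L^{2-s}(Q_T)$, completing the proof.
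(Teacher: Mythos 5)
Your proposal is correct and follows essentially the same route as the paper: Faedo--Galerkin for existence, a coupled energy estimate obtained by testing the $\widetilde{\mathbf{p}}_1$-equation at the $H^2$ level (against $\Delta\widetilde{\mathbf{p}}_1$, which is what absorbs the second-order cross terms $\partial_i(\partial_j\mathbf{d}\,\partial_j\widetilde{\mathbf{p}}_1^{\,i})+\partial_j(\partial_i\mathbf{d}\,\partial_j\widetilde{\mathbf{p}}_1^{\,i})$) and the $\widetilde{\mathbf{p}}_2$-equation against $\widetilde{\mathbf{p}}_2$, closed by Gronwall, and then $L^{2-s}$ parabolic theory applied to the $\widetilde{\mathbf{p}}_2$-equation with right-hand side in $L^{2-s}(Q_T)$. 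The only cosmetic difference is that you test against $\widetilde{\mathbf{p}}_1-\Delta\widetilde{\mathbf{p}}_1$ rather than $\Delta\widetilde{\mathbf{p}}_1$ alone (equivalent by Poincar\'e since $\widetilde{\mathbf{p}}_1|_{\Gamma}=0$), and in the last step the paper lists $|\nabla\mathbf{d}||\nabla^2\widetilde{\mathbf{p}}_1|$ alongside $|\nabla^2\mathbf{d}||\nabla\widetilde{\mathbf{p}}_1|$ as equally borderline contributions, but the conclusion is identical.
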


\begin{proof}
The existence of weak solutions follows from the Faedo-Galerkin method, similar to \cite{CRW2017} Proposition 4.1,
which is left for the readers. Here we sketch the proof of a priori estimates.

Multiplying \eqref{adjoint6} by $\Delta\widetilde{\mathbf{p}}_1$, \eqref{adjoint7} by 
$\widetilde{\mathbf{p}}_2$, and adding the resulting equations, we obtain
\begin{align} \label{pq-est1}
&\frac{1}{2}\frac{d}{dt} (\|\nabla \widetilde{\mathbf{p}}_1\|_{L^{2}(\Omega)}^{2}
+\|\widetilde{\mathbf{p}}_2\|_{L^{2}(\Omega)}^{2})
+(\|\Delta\widetilde{\mathbf{p}}_1\|_{L^{2}(\Omega)}^{2}
+\|\nabla\widetilde{\mathbf{p}}_2\|_{L^{2}(\Omega)}^{2})\\
&=-\int_\Omega \langle\mathbf{u} (T-t)\cdot \nabla\widetilde{\mathbf{p}}_1,  \Delta\widetilde{\mathbf{p}}_1\rangle
 +\int_{\Omega} \langle(\nabla\mathbf{u}(T-t))\widetilde{\mathbf{p}}_1, \Delta\widetilde{\mathbf{p}}_1\rangle\nonumber\\
& \ \ \ +\int_{\Omega} \langle(\nabla\mathbf{d}(T-t))\widetilde{\mathbf{p}}_2,  \Delta \widetilde{\mathbf{p}}_1\rangle 
-\int_{\Omega} \beta_{1} \langle(\mathbf{u}-\mathbf{u}_{Q_T})(T-t), \Delta\widetilde{\mathbf{p}}_1\rangle\nonumber\\
&\ \ \  -\!\int_{\Omega}\!\langle \partial_{i}(\partial_{j}\mathbf{d}(T-t)\partial_{j}\widetilde{\bf p}^{i}_1)
 +\partial_{j}(\partial_{i}\mathbf{d}(T-t)\partial_{j}\widetilde{\bf p}_1^{i}), \widetilde{\mathbf{p}}_2\rangle\nonumber\\
&\ \ \  +\!\int_{\Omega}\langle|\nabla \mathbf{d}|^{2}(T-t)\widetilde{\mathbf{p}}_2, \widetilde{\mathbf{p}}_2\rangle
-2\!\int_{\Omega}\langle\nabla\cdot(\nabla\mathbf{d}\mathbf{d}(T-t)\cdot\widetilde{\mathbf{p}}_2), \widetilde{\mathbf{p}}_2\rangle\nonumber\\
&\ \ \ +\beta_{2}\!\int_{\Omega}\langle(\mathbf{d}\!-\!\mathbf{d}_{Q_{T}})(T-t), \widetilde{\mathbf{p}}_2\rangle
 \nonumber\\
 &=\sum_{i=1}^8 I_i.\nonumber
 \end{align}
We can estimate $I_i$ ($1\le i\le 8$) as follows. 
 \begin{align*}
 |I_1|&\le C\|{\bf u}(T-t)\|_{L^4(\Omega)}\|\nabla\widetilde{\bf p}_1\|_{L^4(\Omega)}
 \|\Delta\widetilde{\bf p}_1\|_{L^2(\Omega)}\\
 &\le \frac{1}{16}\|\Delta\widetilde{\bf p}_1\|_{L^2(\Omega)}^2
  +C\|{\bf u}(T-t)\|_{H^1(\Omega)}^4\|\nabla\widetilde{\bf p}_1\|_{L^2(\Omega)}^2,
  \end{align*}
 \begin{align*}
 |I_2|&\le C\|\nabla{\bf u}(T-t)\|_{L^4(\Omega)}\|\widetilde{\bf p}_1\|_{L^4(\Omega)}
  \|\Delta\widetilde{\bf p}_1\|_{L^2(\Omega)}\\
  &\le \frac{1}{16}\|\Delta\widetilde{\bf p}_1\|_{L^2(\Omega)}^2
  +C\|{\bf u}(T-t)\|_{H^2(\Omega)}^2\|\nabla\widetilde{\bf p}_1\|_{L^2(\Omega)}^2,
  \end{align*}
 \begin{align*}
 |I_3|&\le C\|\nabla{\bf d}(T-t)\|_{L^4(\Omega)}\|\widetilde{\bf p}_1\|_{L^4(\Omega)}
  \|\Delta\widetilde{\bf p}_1\|_{L^2(\Omega)}\\
  &\le \frac{1}{16}\|\Delta\widetilde{\bf p}_1\|_{L^2(\Omega)}^2
  +C\|{\bf d}(T-t)\|_{H^2(\Omega)}^2\|\nabla\widetilde{\bf p}_1\|_{L^2(\Omega)}^2,
 \end{align*}
 \begin{align*}
 |I_4|&\le C\beta_1\|({\bf u}-{\bf u}_{Q_T})(T-t)\|_{L^2(\Omega)} \|\Delta\widetilde{\bf p}_1\|_{L^2(\Omega)}\\
 &\le \frac{1}{16}\|\Delta\widetilde{\bf p}_1\|_{L^2(\Omega)}^2
+C\beta_1^2\|({\bf u}-{\bf u}_{Q_T})(T-t)\|_{L^2(\Omega)}^2,
 \end{align*}
 \begin{align*}
 |I_5|&
 \le C\|\nabla^2{\bf d}(T-t)\|_{L^2(\Omega)}\|\nabla\widetilde{\bf p}_1\|_{L^4(\Omega)}\|\widetilde{\bf p}_2\|_{L^4(\Omega)}\\
 &\ \ +C\|\nabla{\bf d}(T-t)\|_{L^4(\Omega)}\|\Delta\widetilde{\bf p}_1\|_{L^2(\Omega)}\|\widetilde{\bf p}_2\|_{L^4(\Omega)}\\
 &\le \frac1{16}\big(\|\Delta\widetilde{\bf p}_1\|_{L^2(\Omega)}^2+\|\nabla\widetilde{\bf p}_2\|_{L^2(\Omega)}^2\big)\\
 &\ \ +C(1+\|{\bf d}(T-t)\|_{H^2(\Omega)}^2)\|{\bf d}(T-t)\|_{H^2(\Omega)}^2\\
 &\ \ \ \ \ \ \ \ \cdot(\|\nabla\widetilde{\bf p}_1\|_{L^2(\Omega)}^2
 +\|\widetilde{\bf p}_2\|_{L^2(\Omega)}^2),
 \end{align*}
 \begin{align*}
 |I_6|&\le C\|\nabla{\bf d}(T-t)\|_{L^4(\Omega)}^2\|\widetilde{\bf p}_2\|_{L^4(\Omega)}^2\\
 &\le  \frac1{16}\|\nabla\widetilde{\bf p}_2\|_{L^2(\Omega)}^2+
 C\|{\bf d}(T-t)\|_{H^2(\Omega)}^4\|\widetilde{\bf p}_2\|_{L^2(\Omega)}^2,
 \end{align*}
 \begin{align*}
 |I_7|&\le C\|\nabla^2{\bf d}(T-t)\|_{L^2(\Omega)}\|\widetilde{\bf p}_2\|_{L^4(\Omega)}^2
 +C\|\nabla{\bf d}(T-t)\|_{L^4(\Omega)}^2\|\widetilde{\bf p}_2\|_{L^4(\Omega)}^2\\
 &\ \ +C\|\nabla{\bf d}(T-t)\|_{L^4(\Omega)}\|\nabla\widetilde{\bf p}_2\|_{L^2(\Omega)}\|\widetilde{\bf p}_2\|_{L^4(\Omega)}\\
 &\le \frac1{16}\|\nabla\widetilde{\bf p}_2\|_{L^2(\Omega)}^2
 +C\|{\bf d}(T-t)\|_{H^2(\Omega)}^2\|\widetilde{\bf p}_2\|_{L^2(\Omega)}^2\\
 &\ \ +C\|{\bf d}(T-t)\|_{H^2(\Omega)}^4\|\widetilde{\bf p}_2\|_{L^2(\Omega)}^2,
 \end{align*}
 \begin{align*}
 |I_8|&\le C\beta_2\|({\bf d}-{\bf d}_{Q_T})(T-t)\|_{L^2(\Omega)}\|\widetilde{\bf p}_2\|_{L^2(\Omega)}\\
 &\le \|\widetilde{\bf p}_2\|_{L^2(\Omega)}^2+C\beta_2^2\|({\bf d}-{\bf d}_{Q_T})(T-t)\|_{L^2(\Omega)}^2.
 \end{align*}
 Putting these estimates into \eqref{pq-est1}, we obtain
 \begin{align*}
 &\frac{d}{dt} (\|\nabla \widetilde{\mathbf{p}}_1\|_{L^{2}(\Omega)}^{2}
+\|\widetilde{\mathbf{p}}_2\|_{L^{2}(\Omega)}^{2})
+(\|\Delta\widetilde{\mathbf{p}}_1\|_{L^{2}(\Omega)}^{2}
+\|\nabla\widetilde{\mathbf{p}}_2\|_{L^{2}(\Omega)}^{2})\\
&\le C(1+\|{\bf d}(T-t)\|_{H^2(\Omega)}^2)\|{\bf d}(T-t)\|_{H^2(\Omega)}^2(\|\nabla\widetilde{\bf p}_1\|_{L^2(\Omega)}^2
 +\|\widetilde{\bf p}_2\|_{L^2(\Omega)}^2)\\
 &\ \ +C(\|{\bf u}(T-t)\|_{H^1(\Omega)}^4+\|{\bf u}(T-t)\|_{H^2(\Omega)}^2)(\|\nabla\widetilde{\bf p}_1\|_{L^2(\Omega)}^2
 +\|\widetilde{\bf p}_2\|_{L^2(\Omega)}^2)\\
&\ \ +C\big(\beta_1^2\|({\bf u}-{\bf u}_{Q_T})(T-t)\|_{L^2(\Omega)}^2
+\beta_2^2\|({\bf d}-{\bf d}_{Q_T})(T-t)\|_{L^2(\Omega)}^2\big).
 \end{align*}
Since $({\bf u}, {\bf d})\in\mathcal{H}$, we have that
\begin{align*}
&\int_0^T (\|{\bf u}(T-t)\|_{H^1(\Omega)}^4+\|{\bf d}(T-t)\|_{H^2(\Omega)}^4)\,dt\le C\|({\bf u}, {\bf d})\|_{\mathcal{H}}^4,\\
&\int_0^T(\|{\bf u}(T-t)\|_{H^2(\Omega)}^2+\|{\bf d}(T-t)\|_{H^2(\Omega)}^2)\,dt\le C\|({\bf u}, {\bf d})\|_{\mathcal{H}}^2.
\end{align*}
Hence we can apply Gronwall's inequality
to show \eqref{bdd-pq0}. Note that \eqref{bdd-pq0}, together with \eqref{adjoint6}--\eqref{adjoint7}, implies that 
$\partial_{t}\widetilde{\mathbf{p}}_1\in L^{2}([0,T], \mathbf{H})$, 
$\partial_{t}\widetilde{\mathbf{p}}_2\in L^{2}_tH^{-1}_x(Q_T,\mathbb{R}^{3})$,
and $\nabla\widetilde{\bf\pi}\in L^{2}(Q_T)$. 

Observe that 
$$\partial_{t} \widetilde{\mathbf{p}}_2-\Delta \widetilde{\mathbf{p}}_2={\bf F},$$
where
\begin{align*}
|{\bf F}|&\le C\big[(|{\bf u}|+|\nabla{\bf d}|)(T-t)|\nabla\widetilde{\bf p}_2|
+|\nabla^2{\bf d}|(T-t)|\nabla\widetilde{\bf p}_1|\\
&\ \ \ \ \ \ +|\nabla{\bf d}|(T-t)|\nabla^2\widetilde{\bf p}_1|
+|\nabla^2{\bf d}|(T-t)|\widetilde{\bf p}_2|+|({\bf d}-{\bf d}_{Q_T})|(T-t)\big].
\end{align*}
It follows easily from \eqref{bdd-pq} and the fact that $({\bf u}, {\bf d})\in\mathcal{H}$ that
$$|{\bf F}|\in L^{2-s}(Q_T),$$
for any $0<s<2$. 

Hence we can apply the standard $L^{2-s}$-theory of parabolic equations
to deduce $\partial_t{\widetilde{\bf p}}_2, \nabla^2{\widetilde{\bf p}}_2\in L^{2-s}(Q_T)$.
This completes the proof. \end{proof}

From the relations \eqref{reset} and \eqref{adjoint5}, we have
\begin{corollary}
Under the same assumptions of Theorem \ref{bdd-pq}, the adjoint system \eqref{adjoint3}--\eqref{adjoint4} admits a unique weak solution $(\mathbf{p}_1, {\bf\pi}, \mathbf{p}_2)$, satisfying the same properties as for the weak solution 
$(\widetilde{\mathbf{p}}_1, \widetilde{\bf\pi}, \widetilde{\mathbf{p}}_2)$
to the system \eqref{adjoint6}--\eqref{adjoint7} stated in Theorem \ref{bdd-pq}. 
Moreover, the Lagrange multipliers $(\mathbf{q}_1, \mathbf{q}_2)$ are
uniquely determined by \eqref{adjoint5} such that
\begin{align}\label{bdd-pq1}
\mathbf{q}_1\in L^2_tH^{\frac{1}{2}}_x(\Gamma_T,\mathbb{R}^{2}),
\quad \mathbf{q}_2\in L^{1}_tH^{\frac{1}{2}}_x(\Gamma_T, \mathbb{R}^{3}).
\end{align}
\end{corollary}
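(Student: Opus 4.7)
The proof decomposes cleanly into two independent tasks corresponding to the two assertions in the corollary. For the existence and uniqueness of $(\mathbf{p}_1,\pi,\mathbf{p}_2)$ for the original adjoint system \eqref{adjoint3}--\eqref{adjoint4}, I would simply invert the time reversal \eqref{reset}. Setting $\mathbf{p}_1(x,t)=\widetilde{\mathbf{p}}_1(x,T-t)$, $\pi(x,t)=\widetilde{\pi}(x,T-t)$, and $\mathbf{p}_2(x,t)=\widetilde{\mathbf{p}}_2(x,T-t)$ turns a weak solution of \eqref{adjoint6}--\eqref{adjoint7} produced by Theorem \ref{bdd-pq} into a weak solution of \eqref{adjoint3}--\eqref{adjoint4}, with all of the estimates \eqref{bdd-pq0} and the integrability \eqref{s-reg} preserved, since they are invariant under a pointwise-in-$t$ reflection. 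Uniqueness is inherited from the same reversal applied to two would-be solutions.

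For $\mathbf{q}_1=-\partial_\nu\mathbf{p}_1-\pi\nu$, I would combine the Sobolev trace theorems with the bounds from Theorem \ref{bdd-pq}: $\mathbf{p}_1\in L^2_tH^2_x(Q_T)$ gives $\partial_\nu\mathbf{p}_1\in L^2_tH^{1/2}_x(\Gamma_T)$, and $\pi\in L^2_tH^1_x(Q_T)$ gives $\pi|_{\Gamma_T}\in L^2_tH^{1/2}_x(\Gamma_T)$. This yields the claimed $\mathbf{q}_1\in L^2_tH^{1/2}_x(\Gamma_T,\mathbb{R}^2)$ at once.

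For $\mathbf{q}_2$, the formula \eqref{adjoint5} splits the estimate into a linear trace term $\partial_\nu\mathbf{p}_2$ and two boundary products $\partial_j\mathbf{d}^k\partial_j\mathbf{p}_1^i\nu^i$ and $\partial_i\mathbf{d}^k\partial_j\mathbf{p}_1^i\nu^j$. The boundary products are the easier piece: from $\mathbf{d}\in \mathcal{H}$ we have $\nabla\mathbf{d}|_{\Gamma_T}\in L^\infty_tH^{1/2}_x\cap L^2_tH^{3/2}_x(\Gamma_T)$, and $\mathbf{p}_1\in L^2_tH^2_x$ gives $\nabla\mathbf{p}_1|_{\Gamma_T}\in L^2_tH^{1/2}_x(\Gamma_T)$. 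Because $\Gamma$ is $1$-dimensional, $H^{3/2}(\Gamma)$ is a Banach algebra embedded in $L^\infty(\Gamma)$, so pointwise-in-$t$ multiplication of an $H^{3/2}$ function by an $H^{1/2}$ function stays in $H^{1/2}(\Gamma)$, placing the product in $L^2_tH^{1/2}_x(\Gamma_T)\subset L^1_tH^{1/2}_x(\Gamma_T)$.

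The main obstacle, and the step where I would be most careful, is the linear piece $\partial_\nu\mathbf{p}_2$: Theorem \ref{bdd-pq} only gives $\mathbf{p}_2\in L^\infty_tL^2_x\cap L^2_tH^1_x(Q_T)$, which is insufficient to make sense of $\partial_\nu\mathbf{p}_2$ as an $H^{1/2}_x$-valued trace. This is precisely where the auxiliary improvement \eqref{s-reg} enters: $\mathbf{p}_2\in W^{2,1}_{2-s}(Q_T)$ for any $s\in(0,2)$. The anisotropic parabolic trace theorem then gives $\partial_\nu\mathbf{p}_2\in W^{\sigma(s),\sigma(s)/2}_{2-s}(\Gamma_T)$ with $\sigma(s)=1-1/(2-s)\to 1/2$ as $s\to 0^+$. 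Choosing $s$ small, Hölder's inequality in $t$ (trading away $L^{2-s}_t$ for $L^1_t$) together with Sobolev embedding on the $1$-dimensional $\Gamma$ (to absorb the small defect $\frac12-\sigma(s)$ in spatial regularity) yields $\partial_\nu\mathbf{p}_2\in L^1_tH^{1/2}_x(\Gamma_T,\mathbb{R}^3)$. Combining the three pieces establishes \eqref{bdd-pq1}.
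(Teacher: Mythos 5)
Your time-reversal argument, the $\mathbf{q}_1$ trace estimate, and the treatment of the boundary products $\partial_j\mathbf{d}^k\partial_j\mathbf{p}_1^i\nu^i+\partial_i\mathbf{d}^k\partial_j\mathbf{p}_1^i\nu^j$ are all sound and lead to the same conclusions as the paper, though by a slightly different route on the last piece: you invoke the multiplier property of $H^{3/2}(\Gamma)$ on $H^{1/2}(\Gamma)$ directly on the boundary, whereas the paper bounds the product in $L^1_t H^1_x(Q_T)$ (using $\|\nabla\mathbf{d}\,\nabla\mathbf{p}_1\|_{H^1(\Omega)}\lesssim\|\mathbf{d}\|_{H^2}\|\mathbf{d}\|_{H^3}\cdot\|\mathbf{p}_1\|_{H^1}\|\mathbf{p}_1\|_{H^2}$, integrable in $t$ by Cauchy--Schwarz) and then takes a single $H^1(\Omega)\to H^{1/2}(\Gamma)$ trace. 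Both give $L^1_tH^{1/2}_x(\Gamma_T)$; the paper's version is marginally more elementary since it avoids discussing multipliers on the curve.

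The genuine gap is in the $\partial_\nu\mathbf{p}_2$ step. You are right that this is the delicate point and that one must lean on \eqref{s-reg}, but the proposed repair does not close. From $\widetilde{\mathbf{p}}_2\in W^{2,1}_{2-s}(Q_T)$ the trace theorem gives $\partial_\nu\widetilde{\mathbf{p}}_2\in W^{\sigma(s),\sigma(s)/2}_{2-s}(\Gamma_T)$ with $\sigma(s)=1-\frac{1}{2-s}$, and the associated Sobolev index on the one-dimensional curve is
\begin{align*}
\sigma(s)-\frac{1}{2-s}=\frac{1-s}{2-s}-\frac{1}{2-s}=-\frac{s}{2-s}<0=\frac12-\frac12,
\end{align*}
so $W^{\sigma(s)}_{2-s}(\Gamma)$ strictly fails to embed into $H^{1/2}(\Gamma)$; Sobolev embedding trades integrability for differentiability only when the index is at least as large, and here it is strictly smaller for every $s>0$. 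Trading away the $t$-integrability down to $L^1_t$ does nothing to recover the missing spatial regularity. In fact the same obstruction is present in the paper: it states, citing only Theorem \ref{bdd-pq} and the trace theorem, that $\partial_\nu\mathbf{p}_2\in L^2_tH^{1/2}_x(\Gamma_T)$, but Theorem \ref{bdd-pq} only gives $\mathbf{p}_2\in L^\infty_tL^2_x\cap L^2_tH^1_x$ plus the subcritical $W^{2,1}_{2-s}$ gain, neither of which yields an $H^{1/2}_x$-valued normal trace. The honest conclusion from the stated regularity is $\partial_\nu\mathbf{p}_2\in L^{2-s}_tW^{\sigma(s)}_{2-s,x}(\Gamma_T)$ for all $s\in(0,2)$, i.e.\ arbitrarily close to but strictly short of $L^2_tH^{1/2}_x$; to reach the endpoint one would need an additional argument (e.g.\ an improvement of \eqref{s-reg} to genuine $L^2_tH^2_x$ regularity of $\mathbf{p}_2$, or else a weakening of the target space in \eqref{bdd-pq1}). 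Your diagnosis of where the difficulty lies is correct; the proposed Sobolev-embedding fix is not.
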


\begin{proof}
It suffices to prove \eqref{bdd-pq1}. 
From \eqref{reset}, \eqref{bdd-pq},  and the trace Theorem, we have that
${\mathbf{p}}_1\in L^{2}_tH^{\frac{3}{2}}_x(\Gamma_T,\mathbb{R}^{2})$. This, combined with
$\eqref{adjoint5}_{1}$,  implies that $\mathbf{q}_1\in L^2_tH^{\frac{1}{2}}_x(\Gamma_T,\mathbb{R}^{2})$.

To estimate $\mathbf{q}_{2}$, first observe that \eqref{reset}, \eqref{bdd-pq},  and  the trace Theorem imply that
$\partial_{\bm{\nu}}{\mathbf{p}}_2\in L^{2}_t H^{\frac{1}{2}}_x(\Gamma_T).$
Also we can estimate
\begin{align*}
&\|\partial_{j}{\bf d}^{k}\partial_{j}{\bf p}_1^{i}
 +\partial_{i}{\bf d}^{k}\partial_{j}{\bf p}_1^{i}\|_{L^1_tH^{1}_x(Q_T)}\\
& \leq C\big\|(|\nabla{\bf d}|+|\nabla^2 {\bf d}|)|\nabla{\bf p}_1|+|\nabla{\bf d}||\nabla^2{\bf p}_1|\big\|
 _{L^1_tL^2_x(\Omega)}\\
 &\le C\|{\bf d}\|_{L^\infty_tH^2_x(Q_T)}\|{\bf d}\|_{L^2_tH^3_x(Q_T)}
\|{\bf p}_1\|_{L^\infty_tH^1_x(Q_T)}\|{\bf p}_1\|_{L^2_tH^2_x(Q_T)}<\infty.
 \end{align*}
 Hence we have that
\begin{align*}
\partial_{j}{\bf d}^{k}\partial_{j}{\bf p}_1^{i}{\bm\nu}^{i}
 +\partial_{i}{\bf d}^{k}\partial_{j}{\bf p}_1^{i}{\bm\nu}^{j}
 \in L^{1}_tH^{\frac{1}{2}}_x(\Gamma_T),
\end{align*}
which, together with $\eqref{adjoint5}_{2}$, yields 
that ${\mathbf{q}}_2\in L^{1}_tH^{\frac{1}{2}}_x(\Gamma_T)$.
\end{proof}

\subsubsection{The first-order necessary condition via adjoint systems}
With the help of previous subsections, 
we are able to formulate another necessary condition for optimal boundary control
in terms of adjoint systems. More precisely, we have the following theorem.

\begin{theorem}\label{nece2}
 Assume (A1) and (A2). For $M>0$, 
 let $(\mathbf{u}_0, \mathbf{d}_0)\in \mathbf{V}\times \mathbf{H}^2(\Omega,\mathbb{S}_{+}^{2})$  and
\begin{align*}
& \mathbf{u}_{\Omega} \in \mathbf{V},\quad \mathrm{if}\  \beta_{3}>0,\\
& \mathbf{d}_{\Omega} \in H^{1}(\Omega, \mathbb{S}_{+}^{2})\quad \text{with}\ (\mathbf{d}(T)-\mathbf{d}_{\Omega})|_{\Gamma}=\mathbf{0},\quad \mathrm{if}\ \beta_4>0.
 \end{align*}
Let  $\mathbf{h}$ be an optimal boundary control for (\ref{CF1}) in $\widetilde{\mathcal{U}}_{M}$,
with the associate state $(\mathbf{u},\mathbf{d})=\mathcal{S}(\mathbf{h})\in \mathcal{H}$ 
and the adjoint state $({\mathbf{p}}_1, {\mathbf{p}}_2)$ given by \eqref{adjoint3} and \eqref{adjoint4}.
For any $\widehat{\mathbf{h}}\in \widetilde{\mathcal{U}}_{M}$, if $\bm\xi$ is the section
of ${\bf h}^*T\widetilde{\mathcal{U}}$ given by 
$$\bm\xi=\frac{d}{ds}\big|_{s=0} \Pi^{-1}(s\Pi({\bf h})+(1-s)\Pi(\widehat{\bf h})),$$
then the following variational inequality holds:
\begin{align}\label{cond2}
&\beta_5\!\int_{\Gamma_{T}}\!\langle\mathbf{h}-\mathbf{e}_{3}, \bm{\xi}\rangle
+\int_{\Gamma_{T}}\! \langle\partial_{j}\mathbf{d}\partial_{j}{\bf p}_1^{i}\bm\nu_{i}
 \!+\!\partial_{i}\mathbf{d}\partial_{j}{\bf p}_1^{i}\bm\nu_{j}-\partial_{\bm\nu}{\bf p}_2, \bm{\xi}\rangle \geq 0.
\end{align}
\end{theorem}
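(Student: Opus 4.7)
The plan is to eliminate the linearized state $(\bm\omega,\bm\phi)$ from the variational inequality \eqref{cond1} by testing the linearized system \eqref{NLC-L1}--\eqref{NLC-L2} against the adjoint state $(\mathbf{p}_1,\mathbf{p}_2)$ constructed in Theorem \ref{bdd-pq}, and using the adjoint equations \eqref{adjoint3}--\eqref{adjoint4} together with the boundary identities \eqref{adjoint5}.

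First I would multiply $\eqref{NLC-L1}_1$ by $\mathbf{p}_1$ and $\eqref{NLC-L1}_3$ by $\mathbf{p}_2$ and integrate over $Q_T$. Integration by parts in time uses $(\bm\omega,\bm\phi)|_{t=0}=0$, leaving only the terminal contribution $\int_\Omega(\langle\bm\omega(T),\mathbf{p}_1(T)\rangle+\langle\bm\phi(T),\mathbf{p}_2(T)\rangle)$, which by \eqref{adjoint4} equals $\beta_3\int_\Omega\langle\mathbf{u}(T)-\mathbf{u}_\Omega,\bm\omega(T)\rangle+\beta_4\int_\Omega\langle\mathbf{d}(T)-\mathbf{d}_\Omega,\bm\phi(T)\rangle$. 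In the spatial integrations by parts, the pressure term $\int_{Q_T}\langle\nabla\widetilde{P},\mathbf{p}_1\rangle$ vanishes because $\nabla\cdot\mathbf{p}_1=0$ and $\mathbf{p}_1|_{\Gamma_T}=0$; all boundary terms involving $\bm\omega$ (e.g.\ from $-\Delta\bm\omega$, convective terms, and the Stokes-type divergence) are killed by $\bm\omega|_{\Gamma_T}=0$; and all boundary traces of $\mathbf{p}_1$ and $\mathbf{p}_2$ themselves vanish since $(\mathbf{p}_1,\mathbf{p}_2)|_{\Gamma_T}=0$.

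The only surviving boundary contributions therefore come from the normal derivatives of $\mathbf{p}_1$ and $\mathbf{p}_2$ paired with $\bm\phi|_{\Gamma_T}=\bm\xi$. Integration by parts applied to $-\Delta\bm\phi$ produces $-\int_{\Gamma_T}\langle\partial_{\bm\nu}\mathbf{p}_2,\bm\xi\rangle$, while the coupling term $-\int_{Q_T}\langle\nabla\cdot(\nabla\bm\phi\odot\nabla\mathbf{d}+\nabla\mathbf{d}\odot\nabla\bm\phi),\mathbf{p}_1\rangle$, after transferring one derivative from $\bm\phi$ onto $\nabla\mathbf{d}$ and $\mathbf{p}_1$, leaves the boundary trace $\int_{\Gamma_T}\langle\partial_j\mathbf{d}\,\partial_j\mathbf{p}_1^i\bm\nu^i+\partial_i\mathbf{d}\,\partial_j\mathbf{p}_1^i\bm\nu^j,\bm\xi\rangle$, matching $-\mathbf{q}_2$ as specified in $\eqref{adjoint5}_2$. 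The remaining interior terms regroup, by virtue of the adjoint equations \eqref{adjoint3}, into $\beta_1\int_{Q_T}\langle\mathbf{u}-\mathbf{u}_{Q_T},\bm\omega\rangle+\beta_2\int_{Q_T}\langle\mathbf{d}-\mathbf{d}_{Q_T},\bm\phi\rangle$. Collecting everything gives the key identity
\begin{align*}
&\beta_1\!\int_{Q_T}\!\langle\mathbf{u}-\mathbf{u}_{Q_T},\bm\omega\rangle+\beta_2\!\int_{Q_T}\!\langle\mathbf{d}-\mathbf{d}_{Q_T},\bm\phi\rangle\\
&+\beta_3\!\int_\Omega\!\langle\mathbf{u}(T)-\mathbf{u}_\Omega,\bm\omega(T)\rangle+\beta_4\!\int_\Omega\!\langle\mathbf{d}(T)-\mathbf{d}_\Omega,\bm\phi(T)\rangle\\
&=\int_{\Gamma_T}\!\langle\partial_j\mathbf{d}\,\partial_j\mathbf{p}_1^i\bm\nu^i+\partial_i\mathbf{d}\,\partial_j\mathbf{p}_1^i\bm\nu^j-\partial_{\bm\nu}\mathbf{p}_2,\bm\xi\rangle,
\end{align*}
and substituting this into \eqref{cond1} yields exactly \eqref{cond2}.

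The main obstacle will be rigorously justifying these integrations by parts under the limited regularity of $\mathbf{p}_2$, for which Theorem \ref{bdd-pq} only provides $\nabla^2\mathbf{p}_2\in L^{2-s}(Q_T)$ for $s\in(0,2)$, paired against $(\bm\omega,\bm\phi)$ of modest regularity from Theorem \ref{Line}. The standard remedy is to perform the computation first for smooth Galerkin or mollified approximations $(\mathbf{p}_1^\varepsilon,\mathbf{p}_2^\varepsilon)$ and $(\bm\omega^\varepsilon,\bm\phi^\varepsilon)$ of both systems, verify the identity at that regular level, and then pass to the limit using the uniform estimates \eqref{line1} and \eqref{bdd-pq0}. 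The regularity $\mathbf{q}_2\in L^1_tH^{1/2}_x(\Gamma_T)$ from \eqref{bdd-pq1}, combined with $\bm\xi\in H^{5/2,5/4}(\Gamma_T)\hookrightarrow C(\Gamma_T)$, is precisely what makes the boundary pairing on the right-hand side of \eqref{cond2} well-defined in the limit.
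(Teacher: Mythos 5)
Your proof is correct, but it takes a genuinely different route from the paper's. The paper proves \eqref{cond2} directly from the Lagrangian: since $\mathbf{h}$ is a constrained minimizer and $(\mathbf{u},\mathbf{d})$ is unconstrained in $\mathcal{G}$, minimality of $\mathcal{G}$ over $\widetilde{\mathcal{U}}_M$ gives $0\le\frac{d}{ds}\big|_{s=0}\mathcal{G}((\mathbf{u},\mathbf{d}),\mathbf{h}(s),\mathbf{p}_1,\mathbf{p}_2,\bm\pi,\mathbf{q}_1,\mathbf{q}_2)$ with $(\mathbf{u},\mathbf{d})$ held fixed; the only terms with explicit $\mathbf{h}$-dependence are $\tfrac{\beta_5}{2}\|\mathbf{h}-\mathbf{e}_3\|^2_{L^2(\Gamma_T)}$ and $\int_{\Gamma_T}\langle\mathbf{h},\mathbf{q}_2\rangle$, so this derivative collapses immediately to $\beta_5\int_{\Gamma_T}\langle\mathbf{h}-\mathbf{e}_3,\bm\xi\rangle+\int_{\Gamma_T}\langle\mathbf{q}_2,\bm\xi\rangle$, and $\eqref{adjoint5}_2$ converts $\mathbf{q}_2$ into the boundary expression in \eqref{cond2}. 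Your route instead begins from the already-established variational inequality \eqref{cond1} and derives the duality identity
\begin{align*}
&\beta_1\int_{Q_T}\langle\mathbf{u}-\mathbf{u}_{Q_T},\bm\omega\rangle+\beta_2\int_{Q_T}\langle\mathbf{d}-\mathbf{d}_{Q_T},\bm\phi\rangle
+\beta_3\int_\Omega\langle\mathbf{u}(T)-\mathbf{u}_\Omega,\bm\omega(T)\rangle\\
&\ \ +\beta_4\int_\Omega\langle\mathbf{d}(T)-\mathbf{d}_\Omega,\bm\phi(T)\rangle
=\int_{\Gamma_T}\langle\partial_j\mathbf{d}\,\partial_j\mathbf{p}_1^i\bm\nu^i+\partial_i\mathbf{d}\,\partial_j\mathbf{p}_1^i\bm\nu^j-\partial_{\bm\nu}\mathbf{p}_2,\bm\xi\rangle
\end{align*}
by pairing the linearized system \eqref{NLC-L1}--\eqref{NLC-L2} against $(\mathbf{p}_1,\mathbf{p}_2)$ and integrating by parts. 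I checked the bookkeeping: the initial condition kills the $t=0$ terms, the homogeneous Dirichlet conditions on $\bm\omega$, $\mathbf{p}_1$, $\mathbf{p}_2$ and the divergence-free condition kill all boundary terms except the ones you isolate, and the bulk terms cancel precisely against the adjoint equations \eqref{adjoint3}, leaving the $\beta_1,\beta_2$ sources and, via \eqref{adjoint4}, the $\beta_3,\beta_4$ terminal terms. Substituting into \eqref{cond1} gives \eqref{cond2}. What each approach buys: the paper's argument is shorter but presupposes the formal Lagrangian framework of Subsection~4.3.1, in particular that stationarity of $\mathcal{G}$ in $(\mathbf{u},\mathbf{d})$ holds and that the constrained minimizer of $\mathcal{C}$ is a minimizer of $\mathcal{G}$; your argument is more self-contained, replacing that formal step by an explicit duality computation anchored in the rigorously proven Theorems~\ref{thm-cond1}, \ref{Line}, and \ref{bdd-pq}. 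You are also right to flag the regularity of $\mathbf{p}_2$ ($\nabla^2\mathbf{p}_2\in L^{2-s}(Q_T)$ only) as the technical sticking point for the integrations by parts; the mollification-and-limit remedy you describe, together with $\mathbf{q}_2\in L^1_tH^{1/2}_x(\Gamma_T)$ paired against $\bm\xi\in C(\Gamma_T)$, is exactly what is needed to make the boundary pairing well-defined.
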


\begin{proof} Set $${\bf h}(s)=\Pi^{-1}(s\Pi({\bf h})+(1-s)\Pi(\widehat{\bf h}))\in C^1([0,1], \widetilde{\mathcal{U}}_M),$$
and 
$$({\bf u}(s), {\bf d}(s))=\mathcal{S}({\bf h}(s))\ {\rm{for}}\ s\in [0,1].$$ 
Then ${\bf h}(0)={\bf h}$, $({\bf u}, {\bf d})=({\bf u}(0), {\bf d}(0))$,
and ${\bf h}(1)=\widehat{\bf h}$.  From the minimality of
$\mathcal{G}$ at ${\bf h}$, we obtain that
\begin{align*}
0&\le \frac{d}{ds}\big|_{s=0} \mathcal{G}(({\bf u}, {\bf d}), {\bf h}(s), {\bf p}_1, {\bf p_2},
{\bf\pi}, {\bf q}_1, {\bf q}_2)\\
&=\beta_5\int_{\Gamma_T} \langle {\bf h}-{\bf e}_3, \bm\xi\rangle
+\int_{\Gamma_T}\langle{\bf q}_2, \bm\xi\rangle.
\end{align*}
This, combined with \eqref{adjoint5}$_2$, gives rise to \eqref{cond2}.
\end{proof}

\medskip


\noindent\textbf{Acknowledgments}. 
Liu is partially supported by NSF of China 11401202. Wang is partially supported by NSF DMS 1764417. Zhang and Zhou are supported
by a Chinese Council of Scholarship. This work was completed while Liu, Zhang, and Zhou were visiting Department of Mathematics, Purdue University. They would like to
express their gratitude to the department for its hospitality.


\vspace{0.2cm}\sc

\noindent Key Laboratory of High Performance Computing and Stochastic Information Processing, 
College of Mathematics \& Statistics, Hunan Normal Univ., Changsha, Hunan 410081, P.R.China; {\tt liuqao2005@163.com}

\vspace{0.2cm}\sc
\noindent Department of Mathematics, Purdue Univ., West Lafayette, IN 47907, USA;
{\tt wang2482@purdue.edu}

\vspace{0.2cm}\sc
\noindent School of Mathematical Sciences, Fudan Univ., Shanghai 200433, P. R. China;
{\tt 13110180015@fudan.edu.cn}

\vspace{0.2cm}\sc
\noindent School of Mathematical Sciences, Xiamen Univ., Xiamen, Fujian 361005,
P. R. China; {\tt jfzhouxmu@163.com}

\end{document}